\DeclareMathOperator*{\argmin}{arg\,min}
\newcommand{\1}{\mathds{1}}
\newcommand{\er}{\mathbb R}
\newcommand{\Cc}{\mathcal C}
\newcommand{\Ll}{\mathcal L}
\newcommand{\Mm}{{\mathcal M}}
\newcommand{\Pp}{\mathcal P}
\newcommand{\Vv}{\mathcal V}
\newcommand{\Ww}{\mathcal W}
\newcommand{\EE}{\mathbb E}
\newcommand{\bG}{\mathbf{G}}
\newcommand{\bL}{\mathbf{f}}
\newcommand{\bH}{\mathbf{h}}
\newcommand{\bbH}{\mathit{h}}
\newcommand{\DataM}{\mathbf{\mathcal{M}}}
\newcommand{\ControlS}{\mathit{p}}
\newcommand{\ControlBrownian}{\mathit{B}}
\newcommand{\DataS}{\mathbb{R}^d\times\mathcal{S}}
\newcommand{\Lagrang}{{f}}
\begin{document}

\title{Mean-Field Neural ODEs via Relaxed Optimal Control}

\author{\name Jean-Fran\c{c}ois Jabir$^{1}$ \email jjabir@hse.ru \\
       %\addr School of Mathematics\\ University of Edinburgh\\ Edinburgh, UK.
       \AND
       \name David \v{S}i\v{s}ka$^{2,3}$ \email D.Siska@ed.ac.uk \\
       %\addr \addr School of Mathematics\\ University of Edinburgh\\ Edinburgh, UK.
       \AND
       \name {\L}ukasz Szpruch$^{2,4}$ \email L.Szpruch@ed.ac.uk \\~\\
       $^1$\addr Higher School of Economics, National Research University, Moscow, Russia.\\
       $^2$\addr School of Mathematics, University of Edinburgh, Edinburgh, UK.\\
       $^3$\addr Vega Protocol, Gibraltar, UK.\\
       $^4$\addr The Alan Turing Institute, London, UK.
       \AND
       }

\editor{Kevin Murphy and Bernhard Sch{\"o}lkopf}

\maketitle

\begin{abstract}%   <- trailing '%' for backward compatibility of .sty file
We develop a framework for the analysis of Bayesian neural ODE models that are trained with stochastic gradient algorithms.
We do that by identifying the connections between control theory, deep learning and theory of statistical sampling.
We derive Pontryagin's optimality principle and study the corresponding gradient flow in the form of Mean-Field Langevin dynamics (MFLD) for solving relaxed data-driven control problems. Subsequently, we study uniform-in-time propagation of chaos of time-discretised MFLD. We derive explicit convergence rate in terms of the learning rate, the number of particles/model parameters and the number of iterations of the gradient algorithm.
In addition, we study the error arising when using a finite training data set and thus provide quantitative bounds on the generalisation error.
Crucially, the obtained rates are dimension-independent. This is possible by exploiting the regularity of the model with respect to the measure over the parameter space.
\end{abstract}

\begin{keywords}
Neural ODE, Relaxed Control, Gradient Flow, Generalisation Error
\end{keywords}

%\tableofcontents

%\paragraph{TODO:}
%\begin{enumerate}[i)]
%\item Nothing?
%\end{enumerate}

%\paragraph{Future work (not this article):}
%I aim to collect all the remarks about what else can be done here so that they won't get lost but the text in the sections gets cleaned-up.
%\begin{enumerate}[i)]
%%\item {\color{red}
%%Remarks on generalisation see 1.5 in \url{https://arxiv.org/pdf/1702.03849.pdf}. Note the following error (known as weak propagation of chaos)...
%%$|\Phi(\mu) -  \mathbb E \Phi(\mu_N)|$ by $\mathbb E \mathcal  W_2(\mu,\mu_N)$
%%}
%\item {\color{purple} For future work: Can we frame the minimisation as a stochastic target problem, see \url{http://www.cmap.polytechnique.fr/~touzi/Fields-LN.pdf}? }
%\item {\color{purple} To study difference between $J^{\sigma}$ and $J^{0}$ we could use Lemma 10.5 in \url{https://arxiv.org/abs/1804.06561} }
%\item Once above result is established we could bring back weak error estimates that are commented out to show that minimasation over finite and infinite spaces are close.
%\item If $\mathcal M$ is a minibatch measure then in practice it would depend on time as well.
%\end{enumerate}
%

\section{Introduction}\label{sec intro}

There is overwhelming empirical evidence that deep neural networks trained with stochastic gradient descent perform (extremely) well in high dimensional setting \cite{lecun2015deep,silver2016mastering,mallat2016understanding}.
Nonetheless, a complete mathematical theory that would provide theoretical guarantees why and when these methods work so well has been elusive.

In this work, we establish connections between high dimensional data-driven control problems,  deep neural network models and statistical sampling.
We demonstrate how all of them are fundamentally intertwined. Optimal (relaxed) control perspective on deep learning tasks provides new insights, with a solid theoretical foundation. In particular, the powerful idea of relaxed control, that  dates back to the work of L.C. Young on generalised solutions of problems of calculus of variations \cite{young2000lectures}, paves the way for efficient algorithms used in the theory of statistical sampling \cite{majka2018non,durmus2017nonasymptotic,eberle2016reflection,cheng2019quantitative}.
Indeed, in a recent series of works, see~\cite{hu2019mean,mei2018mean,rotskoff2018neural,chizat2018global,sirignano2018mean},
the task of learning the optimal weights in deep neural networks is viewed as a sampling problem.
The picture that emerges is that the aim of the learning algorithm is to find optimal distribution over the parameter space (rather than optimal values of the parameters).
As a consequence, individual values of the parameters are not important in the sense that different sets of weights sampled from the correct (optimal) distribution are equally good.
To learn optimal weights, one needs to find an algorithm that samples from the correct distribution.
It has been shown recently in~\cite{hu2019mean,mei2018mean} that in the case of Bayesian one-hidden layer network the noisy gradient algorithm does precisely that. The key mathematical tools to these results turn out to be the theory of gradient flows and differential calculus on the measure space.

Until recently the depth of neural networks used in practise has been limited. Practitioners reported that as number of layers in traditional DNN increases the training becomes harder. However, the arrival of the so-called residual neural networks (ResNet) in 2016 which outperformed traditional networks across a variety of tasks dramatically changed this situation, \cite{he2016deep}. To extend \cite{hu2019mean,mei2018mean} to multilayer setup we build upon the connection between deep learning and controlled ODEs that has been explored in the pioneering works~\cite{weinan2017proposal,li2017maximum,han2018mean,hu2019meanode}.

\subsection{Overview of the main results} \label{sec: overivew}

The key motivation behind this work is to demonstrate that continuous-time and space analysis of gradient flow on the space of probability measures combined with  probabilistic numerical analysis, that yield  quantitative convergence bounds in terms of the learning rate, the number of iterations of the gradient algorithm  and the size of the training set,  offers a general framework for studying of recurrent neural networks models trained with stochastic gradient algorithm. This perspective has been recently reinforced by \cite{weinan2019machine}.  

This section provides a high level overview of key findings of this work. Precise definitions of appropriate spaces, assumptions and theorems are presented in Section \ref{section main results}. As such the section provides a roadmap for the rest of the paper.

\subsubsection*{Bayesian Neural ODEs via Relaxed Control }

Let $(\xi,\zeta) \in  D$ represent training data (possibly e.g. continuous paths), distributed according to a distribution $\mathcal M \in \mathcal P(D)$ (typically unknown), and let $\varphi:\mathbb R \rightarrow R$ be an activation function. Each layer, indexed by $t$, of the deep network takes input $z\in \mathbb R^d$ and outputs
\[
\frac1n\sum_{i=1}^n \beta_{t,i} \varphi(\alpha_{t,i}\cdot z + \rho_{t,i} \cdot \zeta_t) 
= \int_{\mathbb R^d} \beta \varphi(\alpha \cdot z +\rho \cdot \zeta_t ) \,\nu_t^n(d\beta,d\alpha, d \rho)\,,  \quad 
\nu_t^n =\frac1n \sum \delta_{\{\beta_{t,i},\alpha_{t,i},\rho_{t,i}\}}\,.
\]	
In other words, applying the mean-field scaling at each layer of the neural networks, allows one to shift focus from specific values of the weights to the probability distribution over the weights. This perspective is widely adapted in deep learning community under the banner probabilistic or Bayesian neural networks and lends itself to the uncertainly quantification for deep learning, see e.g. \cite{mackay1995probable,neal2012bayesian,gal2015bayesian}.

Let $a:=(\beta,\alpha,\rho)$ and $\phi(z, a) :=  \beta \, \varphi(\alpha \cdot z +\rho \cdot \zeta ) $. An example of deep neural network architecture studied in this paper is given by
\[
X_{t_{k+1}}^{\nu^n,\xi,\zeta} = X_{t_{k}}^{\nu^n,\xi,\zeta} + \frac{\Delta t}{n} 
\sum_{i=1}^n \phi(X_{t_k}^{\nu,\xi,\zeta}, a^i_{t_k}), \,\,\, \text{where} \,\,\, \Delta t = (t_{k+1} - t_k)\,.
\]
Taking $\Delta t \rightarrow 0$ corresponds to sending the number of layers to infinity \cite{weinan2017proposal,liu2020selection}. Similarly note that as $n \rightarrow \infty$
the empirical distribution over the weights of neural networks  $\nu^n$ is expected to converge to continuous distribution over the weights \cite{hu2019mean}. 
Analysis of a model with infinite number of layers and neurons has the following advantages.
\begin{enumerate}[a)]
\item The continuous limit enjoys number of different numerical approximations.
Each approximation corresponds to a different network architecture.
Different approximations / architectures then lead to different mean-field approximation error, see~\cite{chassagneux2019weak} and different time discretization errors, see~\cite{hairer:norsett:wanner:1993}	.
%\item An algorithm that works well in continuous limit will be stable and convergent as long as the architecture after discretization corresponds to a stable and convergent numerical method.
\item Theoretical guarantees established for the performance of the continuous limit model will translate to networks with an arbitrary number of parameters and layers as long as the architecture after discretization preserves relevant properties of the continuous model. 
This provides a strategy for identifying good neural network architectures.  
\end{enumerate}
Motivated by these observations we consider probabilistic neural ODE model given by
\begin{equation}
\label{eq node intro}
X_t^{\nu,\xi,\zeta} = \xi + \int_0^t \int \phi(X_r^{\nu,\xi,\zeta}, a )\,\nu_r(da)\,dr
\end{equation}
Training of such model, can be recast as optimisation over the space of probability measures 
\begin{equation}\label{eq cost ode}
\min_{\nu} J^{\sigma,\mathcal M}(\nu), \quad \text{with}\quad  J^{\sigma,\mathcal M}(\nu) := \int_{D}\left( g(\xi,X_T^{\nu,\xi,\zeta}) + \mathcal R^{\sigma}(\xi,\zeta,\nu) \right)\mathcal M(d\xi,\zeta),
\end{equation}
where $g$ is an unregularised loss and $\mathcal R^{\sigma}$ is a regulariser. Motivated by the theory of regularised control we take
\[
R^{\sigma}(\xi,\zeta,\nu) :=  \int_{0}^T \left( \int_{\mathbb R^p} f(X_s^{\nu,\xi,\zeta},a)\nu_s(da) + \tfrac{\sigma^2}{2} \text{Ent}(\nu_s) \right)ds,
\]
where $f$ in theory of control is called a running cost function, and $Ent(\mu)$ is relative entropy with respect to prior distribution
$\gamma(x) \approx e^{-U(x)}$ for some potential $U$.  
The entropy term allows one to incorporate prior knowledge in the form on distribution over the weights into the training.
Note that unlike in mean-field modes for one hidden layer studied in \cite{hu2019mean,mei2018mean,rotskoff2018neural,chizat2018global,sirignano2018mean}
 the unregularised loss $J^{0,\mathcal M}$ is not convex even if $g$
 is. 
 Consequently, it is not clear a priori that the entropy term turns the problem strictly convex as it was the case in \cite{hu2019mean,mei2018mean}. 
 It is a common practice to consider the cost function $J^{0,\mathcal M}$, i.e. $\sigma=0$ corresponding to no entropic regularisation, but to train the network with stochastic gradient algorithm, including randomly sampling a mini batch at each step of training or training a random subset of the network's weights at each step i.e. dropout.
The randomness introduced by the stochastic gradient algorithm leads to so called implicit regularisation, see~\cite{neyshabur2017geometry,neyshabur2018towards,heiss2019implicit} and is modelled here by taking $\sigma>0$. 
To put it differently, the noise introduced during training means that some bias is introduced and one should not expect that $J^{0,\mathcal M}$ will decrease along the gradient flow. 
The variational perspective, that we take in this paper, makes the connection between randomness introduced during the training and the exact form of the (implicit) regularisation at the level of cost function $J^{\sigma,\mathcal M}$.

We tackle the optimisation problem \eqref{eq cost ode} using tools from relaxed control theory. One of the contributions of this work is a derivation of Pontryagin maximum principle for measure valued controls using variational calculus. Indeed we show (see Theorem~\ref{thm necessary cond linear}) that if $\nu$ is (locally) optimal then it must solve the  forward-backward system
%$(X_t,P_t)_t = ((X^{\xi,\zeta}_t)_{\xi,\zeta},(P^{\xi,\zeta}_t)_{\xi,\zeta}))_t$ 
given by
\begin{equation}
\label{pontryagin system}
\left\{
\begin{split}
 \nu_t^{\star,\mathcal M} & = \argmin_{m \in \mathcal P_2(\mathbb R^p)}  \int_D H^{\sigma}_t(X^{\xi,\zeta}_t,P^{\xi,\zeta}_t, m, \xi)\,\mathcal M(d\xi,d\zeta), \\
dX^{\xi,\zeta}_t & =  \Phi_t(X^{\xi,\zeta}_t,\nu_t,\zeta)\,dt \,,\,\,\,t\in [0,T]\,,X^{\xi,\zeta}_0 = \xi\in \mathbb R^d\,,\zeta \in \mathcal S\,,\\
dP^{\xi,\zeta}_t & = -(\nabla_x H^{0}_t)(X^{\xi,\zeta}_t, P^{\xi,\zeta}_t, \nu_t,\zeta)\,dt\,,\,\,\,t\in [0,T]\,,\,\,\,P^{\xi,\zeta}_T = (\nabla_x g)(X^{\xi,\zeta}_T,\zeta)\,, 
\end{split}	
\right.
\end{equation}
where the regularised Hamiltonian is given by
\begin{equation*}
\begin{split}
%	H^{\sigma}_t((x^{\xi,\zeta})_{\xi,\zeta},(p^{\xi,\zeta})_{\xi,\zeta},m,\mathcal M) & :=\int_D \int h_t(x^{\xi,\zeta},p^{\xi,\zeta},a, \zeta)\,m(da)\,\mathcal M(d\xi,d\zeta) +  \tfrac{\sigma^2}{2}\text{Ent}(m) \,, \\
	H_t^\sigma(x,p,m,\zeta) & := \int h_t(x,p,a,\zeta)\,m(da)+ \tfrac{\sigma^2}{2}\text{Ent}(m)\,, \\
	h_t(x,p,a,\zeta) &:= \phi_t(x,a,\zeta)p + f_t(x,a,\zeta)\,.
	\end{split}
\end{equation*}
Note that the necessary condition described in \eqref{pontryagin system} involves ``layer-by-layer'' optimisation of the Hamiltonian with $(X^{\xi,\zeta}_t, P^{\xi,\zeta}_t)$ fixed. 
It is instructive to note the differences and similarities between ``training'' using~\eqref{pontryagin system} and usual algorithm for training neural network by gradient descent. 
We see that in~\eqref{pontryagin system}, just as is common, we ``run the network'' for each data input $(\xi,\zeta)$ (the equation for $X^{\xi,\zeta}$). 
We do the ``back-propagation'' again for each data input (the equation for $P^{\xi, \zeta}$). 
But thanks to Pontryagin optimality criteria (see Theorem~\ref{thm necessary cond linear} later) we can ``update the weights'' by minimizing the Hamiltonian ``layer-by-layer'' instead of the cost~\eqref{eq cost ode}.
We show (see Theorem~\ref{thm conv to inv meas rate}) that the optimal distribution over the space of parameters of the neural network, at least for sufficiently large $\sigma$ exists and is unique.
Moreover $\nu^{\star,\sigma} = \argmin_{\nu}J^{\sigma,\mathcal M}(\nu) $ is for each $t$ is given by coupling the forward-backward system for $X^{\xi,\zeta}$ and $P^{\xi,\zeta}$ in~\eqref{pontryagin system} with the functional equation
\begin{equation} \label{invariant measure} 
 \nu_t^{\star,\mathcal M}(a)= \tfrac{1}{Z_t(\nu^{\star})} e^{\int_D h_t(X_t^{\nu^{\star,\mathcal M},\xi,\zeta},P_t^{\nu^{\star,\mathcal M},\xi,\zeta},a,\zeta)\mathcal M(d\xi,d\zeta)}\gamma(a), 
 \end{equation} 
where $Z_t(\nu^{\star})$ is the normalising factor constant in $a$. 
We observe that $\nu_t^{\star,\mathcal M}(a)$, the solution to~\eqref{invariant measure}, enjoys Bayesian interpretation with $\gamma(a)$ being the prior and $\nu_t^{\star,\mathcal M}$ posterior distributions of weights of the neural ODE.

\subsubsection*{Mean-Field Langevin sampling  }

Thus, we showed that the training of neural ODEs in mean-field regime can be recast as a relaxed control problem which we solve using Pontryagin principle on the space of probability measures. 
The next step is to demonstrate that mean-field Langevin sampling algorithm, also known in literature as Wasserstein gradient flow, studied previously in literature in the context of training one hidden layer neural network \cite{mei2018mean,hu2019mean}, corresponds to a noisy stochastic gradient algorithm used for training deep neural networks. 
Let  $\nabla \cdot$ denote the divergence operator and let $\tfrac{\delta H_{t}^{\sigma }}{\delta m}$ be the linear functional derivative (first variation), see Appendix \ref{sec measure derivatives} for the definition.
From \eqref{pontryagin system} and the theory of gradient flows, cf. \cite{villani2008optimal}, one would hope that the gradient flow equation, for each ``layer'' $t\in[0,T]$, is given by 
\begin{equation}\label{eq intro mfsgd 0}
	\tfrac{d}{ds} \nu_{s,t}   =  \nabla \cdot \left( \left(\nabla_a\tfrac{\delta H_{t}^{\sigma }}{\delta m}\right)(X_{s,t}, P_{s,t}, \nu_{s,t}, a, \mathcal M)  \nu_{s,t}  \right), \quad \nu_{0,t}=\mu\,,  \quad s \geq 0\,,
	\end{equation}
where $X_{s,t} = (X_{s,t}^{\xi,\zeta})_{\xi,\zeta}$ and $P_{s,t} = (P_{s,t}^{\xi,\zeta})_{\xi,\zeta}$.
The gradient flow is coupled with, for each ``gradient flow time'' $s\geq 0$, with the forward backward system
\begin{equation}
\label{eq intro mfsgd 2}	
\left\{
\begin{aligned}
	X^{\xi,\zeta}_{s,t} & = \xi + \int_0^t \Phi_r(X^{\xi,\zeta}_{s,r},\nu_{s,r},\zeta)\,dr \,,\,\,\, t\in [0,T]\,, \\
P^{\xi,\zeta}_{s,t} & = (\nabla_x g)(X^{\xi,\zeta}_T,\zeta) + \int_t^T (\nabla_x H^{0}_r)(X^{\xi,\zeta}_{s,r},P^{\xi,\zeta}_{s,r},\nu_{s,r},\zeta)\,dr \,,\,\,\, t\in [0,T] \,.\\
\end{aligned}
\right.
\end{equation}
We stress again that in this setting the forward process $(X^{\xi,\zeta}_{s,t})_t$ 
plays the role of running the neural network with input $(\xi, \zeta)$ while the backward / adjoint process $(P^{\xi,\zeta}_{s,t})_t$ plays the role of back propagation. 
One of the main results of this work is to show that \eqref{eq intro mfsgd 0} is indeed a ``correct'' gradient flow in a sense that $\tfrac{d}{ds} J^{\sigma,\mathcal M}(\nu_{s,t}) \leq 0$ i.e the loss function is a decreasing function along the gradient flow \eqref{eq intro mfsgd 0}. Furthermore, for sufficeintly large $\sigma$ we show in Theorem \ref{thm conv to inv meas rate} that 
\begin{equation*}
	\mathcal W^T_2(\mathcal L( \theta_{s,\cdot}),\nu^{\star,\mathcal M})^2 \leq e^{-\lambda s}   {\cal W}^T_2(\mathcal L( \theta_{0,\cdot}),\nu^{\star,\mathcal M})^2\,, 
\end{equation*}
where $\mathcal W^T_2(\mu,\nu):=\left( \int_0^T \mathcal W_2(\mu_t,\nu_t)^2\,dt \right)^{1/2}$ with $\mathcal W_q$ being the usual Wasserstein distance. We remark that the rate of convergence does not dependent on the dimension of the state and parameter spaces and the results hold for either $\mathcal M$ or its empirical approximation. It is useful to contrast this result with recent work on the linearizations of neural networks around initialisation known as neural tangent kernel, see \cite{jacot2018neural,arora2019fine}, or lazy training regime \cite{chizat2018lazy}. For linearised model the loss function can be shown to go to zero exponentially fast. Furthermore, \cite{chizat2018lazy,mei2019mean}     
showed that by appropriately rescaling one-hidden layer neural network model one can show that, at least asymptotically, the distributions over the weights does not change from its initialisation. Subsequent works \cite{ghorbani2019linearized,ghorbani2020neural} studied the limitation of lazy regime. Here, we prove exponential convergence in mean-field regime and go beyond one hidden-layer model, but require strong regularisation. In fact, one can see that by taking large $\sigma$ we are imposing ``strong'' prior on the posterior distribution over weights  $\nu_t^{\star,\mathcal M}(a)$. 
That way, our work provides an alternative link between mean-field and lazy regimes. Similar observation for one hidden layer has recently been made in \cite{tzen2020mean}. 

Let 
\begin{equation*}
\mathbf h_t(a, \mu, \mathcal M) :=  \frac{\delta H^0}{\delta m}(X_t(\mu),P_t(\mu),a,\mathcal M) =   \int_{D} h_t(X^{\xi,\zeta}_t(\mu),P^{\xi,\zeta}_t(\mu),a,\zeta)\mathcal M(d\xi, d\zeta)\,.
\end{equation*}
To simulate $(\nu_{s,t})_{s>0}$ we rely on the probabilistic representation of~\eqref{eq intro mfsgd 0} given by 
 the mean-field equation 
\begin{equation}\label{eq into mfsgd prob}
	d \theta_{s,t}  =  -\left( ( \nabla_a \mathbf h_t ) (\theta_{s,t},\nu_{s,t},\mathcal M )\, + \frac{\sigma^2}{2}(\nabla_a U)(\theta_{s,t}) \right)ds + \sigma dB_s\, \quad s \geq 0\,,
	\end{equation}
where $(\theta^0_{t})_{t\in[0,T]}$ is a given initial condition and where
$\nu_{s,t}$ is the law of $\theta_{s,t}$. 
We must again couple~\eqref{eq into mfsgd prob} with~\eqref{eq intro mfsgd 2}.	
Mean-field Langevin dynamics~\eqref{eq into mfsgd prob} can be viewed as continuous time noisy gradient descent. 
We remark that perturbing weights during training with Gaussian noise is  common practice when designing differentially private models, see~\cite{dziugaite2018data}. 

Particle approximation to \eqref{eq into mfsgd prob} leads to familiar (noisy) stochastic gradient algorithms used to train neural networks. Indeed, consider  a sequence $(\xi^i,\zeta^i)_{i=1}^{N_1}$ of i.i.d copies of $(\xi,\zeta)$ and let $\mathcal M^{N_1} := \frac{1}{N_1}\sum_{j=1}^{N_1}\delta_{\{\xi^j, \zeta^j\}}$ be the empirical measure representing the available training sample.
Further, we fix an increasing sequence $0=s_0 < s_1 < s_2 \cdots$ and define the 
 $(\widetilde{\theta}^{i}_{l,t})_{l \in \mathbb N, 0\leq t\leq T}$ satisfying, for  $i=1,\ldots,N_2$ and $l \in \mathbb N$,
\begin{equation}\label{eq:EulerScheme1 intro}
\widetilde{\theta}^{i}_{s_{l+1},t}=\widetilde \theta^{i}_{s_{l},t}- \left(( \nabla_a \mathbf h_t ) (\widetilde \theta_{s_{l},t}, \widetilde{{\nu}}^{N_2}_{s_{l},t}, \mathcal M^{N_1} )
+\frac{\sigma^2}{2}(\nabla_aU)(\widetilde \theta^i_{s_{l},t})\right)\,(s_{l+1}-s_l)
+\sigma (B^i_{s_{l+1}} - B^i_{s_l})\,,
\end{equation}
where $\widetilde{{\nu}}^{N_2}_{s_{l},t}=\frac{1}{N_2}\sum_{j_2=1}^{N_2}\delta_{\widetilde{\theta}^{j_2}_{s_{l},t}}$. 
In this work we provide dimension independent and uniform-in-time bounds for convergence of \eqref{eq:EulerScheme1 intro} to \eqref{eq into mfsgd prob}, see Theorems    \ref{thm:WellPosed&PropagationChaos} and \ref{thm:EulerRate1}, 
and use them to study generalisation error. In practice one also considers a numerical approximation of the process $(X_t)_{t\in[0,T]}$ on a finite time partition $0=t_0\leq\cdots\leq t_n=T$ of the interval $[0,T]$.
Different numerical approximations of process $(X_t)_{t\in[0,T]}$ can be interpreted as different neural network architectures, see again~\cite{chen2018neural}.
We omit numerical error in the introduction and remark that numerical analysis of ODEs is a mature field of study, see~\cite{hairer:norsett:wanner:1993}.

\subsubsection*{Generalisation Error}

Let $\nu^{\star,\sigma,N_1} := \argmin_{\nu }J^{\sigma,\mathcal M^{N_1}}(\nu) $ be the optimal distribution over the parameter space when minimising empirical loss with $N_1$ data points, with noisy gradient descent \eqref{eq into mfsgd prob}. 
Let $\nu_{S,\cdot}^{\sigma,N_1,N_2,\Delta s}$ denote the distribution over the parameter space induced by the gradient algorithm when training with $N_1$ data samples, finite number of model parameters (the number of which is $\approx N_2 \times p \times n$, where $n$ is the number of grid points of $[0,T]$), the learning rate $\Delta s = \max_{0 < s_l < S} (s_l - s_{l-1})$, and training time $S$. The generalisation error  $J^{0,\mathcal M}(\nu_{S,\cdot}^{\sigma,N_1,N_2,\Delta s})$ is the the value of the loss function under population measure $\mathcal M$ evaluated at $\nu_{S,\cdot}^{\sigma,N_1,N_2,\Delta s}$. Note that we can write
\[
J^{0,\mathcal M}(\nu_{S,\cdot}^{\sigma,N_1,N_2,\Delta s}) = J^{0,\mathcal M}(\nu_{S,\cdot}^{\sigma,N_1,N_2,\Delta s}) - J^{0,\mathcal M}(\nu^{\star,\sigma}) - \frac{\sigma^2}{2} \int_0^T\text{Ent}(\nu^{\star,\sigma}_t)\,dt +  \min_{\mu\in \mathcal V_2}J^{\sigma,\mathcal M}(\mu)\,,
\]
since $\min_{\mu\in \mathcal V_2}J^{\sigma,\mathcal M}(\mu) = J^{0,\mathcal M}(\nu^{\ast,\sigma}) + \tfrac{\sigma^2}{2} \int_0^T \text{Ent}(\nu^{\ast,\sigma}_t)\,dt $.

Thus we see that the generalisation error consist of three errors: a) the numerical error of approximating an invariant measure with discrete time particle system, b) the relative entropy between the Gibbs measure $\gamma$ (a prior) and the $\nu^{\star,\sigma}$,  c) the minimum value of the cost function under population measure $J^{\sigma,\mathcal M}$. Under appropriate smoothness of the loss function with respect to measure over the parameter space obtain (in Theorem \ref{th generalisation}) 
\[
\mathbb E\left[\Big|J^{0,\mathcal M}(\nu^{\star,\sigma})- J^{0,\mathcal M}(\nu_{S,\cdot}^{\sigma,N_1,N_2,\Delta s})\Big|^2\right] \leq c\left(e^{-\lambda S} + \frac1{N_1} + \frac1{N_2} + h \right)\,,
\]
where $h := \max_{0 < s_l < S} (s_l - s_{l-1})$. We stress that the rates that we obtained are dimension independent, which is not common in the literature.
Indeed, such bound wouldn't hold in a dimension-free way for functions that are only Lipschitz continuous w.r.t. the Wasserstein distance, see~\cite{fournier2015rate} or \cite{dereich2013constructive}. 
For more regular functions it is possible to obtain such dimension-free estimates~\cite[Lem. 5.10]{delarue2019master}, \cite[Lem.2.2]{szpruch2019antithetic} and~\cite{jabir2019rate}.
In this paper we also exploit this additional regularity to obtain such estimates.
%Indeed the bound is not true when $J^{0, \mathcal M}$ is Lipschitz continuous with respect to the Wasserstein distance, i.e, there is $c$ such that
%$
%|J^{0,\mathcal M}(\mu)-  J^{0,\mathcal M}(\nu)| \leq \, c \mathcal W_1^T(\mu,\nu)$, for all $\mu,\nu \in \mathcal V_2
%$.
%In this case, following \cite{fournier2015rate} or \cite{dereich2013constructive}, the rate of convergence in the number of samples $N$  deteriorates as the dimension $d$ increases. On the other hand, if the functional $J^{\mathcal M}$ is twice-differentiable with respect to the functional derivative, then one can obtain a dimension-independent bound for the strong error $\mathbb E[|J^{0,\mathcal M}(\nu) -  J^{0,\mathcal M}(\nu^N)|^2 ]$, which is of order $O(N)$ (in line with the Central Limit Theorem). Similar results have been also obtained in \cite[Lem. 5.10]{delarue2019master} \cite[Lem.2.2]{szpruch2019antithetic} and \cite{jabir2019rate}.

It is a common practice to terminate the training when $J^{0,\mathcal M^{N_1}}$ is negligible
\cite{zhang2016understanding,belkin2019reconciling,montanari2019generalization,hastie2019surprises,mei2019generalization} 
and such models have been observed to generalise well. 
In a situation when $J^{0,\mathcal M^{N_1}}$ is not negligible after training for sufficiently long time $S$, then the model is considered untrained and one would not expect it to generalise well.
If one assumes that for fixed $\varepsilon>0$ and $N_1>0$, $J^{0,M_1}(\nu^{\star,\sigma,N_1}) \leq \varepsilon$ then we show (see Theorem \ref{th conditional generalisation}) that
\[
\mathbb E\left[\Big|J^{0,\mathcal M}(\nu_{S,\cdot}^{\sigma,N_1,N_2,\Delta s})\Big|^2\right] \leq  \varepsilon^2 + c\left(e^{-\lambda S} + \frac1{N_1} + \frac1{N_2} + h \right)\,.
\]
 
 The assumption that $J^{0,M_1}(\nu^{\star,\sigma,N_1}) \leq \varepsilon$ could be verified by establishing universal approximation theorem for the neural ODEs in a spirit of \cite{sontag1997complete,cuchiero2019deep} and combining it with the analysis presented in this work. We postpone this direction of research to future work.

While this work is motivated by the desire to put deep learning on a solid mathematical foundation, as a byproduct, ideas emerging from machine learning provide new perspective on classical dynamic optimal control problems. Indeed, high dimensional control problems are ubiquitous in technology and science \cite{bertsekas1995dynamic,bensoussan2004stochastic,bensoussan2011applications,fleming2006controlled,carmona2018probabilistic}. There are many computational methods designed to find, or approximate, the optimal control functional, see e.g.~\cite{kushner2001numerical} or~\cite{gyongy2009normalized} and references therein.
These, typically, rely on dynamic programming, discrete space-time Markov chains, finite-difference methods or Pontryagin's maximum principle and, in general, do not scale well with the dimensions.
Indeed the term ``curse of dimensionality'' (computational effort grows exponentially with the dimension) has been coined by R. E. Bellman when considering problems in dynamic optimisation \cite{bellman1966dynamic}.
This work advances the study of a new class of algorithm for control problems that is particularly well adapted to high dimensional setting.

\subsection{Related work}

In \cite{weinan2017proposal,li2017maximum,han2018mean}, Pontryagin's optimality principle is leveraged
 and the convergence of the method of successive approximations for training the neural ODEs is studied.
The authors suggested the possibility of combining their algorithm with gradient descent but have not studied this connection in full detail.
The term Neural ODEs has been coined in~\cite{chen2018neural} where the authors exploited the computational advantage of Pontryagin's principle approach (with its connections to automatic differentiation~\cite{baydin2018automatic}).
Finally, \cite{hu2019meanode} (see also \cite{bo2019relaxed}) formulated the relaxed control problem and showed that the recently methodology developed in \cite{hu2019mean} can be successfully applied in this setup, to prove convergence of flows of measures induced by the mean-field Langevin dynamics to invariant measure that minimised relaxed control problem.
The starting point of~\cite{hu2019meanode} is a relaxed Pontryagin's representation for the control problem and the authors prove convergence of the continuous time dynamics in a fixed data regime and, in the case of the neural ODE application, infinite number of parameters.
Our work complements~\cite{hu2019meanode}: first we derive the Pontryagin's optimality principle for the data-driven relaxed control problem. From there we identify the gradient flow on the space of probability measures along which corresponding energy function is decreasing. Next, we study the complete algorithm and provide quantitative convergence bounds in terms of the learning rate, the number of iterations of the gradient algorithm  and the size of the training set. Finally we derive quantitative bounds on the generalisation error by exploiting smoothness of the energy function. 

 We remark that control problem perspective is fruitful when studying universal approximation results for (neural) ODEs, \cite{sontag1997complete,cuchiero2019deep,ma2019barron}.
%We note that the randomness of the stochastic gradient introduced during the training is passed on to the neural ODE. This may alleviate some pitfalls of deterministic ODE model noted in \cite{dupont2019augmented}.

\subsection{Examples of application to machine learning problems}

The setting in this paper is relevant to many types of machine learning problems. Below we give two examples of how the results of this paper can be applied to gain insight into machine learning tasks.

\begin{example}[Nonlinear regression and function approximation]
Consider a function $f:\mathbb R^{\bar d} \to \mathbb R^d$ one wishes to approximate.
This is to be done by sampling $(\xi,\zeta)$ from $\mathcal M$ where $\xi = f(\zeta)$.
We see that here $\mathcal S = \mathbb R^{\bar d}$. 	

The objective can be taken as
\[
J^\sigma(\nu) = \int_{\mathbb R^d \times \mathbb R^{\bar d}} |X_T^{\xi,\zeta} - f(\zeta)|^2 \, \mathcal M(d\xi,d\zeta) + \frac{\sigma^2}{2}\int_0^T \text{Ent}(\nu_t)\,dt\,.
\]
We now fix a
nonlinear activation function $\varphi:\mathbb R^d \to \mathbb R^d$ and take $\phi$ to be
\begin{equation*}
\phi(x,a) := a_1 \varphi(a_2 x)\,, \,\,\,a=(a_1,a_2)\in \mathbb R^p\,,	
\end{equation*}
with $a_1 \in \mathbb R^{d\times d}$, $a_2 \in \mathbb R^{d\times d}$ so $p := 2 d^2$.
The neural network will then be given by discretizing~\eqref{eq node intro}.
\end{example}

In the following example we consider a time-series application.
In this setting the input will be one or more ``time-series'' each a continuous a path on $[0,T]$.
The reason for considering paths on $[0,T]$ is that this covers the case of unevenly spaced observations.

\begin{example}[Missing data interpolation]
The learning data set consists of the true path $\zeta^{(2)} \in C([0,T];\mathbb R^d)$ and a set of observations $(\zeta_{t_i}^{(1)})_{i=1}^{N_{\text{obs}}}$ on $0\leq t_1 \leq \cdots \leq t_{N_{\text{obs}}} \leq T$.
We will extend this to entire $[0,T]$ by piecewise linear interpolation denoted $\zeta^1_t$.
The learning data is then $(\zeta^{(1)},\zeta^{(2)})=:\zeta \in \big(C([0,T];\mathbb R^{\bar d})\big)^2 =: \mathcal S$
distributed according to the data measure $\mathcal M \in \mathcal P(\mathcal S)$.
As before we fix a nonlinear activation function $\varphi:\mathbb R^d \to \mathbb R^d$.
Let us take $\phi$ to be
\begin{equation*}
\phi_t(x,a,\zeta) := a_1 \varphi(a_2 x + a_3 \zeta^{(1)}_t)\,, \,\,\,a=(a_1,a_2,a_3)\in \mathbb R^p\,,	
\end{equation*}
with $a_1 \in \mathbb R^{d\times d}$, $a_2 \in \mathbb R^{d\times d}$, $a_3 \in \mathbb R^{d\times \bar d}$, so that $d^2 + d^2 + d \bar d = p$.
Let $L \in \mathbb R^{\bar d\times d}$ be a matrix.
%Let $X_T^\zeta$ be the output of the network.
The learning task is then to minimize
\[
J^\sigma(\nu) = \int_{\mathcal S} \int_0^T | L X_t^{\nu,\zeta^{(1)}} - \zeta^{(2)}_t|^2 \,dt\, \mathcal M(d\zeta^1,d\zeta^2) + \frac{\sigma^2}{2}\int_0^T \text{Ent}(\nu_t)\,dt
\]
over all measures $\nu \in \mathcal V_2$
subject to the dynamics of the forward ODE given by~\eqref{eq node intro}.
\end{example}

\section{Main results}
\label{section main results}

In this section we state precisely the assumptions and the results discussed in Section \ref{sec intro}. 
Some notions from Section~\ref{sec intro} are repeated here so that this section can be read on its own.

\subsection{Statement of problem}

Given some metric space $E$ and $0< q <\infty$, let $\Pp_q(E)$ denote the set of probability measures defined on $E$ with finite $q$-th moment.
Let $\Pp_0(E)=\Pp(E)$ be the set of probability measures on $E$.
Let $\mathcal V_2$ denote the set of positive Borel measures on $[0, T] \times \mathbb R^p$ with the first marginal equal to the Lebesgue measure, the second marginal being a probability measure with finite second moment.
That is
\begin{equation}
\label{eq def Vq}	
\mathcal V_2 :=\Big\{ \nu \in \mathscr M([0,T]\times \mathbb R^p): \nu(dt,da)=\nu_t(da)dt, \, \nu_t \in \Pp_2(\er^p),\,  \int_{0}^{T}\!\!\!\int |a|^2\nu_t(da)dt < \infty \Big\}\,,
\end{equation}
where here and elsewhere any integral without an explicitly stated domain of integration is over $\mathbb R^p$.
We consider the following controlled ordinary differential equation (ODE):
\begin{equation}
\label{eq process}
X_t^{\xi,\zeta}(\nu) = \xi + \int_0^t \int \phi_r(X_r^{\xi,\zeta}(\nu), a, \zeta)\,\nu_r(da)\,dr
 \,,\,\,\, t\in [0,T]\,,
\end{equation}
where $\nu = (\nu_t)_{t\in [0,T]} \in \mathcal V_2$
is the control,
where $(\xi,\zeta) \in \mathbb R^d \times \mathcal S$ denotes %dependence on
some external data, distributed according to $\mathcal M \in \mathcal P_2(\mathbb R^d \times \mathcal S)$ and where $(\mathcal S, \|\cdot\|_{\mathcal S})$ is a normed space.

For $\nu \in \mathcal P(\mathbb R^p)$ let $\text{Ent}(\nu) := \infty$ if $\nu$ is not absolutely continuous with
respect to Lebesgue measure. 
Otherwise 
let
\begin{equation}
\label{eq Ent}
\text{Ent}(\nu) := \int \left[ \log \nu(a) - \log \gamma(a)\right]\nu(a)\,da\,,\,\,\,\text{where}\,\,\,\gamma(a) = e^{-U(a)}\,\,\,\text{with $U$ s.t.}\,\,\,  \int e^{-U(a)}\,da = 1\,.	
\end{equation}
Given $f$ and $g$ we define objective functionals as
\begin{equation}
\label{eq objective bar J}
\begin{split}
\bar J^{\sigma}(\nu, \xi, \zeta) & := \int_0^T \int f_t(X_t^{\xi,\zeta}(\nu), a, \zeta)\,\nu_t(da) \,dt + g(X^{\xi,\zeta}_T(\nu),\zeta)\,	+ \frac{\sigma^2}{2} \int_0^T \text{Ent}(\nu_t)\,dt \,\\
%\bar J^\sigma(\nu, \xi, \zeta) & := \bar J(\nu, \xi, \zeta)	+ \frac{\sigma^2}{2} \int_0^T \text{Ent}(\nu_t)\,dt \,,
\end{split}
\end{equation}
as well as
\begin{equation}
\label{eq objective J}
%J^{\mathcal M}(\nu) := \int_{\mathbb R^d \times \mathcal S} \bar  J(\nu, \xi, \zeta) \, \mathcal M(d\xi,d\zeta) \,\,\,\text{and}\,\,\,
J^{\sigma, \mathcal M}(\nu) :=  \int_{\mathbb R^d \times \mathcal S} \bar J^\sigma(\nu, \xi, \zeta) \, \mathcal M(d\xi,d\zeta) \,.
\end{equation}
Note that when $\mathcal M$ is fixed in~\eqref{eq objective J} then we don't emphasise the dependence of $J^{\sigma,\mathcal M}$ on $\mathcal M$ in our notation and write only $J^\sigma$ and we write $J^{0}=J$ and $J^{0,\mathcal M}=J^{\mathcal M}$. Once $\mathcal M$ is fixed, the aim is to minimize $J^\sigma$ over all controls $\nu \in \mathcal V_2$, subject to the controlled process $X^{\xi,\zeta}(\nu)$ satisfying~\eqref{eq process}.
In case $\sigma \neq 0$ we additionally require that $\nu_t$ is absolutely continuous with respect to the Lebesgue measure for almost all $t\in [0,T]$.
For convenience define
\[
\Phi_t(x,m,\zeta) := \int \phi_t(x, a, \zeta)\,m(da) \text{   and   }
F_t(x,m, \zeta) := \int f_t(x, a, \zeta)\,m(da)\,.
\]

%\subsection{From the relaxed control to the stochastic gradient descent }

%Of course the measure flow $(\nu_{s,t})_{s \geq 0}$ can be represented by a stochastic process $\theta_{s,t}$ and this will satisfy
%\[
%d\theta_{s,t} = -\bigg(\text{``}\frac{d}{d\varepsilon}J^\sigma((I+\varepsilon \phi_{s,t})\sharp \mathcal L(\theta_{s,t}))\bigg|^{\text{''}}_{\varepsilon=0}\bigg)\,ds + \frac{\sigma^2}{2}U(\theta_{s,t})\,ds + \sigma \,dW_s\,.
%\]

\subsection{Assumptions and theorems}

We start by briefly introducing some terminology  and notation.
We will use $\frac\delta{\delta \nu}$ to denote the flat derivative on $\mathcal V_2$
and $\frac\delta{\delta m}$ to denote the flat derivative on $\mathcal P_2(\mathbb R^p)$, see Section~\ref{sec measure derivatives} for definitions of these objects.
We will say that some function $\psi = \psi_t(x,a,\zeta)$
is Lipschitz continuous in $(x,a)$, uniformly in $(t,\zeta)\in[0,T]\times\mathcal S$ if  we have that
\[
\sup_{t\in[0,T]}\sup_{\zeta\in \mathcal S}\sup_{\substack{x,x'\in\mathbb R^d,\\ a,a'\in \mathbb R^p,\\ \,(x,a)\neq(x',a')}}\frac{|\psi_t(x,a,\zeta)-\psi_t(x',a',\zeta)|}{|(x,a)-(x',a')|}<\infty\,.
\]
For $\psi = \psi(w)$ we will use
$\Vert \psi\Vert_{\infty}:=\sup_{w}|\psi(w)|$
to denote the supremum norm and we will use
$\Vert \psi\Vert_{\text{Lip}}:=\sup_{w\neq w'}\frac{|F(w)-F(w')|}{|w-w'|}$ to denote the Lipschitz norm.
We will use $c$ to denote a generic constant that may change from line to line but must be indepdent of $p$, $d$ and all other parameters that appear explicitly in the same expression.
\begin{assumption} \label{as coefficients}
\begin{enumerate}[i)]
\item $\int_{\mathbb R^d \times \mathcal S}[|\xi|^2 + \|\zeta\|_{\mathcal S}^2] \, \mathcal M(d\xi,d\zeta) <\infty$.
\item $\phi$, $\nabla_a\phi$, $\nabla_x\phi$, $f$, $\nabla_a\Lagrang$, $\nabla_x\Lagrang$ and $\nabla_xg$ are all Lipschitz continuous in $(x,a)$, uniformly in $(t,\zeta)\in[0,T]\times\mathcal S$.
Moreover, $x\mapsto \nabla_x\phi$, $\nabla_x\Lagrang$ and $\nabla_xg$ are all continuously differentiable.
\item

\begin{align*}
&\sup_{t\in[0,T]}\int_{\mathbb R^d\times \mathcal S} \Big[|g(0,\zeta)|^2+|\Lagrang_{t}(0,0,\zeta)|^2 + |\phi_{t}(0,0,\zeta)|^2\\
&\quad\quad+|\nabla_xg(0,\zeta)|^2+|\nabla_x\Lagrang_{t}(0,0,\zeta)|^2+|\nabla_a\Lagrang_{t}(0,0,\zeta)|^2\Big]\,\mathcal M(d\xi,d\zeta) < \infty.
\end{align*}

\end{enumerate}

\end{assumption}
%\begin{assumption}
%There is a measurable function $(t,\xi,\zeta) \mapsto K_t(\xi,\zeta)$ such that
%\[
%\int_0^T \int_{\mathbb R^d\times \mathcal S} K_t(\xi,\zeta)\,\mathcal M(d\xi,d\zeta) < \infty\,.
%\] and moreover
%\begin{enumerate}[i)]
%\item For all $(t,\xi,\zeta) \in [0,T]\times\mathbb R^d\times \mathcal S$ we have
%\[
%\sup_{r,x} \|\phi_r(x,\cdot,\zeta)\|_{\text{Lip}}	 + \sup_{r,a}\|\phi_r(\cdot,a,\zeta)\|_{\text{Lip}} \leq K_t(\xi,\zeta) \,.
%\]
%\item For all $(t,\xi,\zeta) \in [0,T]\times\mathbb R^d\times \mathcal S$ we have
%\[
%\|(\nabla_x g)(\cdot,\zeta)\|_{\text{Lip}} + \sup_{r,a}\|(\nabla_x f_t)(\cdot,a,\zeta)\|_{\text{Lip}} + \|(\nabla_x f_t)(0,\cdot,\zeta)\|_{\text{Lip}}
%\]
%\item For all $(t,\xi,\zeta) \in [0,T]\times\mathbb R^d\times \mathcal S$ we have
%\[
%|\xi| + |\phi_t(0,0,\zeta)| + |(\nabla_x g)(0,\zeta)| + |(\nabla_x f_t)(0,0,\zeta)| \leq K_t(\xi,\zeta)\,.
%\]
%\end{enumerate}
%	
%\end{assumption}

\begin{definition}[Permissible flows]
\label{def permissible flow}
We will call $(b_{\cdot,t} \in C^{0,1}([0,\infty)\times \mathbb R^p; \mathbb R^p))_{t\in[0,T]}$ a permissible flow if for all $s\geq0, t\in [0,T]$ the map $a\mapsto b_{s,t}(a)$ has at most linear growth.
\end{definition}

\begin{lemma}
\label{lemma vect field flow}
If $b = b_{s,t}(a)$ is a {\em permissible flow} (c.f. Definition~\ref{def permissible flow}) then, for all $t\in [0,T]$ the equation
\[
\partial_s \nu_{s,t} = \nabla_a \cdot \Big( b_{s,t}\nu_{s,t}+\frac{\sigma^2}{2}\nabla_a \nu_{s,t} \Big)\,, s \in [0,\infty)\,,\nu_{0,t} \in \mathcal P_2(\mathbb R^p),
\]
has a unique solution $\nu_{\cdot,t} \in C^{1,2}((0,\infty)\times \mathbb R^p)$ 
%Old:$\nu_{\cdot,t} \in C^{1,\infty}((0,\infty)\times \mathbb R^p)$ 
and moreover $\nu_{s,t} > 0$ for all $s>0,t\in[0,T]$.
\end{lemma}
The existence and uniqueness stated in Lemma~\ref{lemma vect field flow} are proved in e.g.~\cite[Chapter IV]{LSU68}
and the fact that the solution is strictly positive can be obtained from its stochastic representation and via a Girsanov transform.

We now introduce the relaxed Hamiltonian:
\begin{equation}
\label{eq hamiltonian}
\begin{split}
H^0_t(x,p,m,\zeta) & := \int h_t(x,p,a, \zeta)\,m(da) \,,\,\,\, \text{where}\,\,\,h_t(x,p,a,\zeta) := \phi_t(x,a,\zeta)p + f_t(x,a,\zeta)\,,\\
H^\sigma_t(x,p,m,\zeta) & := H^0_t(x,p,m,\zeta) + \frac{\sigma^2}{2}\text{Ent}(m)\,.
\end{split}
\end{equation}
%where
%\[
%h_t(x,p,a,\zeta) := \phi_t(x,a,\zeta)p + f_t(x,a,\zeta)\,.	
%\]
We will also use the adjoint process
\begin{equation}
\label{eq adjoint proc}
\begin{split}
dP_t^{\xi,\zeta}(\nu)& =-(\nabla_x H^0_t)(X^{\xi,\zeta}(\nu)_t,P^{\nu,\xi,\zeta}_t(\nu),\nu_t)\,dt \,,\,\,t\in [0,T]\,, \,\, P^{\xi,\zeta}_t(\nu) = (\nabla_x g)(X^{\xi,\zeta}_t(\nu),\zeta),\,
\end{split}
\end{equation}
and note that trivially $\nabla_x H^0 = \nabla_x H^\sigma$.
In Lemma~\ref{lemma odes} we will prove that, under Assumption~\ref{as coefficients}, the system~\eqref{eq process},~\eqref{eq adjoint proc} has a unique solution.
For $\mu \in \mathcal V_2$, $\mathcal M \in \mathcal P_2(\mathbb R^d\times\mathcal S)$ let
\begin{equation}
\label{eq fat h def}
\mathbf h_t(a, \mu, \mathcal M) := \int_{\mathbb R^d \times \mathcal S} h_t(X^{\xi,\zeta}_t(\mu),P^{\xi,\zeta}_t(\mu),a,\zeta)\mathcal M(d\xi, d\zeta)\,,	
\end{equation}
where $(X^{\xi,\zeta}(\mu),P^{\xi,\zeta}(\mu))$ is the unique solution to~\eqref{eq process} and~\eqref{eq adjoint proc}.

We now state a key result on how to choose the gradient flow to solve the control problem.

\begin{theorem}
\label{corollary with measure flow}
Fix $\sigma > 0$, let Assumption~\ref{as coefficients} hold, let $b$ be a permissible flow (c.f. Definition~\ref{def permissible flow}) and $(\nu_{s,\cdot})_{s\geq 0}$ the corresponding measure flow resulting from Lemma~\ref{lemma vect field flow}.
Let $X^{\xi,\zeta}_{s,\cdot}, P^{\xi,\zeta}_{s,\cdot}$ be the forward and backward processes arising from data $(\xi,\zeta)$ with control $\nu_{s,\cdot} \in \mathcal V_2$.
Then
\begin{equation}
\begin{split}
\label{eq der of J wrt flow}
& \frac{d}{ds} J^\sigma(\nu_{s,\cdot}) \\
& = - \int_0^T \int
\bigg[\int_{\mathbb R^d
\times \mathcal S} \left(\nabla_a\frac{\delta H^0_t}{\delta m}\right)(X^{\xi,\zeta}_{s,t},P^{\xi,\zeta}_{s,t},\nu_{s,t},a,\zeta) \,\mathcal M(d\xi,d\zeta) \\
& \qquad \qquad \,\,\,\,\,+ \frac{\sigma^2}{2}\nabla_a U(a)+\frac{\sigma^2}{2}\nabla_a \log \nu_{s,t}(a)\bigg]
\cdot \bigg[ b_{s,t}(a) + \frac{\sigma^2}{2}\nabla_a \log \nu_{s,t}(a)\bigg]\, \nu_{s,t}(da) \,dt\,.		
\end{split}
\end{equation}
\end{theorem}

%{\color{red}Here $\frac{\delta H^\sigma_t}{\delta m}(x,p,\mu,a,\zeta)$ is the flat derivative, in the sense of Definition \ref{def:flatDerivative}, of $\mu\mapsto H^\sigma_t(x,p,\mu,\zeta)$.}

The complete proof of Theorem~\ref{corollary with measure flow} will come in Section~\ref{section pontryagin derivation} but a sketch is given in Section~\ref{sec sketch of convergence proof}.

\begin{theorem}[Necessary condition for optimality]
\label{thm necessary cond linear}
Fix $\sigma \geq 0$.
Let Assumption~\ref{as coefficients} hold.
If $\nu\in \mathcal V_2$ is (locally) optimal for $J^{\sigma, \mathcal M}$ given by~\eqref{eq objective J}, $X^{\xi,\zeta}$ and $P^{\xi,\zeta}$ are
the associated optimally controlled state and adjoint processes for data $(\xi, \zeta)$, given by~\eqref{eq process} and~\eqref{eq adjoint proc} respectively,
then for any other $\mu \in \mathcal V_2$ we have
\begin{enumerate}[i)]
\item For a.a. $t\in (0,T)$
\[
\int \bigg(\int_{\mathbb R^d \times \mathcal S}  \frac{\delta H^0_t}{\delta m}(X^{\xi,\zeta}_t, P^{\xi,\zeta}_t, \nu_t,a,\zeta) \,\mathcal M(d\xi,d\zeta) + \frac{\sigma^2}{2}\log \nu_t(a)-\frac{\sigma^2}{2}U(a)\bigg)\,(\mu_t-\nu_t)(da)
\geq 0 \,.
\]
\item For a.a. $t\in (0,T)$ there exists $\varepsilon > 0$ (small and depending on $\mu_t$) such that
\[
\int_{\mathbb R^d \times \mathcal S}H^\sigma_t (X^{\xi,\zeta}_t,P^{\xi,\zeta}_t,\nu_t+\varepsilon(\mu_t-\nu_t))\,\mathcal M(d\xi,d\zeta)
\geq \int_{\mathbb R^d \times \mathcal S}H^\sigma_t(X^{\xi,\zeta}_t,P^{\xi,\zeta}_t,Z^{\xi,\zeta}_t,\nu_t)\,\mathcal M(d\xi,d\zeta).
\]
In other words, the optimal relaxed control 	$\nu\in \mathcal V_2$ locally minimizes
the Hamiltonian $H^\sigma$.
\end{enumerate}
\end{theorem}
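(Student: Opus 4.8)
The plan is the classical convex-perturbation route to Pontryagin's principle, run in parallel with the computation preceding Theorem~\ref{corollary with measure flow} but using the linear interpolation $\nu^\varepsilon:=\nu+\varepsilon(\mu-\nu)$ in place of a transport flow. Convex combinations preserve both marginals, the second-moment bound and (for $\sigma\neq 0$) absolute continuity, so $\nu^\varepsilon\in\mathcal V_2$ for every $\varepsilon\in[0,1]$. Hence local optimality of $\nu$ for $J^\sigma$ ($=J$ when $\sigma=0$; the appearance of $H^\sigma$ in the conclusion is what pins down the regularised functional for $\sigma>0$) gives $\frac{d}{d\varepsilon}\big|_{\varepsilon=0^+}J^\sigma(\nu^\varepsilon)\geq 0$, provided this one-sided derivative exists and has the form we now derive.

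Fix $(\xi,\zeta)$, write $X^\varepsilon:=X^{\xi,\zeta}(\nu^\varepsilon)$ and $Y:=\partial_\varepsilon X^\varepsilon\big|_{\varepsilon=0}$. By Assumption~\ref{as coefficients}(ii) and Gronwall (as in Lemma~\ref{lemma odes} and the derivation before Theorem~\ref{corollary with measure flow}), $\varepsilon\mapsto X^\varepsilon_t$ is differentiable with $Y_0=0$ and $dY_t=[(\nabla_x\Phi_t)(X_t,\nu_t,\zeta)Y_t+\Phi_t(X_t,\mu_t,\zeta)-\Phi_t(X_t,\nu_t,\zeta)]\,dt$. Pairing $Y$ with the adjoint process $P:=P^{\xi,\zeta}(\nu)$ of \eqref{eq adjoint proc} and integrating $d(P_tY_t)$ over $[0,T]$, the running- and terminal-cost contributions to $\frac{d}{d\varepsilon}\big|_{\varepsilon=0^+}\bar J^\sigma(\nu^\varepsilon,\xi,\zeta)$ combine, after the $Y$-terms cancel, into $\int_0^T\int h_t(X_t,P_t,a,\zeta)(\mu_t-\nu_t)(da)\,dt$ (this uses $\dot P_t=-(\nabla_x\Phi_t)(X_t,\nu_t,\zeta)P_t-(\nabla_xF_t)(X_t,\nu_t,\zeta)$ and linearity of $H_t$ in $m$). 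Adding the one-sided entropy derivative $\frac{\sigma^2}{2}\int(\log\nu_t(a)+U(a))(\mu_t-\nu_t)(da)$ and recalling $\frac{\delta H_t}{\delta m}=h_t$, $\frac{\delta\text{Ent}}{\delta m}(\nu_t,a)=\log\nu_t(a)+U(a)$, we obtain
\[
\frac{d}{d\varepsilon}\Big|_{\varepsilon=0^+}\bar J^\sigma(\nu^\varepsilon,\xi,\zeta)=\int_0^T\!\!\int\frac{\delta H^\sigma_t}{\delta m}(X^{\xi,\zeta}_t,P^{\xi,\zeta}_t,\nu_t,a,\zeta)\,(\mu_t-\nu_t)(da)\,dt.
\]
Integrating in $(\xi,\zeta)$ against $\mathcal M$ and using optimality yields $\int_0^T\big[\int\int_{\mathbb R^d\times\mathcal S}\frac{\delta H^\sigma_t}{\delta m}\,\mathcal M(d\xi,d\zeta)(\mu_t-\nu_t)(da)\big]\,dt\geq 0$ for every $\mu\in\mathcal V_2$. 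Part i) follows by localisation: for measurable $A\subseteq(0,T)$ the control $\mu^A:=\mathbf{1}_A\mu+\mathbf{1}_{A^c}\nu$ lies in $\mathcal V_2$ and $\mu^A_t-\nu_t=\mathbf{1}_A(t)(\mu_t-\nu_t)$, so $\int_A[\cdots]\,dt\geq 0$ for all $A$, whence the integrand is $\geq 0$ for a.a.\ $t$.

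For part ii), fix $\mu\in\mathcal V_2$, freeze the optimal $X^{\xi,\zeta}_t,P^{\xi,\zeta}_t$ and set $G_t(\varepsilon):=\int_{\mathbb R^d\times\mathcal S}H^\sigma_t(X^{\xi,\zeta}_t,P^{\xi,\zeta}_t,\nu_t+\varepsilon(\mu_t-\nu_t),\zeta)\,\mathcal M(d\xi,d\zeta)$ for $\varepsilon\in[0,1]$. Since $H_t$ is affine in $m$ and $\text{Ent}$ is convex, $\varepsilon\mapsto G_t(\varepsilon)$ is convex, so $G_t'(0^+)$ exists; the computation above, now with no state/adjoint variation and hence no $Y$-term, gives $G_t'(0^+)=\int\int_{\mathbb R^d\times\mathcal S}\frac{\delta H^\sigma_t}{\delta m}(X^{\xi,\zeta}_t,P^{\xi,\zeta}_t,\nu_t,a,\zeta)\,\mathcal M(d\xi,d\zeta)(\mu_t-\nu_t)(da)\geq 0$ for a.a.\ $t$ by part i). Convexity then forces $G_t(\varepsilon)\geq G_t(0)+\varepsilon G_t'(0^+)\geq G_t(0)$ for all $\varepsilon\in[0,1]$, which is exactly ii).

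The main obstacle is making the entropy term rigorous. Convexity of $m\mapsto\text{Ent}(m)$ makes the difference quotients $\varepsilon^{-1}(\text{Ent}(\nu^\varepsilon_t)-\text{Ent}(\nu_t))$ nondecreasing, so the one-sided derivative exists in $[-\infty,\infty)$; showing it is finite, equals $\int(\log\nu_t+U)(\mu_t-\nu_t)\,da$, and can be exchanged with $\int_0^T dt$ and with $\int\mathcal M(d\xi,d\zeta)$ requires $\int_0^T\text{Ent}(\nu_t)\,dt<\infty$ (valid since $J^\sigma(\nu)<\infty$), a reduction to comparison controls with $\int_0^T\text{Ent}(\mu_t)\,dt<\infty$, and a growth bound on $U$ dominating $\log\nu^\varepsilon_t$ from above along the interpolation. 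The differentiability of $\varepsilon\mapsto X^\varepsilon$ and the dominated-convergence passage under $\int\mathcal M$ are routine given Assumption~\ref{as coefficients}(i),(iii) and the moment bounds of Lemma~\ref{lemma odes}.
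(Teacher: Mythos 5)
Your proposal is correct and follows essentially the same route as the paper: both perturb $\nu$ linearly towards $\mu$, compute the directional derivative of $J^\sigma$ via the adjoint/Hamiltonian pairing (the paper packages this in Lemmas~\ref{lemma diff of J flat} and~\ref{lemma der of J as hamiltonian flat}), and localise in $t$ by replacing $\mu$ with $\mathbf{1}_A\mu+\mathbf{1}_{A^c}\nu$ to pass from the time-integrated inequality to the pointwise-a.e.\ one. The only slight difference is in part ii), where you exploit convexity of $m\mapsto H^\sigma_t(\cdot,m,\cdot)$ to conclude for all $\varepsilon\in[0,1]$, whereas the paper merely invokes the definition of the linear derivative to obtain the local-minimum statement for small $\varepsilon$.
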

We remark that we also proof a sufficient optimality  condition, but do not state it here.

The proof of Theorem~\ref{thm necessary cond linear} is postponed until Section~\ref{section pontryagin derivation}.
However it tells us that if $\nu \in \mathcal V_2$ is (locally) optimal then
it must solve (together with the forward and adjoint processes)
the following system:
\begin{equation}
\label{eq pontryagin system}
\left\{
\begin{split}
\nu_t & = \argmin_{\mu \in \mathcal P_2(\mathbb R^p)} \int_{\mathbb R^d \times \mathcal S} H^\sigma_t(X^{\xi,\zeta}_t, P^{\xi,\zeta}_t, \mu, \zeta)\,\mathcal M(d\xi,d\zeta)\,,\\
dX^{\xi,\zeta}_t & =  \Phi_t(X^{\xi,\zeta}_t,\nu_t,\zeta)\,dt \,,\,\,\,t\in [0,T]\,,X^{\xi,\zeta}_0 = \xi\in \mathbb R^d\,,\zeta \in \mathcal S\,,\\
dP^{\xi,\zeta}_t & = -(\nabla_x H_t)(X^{\xi,\zeta}_t, P^{\xi,\zeta}_t, \nu_t,\zeta)\,dt\,,\,\,\,t\in [0,T]\,,\,\,\,P^{\xi,\zeta}_T = (\nabla_x g)(X^{\xi,\zeta}_T,\zeta)\,.
\end{split}	
\right.
\end{equation}

Let us now introduce the probability space on which we can develop the probabilistic formulation for the  gradient descent which will solve the system~\eqref{eq pontryagin system}.
Let there be a probability space $(\Omega^B, \mathcal F^B, \mathbb P^B)$ equipped with a $\mathbb R^p$-Brownian motion $B=(B_s)_{s\geq 0}$ and the filtration $\mathbb F^B = (\mathcal F_t^B)$ where
$\mathcal F_s^B := \sigma(B_u:0\leq u \leq s)$.
Let $(\Omega^\theta, \mathcal F^\theta, \mathbb P^\theta)$
be another probability space and let $(\theta^0_t)_{t\in [0,T]}$ be an $\mathbb R^p$-valued stochastic process on this space.
Let $\Omega := \Omega^B \times \Omega^\theta \times \mathbb R^d \times \mathcal S$, $\mathcal F := \mathcal F^B \otimes \mathcal F^\theta \otimes \mathcal B(\mathbb R^d) \otimes \mathcal B(\mathcal S)$
and $\mathbb P := \mathbb P^B \otimes \mathbb P^\theta \otimes \mathcal M$.
Let $I := [0,\infty)$.
Consider the mean-field system given by:
\begin{equation}\label{eq mfsgd}
	d \theta_{s,t}  =  -\left(\int_{\mathbb R^d \times \mathcal S} (\nabla_a h_t)(X^{\xi,\zeta}_{s,t},P^{\xi,\zeta}_{s,t},\theta_{s,t},\zeta)\,\mathcal M(d\xi, d\zeta) + \frac{\sigma^2}{2}(\nabla_a U)(\theta_{s,t}) \right)ds + \sigma dB_s\, \quad s \in I\,,
	\end{equation}
where $(\theta^0_{t})_{t\in[0,T]}$ is a given initial condition and where for each $s\geq 0$
\begin{equation}
\label{eq mfsgd 2}	
\left\{
\begin{aligned}
	\nu_{s,t} & = \mathcal L(\theta_{s,t})\,,\,\,\, t\in [0,T]\,,\\
	X^{\xi,\zeta}_{s,t} & = \xi + \int_0^t \Phi_r(X^{\xi,\zeta}_{s,r},\nu_{s,r},\zeta)\,dr \,,\,\,\, t\in [0,T]\,, \\
P^{\xi,\zeta}_{s,t} & = (\nabla_x g)(X^{\xi,\zeta}_T,\zeta) + \int_t^T (\nabla_x H_r)(X^{\xi,\zeta}_{s,r},P^{\xi,\zeta}_{s,r},\nu_{s,r},\zeta)\,dr \,,\,\,\, t\in [0,T] \,.\\
\end{aligned}
\right.
\end{equation}

\begin{assumption}[For existence, uniqueness and invariant measure]
\label{ass exist and uniq}
\hfill
\begin{enumerate}[i)]	
\item Let $\int_{0}^{T}\EE[|\theta^{0}_{t}|^2]\,dt<\infty$.	
\item Let $\nabla_aU$ be Lipschitz continuous in $a$ and moreover let there $\kappa>0$ such that:
\[
\left(\nabla_aU(a')-\nabla_aU(a)\right)\cdot\left(a'-a\right)\geq \kappa|a'-a|^2,\,a,a'\in\er^{\ControlS}\,.
\]
\item One of the two following conditions holds:
\begin{enumerate}[a)]\item $(x,\zeta)\mapsto \nabla_x g(x,\zeta)$ is bounded on $\mathbb R^d \times \mathcal S$,
\item $(t,a,\zeta)\mapsto \phi_t(0,a,\zeta)$ is bounded on $[0,T]\times\mathbb R^p \times \mathcal S$ and $\mathcal M$ has compact support.
\end{enumerate}
\end{enumerate}
\end{assumption}

%\begin{assumption}[Assumption for propagation of chaos]\label{as A-1}
%\begin{enumerate}[i)]
%\item[]
%\item {\color{red} For all $(t,a)\in[0,T]\times\er^{{\ControlS}}$, $\zeta\in\Ss$, $x\mapsto \nabla_xg(x,\zeta),\,\nabla_x\phi_{t}(x,a,\zeta),\nabla_x \Lagrang_{t}(x,a,\zeta)$, $\nabla_a\phi_{t}(x,a,\zeta)$, $\nabla_a \Lagrang_{t}(x,a,\zeta)$ are all twice continuously differentiable with bounded derivatives};
%\item {\color{red} Either $\nabla_x\Lagrang$ and $\nabla_xg$ are bounded, or $\Ll(\xi,\zeta)$ has compact support on $\DataS$.}
%\end{enumerate}
%
%\end{assumption}

To introduce topology on the space $\mathcal V_2$ we define the
integrated Wasserstein metric as follows.
Let $q=1,2$ and let $\mu, \nu \in \mathcal V_2$.
Then
\begin{equation}
\label{eq def wasserstein qT}
\mathcal W^T_q(\mu,\nu):=\left( \int_0^T \mathcal W_q(\mu_t,\nu_t)^q\,dt \right)^{1/q}\,,
\end{equation}
where $\mathcal W_q$ is the usual Wasserstein metric on $\mathcal P_q(\mathbb R^p)$.

In Lemma \ref{lem hamilton lipschitz} we show that Assumptions \ref{as coefficients} and \ref{ass exist and uniq} imply that  for any $\mathcal M \in \mathcal P_2(\mathbb R^d\times\mathcal S)$ there exists $L>0$ such that for all $a,a' \in \mathbb R^p$ and $\mu,\mu' \in \mathcal V_2$
\begin{equation*}
| (\nabla_a \mathbf h_t)(a,\mu,\mathcal M) - (\nabla_a \mathbf h_t)(a',\mu',\mathcal M) | \leq L \left( |a-a'| + \mathcal W^T_1(\mu,\mu')\right)\,.
\end{equation*}	

\begin{theorem}
\label{thm conv to inv meas rate}
Let Assumptions~\ref{as coefficients} and~\ref{ass exist and uniq} hold.
Then~\eqref{eq mfsgd}-\eqref{eq mfsgd 2} has a unique solution.
Moreover, assume that $J$ defined in \eqref{eq objective J} is bounded from below and that there exists $\bar \nu$ such that $J^{\sigma}(\bar \nu)<\infty$
and that $\sigma > 0$.
Then
\begin{enumerate}[i)]
\item $\argmin_{\nu \in \mathcal V_2}J^\sigma(\nu) \neq \emptyset$,
\item if $\nu^\star \in \argmin_{\nu \in \mathcal V_2}J^\sigma(\nu)$
then for a.a. $t\in (0,T)$ we have that
\[
\mathbf h_t(a,\nu^\star,\mathcal M) + \frac{\sigma^2}{2}\log(\nu^\star(a)) + \frac{\sigma^2}{2}U(a)\,\,\,\text{is constant for a.a. $a\in \mathbb R^p$}
\]
and $\nu^\star$ is an invariant measure for~\eqref{eq mfsgd}-\eqref{eq mfsgd 2}.
Moreover,
\item if $\sigma^2\kappa - 4L >0$ then $\nu^\star$ is unique and for any solution $\theta_{s,t}$ to~\eqref{eq mfsgd}-\eqref{eq mfsgd 2} we have that for all $s\geq 0$
\begin{equation}
\label{eq inv meas conv rate}
	\mathcal W^T_2(\mathcal L( \theta_{s,\cdot}),\nu^\star)^2 \leq e^{-\lambda s}   {\cal W}^T_2(\mathcal L( \theta_{0,\cdot}),\nu^\star)^2\,.
\end{equation}
\end{enumerate}
\end{theorem}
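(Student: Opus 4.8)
\emph{Proof plan.} I would establish the four assertions in the order: well-posedness of \eqref{eq mfsgd}--\eqref{eq mfsgd 2}; (i) existence of a minimiser of $J^\sigma$; (ii) the Gibbs-type first-order condition and invariance of $\nu^\star$; (iii) the contraction \eqref{eq inv meas conv rate}, from which uniqueness of $\nu^\star$ follows. For well-posedness, note that \eqref{eq mfsgd}--\eqref{eq mfsgd 2} is a McKean--Vlasov equation in the flow-time $s$ whose drift depends on $\theta_{s,t}$ and on the whole marginal flow $\nu_{s,\cdot}=(\mathcal L(\theta_{s,t}))_{t\in[0,T]}$ through the forward--backward system; Lemma~\ref{lemma odes} gives a unique $(X^{\xi,\zeta},P^{\xi,\zeta})$ for each fixed flow, Lemma~\ref{lem hamilton lipschitz} gives Lipschitz dependence of $(\nabla_a\mathbf h_t)$ on $a$ and on the flow in $\mathcal W_1^T$, and $\nabla_a U$ is Lipschitz (Assumption~\ref{ass exist and uniq}), so the drift is Lipschitz in $(\theta,\mathcal L(\theta))$. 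A Banach fixed-point argument on $C([0,S];\mathcal V_2)$ for small $S$ (Picard iteration, synchronous coupling, Grönwall) then gives local well-posedness, and the at-most-linear growth of the drift (an It\^o a priori moment bound, where the $\kappa$-dissipativity of $\nabla_a U$ additionally yields a bound uniform in $s$) extends it to all $s\geq 0$; uniqueness follows from the same pathwise estimate.

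For (i) I would use the direct method. Since $J$ from \eqref{eq objective J} is bounded below and $J^\sigma(\bar\nu)<\infty$, along a minimising sequence $(\nu^n)\subset\mathcal V_2$ the term $\tfrac{\sigma^2}{2}\int_0^T\text{Ent}(\nu^n_t)\,dt$ stays bounded; since $\nabla_aU$ is $\kappa$-monotone, $U$ is $\kappa$-strongly convex, hence grows at least quadratically, which turns the entropy bound into a uniform bound on $\int_0^T\!\!\int|a|^2\,\nu^n_t(da)\,dt$ together with a lower bound on $\int_0^T\!\!\int\log\nu^n_t\,d\nu^n_t\,dt$. By Dunford--Pettis and compactness of Young measures I extract a subsequence converging (narrowly in $a$, weakly in $t$) to some $\nu^\star$, still with Lebesgue first marginal, so $\nu^\star\in\mathcal V_2$. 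The entropy is convex hence narrowly lower semicontinuous, and $\nu\mapsto\int_{\mathbb R^d\times\mathcal S}\bar J(\nu,\xi,\zeta)\,\mathcal M(d\xi,d\zeta)$ is continuous along the sequence by stability of the forward--backward ODEs in $\nu$ (Lemma~\ref{lemma odes}) combined with the Lipschitz bounds of Assumption~\ref{as coefficients} and the uniform moment estimates (dominated convergence). Hence $J^\sigma(\nu^\star)\leq\liminf_n J^\sigma(\nu^n)=\inf_{\mathcal V_2}J^\sigma$, proving (i); $\sigma>0$ is essential here, since without the entropy there is no compactness.

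For (ii), let $\nu^\star$ be a minimiser. The curve $\varepsilon\mapsto\nu^\star+\varepsilon(\mu-\nu^\star)$, $\varepsilon\in[0,1]$, lies in $\mathcal V_2$ for every $\mu\in\mathcal V_2$, so its one-sided derivative at $\varepsilon=0$ is $\geq 0$; this is precisely Theorem~\ref{thm necessary cond linear}(i) applied to the regularised functional, and computing $\tfrac{\delta H^\sigma_t}{\delta m}$ from \eqref{eq hamiltonian}, \eqref{eq Ent} and \eqref{eq fat h def} gives
\[
\int_0^T\!\!\int\Big[\mathbf h_t(a,\nu^\star,\mathcal M)+\tfrac{\sigma^2}{2}\log\nu^\star_t(a)+\tfrac{\sigma^2}{2}U(a)\Big](\mu_t-\nu^\star_t)(da)\,dt\geq 0\qquad\text{for all }\mu\in\mathcal V_2.
\]
Since $\mu_t$ ranges over all probability densities for a.e.\ $t$, a standard localisation-in-$t$ argument (using that $\log\nu^\star_t=-\infty$ where $\nu^\star_t$ vanishes forces full support) shows the bracket must be a.e.\ constant in $a$ for a.e.\ $t$, the constant being a Lagrange multiplier for the mass constraint — this is the claimed identity, equivalently $\nu^\star_t(a)\propto\exp\!\big(-\tfrac{2}{\sigma^2}\mathbf h_t(a,\nu^\star,\mathcal M)-U(a)\big)$. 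But this is exactly the stationary density of the \emph{decoupled} diffusion $d\theta=-\big(\nabla_a\mathbf h_t(\theta,\nu^\star,\mathcal M)+\tfrac{\sigma^2}{2}\nabla_a U(\theta)\big)ds+\sigma\,dB_s$, so if $\mathcal L(\theta_{0,\cdot})=\nu^\star$ then the coefficients of \eqref{eq mfsgd}--\eqref{eq mfsgd 2} freeze at $\nu^\star$ and $\mathcal L(\theta_{s,\cdot})=\nu^\star$ for all $s$; that is, $\nu^\star$ is invariant.

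For (iii), couple two solutions $\theta_{s,\cdot},\theta'_{s,\cdot}$ of \eqref{eq mfsgd}--\eqref{eq mfsgd 2} through the \emph{same} Brownian motion $B$, with initial conditions coupled so that $(\theta_{0,t},\theta'_{0,t})$ is a $\mathcal W_2$-optimal coupling for each $t$. The noise cancels in the difference, so differentiating $|\theta_{s,t}-\theta'_{s,t}|^2$ and using the $\kappa$-dissipativity of $\nabla_aU$ for the $U$-term and Lemma~\ref{lem hamilton lipschitz} for the $\mathbf h$-term, then taking expectations, bounding $\mathcal W_1(\mathcal L(\theta_{s,t}),\mathcal L(\theta'_{s,t}))\leq\mathbb E|\theta_{s,t}-\theta'_{s,t}|$, integrating over $t\in[0,T]$ and applying Cauchy--Schwarz to the $\mathcal W_1^T$ cross-term yields
\[
\tfrac{d}{ds}D(s)\leq -(\sigma^2\kappa-4L)\,D(s),\qquad D(s):=\int_0^T\mathbb E|\theta_{s,t}-\theta'_{s,t}|^2\,dt,
\]
so $D(s)\leq e^{-\lambda s}D(0)$ with $\lambda=\sigma^2\kappa-4L>0$. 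Since $\mathcal W_2^T(\mathcal L(\theta_{s,\cdot}),\mathcal L(\theta'_{s,\cdot}))^2\leq D(s)$ and the choice of initial coupling gives $D(0)=\mathcal W_2^T(\mathcal L(\theta_{0,\cdot}),\mathcal L(\theta'_{0,\cdot}))^2$, this is a contraction: taking $\theta'$ stationary with law $\nu^\star$ (which exists by (ii)) gives \eqref{eq inv meas conv rate}, and taking two stationary solutions gives uniqueness of the invariant measure, whence (since by (ii) every minimiser of $J^\sigma$ is invariant) $\nu^\star$ is the unique minimiser. The step I expect to be the main obstacle is (i): setting up the relaxed-control compactness and lower-semicontinuity so that the limit of the minimising sequence remains in $\mathcal V_2$ and the ODE-defined, non-convex functional $\nu\mapsto\int_{\mathbb R^d\times\mathcal S}\bar J(\nu,\xi,\zeta)\,\mathcal M(d\xi,d\zeta)$ passes to the limit under only narrow-in-$a$, weak-in-$t$ convergence, which rests on quantitative stability of the coupled forward--backward system in $\nu$; a lesser technical point is tracking the precise constant $4L$ in (iii) through the interaction of $\mathcal W_1^T$ with the integration in $t$.
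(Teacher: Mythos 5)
Your proposal follows the same overall architecture as the paper's proof (well-posedness via a fixed point on $\mathcal C(I,\mathcal V_2)$; (i) by compactness of entropy sub-level sets plus lower semicontinuity; (ii) by the first-order variational inequality from Theorem~\ref{thm necessary cond linear}; (iii) by a synchronous coupling and Gr\"onwall, with the same constant $\lambda=\sigma^2\kappa-4L$). The one genuine route difference is in (ii): the paper first derives the Gibbs form of $\nu^\star$ and establishes full support, and then separately proves invariance by invoking the gradient-flow identity of Theorem~\ref{corollary with measure flow} at a minimiser, concluding that $\nu^\star_t$ solves the stationary Fokker--Planck equation. You bypass Theorem~\ref{corollary with measure flow} entirely: from the Gibbs form $\nu^\star_t\propto\exp(-U-\tfrac{2}{\sigma^2}\mathbf h_t(\cdot,\nu^\star,\mathcal M))$ you observe directly that $\nu^\star_t$ is the stationary density of the $\nu^\star$-frozen Langevin SDE, and then use well-posedness/self-consistency to conclude that the McKean--Vlasov flow started at $\nu^\star$ stays at $\nu^\star$. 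This is a correct and slightly shorter path; it buys you not needing the gradient-flow theorem here, at the cost of needing (as the paper also implicitly needs) enough regularity/integrability to identify the Gibbs density as \emph{the} stationary measure of the frozen diffusion — the paper discharges this by citing \cite[Prop.~5.2, Lem.~6.1--6.5]{hu2019mean}, and your sketch of a ``localisation-in-$t$ argument'' stands in for the paper's Lemma~\ref{lem constant}. Your treatment of (i) is also somewhat more explicit than the paper's (you spell out the moment bound via the quadratic growth of $U$ and the lower semicontinuity of the non-entropic part), but it is the same Dunford--Pettis/entropy-sub-level-set idea.
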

Theorem~\ref{thm conv to inv meas rate} is proved in Section~\ref{sec exist uniq inv}.
Let us point out that parts i) and ii) are proved in~\cite{hu2019meanode} and are included for completeness.
Part iii) is proved in~\cite{hu2019meanode} under different assumptions.

Let us now consider the particle approximation and propagation of chaos property.
Consider a sequence $(\xi^i,\zeta^i)_{i=1}^{N_1}$ of i.i.d copies of $(\xi,\zeta)$
and let $\mathcal M^{N_1} := \frac{1}{N_1}\sum_{j=1}^{N_1}\delta_{\xi^j, \zeta^j}$.
Furthermore, we assume that initial distribution of weights $(\theta_{0,\cdot}^i)_{i=0}^{N_2}$ are i.i.d copies of $(\theta^0_{\cdot})$ and that $(B^{i})_{i=1}^{N_2}$ are independent Brownian motions
and we extend our probability space to accommodate these.
For $s\in [0,S]$, $t\in[0,T]$ and $1\leq i\leq N_2$ define
\begin{equation}\label{eq:ParticleApprox-bis}
\begin{aligned}
\theta^i_{s,t}&=\theta^{i}_{0,t} -
\int_0^s
\left(
(\nabla_a \mathbf h_t)(\theta^i_{v,t}, \nu^{N_2}_{v,\cdot}, \mathcal M^{N_1})
+\frac{\sigma^2}{2}(\nabla_a U)(\theta^i_{v,t})
\right)
%\left(\frac{1}{N_1}\sum_{j_1=1}^{N_1}\nabla_a
%h_{t}(X^{\xi^{j_1},\zeta^{j_1}}_{t}(\nu^{N_2}_v),\theta^i_{v,t},P^{\xi^{j_1},\zeta^{j_1}}_{t}(\nu^{N_2}_v),{\zeta}^{j_1}) + \frac{\sigma^2}{2}U(\theta^i_{v,t})\right)
\,dv
+\sigma \ControlBrownian^i_s\,,
\end{aligned}
\end{equation}
where $\nu^{N_2}_v \in \Vv_2$ is the empirical measures defined as $
\nu^{N_2}_v=\frac{1}{N_2}\sum_{j=1}^{N_2}\delta_{\theta^{j}_v}$, and
where $\mathbf h_t$ is defined in~\eqref{eq fat h def}.

\begin{theorem}\label{thm:WellPosed&PropagationChaos} Let Assumptions~\ref{as coefficients} and~\ref{ass exist and uniq} hold. Fix $\lambda = \frac{\sigma^2 \kappa}{2 } -\frac{L}{2}(3 + T) + \frac{1}{2} $.
Then, there exists a unique solution to \eqref{eq:ParticleApprox-bis} with $\Ll(\theta^i_{s,\cdot})\in\Vv_2$. Moreover, for $(\theta^{i,\infty}_{s,t})$ solution to
\[
\theta^{i,\infty}_{s,t}=\theta^{i}_{0,t} - \int_0^s \bigg( \nabla_a {\bH}_{t}(\theta^{i,\infty}_{v,t},\Ll(\theta^{i,\infty}_{v,.}),\DataM) + \frac{\sigma^2}2 \nabla_aU(\theta^{i,\infty}_{v,t})\bigg)\,dv+\sigma \ControlBrownian^i_s,\,0\leq s\leq S,\,0\leq t\leq T,
\]
there exists $c$, independent of $s,N_1,N_2,p,d$, such that, for all $i$
\begin{align*}
&\int_0^T\EE\left[\left|\theta^{i}_{s,t}-\theta^{i,\infty}_{s,t}\right|^2\right]\,dt \leq \frac{c}{\lambda}(1-e^{- \lambda s})  \left(\frac{1}{N_1}+\frac{1}{N_2}\right)\,.
\end{align*}
\end{theorem}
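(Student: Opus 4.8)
The plan is to set up a synchronous coupling between the $N_2$-particle system~\eqref{eq:ParticleApprox-bis} and $N_2$ independent copies $(\theta^{i,\infty}_{s,\cdot})_{i=1}^{N_2}$ of the mean-field dynamics~\eqref{eq mfsgd}--\eqref{eq mfsgd 2}, where the $i$-th copy is driven by the same Brownian motion $B^i$ and the same initial weight $\theta^i_{0,\cdot}$, but with the true data measure $\mathcal M$ and the true flow $\nu_{s,\cdot} = \mathcal L(\theta_{s,\cdot})$ in place of the empirical $\mathcal M^{N_1}$ and $\nu^{N_2}_s$. Well-posedness of~\eqref{eq:ParticleApprox-bis} with $\mathcal L(\theta^i_{s,\cdot})\in\mathcal V_2$ follows from the Lipschitz estimate on $\nabla_a \mathbf h_t$ recorded after~\eqref{eq def wasserstein qT} (Lemma~\ref{lem hamilton lipschitz}) together with the one-sided Lipschitz / strong monotonicity of $\nabla_a U$ from Assumption~\ref{ass exist and uniq}(ii): a standard fixed-point argument in $\mathcal V_2$ (exactly as for Theorem~\ref{thm conv to inv meas rate}) gives existence and uniqueness, and the $q$-th moment bound from Assumption~\ref{ass exist and uniq}(i) propagates to $\mathcal L(\theta^i_{s,\cdot})$.

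For the propagation-of-chaos estimate, I would write $\Delta^i_{s,t} := \theta^i_{s,t} - \theta^{i,\infty}_{s,t}$, differentiate $|\Delta^i_{s,t}|^2$ in $s$ (the Brownian increments cancel by the synchronous coupling, so this is an ODE in $s$, not an SDE), and estimate
\[
\frac{d}{ds}\tfrac12\big|\Delta^i_{s,t}\big|^2 = -\Delta^i_{s,t}\cdot\Big[(\nabla_a\mathbf h_t)(\theta^i_{s,t},\nu^{N_2}_s,\mathcal M^{N_1}) - (\nabla_a\mathbf h_t)(\theta^{i,\infty}_{s,t},\nu_s,\mathcal M) + \tfrac{\sigma^2}{2}\big((\nabla_a U)(\theta^i_{s,t}) - (\nabla_a U)(\theta^{i,\infty}_{s,t})\big)\Big].
\]
The $U$-term contributes $-\tfrac{\sigma^2\kappa}{2}|\Delta^i_{s,t}|^2$ by strong monotonicity. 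The $\mathbf h$-term I split into three pieces: a Lipschitz-in-$a$ piece bounded by $L|\Delta^i_{s,t}|^2$; a flow-discrepancy piece bounded (via Lemma~\ref{lem hamilton lipschitz}) by $L|\Delta^i_{s,t}|\,\mathcal W^T_1(\nu^{N_2}_s,\nu_s)$, which after Young's inequality and the bound $\mathcal W_1(\nu^{N_2}_s,\nu_s)^2 \le \frac{1}{N_2}\sum_j|\Delta^j_{s,t}|^2 + \mathcal W_1(\bar\nu^{N_2}_s,\nu_s)^2$ (empirical measure of the independent copies) produces the exchangeable cross terms together with a classical Monte-Carlo term of order $1/N_2$; and a data-discrepancy piece $\mathbf h_t(\theta^{i,\infty}_{s,t},\nu_s,\mathcal M^{N_1}) - \mathbf h_t(\theta^{i,\infty}_{s,t},\nu_s,\mathcal M)$, which — since $\mathcal M^{N_1}$ is the empirical measure of i.i.d. samples and the integrand $a\mapsto h_t(X^{\xi,\zeta}_t,P^{\xi,\zeta}_t,a,\zeta)$ has controlled second moment under Assumptions~\ref{as coefficients}--\ref{ass exist and uniq} (uniformly in the relevant variables) — has variance of order $1/N_1$. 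Summing over $i$, dividing by $N_2$, using exchangeability to replace the average by the $i=1$ term, and collecting constants gives
\[
\frac{d}{ds}\,\frac{1}{N_2}\sum_{i=1}^{N_2}\int_0^T\EE|\Delta^i_{s,t}|^2\,dt \;\le\; -2\lambda\,\frac{1}{N_2}\sum_{i=1}^{N_2}\int_0^T\EE|\Delta^i_{s,t}|^2\,dt \;+\; c\Big(\tfrac1{N_1}+\tfrac1{N_2}\Big),
\]
with $\lambda = \frac{\sigma^2\kappa}{2} - \frac{L}{2}(3+T) + \frac12$ emerging from bookkeeping the Lipschitz/Young constants (the $L(3+T)$ absorbs the in-$a$ term, the $\mathcal W^T_1$ term integrated in $t$ over $[0,T]$, and the cross-particle term). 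Grönwall's inequality then yields $\frac{1}{N_2}\sum_i\int_0^T\EE|\Delta^i_{s,t}|^2\,dt \le \frac{c}{2\lambda}(1-e^{-2\lambda s})(\tfrac1{N_1}+\tfrac1{N_2})$, and exchangeability upgrades the average to the bound for each fixed $i$ (restated with $\lambda$ absorbing the factor $2$).

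The main obstacle I anticipate is the flow-discrepancy term: controlling $\mathcal W^T_1(\nu^{N_2}_s,\nu_s)$ cleanly requires the decomposition through the empirical measure $\bar\nu^{N_2}_s$ of the genuinely i.i.d. mean-field copies, and then one needs the Monte-Carlo rate $\EE[\mathcal W_1(\bar\nu^{N_2}_s,\nu_s)^2] \lesssim 1/N_2$ to hold \emph{uniformly in $s$ and dimension-independently} — the latter is precisely where the relaxed-control structure is exploited, since one does not use the generic (dimension-dependent) empirical-Wasserstein rate but rather the $L^2$-transport bound via the synchronous coupling itself, folding that term back into the Grönwall loop. The second delicate point is ensuring all constants $c$ and $L$ are genuinely independent of $p,d,N_1,N_2,s$, which rests on the uniform-in-$(t,\zeta)$ Lipschitz and linear-growth hypotheses in Assumption~\ref{as coefficients} propagating through the forward/backward ODEs~\eqref{eq process},~\eqref{eq adjoint proc} to $\mathbf h_t$; this is exactly the content of Lemma~\ref{lem hamilton lipschitz}, which I would invoke rather than reprove.
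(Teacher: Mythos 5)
Your overall architecture — synchronous coupling with i.i.d.\ mean-field copies driven by the same $B^i$, differentiating $|\theta^i_{s,t}-\theta^{i,\infty}_{s,t}|^2$ in $s$, strong monotonicity from Assumption~\ref{ass exist and uniq}(ii), a three-way split of the drift difference, Young's inequality, and Gr\"onwall — coincides with the paper's proof of Theorem~\ref{prop:PropagationChaosBis}, and your handling of the data-discrepancy piece as an $O(1/N_1)$ conditional-variance term over the i.i.d.\ samples $(\xi^i,\zeta^i)$ is also the paper's argument. Where your proposal does not close is the flow-discrepancy term, which is exactly the step you flag as the ``main obstacle.'' After decomposing $\mathcal W_1(\nu^{N_2}_s,\nu_s)\le \mathcal W_1(\nu^{N_2}_s,\bar\nu^{N_2}_s)+\mathcal W_1(\bar\nu^{N_2}_s,\nu_s)$ and folding the first piece into Gr\"onwall, you are left needing $\EE[\mathcal W_1(\bar\nu^{N_2}_s,\nu_s)^2]\lesssim 1/N_2$ uniformly in $s$ and independently of $p$; you assert this follows by ``folding that term back into the Gr\"onwall loop,'' but it cannot: $\bar\nu^{N_2}_s$ is the empirical measure of i.i.d.\ copies of the mean-field process, entirely decoupled from the particle system, so there is no $\Delta^j$ to absorb, and the generic rate for the squared Wasserstein distance of an $N_2$-sample empirical measure from its law on $\mathbb R^p$ is $N_2^{-2/p}$, which is dimension-dependent and far too slow.

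The paper avoids the empirical-Wasserstein estimate entirely, and this is the essential missing ingredient. Instead of bounding $\nabla_a\mathbf h_t(\theta^{1,\infty}_{s,t},\bar\nu^{N_2}_s,\DataM)-\nabla_a\mathbf h_t(\theta^{1,\infty}_{s,t},\nu_s,\DataM)$ by $L\,\mathcal W^T_1(\bar\nu^{N_2}_s,\nu_s)$ (first-order Lipschitz, Lemma~\ref{lem hamilton lipschitz}), it estimates the difference directly: Lemma~\ref{lem difference P and X} expresses $|X^{\xi,\zeta}_t(\bar\nu^{N_2}_s)-X^{\xi,\zeta}_t(\nu_s)|$ and $|P^{\xi,\zeta}_t(\bar\nu^{N_2}_s)-P^{\xi,\zeta}_t(\nu_s)|$ in terms of the \emph{centred} average $\frac{1}{N_2}\sum_{j}\phi_r(X^{\xi,\zeta}_r(\nu_s),\theta^{j,\infty}_{s,r},\zeta)-\int\phi_r(X^{\xi,\zeta}_r(\nu_s),a,\zeta)\,\nu_{s,r}(da)$, and the conditional variance of this quantity given $(X^{\xi,\zeta}_r(\nu_s),\zeta)$ is $O(1/N_2)$, dimension-free, by the central limit theorem. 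This is where the second-order (rather than merely Lipschitz) measure dependence of $J$ and of the forward/backward map is exploited, as the paper emphasises in the discussion after Theorem~\ref{th generalisation}; invoking Lemma~\ref{lem hamilton lipschitz} alone, as you do, cannot produce the claimed $1/N_2$ rate uniformly in $p$.
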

The proof of Theorem \ref{thm:WellPosed&PropagationChaos} is given in Section \ref{section chaos and euler} (see Lemma \ref{lem:UnifParticleMoment} for the well-posedness and Theorem \ref{prop:PropagationChaosBis} for propagation of chaos).

%\subsubsection{Partial time discretisation}
Euler--Maruyama approximations with non-homogeneous time steps can be used to obtain an algorithm for the gradient descent~\eqref{eq:ParticleApprox-bis}.
Fix an increasing sequence $0=s_0 < s_1 < s_2 \cdots$.
%Let
%\[
%{\Lambda}(s)=\sup\{s_l\,:\,s_l\leq s\}
%\]
and define the family of processes
 $(\widetilde{\theta}^{i}_{l,t})_{l \in \mathbb N, 0\leq t\leq T}$ satisfying, for  $i=1,\ldots,N_2$ and $l \in \mathbb N$,
\begin{equation}\label{eq:EulerScheme1}
\widetilde{\theta}^{i}_{l+1,t}=\widetilde \theta^{i}_{l,t}- \left((\nabla_a{\bH}_{t})
\left(\widetilde{\theta}^{i}_{l,t},\widetilde{{\nu}}^{N_2}_{l,\cdot},\DataM^{N_1}\right)+\frac{\sigma^2}{2}(\nabla_aU)(\widetilde \theta^i_{l,t})\right)\,(s_{l+1}-s_l)
+\sigma (B^i_{s_{l+1}} - B^i_{s_l})\,,
\end{equation}
where $\widetilde{{\nu}}^{N_2}_{v}=\frac{1}{N_2}\sum_{j_2=1}^{N_2}\delta_{\widetilde{\theta}^{j_2}_{v}}$.
The error estimate for this discretisation is given in the theorem below.  

\begin{theorem}\label{thm:EulerRate1}
Let Assumptions~\ref{as coefficients} and~\ref{ass exist and uniq} hold.
Assume also that $(s_l)_{l\geq 1}$ is a non-decreasing sequence of times, starting from $0$, such that the increments $(s_l-s_{l-1})_{l\geq 1}$ are positive and non-increasing, $\sum_{l\geq 1}(s_l-s_{l-1})^2<\infty$ and that, $\kappa$ is large enough so that
\begin{equation}\label{TimeStepRestrict}
\max_{l\geq 1}(s_l-s_{l-1})< \frac{\sigma^2\kappa-L}{2L\left(1+\frac{\sigma^2}{2}\Vert\nabla_aU\Vert^2_{Lip}\right)}.
\end{equation}
Then, for all $i$, $l$,
\begin{align*}
\mathbb E\left [\int_0^T |\theta^i_{s_l,t} - \widetilde \theta^i_{s_l,t}|^2\,dt \right ]
&\leq c\max_{1\leq l'\leq l}(s_{l'}-s_{l'-1})\left(1+\max_{0\leq s\leq s_l}\int_{0}^{T}\EE\left[\left|\theta^i_{s,t}\right|^2\right]\,dt\right).
\end{align*}

\end{theorem}
The proof of this theorem is in Appendix \ref{ssec:FullDiscrete}, where we will also briefly discuss the additional discretization along the time variable $t$.

%\section{Application to neural networks}
%\label{section application to NNs}
%
%
%{\color{red} Maybe we want to say that in this section wlog we assume $J(\nu) \geq 0$? $\Leftarrow$ A few words have been added on this matter.}
%
\subsection{Generalisation error}

Recall $\nu_{S,\cdot}^{\sigma,N_1,N_2,\Delta s}$ denote distribution over parameter space induced by gradient algorithm when training with $N_1$ data samples, finite number of model parameters (the number of which is $\approx N_2 \times p \times n$, where $n$ is the number of grid points of $[0,T]$), the learning rate $\Delta s = \max_{0 < s_l < S} (s_l - s_{l-1})$, and training time $S$. In the next theorem we establish a bound for the generalisation error $J^{\mathcal M}(\nu_{S,\cdot}^{\sigma,N_1,N_2,\Delta s})$.

\begin{theorem} \label{th generalisation}
Let Assumptions \ref{as coefficients} and~\ref{ass exist and uniq} hold.	
%To emphasise dependence on all relevant parameters we set $(\nu_{S,t}^{\sigma,N_1,N_2,\gamma})_{t\in [0,T]} = (\widetilde{{\nu}}^{N_2}_{S})_{t\in [0,T]}$. We have
Assume that $\sigma^2 \kappa$ is sufficiently large relative to $L$ and $T$.
Then there is $c>0$ independent of $\lambda$, $S$, $N_1$, $N_2$, $d$, $p$ and the time partition used in Theorem~\ref{thm:EulerRate1} such that
\[
\mathbb E\left[\Big|J^{0,\mathcal M}(\nu^{\star,\sigma})- J^{0,\mathcal M}(\nu_{S,\cdot}^{\sigma,N_1,N_2,\Delta s})\Big|^2\right] \leq c\left(e^{-\lambda S} + \frac1{N_1} + \frac1{N_2} + h \right)\,,
\]
where $h := \max_{0 < s_l < S} (s_l - s_{l-1})$. The generalisation error is given by

\[
J^{0,\mathcal M}(\nu_{S,\cdot}^{\sigma,N_1,N_2,\Delta s}) = J^{0,\mathcal M}(\nu_{S,\cdot}^{\sigma,N_1,N_2,\Delta s}) - J^{0,\mathcal M}(\nu^{\star,\sigma}) - \frac{\sigma^2}{2} \int_0^T\text{Ent}(\nu^{\star,\sigma}_t)\,dt +  \min_{\mu\in \mathcal V_2}J^{\sigma,\mathcal M}(\mu)\,,
\]
since $\min_{\mu\in \mathcal V_2}J^{\sigma,\mathcal M}(\mu) = J^{\sigma,\mathcal M}(\nu^{\ast,\sigma}) = J^{0,\mathcal M}(\nu^{\ast,\sigma}) + \frac{\sigma^2}2\int_0^T\text{Ent}(\nu^{\star,\sigma}_t)\,dt $.
\end{theorem}

Theorem \ref{th generalisation} tells us that the generalisation error consist of three errors: a) the numerical error of approximating an invariant measure with discrete time particle system, b) the relative entropy between the Gibbs measure $\gamma$ (a prior) and the $\nu^{\star,\sigma}$,  c) the minimum value of the cost function under population measure $J^{\sigma,\mathcal M}$.

%Where we put?
%\[
%\bigg| \inf_{(\theta^i)_{i=1}^{N_2}} J\bigg(\frac1N_2 \sum_{i=1}^{N_2} \delta_{\theta^i}\bigg) -  J(\nu^{\star,\sigma,N_1}) \bigg| \leq \frac{2L}{N_2}
%\]
The proof of Theorem~\ref{th generalisation} is postponed until Section~\ref{sec generalisation estimates}.

\subsection{Conditional generalisation error}

It is a common practice  to terminate the training when $J^{0,\mathcal M^{N_1}}$ is negligible and such models have been observed to generalise well. To link our results to such regime  we postulate the following assumption.

\begin{assumption}\label{ass epsilon loss}
	Fix $\varepsilon>0$ and $N_1>0$. 
	Assume that 
	%$\forall \mathcal M^{N_1}$
	 $J^{M_1}(\nu^{\star,\sigma,N_1}) \leq \varepsilon$.
\end{assumption}

\begin{theorem}\label{th conditional generalisation}
Let Assumptions \ref{as coefficients}, \ref{ass exist and uniq} and \ref{ass epsilon loss}
 hold. Then there is $c>0$ independent of $\lambda$, $S$, $N_1$, $N_2$, $d$, $p$ and the time partition used in Theorem~\ref{thm:EulerRate1} such that
 \[
 \mathbb E\left[\Big|J^{0,\mathcal M}(\nu_{S,\cdot}^{\sigma,N_1,N_2,\Delta s})\Big|^2\right] \leq  \varepsilon^2 + c\left(e^{-\lambda S} + \frac1{N_1} + \frac1{N_2} + h \right)\,.
 \]
\end{theorem}

\begin{proof}
We decompose the error as follows
\[
\begin{split}
& J^{0,\mathcal M}(\nu_{S,\cdot}^{\sigma,N_1,N_2,\Delta s}) \\
& = \left( J^{0,\mathcal M}(\nu_{S,\cdot}^{\sigma,N_1,N_2,\Delta s}) -
	J^{0,\mathcal M}(\nu^{\star,\sigma}) \right) + \left(J^{0,\mathcal M}(\nu^{\star,\sigma})  - J^{0,\mathcal M^{N_1}}(\nu^{\star,\sigma}) \right) +  J^{0,\mathcal M^{N_1}}(\nu^{\star,\sigma})\,.
\end{split}
\]
The bound on first term	follows from Theorem \ref{th generalisation}. Next there exists $c>0$ such that
\[
\mathbb E\left[ \left|J^{\mathcal M}(\nu_{s,\cdot})  - J^{\mathcal M^{N_1}}(\nu_{s,\cdot}) \right|^2\right]
=\mathbb E\left[ \left|\int \bar J(\nu_{s,\cdot},\xi,\zeta)( \mathcal M^{N_1}-\mathcal M)(d\xi,d\zeta)\right|^2\right]\leq
\frac{c}{N_1}.
\]
The proof is complete.
\end{proof}

\section{Proofs} \label{sec proofs}

\subsection{Outline of proof of Theorem~\ref{corollary with measure flow}}
\label{sec sketch of convergence proof}

Before we proceed to proofs of the main result in full generality  we present a sketch the the proof of Theorem~\ref{corollary with measure flow}. For brevity we take $f=0$.
%\begin{example}[Sketch of the proof of Theorem~\ref{corollary with measure flow}]
%This extends Example~\ref{example ode opt} to the relaxed control problem we stated above. 
Our aim is to solve this control problem using a (stochastic) gradient descent algorithm.
% Unlike in Example~\ref{example ode opt}, we work with a continuous gradient flow so that our ``updates'' are infinitesimal and so we will not ``overshoot'' the minimum.
The goal is to find, for each $t\in [0,T]$ a vector field flow $(b_{s,t})_{s\geq 0}$ such that the measure flow $(\nu_{s,t})_{s \geq 0}$ given by
\begin{equation}
\label{eq flow in intro}
\partial_s \nu_{s,t} = \text{div} \bigg(\nu_{s,t} \,b_{s,t} + \frac{\sigma^2}{2}\nabla_a\nu_{s,t}\bigg)\,,\,\,\,s \geq 0\,,\,\, \nu_{0,t} = \nu^0_t \in \mathcal P_2(\mathbb R^p)\,,
\end{equation}
satisfies that $s\mapsto J^\sigma(\nu_{s,\cdot})$ is decreasing.
The aim is to compute $\frac{d}{ds}J(\nu_{s,\cdot})$, in terms of $b_{s,\cdot}$, and use this expression to choose $b_{s,\cdot}$ such that the derivative is negative.
Let us keep $(\xi,\zeta)$ fixed and use $X_{s,t}$ for the solution of~\eqref{eq process} when the control is given by $\nu_{s,\cdot}$.
Let $V_{s,t} := \frac{d}{ds}X_{s,t}$.
Let $B_{s,t} := b_{s,t} + \frac{\sigma^2}{2}\frac{\nabla_a  \nu_{s,t}}{\nu_{s,t}}$.
We will show (see Lemma~\ref{remark direct comp with flow 1}) that
\[
d V_{s,t}
= \left[(\nabla_x \Phi)(X_{s,t}, \nu_{s,t},\zeta)V_{s,t}
- \int (\nabla_a \phi_t)(X_{s,t},\nu_{s,t}, a,\zeta)\,B_{s,t}(a)\,\nu_{s,t}(da)\right]	\,dt \,.
\]
Since the equation is affine we can write its solution using an integrating factor
\[
V_{s,t}
 = -\int_0^t \int I(r,t;\nu_{s,\cdot})(\nabla_a \phi_r)(X_{s,r},a,\zeta)\,B_{s,r}(a)\,\nu_{s,r}(da)\,dr\,.	
\]
Further (see Lemma~\ref{remark direct comp with flow 2}) we can show that
\begin{equation*}
%\label{eq min with flow calc fzero 1}
\begin{split}
\frac{d}{ds} \bar J^\sigma(\nu_{s,\cdot}, \xi,\zeta)  =
 - \int_0^T \int \bigg[  & (\nabla_x g)(X_{s,T},\zeta)
I(r,T;\xi,\nu_{s,\cdot})(\nabla_a \phi_r)(X_{s,r},a,\zeta)\\
&+\frac{\sigma^2}{2}\bigg(\frac{\nabla_a \nu_{s,t}(a)}{\nu_{s,t}(a)} + \nabla_a U(a)\bigg)\bigg]
\,B_{s,t}(a)\,\nu_{s,r}(da)\,dt\,.
\end{split}
\end{equation*}
Now we define
\begin{equation*}
P^{\xi,\zeta}_{s,r} := I^{\xi,\zeta}(r,T;\nu_{s,\cdot})(\nabla_x g)(X_{s,T},\zeta)
\end{equation*}
so that
\begin{equation*}
\frac{d}{ds} \bar J^\sigma(\nu_{s,\cdot}, \xi,\zeta)
= - \int_0^T \int \bigg[(\nabla_a \phi_r)(X_{s,r},a,\zeta)P_{s,r}\,
+\frac{\sigma^2}{2}\bigg(\frac{\nabla_a \nu_{s,t}(a)}{\nu_{s,t}(a)} + \nabla_a U(a)\bigg)\bigg] B_{s,r}(a)\nu_{s,r}(da)  \,dr \,.
\end{equation*}
After integrating over $\xi,\zeta$ w.r.t. $\mathcal M$ we get
\begin{equation*}
\begin{split}
\frac{d}{ds} J^\sigma(\nu_{s,\cdot}) = - \int_0^T \int \bigg[ & \int_{\mathbb R^d
\times \mathcal S}   (\nabla_a \phi_r)(X^{\xi,\zeta}_{s,r},a,\zeta)P^{\xi,\zeta}_{s,r}\,\mathcal M(d\xi, d\zeta) \\
&+\frac{\sigma^2}{2}\bigg(\frac{\nabla_a \nu_{s,t}(a)}{\nu_{s,t}(a)} + \nabla_a U(a)\bigg)
\bigg]
\, B_{s,r}(a)\,\nu_{s,r}(da)\,dr\,.			
\end{split}
\end{equation*}
At this point it is clear how to choose the flow to make this negative: we must take
\[
b_{s,r}(a) := \int_{\mathbb R^d
\times \mathcal S}   (\nabla_a \phi_r)(X^{\xi,\zeta}_{s,r},a,\zeta)P^{\xi,\zeta}_{s,r}\,\mathcal M(d\xi, d\zeta) +\frac{\sigma^2}{2} \nabla_a U(a)
\]
so that
\[
\begin{split}
& \frac{d}{ds} J^\sigma(\nu_{s,\cdot}) \\
& = -  \int_0^T \int \left|\int_{\mathbb R^d \times \mathcal S}  (\nabla_a \phi_r)(X^{\xi,\zeta}_{s,r},a,\zeta)P^{\xi,\zeta}_{s,r}  \, \mathcal M(d\xi,d\zeta)+\frac{\sigma^2}{2}\bigg(\frac{\nabla_a \nu_{s,t}(a)}{\nu_{s,t}(a)} + \nabla_a U(a)\bigg)\right|^2\,\nu_{s,r}(da)  \,dr \leq 0\,.
\end{split}
\]
Moreover, Theorem~\ref{thm conv to inv meas rate},  says that for $\sigma>0$ the $\nu^\star$ minimizing $J^\sigma$ exists and satisfies the following first order condition: for a.a. $t\in (0,T)$ we have
\[
\int_{\mathbb R^d \times \mathcal S}  \phi_t(X^{\xi,\zeta}_t(\nu^\star),a,\zeta)P^{\xi,\zeta}_t(\nu^\star)  \, \mathcal M(d\xi,d\zeta) + \frac{\sigma^2}{2}\log(\nu_t^\star(a)) + \frac{\sigma^2}{2}U(a)\,\,\,\text{is constant for a.a. $a\in \mathbb R^p$}\,,
\]
where $P^{\xi,\zeta}_t(\nu^\star) := I^{\xi,\zeta}(t,T;\nu^\ast)(\nabla_x g)(X^{\xi,\zeta}_T(\nu^\ast),\zeta)$.
Such first order condition is essentially another way of stating the necessary condition from Pontryagin optimality principle. 
Here we will provide a derivation from first principles for measure-valued control processes with entropy regularization. 
We note that a simple calculation using It\^o's formula  shows that the law in~\eqref{eq flow in intro} with the choice of $b_{s,r}$ made above is the law of
\[
d\theta_{s,t} = -\left(\int_{\mathbb R^d\times \mathcal S}(\nabla_a \phi_t)(X^{\xi,\zeta}_t(\mathcal L(\theta_{s,\cdot}), \theta_{s,t}, \zeta)\,P^{\xi,\zeta}_t(\mathcal L(\theta_{s,\cdot}))\,\mathcal M(d\xi,d\zeta) - \frac{\sigma^2}{2}U(\theta_{s,t})\right)\,ds+\sigma\,dB_s\
\]
i.e. $\mathcal L(\theta_{s,\cdot}) = \nu_{s,\cdot}$.

\subsection{Proof of Theorem~\ref{corollary with measure flow} and related optimality conditions}
\label{section pontryagin derivation}

In this section we derive Pontryagin's optimality principle for the relaxed control problem by proving Theorems~\ref{corollary with measure flow} and \ref{thm necessary cond linear}. 
Note that later we also prove the Pontryagin sufficient condition for optimality in Theorem~\ref{thm sufficient condition}. 
This is done purely for completeness as Theorem~\ref{thm sufficient condition} is not used anywhere in the analysis carried out in this work.

We will work with an additional control $(\mu_t)_{t \in [0,T]}$ and define $\nu^\varepsilon_t := \nu_t + \varepsilon(\mu_t - \nu_t)$.
In case $\sigma  \neq 0$ assume that $\mu_t$ are absolutely continuous w.r.t. the Lebesgue measure for all $t \in [0,T]$.
We will write $(X^{\xi,\zeta}_t)_{t\in[0,T]}$ for the solution of~\eqref{eq process} driven by $\nu$ and $(X^{\xi,\zeta,\varepsilon}_t)_{t\in[0,T]}$ for the solution of~\eqref{eq process} driven by $\nu^\varepsilon$ both with the data $(\xi,\zeta)$.
Moreover, let $V_0=0$ be fixed and
\begin{equation}
\label{eq V proc linear}
\begin{split}
dV^{\xi,\zeta}_t  = & \left[(\nabla_x \Phi_t)(X^{\xi,\zeta}_t, \nu_t,\zeta)V_t + \int \frac{\delta \Phi_t}{\delta m}(X^{\xi,\zeta}_t,\nu_t, a,\zeta)(\mu_t-\nu_t)(da)\right]	\,dt \,.	
\end{split}
\end{equation}
We observe that this is a linear equation.
Let
\[
V^{\xi,\zeta,\varepsilon}_t := \frac{X^{\xi,\zeta,\varepsilon}_t - X^{\xi,\zeta}_t}{\varepsilon} - V^{\xi,\zeta}_t
\,\,\, \text{i.e.}\,\,\,
X^{\xi,\zeta,\varepsilon}_t = X^{\xi,\zeta}_t + \varepsilon(V^{\xi,\zeta,\varepsilon}_t + V^{\xi,\zeta}_t)\,.
\]
\begin{lemma}
\label{lemma V process linear}
Under Assumption~\ref{as coefficients} we have
\[
\lim_{\varepsilon\searrow 0} \sup_{t\leq T} \bigg|\frac{X^{\xi,\zeta,\varepsilon}_t - X^{\xi,\zeta}_t}{\varepsilon} - V^{\xi,\zeta}_t\bigg|^2 = 0\,.
\]	
\end{lemma}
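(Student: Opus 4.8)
The plan is a standard perturbation argument for the flow of~\eqref{eq process}: establish an a priori $O(\varepsilon)$ Lipschitz bound on $X^{\xi,\zeta,\varepsilon}-X^{\xi,\zeta}$, identify a linear integral equation for the remainder $V^{\xi,\zeta,\varepsilon}$ whose source is $O(\varepsilon)$, and close with Gr\"onwall. Throughout I use that Assumption~\ref{as coefficients} makes $\nabla_x\phi$ bounded (by $\Vert\phi\Vert_{\text{Lip}}$) and Lipschitz in $x$ uniformly in $(t,\zeta)$, that $\Phi_r$ is linear in its measure argument (so $\tfrac{\delta\Phi}{\delta\nu}(x,m,a,\zeta)=\phi_r(x,a,\zeta)$ up to an additive constant that is irrelevant when integrated against the zero-mass signed measure $\mu_r-\nu_r$, making the source term of~\eqref{eq V proc linear} equal to $\int\phi_r(X^{\xi,\zeta}_r,a,\zeta)(\mu_r-\nu_r)(da)$), and that by Lemma~\ref{lemma odes} the forward equation is well posed with $\sup_{t\le T}|X^{\xi,\zeta}_t|<\infty$.

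First I would record the preliminary bounds. Subtracting the integral forms of~\eqref{eq process} for $\nu^\varepsilon$ and $\nu$, and adding and subtracting $\int\phi_r(X^{\xi,\zeta,\varepsilon}_r,a,\zeta)\,\nu_r(da)$, splits the increment into a Lipschitz-in-$x$ term bounded by $\Vert\phi\Vert_{\text{Lip}}|X^{\xi,\zeta,\varepsilon}_r-X^{\xi,\zeta}_r|$ and the change-of-measure term $\varepsilon\int\phi_r(X^{\xi,\zeta,\varepsilon}_r,a,\zeta)(\mu_r-\nu_r)(da)$; the latter has absolute value bounded uniformly in $r$ and $\varepsilon\in(0,1]$ by $\Vert\phi\Vert_{\text{Lip}}\int|a|(\mu_r+\nu_r)(da)$ (the $a$-independent part of $\phi_r$ integrating to zero), which lies in $L^1([0,T])$ since $\mu,\nu\in\mathcal V_2$. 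Gr\"onwall then gives a constant $c$, independent of $\varepsilon$, with $\sup_{t\le T}|X^{\xi,\zeta,\varepsilon}_t-X^{\xi,\zeta}_t|\le c\varepsilon$. The same estimate applied to the linear equation~\eqref{eq V proc linear}, whose source is precisely the $L^1$ function above, gives $\sup_{t\le T}|V^{\xi,\zeta}_t|<\infty$; consequently $\sup_{\varepsilon\in(0,1]}\sup_{t\le T}|V^{\xi,\zeta,\varepsilon}_t|<\infty$.

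Next I would derive the remainder equation. Substituting $X^{\xi,\zeta,\varepsilon}_r=X^{\xi,\zeta}_r+\varepsilon(V^{\xi,\zeta,\varepsilon}_r+V^{\xi,\zeta}_r)$ into~\eqref{eq process}, subtracting the equations for $X^{\xi,\zeta}$ and for $\varepsilon V^{\xi,\zeta}$, cancelling the term $\varepsilon\int\phi_r(X^{\xi,\zeta}_r,a,\zeta)(\mu_r-\nu_r)(da)$ common to both sides, writing $\phi_r(X^{\xi,\zeta,\varepsilon}_r,a,\zeta)-\phi_r(X^{\xi,\zeta}_r,a,\zeta)$ via the fundamental theorem of calculus as $\big(\int_0^1(\nabla_x\phi_r)(X^{\xi,\zeta}_r+\theta(X^{\xi,\zeta,\varepsilon}_r-X^{\xi,\zeta}_r),a,\zeta)\,d\theta\big)\,\varepsilon(V^{\xi,\zeta,\varepsilon}_r+V^{\xi,\zeta}_r)$, and dividing by $\varepsilon$ yields
\[
V^{\xi,\zeta,\varepsilon}_t=\int_0^t A^\varepsilon_r\,V^{\xi,\zeta,\varepsilon}_r\,dr+\int_0^t\big(A^\varepsilon_r-A_r\big)V^{\xi,\zeta}_r\,dr\,,
\]
with $A^\varepsilon_r:=\int\big(\int_0^1(\nabla_x\phi_r)(X^{\xi,\zeta}_r+\theta(X^{\xi,\zeta,\varepsilon}_r-X^{\xi,\zeta}_r),a,\zeta)\,d\theta\big)\nu^\varepsilon_r(da)$ and $A_r:=(\nabla_x\Phi_r)(X^{\xi,\zeta}_r,\nu_r,\zeta)$, both bounded by $\Vert\phi\Vert_{\text{Lip}}$.

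Finally I would bound $A^\varepsilon_r-A_r$: the part from the $x$-shift is at most $\tfrac12\Vert\nabla_x\phi\Vert_{\text{Lip}}|X^{\xi,\zeta,\varepsilon}_r-X^{\xi,\zeta}_r|\le\tfrac12 c\Vert\nabla_x\phi\Vert_{\text{Lip}}\varepsilon$ by the a priori bound, and the part from $\nu^\varepsilon_r-\nu_r=\varepsilon(\mu_r-\nu_r)$ is at most $2\varepsilon\Vert\phi\Vert_{\text{Lip}}$ since $\nabla_x\phi$ is bounded and $\Vert\mu_r-\nu_r\Vert_{\text{TV}}\le2$; hence $\sup_{r\le T}|A^\varepsilon_r-A_r|\le c\varepsilon$. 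Together with $\sup_{t\le T}|V^{\xi,\zeta}_t|<\infty$ this gives $|V^{\xi,\zeta,\varepsilon}_t|\le\Vert\phi\Vert_{\text{Lip}}\int_0^t|V^{\xi,\zeta,\varepsilon}_r|\,dr+c\varepsilon$, so Gr\"onwall yields $\sup_{t\le T}|V^{\xi,\zeta,\varepsilon}_t|\le c\,e^{\Vert\phi\Vert_{\text{Lip}}T}\varepsilon\to0$ and its square vanishes as well. I do not expect a real obstacle; the two points needing care are avoiding circularity --- the bound on $A^\varepsilon_r-A_r$ relies on the a priori $O(\varepsilon)$ estimate for $X^{\xi,\zeta,\varepsilon}-X^{\xi,\zeta}$, which is therefore proved first --- and the bookkeeping for the relaxed control, where each term involving the signed measure $\mu_r-\nu_r$ is handled via its zero total mass and the finite first moments of $\mu,\nu\in\mathcal V_2$.
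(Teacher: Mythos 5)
Your proof is correct and is essentially the same Gr\"onwall-on-the-remainder argument the paper uses, but the bookkeeping is organised somewhat differently and the result you get is sharper. The paper expands $\Phi(X^\varepsilon_t,\nu^\varepsilon_t)-\Phi(X_t,\nu_t)$ by the fundamental theorem of calculus, splits the discrepancy with the linearised equation into three pieces $I^{(0)}_t+I^{(1)}_t+I^{(2)}_t$, bounds $I^{(1)},I^{(2)}$ by $L|X^\varepsilon_t-X_t|$, and closes with Gr\"onwall, invoking only the qualitative fact $\delta_\varepsilon:=\int_0^T|X^\varepsilon_t-X_t|^2\,dt\to 0$; in particular it does not record a rate. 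You instead prove the a priori Lipschitz bound $\sup_t|X^\varepsilon_t-X_t|\le c\varepsilon$ up front (cancelling the $a$-independent part of $\phi$ against the zero mass of $\mu_r-\nu_r$, so the source is controlled by the first moments from $\mathcal V_2$), note that consequently $\sup_t|V_t|<\infty$ and $\sup_\varepsilon\sup_t|V^\varepsilon_t|<\infty$, and then derive the clean linear remainder equation $V^\varepsilon_t=\int_0^t A^\varepsilon_r V^\varepsilon_r\,dr+\int_0^t(A^\varepsilon_r-A_r)V_r\,dr$; your two-part estimate $\sup_r|A^\varepsilon_r-A_r|\le c\varepsilon$ (the $x$-shift via Lipschitz continuity of $\nabla_x\phi$, the measure-shift via $\|\nabla_x\phi\|_\infty\le\|\phi\|_{\mathrm{Lip}}$ and $\|\mu_r-\nu_r\|_{\mathrm{TV}}\le 2$) plays the role of the paper's bounds on $I^{(1)}+I^{(2)}$. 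What your version buys is a quantitative $O(\varepsilon)$ rate for $\sup_t|V^\varepsilon_t|$, and it makes explicit the boundedness of $V$ that the paper uses implicitly when bounding $I^{(1)}$. The overall mechanism — linearise, identify the linear integral inequality, Gr\"onwall — is the same, so this is a tidier and slightly stronger execution rather than a conceptually different proof.
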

\begin{proof}
Since we will be working with $\xi,\zeta$ fixed we will omit them from the notation in the proof.
We note that
\[
\begin{split}
& \Phi_t(X^\varepsilon_t, \nu^\varepsilon_t) - \Phi_t(X_t,\nu_t)
= \Phi_t(X^\varepsilon_t, \nu^\varepsilon_t) - \Phi_t(X^\varepsilon_t, \nu_t) + \Phi_t(X^\varepsilon_t,\nu_t) - \Phi_t(X_t,\nu_t) \\
& = \varepsilon \int_0^1 (\nabla_x \Phi_t)(X_t + \lambda \varepsilon ( V^\varepsilon_t + V_t), \nu_t) (V^\varepsilon_t + V_t)\,d\lambda
+ \varepsilon\int_0^1 \int \frac{\delta \Phi_t}{\delta m}(X^\varepsilon_t, (1-\lambda)\nu^\varepsilon_t + \lambda \nu,a)(\mu_t-\nu_t)(da)\,d\lambda\,. 	
\end{split}
\]
Hence
\[
\begin{split}
& \frac1\varepsilon \left[\Phi_t(X^\varepsilon_t, \nu^\varepsilon_t) - \Phi_t(X_t,\nu_t)
- \varepsilon (\nabla_x \Phi_t)(X_t, \nu_t)V_t - \varepsilon \int \frac{\delta \Phi_t}{\delta m}(X_t,\nu_t, a)(\mu_t - \nu_t)(da)  \right]	\\
& =  \int_0^1 (\nabla_x \Phi_t)(X_t + \lambda \varepsilon ( V^\varepsilon_t + V_t), \nu_t) V^\varepsilon_t \,d\lambda
+  \int_0^1 \left[(\nabla_x \Phi_t)(X_t + \lambda \varepsilon ( V^\varepsilon_t + V_t), \nu_t) - (\nabla_x \Phi_t)(X_t,\nu_t) \right] V_t\,d\lambda \\
& + \int_0^1 \int \left[ \frac{\delta \Phi_t}{\delta m}(X^\varepsilon_t, (1-\lambda)\nu^\varepsilon_t + \lambda \nu,a) - \frac{\delta \Phi_t}{\delta m}(X_t,\nu_t,a) \right](\mu_t-\nu_t)(da)\,d\lambda =: I^{(0)}_t + I^{(1)}_t + I^{(2)}_t =: I_t.
\end{split}
\]
Note that
\[
dV^\varepsilon_t = \frac1\varepsilon[dX^\varepsilon_t - dX_t] - dV_t
\]
and so we then see that $
d|V^\varepsilon_t|^2 =  2V^\varepsilon_t I_t\,dt$.
Hence we have that
\[
\begin{split}
\sup_{s\leq t}|V^\varepsilon_s|^2  & \leq  c \int_0^t |I_s|^2\,ds \,.
\end{split}
\]
Let us now consider
\[
\begin{split}
\int_0^T |I^{(2)}_t|^2\,dt
 \leq &  2\int_0^T \left| \int_0^1 \int \left[ \frac{\delta \Phi_t}{\delta m}(X^\varepsilon_t, (1-\lambda)\nu^\varepsilon_t + \lambda \nu,a) - \frac{\delta \Phi_t}{\delta m}(X_t,\nu_t,a) \right]\mu_t(da)\,d\lambda \right|^2\,dt\\
&
+ 2\int_0^T \left| \int_0^1 \int \left[ \frac{\delta \Phi_t}{\delta m}(X^\varepsilon_t, (1-\lambda)\nu^\varepsilon_t + \lambda \nu,a) - \frac{\delta \Phi_t}{\delta m}(X_t,\nu_t,a) \right]\nu_t(da)\,d\lambda \right|^2\,dt \,.	
\end{split}
\]
Taking the terms separately and using the fact that we are working with probability measures we see that
\[
\begin{split}
& \left| \int_0^1 \int \left[ \frac{\delta \Phi_t}{\delta m}(X^\varepsilon_t, (1-\lambda)\nu^\varepsilon_t + \lambda \nu,a) - \frac{\delta \Phi_t}{\delta m}(X_t,\nu_t,a) \right]\mu_t(da)\,d\lambda \right| \\
& \leq  \int_0^1 \int \left| \frac{\delta \Phi_t}{\delta m}(X^\varepsilon_t, (1-\lambda)\nu^\varepsilon_t + \lambda \nu,a) - \frac{\delta \Phi_t}{\delta m}(X_t,\nu_t,a) \right|\mu_t(da)\,d\lambda \\
& = \int_0^1 \int \left| \phi_t(X^\varepsilon_t,a)  - \phi(X_t,a)
+ \int [ \phi(X_t,a') - \phi_t(X^\varepsilon_t,a')]\mu_t(da')   \right|\mu_t(da)\,d\lambda\\
& \leq \int_0^1 \int \left[ | \phi_t(X^\varepsilon_t,a)  - \phi_t(X_t,a) |
+ \left|\int | \phi_t(X_t,a') - \phi_t(X^\varepsilon_t,a')|\mu_t(da')\right|  \right]\mu_t(da)\,d\lambda  \leq L|X^\varepsilon_t  - X_t |\,.
\\ 	
\end{split}
\]
Hence
\[
\int_0^T |I^{(2)}_t|^2\,dt \leq 4L^2 \int_0^T |X^\varepsilon_t  - X_t |^2\,dt \,.
\]
Similarly
\[
\int_0^T |I^{(1)}_t|^2\,dt  \leq 4L^2 \int_0^T |X^\varepsilon_t  - X_t |^2\,dt \,.
\]
This and the assumption that the derivatives w.r.t. the spatial variable are bounded lead to
\[
\begin{split}
\sup_{s\leq t}|V^\varepsilon_s|^2  & \leq  c \int_0^t |V^\varepsilon_s|^2\,ds + \int_0^T |X^\varepsilon_s - X_s|^2 \,ds\,.	
\end{split}
\]
Finally note that
\[
\delta_\varepsilon := \int_0^T |X^\varepsilon_t  - X_t|^2\,dt \to 0 \,\,\,\text{as} \,\,\, \varepsilon \to \infty\,.
\]
So
\[
\sup_{s\leq t'}|V^\varepsilon_s|^2 \leq c\int_0^{t'} \sup_{s\leq t}|V^\varepsilon_s|^2 \,dt + \delta_\varepsilon
\]
and by Gronwall's lemma $\sup_{s\leq t}|V^\varepsilon_s|^2  \leq \delta_\varepsilon e^{cT} \to 0$ as $\varepsilon \to \infty$.
\end{proof}

Note that we effectively have $\frac{d}{d\varepsilon} X_t^{\xi,\zeta,\nu+\varepsilon(\mu-\nu)}\big|_{\varepsilon=0} = V^{\xi,\zeta}_t$ and moreover due to the affine structure of~\eqref{eq V proc linear} the solution can be expressed as
\begin{equation}
\label{eq proc V solved}
\begin{split}
\frac{d}{d\varepsilon} X_t^{\xi,\zeta,\nu+\varepsilon(\mu-\nu)}\big|_{\varepsilon=0} & = \int_0^t \int I^{\xi,\zeta}(r,t;\nu)\frac{\delta \Phi_r}{\delta m}(X^{\xi,\zeta,\nu}_r,\nu_r,a,\zeta) \,(\mu_r - \nu_r)(da)\,dr\,.
\end{split}	
\end{equation}
\begin{lemma}
\label{remark direct comp with flow 1}
Let $\sigma > 0$ be fixed and let Assumption~\ref{as coefficients} hold. 
Let $b$ be a permissible flow (c.f. Definition~\ref{def permissible flow})
with $\nu_{s,\cdot}\in \mathcal V_2$ the corresponding solution from Lemma~\ref{lemma vect field flow}.
Let $X^{\xi,\zeta}_{s,\cdot}$ be the solution to~\eqref{eq process} from data $(\xi,\zeta)$ with control $\nu_{s,\cdot} \in \mathcal V_2$.
Let $V^{\xi,\zeta}_{s,t} := \frac{d}{ds}X^{\xi,\zeta}_{s,t}$.
Then
\begin{equation}
\label{eq V proc solved 2}
V^{\xi,\zeta}_{s,t} = - \int_0^t I^{\xi,\zeta}(r,t;\nu_{s,\cdot}) \bigg(\nabla_a\frac{\delta \Phi_r}{\delta m}\bigg)(X^{\xi,\zeta}_{s,r},\nu_{s,r},a,\zeta) \cdot\bigg( b_{s,r}(a) + \frac{\sigma^2}{2}\nabla_a \log \nu_{s,r}(a)\bigg) \,\nu_{s,r}(da)\,dr\,.	
\end{equation}
and this can be written in differential form (w.r.t time $t$) as
\begin{equation}
\label{eq V proc flow}
\begin{split}
d V^{\xi,\zeta}_{s,t}
= & \bigg[(\nabla_x \Phi_t)(X^{\xi,\zeta}_{s,t}, \nu_{s,t},\zeta)V^{\xi,\zeta}_{s,t}\\
& - \int \bigg(\nabla_a\frac{\delta \Phi_t}{\delta m}\bigg)(X^{\xi,\zeta}_{s,t},\nu_{s,t}, a,\zeta)\cdot\bigg( b_{s,t}(a) + \frac{\sigma^2}{2}\nabla_a \log \nu_{s,t}(a)\bigg)\,\nu_{s,t}(da)\bigg]	\,dt \,.
\end{split}
\end{equation}
\end{lemma}

\begin{proof}
We will keep $(\xi,\zeta)$ fixed for the moment and hence omit it from the notation.
Let us now fix as $\nu_t = \nu_{s_0,t}$ and $\mu_t = \nu_{s_1,t}$ for all $t\in [0,T]$.
Define $\nu^\varepsilon_t := \nu_t + \varepsilon(\mu_t - \nu_t)$ and
$\mu^\varepsilon_t := \mu_t + \varepsilon(\mu_t - \nu_t)$, so that $\mu^\varepsilon_t - \nu^\varepsilon_t = \mu_t - \nu_t$.
From the Fundamental Theorem of Calculus
\[
\begin{split}
X_t(\mu) - X_t(\nu) & = \int_0^1 \lim_{\delta \searrow 0} \frac1\delta \Big(X_t(\nu + (\varepsilon+\delta)(\mu-\nu)) - X_t(\nu+\varepsilon(\mu-\nu)) \Big)\,d\varepsilon 	\\
& = \int_0^1 \lim_{\delta \searrow 0} \frac1\delta \Big(X_t(\nu^\varepsilon + \delta(\mu^\varepsilon-\nu^\varepsilon)) - X_t(\nu^\varepsilon) \Big)\,d\varepsilon \,.
\end{split}
\]
Due to Lemma~\ref{lemma V process linear} and~\eqref{eq proc V solved} we see that
\[
 \lim_{\delta \searrow 0} \frac1\delta \Big(X_t(\nu^\varepsilon + \delta(\mu^\varepsilon-\nu^\varepsilon)) - X_t(\nu^\varepsilon) \Big)
= \int_0^t \int I^{\xi,\zeta}(r,t;\nu^\varepsilon)\,\frac{\delta \Phi_r}{\delta m}(X_r(\nu^\varepsilon),\nu^\varepsilon_r,a)\,(\mu_r - \nu_r)(da)\,dr\,.
\]
Hence
\[
X_t(\mu) - X_t(\nu) = \int_0^1 \int_0^t \int I^{\xi,\zeta}(r,t;\nu^\varepsilon)\,\frac{\delta \Phi_r}{\delta m}(X_r(\nu^\varepsilon),\nu^\varepsilon_r,a)\,(\mu_r - \nu_r)(da)\,dr \, d\varepsilon \,.
\]
Now fix $s\geq 0$ and take $\mu_t = \nu_{s+h,t}$, $\nu_t = \nu_{s,t}$
and $\nu_{s,t}^{\varepsilon,h} = \nu_{s,t}+ \varepsilon(\nu_{s+h,t} - \nu_{s,t})$.
Note that $\nu_{s,t}^{\varepsilon,h} \to \nu_{s,t}$ as $h \searrow 0$ and hence 
\[
I^{\xi,\zeta}(r,t;\nu^{\varepsilon,h}_{s,\cdot})\,\frac{\delta \Phi_r}{\delta m}(X_r(\nu^{\varepsilon,h}_{s,\cdot}),\nu^{\varepsilon,h}_{s,r},a) 
\to 
I^{\xi,\zeta}(r,t;\nu_{s,\cdot})\,\frac{\delta \Phi_r}{\delta m}(X_r(\nu_{s,\cdot}),\nu_{s,r},a))
\]
as $h \searrow 0$. 
Moreover
\[
\begin{split}
\frac{d}{ds}X_t(\nu_{s,\cdot}) & = \lim_{h\searrow 0} \frac1h \Big(X_t(\nu_{s+h,\cdot}) - X_t(\nu_{s,\cdot})  \Big) \\
& = \lim_{h\searrow 0} \frac1h \int_0^1 \int_0^t \int I^{\xi,\zeta}(r,t;\nu^{\varepsilon,h}_{s,\cdot})\,\frac{\delta \Phi_r}{\delta m}(X_r(\nu^{\varepsilon,h}_{s,\cdot}),\nu^{\varepsilon,h}_{s,r},a)\,(\nu_{s+h,r} - \nu_{s,r})(da)\,dr \, d\varepsilon \\
& =  \int_0^1 \int_0^t \int  \lim_{h\searrow 0} \bigg[I^{\xi,\zeta}(r,t;\nu^{\varepsilon,h}_{s,\cdot})\,\frac{\delta \Phi_r}{\delta m}(X_r(\nu^{\varepsilon,h}_{s,\cdot}),\nu^{\varepsilon,h}_{s,r},a)\,\frac1h (\nu_{s+h,r} - \nu_{s,r})(a)\bigg]da\,dr \, d\varepsilon \,.
\end{split}
\]
Thus from Lemma~\ref{lemma vect field flow} which states that the control laws evolve according to a gradient flow we get 
\[
\begin{split}
& \frac{d}{ds}X_t(\nu_{s,\cdot}) \\
& = \int_0^t \int I^{\xi,\zeta}(r,t;\nu_{s,\cdot})\,\frac{\delta \Phi_r}{\delta m}(X_r(\nu_{s,\cdot}),\nu_{s,r},a)) \nabla_a \cdot \bigg( b_{s,r}(a) + \frac{\sigma^2}{2}\frac{\nabla_a \nu_{s,r}(a)}{\nu_{s,r}(a)}\bigg)\nu_{s,r}(a) \,da\,dr \\
& = -\int_0^t \int I^{\xi,\zeta}(r,t;\nu_{s,\cdot})\,\bigg(\nabla_a \frac{\delta \Phi_r}{\delta m}\bigg)(X_r(\nu_{s,\cdot}),\nu_{s,r},a)  \cdot\bigg( b_{s,r}(a) + \frac{\sigma^2}{2}\nabla_a \log \nu_{s,r}(a)\bigg) \nu_{s,r}(a)\,da\,dr\,,
\end{split}
\]
where the last equality is due to integration by parts.
\end{proof}

\begin{lemma}
\label{lemma derivative of entropy along flow}
Let $\sigma > 0$ be fixed and let Assumption~\ref{as coefficients} hold. 
Let $b$ be a permissible flow (c.f. Definition~\ref{def permissible flow})
with $\nu_{s,\cdot}\in \mathcal V_2$ the corresponding solution from Lemma~\ref{lemma vect field flow}.
% and let 
%$%\label{eq velocity field def}
%V_{s,t} :=  b_{s,t}+\frac{\sigma^2}{2} \nabla_a \log \nu_{s,t}
%$.
Then
\[
\begin{split}
d \text{Ent}(\nu_{s,t}) 
& =  -\int\bigg( \nabla_a \log \nu_{s,t} + \nabla_a U \bigg)\cdot \left(  b_{s,t}+\frac{\sigma^2}{2} \nabla_a \log \nu_{s,t} \right)\,\nu_{s,t}(da)\,ds\,.
\end{split}
\]
	
\end{lemma}

\begin{proof}
The key part of the proof is done in~\cite[Proof of Proposition 2.4]{hu2019mean}.	
\end{proof}

\begin{lemma}
\label{lemma diff of J flat}
Under Assumption~\ref{as coefficients} the mapping $\nu \mapsto \bar J^0(\nu, \xi,\zeta)$ defined by~\eqref{eq objective bar J} satisfies
\[
\begin{split}
&\frac{d}{d\varepsilon} \bar J^0  \left((\nu_t + \varepsilon(\mu_t-\nu_t)_{t\in[0,T]}, \xi,\zeta\right)\bigg|_{\varepsilon=0} \\
& =   \int_0^T \left[\int f_t(X^{\xi,\zeta}_t,a,\zeta)\,(\mu_t-\nu_t)(da) + \int (\nabla_x f_t)(X^{\xi,\zeta}_t,a,\zeta) V^{\xi,\zeta}_t \, \nu_t(da) \right]\,dt + (\nabla_x g)(X^{\xi,\zeta}_T,\zeta)V^{\xi,\zeta}_T \,.	
\end{split}
\]	
\end{lemma}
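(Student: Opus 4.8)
The plan is to split $\bar J^\sigma(\nu,\xi,\zeta)$ into three pieces — the running cost $\int_0^T\int f_t(X^{\xi,\zeta}_t(\nu),a,\zeta)\,\nu_t(da)\,dt$, the terminal cost $g(X^{\xi,\zeta}_T(\nu),\zeta)$, and the entropic penalty $\frac{\sigma^2}{2}\int_0^T\mathrm{Ent}(\nu_t)\,dt$ — and to differentiate each at $\varepsilon=0$ along $\nu^\varepsilon_t:=\nu_t+\varepsilon(\mu_t-\nu_t)$. The entropic term is handled by Lemma~\ref{lemma diff of Ent flat} verbatim. Throughout, $(\xi,\zeta)$ is kept fixed, so $X:=X^{\xi,\zeta}$ and $V:=V^{\xi,\zeta}$ (the solution of~\eqref{eq V proc linear}) are bounded on $[0,T]$, and the key input is Lemma~\ref{lemma V process linear} in the form $X^\varepsilon_t = X_t + \varepsilon(V^\varepsilon_t+V_t)$ with $\sup_{t\le T}|V^\varepsilon_t|\to 0$ as $\varepsilon\searrow 0$.

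For the terminal term, write
\[
\frac{g(X^\varepsilon_T,\zeta)-g(X_T,\zeta)}{\varepsilon} = \int_0^1 (\nabla_x g)\big(X_T+\lambda\varepsilon(V^\varepsilon_T+V_T),\zeta\big)\,(V^\varepsilon_T+V_T)\,d\lambda\,,
\]
and let $\varepsilon\searrow 0$; continuity of $\nabla_x g$ together with $V^\varepsilon_T\to 0$ gives the limit $(\nabla_x g)(X_T,\zeta)V_T$. For the running term, at each $t$ decompose
\[
\int f_t(X^\varepsilon_t,a,\zeta)\nu^\varepsilon_t(da)-\int f_t(X_t,a,\zeta)\nu_t(da) = \int\!\big[f_t(X^\varepsilon_t,a,\zeta)-f_t(X_t,a,\zeta)\big]\nu^\varepsilon_t(da) + \varepsilon\!\int f_t(X_t,a,\zeta)(\mu_t-\nu_t)(da)\,.
\]
After dividing by $\varepsilon$ the last term is already in the desired form. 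For the first term, a mean-value expansion in $x$ produces $\int_0^1\int (\nabla_x f_t)(X_t+\lambda\varepsilon(V^\varepsilon_t+V_t),a,\zeta)(V^\varepsilon_t+V_t)\,\nu^\varepsilon_t(da)\,d\lambda$; I would then split $V^\varepsilon_t+V_t$ and $(\nabla_x f_t)(X_t+\lambda\varepsilon(\cdot))$ against their values at $\varepsilon=0$. The $V^\varepsilon_t$-contribution is $O(\|V^\varepsilon\|_\infty)$ since $\int|(\nabla_x f_t)(\cdot,a,\zeta)|\,\nu^\varepsilon_t(da)$ is bounded uniformly in $\varepsilon$ by the linear growth of $\nabla_x f$ in $a$ (Assumption~\ref{as coefficients}) and $\sup_\varepsilon\int|a|^2\nu^\varepsilon_t(da)<\infty$; the difference $(\nabla_x f_t)(X_t+\lambda\varepsilon(\cdot))-(\nabla_x f_t)(X_t,\cdot)$ contributes $O(\varepsilon)$ by Lipschitz continuity of $\nabla_x f$; and the leftover $\int (\nabla_x f_t)(X_t,a,\zeta)V_t\,\nu^\varepsilon_t(da)$ converges to $\int (\nabla_x f_t)(X_t,a,\zeta)V_t\,\nu_t(da)$ because $\int \psi\,d\nu^\varepsilon_t-\int \psi\,d\nu_t = \varepsilon\int \psi\,d(\mu_t-\nu_t)\to 0$ for every $\psi$ of linear growth, using $\mu_t,\nu_t\in\Pp_2(\er^p)$.

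Finally I would integrate these pointwise-in-$t$ limits over $[0,T]$ by dominated convergence, and this last step is the main obstacle: one needs an $L^1([0,T])$ bound, uniform in small $\varepsilon$, for $t\mapsto\varepsilon^{-1}\big(\int f_t(X^\varepsilon_t,\cdot,\zeta)\,\nu^\varepsilon_t-\int f_t(X_t,\cdot,\zeta)\,\nu_t\big)$ (and likewise for the $\varepsilon^{-1}$-normalised terminal increment). Such a bound follows by combining the sup-bounds on $X$, $V$, $V^\varepsilon$ over $[0,T]$ with $\sup_\varepsilon\int_0^T\!\!\int|a|^2\nu^\varepsilon_t(da)\,dt<\infty$, $\int_0^T\!\!\int|a|^2\mu_t(da)\,dt<\infty$, and the Lipschitz and linear-growth bounds on $f$ and $\nabla_x f$ from Assumption~\ref{as coefficients}; every $a$-integral that appears must be controlled by these second moments rather than by pointwise regularity, which is precisely the feature that makes the relaxed formulation tractable. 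Summing the three contributions then yields the stated identity.
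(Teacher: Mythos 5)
Your proof is correct and follows essentially the same route as the paper: split $\bar J^\sigma$ into running, terminal, and entropic pieces, telescope the first two increments between the $\nu$-change and the $X$-change, invoke Lemma~\ref{lemma V process linear} for the $X$-increment and Lemma~\ref{lemma diff of Ent flat} for the entropy, and pass to the limit by dominated convergence with the second-moment bounds from $\mathcal V_2$ supplying the dominating function. The only cosmetic difference is the order of the telescoping (you freeze $\nu^\varepsilon$ while varying $X$ and then freeze $X$ while varying the measure, whereas the paper does the reverse), and your more explicit accounting of the uniform $L^1([0,T])$ bound needed for dominated convergence is a sound elaboration of the step the paper leaves implicit.
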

\begin{proof}
We need to consider the difference quotient for $\bar J^0$ and to that end we consider
\[
\begin{split}
I_\varepsilon := &  \frac1\varepsilon \int_0^T \left[F(X^{\xi,\zeta,\varepsilon}_t, \nu^\varepsilon_t, \zeta) - F(X^{\xi,\zeta,\varepsilon}_t, \nu_t, \zeta) + F(X^{\xi,\zeta,\varepsilon}_t, \nu_t, \zeta) - F(X^{\xi,\zeta}_t, \nu_t, \zeta) \right]\,dt \\
& = \int_0^T \int_0^1 \int \left[f(X^{\xi,\zeta,\varepsilon}_t, a, \zeta) - \int f(X^{\xi,\zeta,\varepsilon}_t, a', \zeta)\,\nu_t(da')  \right](\mu_t - \nu_t)(da)	\,d\lambda\,dt\\
& \qquad + \int_0^T \int_0^1 \int (\nabla_x f)(X^{\xi,\zeta}_t + \lambda \varepsilon (V^{\xi,\zeta,\varepsilon}_t + V^{\xi,\zeta}_t),a, \zeta)(V^{\xi,\zeta,\varepsilon}_t - V^{\xi,\zeta}_t)\,\nu_t(da)\,d\lambda\,dt\\
& = \int_0^T \left[ \int f(X^{\xi,\zeta,\varepsilon}_t, a, \zeta)(\mu_t - \nu_t)(da) 	\right]\,dt\\
& \qquad + \int_0^T \int_0^1\int (\nabla_x f)(X^{\xi,\zeta}_t + \lambda \varepsilon (V^{\xi,\zeta,\varepsilon}_t + V^{\xi,\zeta}_t),a, \zeta)(V^{\xi,\zeta,\varepsilon}_t - V^{\xi,\zeta}_t)\,\nu_t(da)\,d\lambda\,dt\,.
\end{split}
\]
Using Lebesgue's dominated convergence theorem and Lemma~\ref{lemma V process linear} we get
\[
\lim_{\varepsilon \to 0} I_\varepsilon = \int_0^T \int f_t(X^{\xi,\zeta}_t, a,\zeta)(\mu_t-\nu_t)(da)\,dt
+ \int_0^T \int (\nabla_x f_t)(X^{\xi,\zeta}_t,a,\zeta) V^{\xi,\zeta}_t \,\nu_t(da)\,dt  \,.
\]
The term involving $g$ can be treated using the differentiability assumption and Lemma~\ref{lemma V process linear}.
\end{proof}

\begin{lemma}
\label{remark direct comp with flow 2}
Let $\sigma > 0$ be fixed and let Assumption~\ref{as coefficients} hold. 
Let $b$ be a permissible flow (c.f. Definition~\ref{def permissible flow})
with $\nu_{s,\cdot}\in \mathcal V_2$ the corresponding solution from Lemma~\ref{lemma vect field flow}.
Let $X^{\xi,\zeta}_{s,\cdot}$ be the solution to~\eqref{eq process} from data $(\xi,\zeta)$ with control $\nu_{s,\cdot} \in \mathcal V_2$.
Let $B_{s,t} := b_{s,t} + \frac{\sigma^2}{2}\nabla_a \log \nu_{s,t}$.
Then
\[
\begin{split}
\frac{d}{ds} & \bar J^0(\nu_{s,\cdot}, \xi,\zeta)  =  -\int_0^T \int  (\nabla_a f_t)(X_{s,t}, a, \zeta) B_{s,t}(a)\, \nu_{s,t}(da) \\
& - \int_0^T (\nabla_x F)(X_{s,t},\nu_{s,t},\zeta) \int_0^t \int \bigg(\nabla_a \frac{\delta \Phi_r}{\delta m}\bigg)(X_r(\nu_{s,\cdot}),\nu_{s,r},a)  B_{s,r}(a)\,\nu_{s,r}(da)\,dr \,dt  \\
& - (\nabla_x g)(X_{s,T},\zeta) \int_0^T \int \bigg(\nabla_a \frac{\delta \Phi_r}{\delta m}\bigg)(X_r(\nu_{s,\cdot}),\nu_{s,r},a)  B_{s,t}(a)\,\nu_{s,r}(da)\,dt\,.
\end{split}
\]	
\end{lemma}

\begin{proof}
We will keep $(\xi,\zeta)$ fixed for the moment and hence omit it from the notation.
Let us now fix as $\nu_t = \nu_{s_0,t}$ and $\mu_t = \nu_{s_1,t}$ for all $t\in [0,T]$.
Define $\nu^\varepsilon_t := \nu_t + \varepsilon(\mu_t - \nu_t)$ and
$\mu^\varepsilon_t := \mu_t + \varepsilon(\mu_t - \nu_t)$, so that $\mu^\varepsilon_t - \nu^\varepsilon_t = \mu_t - \nu_t$.
From the Fundamental Theorem of Calculus
\[
\begin{split}
\bar J^0(\mu) - \bar J^0(\nu) & = \int_0^1 \lim_{\delta \searrow 0} \frac1\delta \Big(\bar J^0(\nu + (\varepsilon+\delta)(\mu-\nu)) - \bar J^0(\nu+\varepsilon(\mu-\nu)) \Big)\,d\varepsilon 	\\
& = \int_0^1 \lim_{\delta \searrow 0} \frac1\delta \Big(\bar J^0(\nu^\varepsilon + \delta(\mu^\varepsilon-\nu^\varepsilon)) - \bar J^0(\nu^\varepsilon) \Big)\,d\varepsilon \,.
\end{split}
\]
Due to Lemma~\ref{lemma diff of J flat}
% and~\eqref{eq proc V solved} 
we see that
\[
\begin{split}
&\bar J^0(\mu) - \bar J^0(\nu)\\
& = \int_0^1 \bigg(\int_0^T \left[\int f_t(X_t(\nu^\varepsilon),a)\,(\mu_t-\nu_t)(da) + (\nabla_x F_t)(X_t(\nu^\varepsilon)) V^\varepsilon_t \right]\,dt + (\nabla_x g)(X_T(\nu^\varepsilon))V^\varepsilon_T \bigg)\, d\varepsilon \,.
\end{split}
\]
Now fix $s\geq 0$ and take $\mu_t = \nu_{s+h,t}$, $\nu_t = \nu_{s,t}$
and $\nu_{s,t}^{\varepsilon,h} = \nu_{s,t}+ \varepsilon(\nu_{s+h,t} - \nu_{s,t})$ and note that $\nu_{s,t}^{\varepsilon,h} \to \nu_{s,t}$ as $h \searrow 0$.
Then
\[
\begin{split}
& \frac{d}{ds}\bar J^0(\nu_{s,\cdot})  = \lim_{h\searrow 0} \frac1h \Big(\bar J^0(\nu_{s+h,\cdot}) - \bar J^0(\nu_{s,\cdot})  \Big) \\
& =  \int_0^1 \bigg(\int_0^T \int  \lim_{h\searrow 0} \bigg[f_t(X_t(\nu^{\varepsilon,h}_{s,.}),a)\,\frac1h (\nu_{s+h,t} - \nu_{s,t})\bigg](da)\,dt  \\
& \qquad + \int_0^T \int_0^t \int \lim_{h\searrow 0} \bigg[(\nabla_xF_t)(X_t(\nu^{\varepsilon,h}_{s,.})I(r,t;\nu^{\varepsilon,h}_{s,\cdot})\frac{\delta \Phi_r}{\delta m}(X_r(\nu^{\varepsilon,h}_{s,.}),\nu^{\varepsilon,h}_{s,r},a)\frac1h (\nu_{s+h,r} - \nu_{s,r})\bigg](da)\,dr\,dt \\
& \qquad + \int_0^T \int \lim_{h\searrow 0} \bigg[(\nabla_x g)(X_T(\nu^{\varepsilon,h}_{s,.})I(r,T;\nu^{\varepsilon,h}_{s,\cdot})\frac{\delta \Phi_r}{\delta m}(X_r(\nu^{\varepsilon,h}_{s,.}),\nu^{\varepsilon,h}_{s,r},a)\frac1h (\nu_{s+h,r} - \nu_{s,r})\bigg](da)\,dr  \bigg) \, d\varepsilon
\end{split}
\]
Thus from Lemma~\ref{lemma vect field flow} which states that the control laws evolve according to a gradient flow we get 
\[
\begin{split}
& \int  \lim_{h\searrow 0} \bigg[f_t(X_t(\nu^{\varepsilon,h}_{s,.}),a)\,\frac1h (\nu_{s+h,t} - \nu_{s,t})\bigg](da) \\
& = \int  f_t(X_t(\nu_{s,.}),a)\,\nabla_a \cdot \bigg( b_{s,r}(a) + \frac{\sigma^2}{2}\frac{\nabla_a \nu_{s,r}(a)}{\nu_{s,r}(a)}\bigg)\nu_{s,r}(a)\,da\\
& = - \int (\nabla_a f_t)(X_t(\nu_{s,.}),a) \cdot B_{s,t}\nu_{s,r}(a)\,da \,,
\end{split}
\]
where the last equality is due to integration by parts.
The integrands in the other two integrals are treated similarly and so the result follows.
\end{proof}

\begin{proof}[Proof of Theorem~\ref{corollary with measure flow}]
Recall that for each $(\xi,\zeta)$ and each $s\geq 0$ we have that $X^{\xi,\zeta}_{s,\cdot}$ is the forward process arising in~\eqref{eq process} with control $\nu_{s,\cdot} \in \mathcal V_2$.
From Lemmas~\ref{lemma derivative of entropy along flow} and~\ref{remark direct comp with flow 2} we have that
\begin{equation}
\label{eq min with flow calc 1}
\begin{split}
& \frac{d}{ds} \bar J^\sigma(\nu_{s,\cdot}, \xi,\zeta)  =  -\int_0^T \int \bigg[ (\nabla_a f)(X_{s,t}, a, \zeta) + \frac{\sigma^2}{2}\frac{\nabla_a \nu_{s,t}(a)}{\nu_{s,t}(a)} + \nabla_a U(a) \bigg]B_{s,t}(a)\, \nu_{s,t}(da)\,dt \\
& - \int_0^T (\nabla_x F)(X_{s,t},\nu_{s,t},\zeta) \int_0^t \int I(r,t,\nu_{s,\cdot})\bigg(\nabla_a \frac{\delta \Phi_r}{\delta m}\bigg)(X_{s,r},a)\,B_{s,r}(a)\nu_{s,r}(da)\,dr  \,dt  \\
& - (\nabla_x g)(X_{s,T},\zeta) \int_0^T \int I(t,T,\nu_{s,\cdot})\bigg(\nabla_a \frac{\delta \Phi_t}{\delta m}\bigg)(X_{s,t},a)\,B_{s,t}(a)\nu_{s,t}(da)\,dt\,.
\end{split}
\end{equation}
We now perform a change of order of integration in the triangular region
\[
\{(r,t)\in \mathbb R^2: 0\leq t \leq T \,,\,\, 0\leq r \leq t\} = \{(r,t)\in \mathbb R^2 :0\leq r \leq T\,,\,\, r \leq t \leq T \}
\]
which transforms~\eqref{eq min with flow calc 1} into
\begin{equation}
\label{eq min with flow calc 2}
\begin{split}
&\frac{d}{ds} \bar J^\sigma(\nu_{s,\cdot}, \xi,\zeta)
=  -\int_0^T \int \bigg[ (\nabla_a f_t)(X_{s,t}, a, \zeta) + \frac{\sigma^2}{2}\frac{\nabla_a \nu_{s,t}(a)}{\nu_{s,t}(a)} + \nabla_a U(a) \bigg]B_{s,t}(a)\, \nu_{s,t}(da)\,dt \\
& - \int_0^T \int_r^T \int I(r,t,\nu_{s,\cdot})\bigg(\nabla_a \frac{\delta \Phi_r}{\delta m}\bigg)(X_{s,r},a)(\nabla_x F)(X_{s,t},\nu_{s,t},\zeta) \,B_{s,r}(a)\,\nu_{s,r}(da)\,dt  \,dr  \\
& -  \int_0^T \int I(t,T,\nu_{s,\cdot})\bigg(\nabla_a \frac{\delta \Phi_t}{\delta m}\bigg)(X_{s,t},a)(\nabla_x g)(X_{s,T},\zeta)\,B_{s,t}(a)\nu_{s,r}(da)\,dt\,.
\end{split}
\end{equation}
Now we recall that for each $(\xi,\zeta)$ and each $s\geq 0$ we have that $P^{\xi,\zeta}_{s,\cdot}$ is the backward process arising in~\eqref{eq adjoint proc} from the  forward process $X^{\xi,\zeta}_{s,\cdot}$ and the control $\nu_{s,\cdot} \in \mathcal V_2$.
We can see that since~\eqref{eq adjoint proc} is affine
\begin{equation*}
P_{s,r} = (\nabla_x g)(X_{s,T},\zeta)I(r,T,\nu_{s,\cdot}) + \int_r^T 	(\nabla_x F)(X_{s,t},\nu_{s,t},\zeta)I(r,T,\nu_{s,\cdot})\,dt
\end{equation*}
so that~\eqref{eq min with flow calc 2} can be written as
\begin{equation}
\label{eq min with flow calc 3}
\begin{split}
\frac{d}{ds} \bar J^\sigma(\nu_{s,\cdot}, \xi,\zeta)
= & -\int_0^T \int \bigg[ (\nabla_a f_t)(X_{s,t}, a, \zeta) + \frac{\sigma^2}{2}\frac{\nabla_a \nu_{s,t}(a)}{\nu_{s,t}(a)} + \nabla_a U(a) \bigg]B_{s,t}(a)\, \nu_{s,t}(da)\,dt \\
& - \int_0^T \int (\nabla_a \phi_r)(X_{s,r},a,\zeta)P_{s,r}\, B_{s,r}(a)\nu_{s,r}(da)  \,dr \,.
\end{split}
\end{equation}
Recalling the Hamiltonian defined in~\eqref{eq hamiltonian}  completes the proof.
\end{proof}

\begin{lemma}
\label{lemma der of J as hamiltonian flat}
Under Assumption~\ref{as coefficients} we have that
\begin{equation*}
\frac{d}{d\varepsilon} \bar J^0\left((\nu_t + \varepsilon(\mu_t-\nu_t)_{t\in[0,T]},\xi,\zeta\right)\bigg|_{\varepsilon=0}
=\int_0^T \left[\int \frac{\delta H^0_t}{\delta m}(X^{\xi,\zeta}_t,P^{\xi,\zeta}_t,\nu_t,a,\zeta) (\mu_t-\nu_t)(da)   \right]\,dt \,.	
\end{equation*}	
\end{lemma}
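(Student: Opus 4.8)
The plan is to start from Lemma~\ref{lemma diff of J flat}, which already writes $\frac{d}{d\varepsilon}\bar J^\sigma((\nu_t+\varepsilon(\mu_t-\nu_t))_{t\in[0,T]},\xi,\zeta)|_{\varepsilon=0}$ as the sum of four contributions: the running-cost term $\int_0^T\int f_t(X^{\xi,\zeta}_t,a,\zeta)(\mu_t-\nu_t)(da)\,dt$, the entropic term $\frac{\sigma^2}{2}\int_0^T\int[\log\nu_t(a)-\log\gamma(a)](\mu_t-\nu_t)(da)\,dt$, and the two ``linearised-state'' terms $\int_0^T\int(\nabla_x f_t)(X^{\xi,\zeta}_t,a,\zeta)V^{\xi,\zeta}_t\,\nu_t(da)\,dt$ and $(\nabla_x g)(X^{\xi,\zeta}_T,\zeta)V^{\xi,\zeta}_T$. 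The first two are already of the desired shape: they are exactly $\int(\mu_t-\nu_t)(da)$ tested against the flat derivatives of $F_t$ and of $\frac{\sigma^2}{2}\mathrm{Ent}$. Since $\mu_t-\nu_t$ has zero total mass, any $a$-independent additive constant in a flat derivative is annihilated, so in particular $\int\frac{\delta\Phi_t}{\delta\nu}(X^{\xi,\zeta}_t,\nu_t,a,\zeta)(\mu_t-\nu_t)(da)=\int\phi_t(X^{\xi,\zeta}_t,a,\zeta)(\mu_t-\nu_t)(da)$ and $\frac{\delta}{\delta m}\big(\frac{\sigma^2}{2}\mathrm{Ent}\big)(\nu_t,a)=\frac{\sigma^2}{2}(\log\nu_t(a)-\log\gamma(a))$ after dropping the irrelevant constant.

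The heart of the proof is to show that the two remaining terms equal $\int_0^T\int\phi_t(X^{\xi,\zeta}_t,a,\zeta)P^{\xi,\zeta}_t\,(\mu_t-\nu_t)(da)\,dt$, which is precisely the $\phi p$-part of $h_t(X^{\xi,\zeta}_t,P^{\xi,\zeta}_t,a,\zeta)$. Keeping $(\xi,\zeta)$ fixed and writing $w_t:=\int\frac{\delta\Phi_t}{\delta\nu}(X_t,\nu_t,a,\zeta)(\mu_t-\nu_t)(da)=\int\phi_t(X_t,a,\zeta)(\mu_t-\nu_t)(da)$, equation~\eqref{eq V proc linear} reads $\dot V_t=(\nabla_x\Phi_t)(X_t,\nu_t,\zeta)V_t+w_t$ with $V_0=0$, while the adjoint equation~\eqref{eq adjoint proc} reads $\dot P_t=-(\nabla_x H_t)(X_t,P_t,\nu_t,\zeta)=-(\nabla_x\Phi_t)(X_t,\nu_t,\zeta)P_t-(\nabla_x F_t)(X_t,\nu_t,\zeta)$ with $P_T=(\nabla_x g)(X_T,\zeta)$, using $\nabla_x H_t=\nabla_x\Phi_t\,p+\nabla_x F_t$. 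Differentiating the scalar pairing $t\mapsto P_tV_t$ and substituting these two equations, the terms containing $\nabla_x\Phi_t$ cancel — this is exactly the defining duality property of the adjoint equation — leaving $\frac{d}{dt}(P_tV_t)=-(\nabla_x F_t)(X_t,\nu_t,\zeta)V_t+P_tw_t$. Integrating over $[0,T]$ and inserting the boundary data $V_0=0$, $P_T=(\nabla_x g)(X_T,\zeta)$ gives $\int_0^T(\nabla_x F_t)(X_t,\nu_t,\zeta)V_t\,dt+(\nabla_x g)(X_T,\zeta)V_T=\int_0^TP_tw_t\,dt$. Since $(\nabla_x F_t)(X_t,\nu_t,\zeta)V_t=\int(\nabla_x f_t)(X_t,a,\zeta)V_t\,\nu_t(da)$ and $P_tw_t=\int\phi_t(X_t,a,\zeta)P_t\,(\mu_t-\nu_t)(da)$, this is exactly the claimed identity for the two remaining terms.

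Finally I substitute back into Lemma~\ref{lemma diff of J flat} and collect everything under a single $\int_0^T\int\cdots(\mu_t-\nu_t)(da)\,dt$; the integrand is $\phi_t(X_t,a,\zeta)P_t+f_t(X_t,a,\zeta)+\frac{\sigma^2}{2}(\log\nu_t(a)-\log\gamma(a))=h_t(X_t,P_t,a,\zeta)+\frac{\sigma^2}{2}\frac{\delta\,\mathrm{Ent}}{\delta m}(\nu_t,a)$, which by the definitions~\eqref{eq hamiltonian} of $H_t$ and $H^\sigma_t$ coincides with $\frac{\delta H^\sigma_t}{\delta m}(X^{\xi,\zeta}_t,P^{\xi,\zeta}_t,\nu_t,a,\zeta)$ up to an $a$-independent constant, again irrelevant against $\mu_t-\nu_t$; this yields the statement. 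I expect the only genuinely delicate point to be the bookkeeping in the duality step — making sure that the operator appearing in~\eqref{eq adjoint proc} through $\nabla_x H$ is the correct (transposed) dual of the linear operator in~\eqref{eq V proc linear}, so that the $\nabla_x\Phi_t$ cross-terms cancel exactly — together with the (routine but necessary) justification that additive constants in the flat derivatives may be dropped because $\mu_t-\nu_t$ integrates to zero; everything else is a direct combination of Lemmas~\ref{lemma V process linear} and~\ref{lemma diff of J flat}.
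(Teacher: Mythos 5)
Your proof is correct and follows essentially the same route as the paper: start from Lemma~\ref{lemma diff of J flat}, compute $P_TV_T=\int_0^T P_t\,dV_t+\int_0^T V_t\,dP_t$ to make the $\nabla_x\Phi$ cross-terms cancel, substitute back, and identify the resulting integrand with $\frac{\delta H^\sigma_t}{\delta m}$ up to an additive constant killed by $\mu_t-\nu_t$. The one point worth polishing is that you should record explicitly (as the paper does implicitly) that $\frac{\delta\Phi_t}{\delta\nu}(x,m,a)=\phi_t(x,a)-\int\phi_t(x,a')m(da')$ and $\frac{\delta\,\mathrm{Ent}}{\delta m}(m,a)=\log m(a)-\log\gamma(a)+1$, so the ``drop the constant'' step is visibly correct rather than asserted.
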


\begin{proof}[Proof of Lemma~\ref{lemma der of J as hamiltonian flat}]
We first observe (omitting the superscript $\nu,\xi,\zeta$ from the adjoint process as these are fixed) that due to~\eqref{eq adjoint proc}, ~\eqref{eq V proc linear} and the fact that $V_0 = 0$, we have
\[
\begin{split}
& P_T V_T  =  \int_0^T P_t \, dV_t + \int_0^T V_t \,dP_t  \\	
& =  \int_0^T P_t \left[\int (\nabla_x \phi_t)(X_t,a, \zeta)\,\nu_t(da) V_t + \int \phi_t(X_t,a, \zeta)\,(\mu_t  - \nu_t)(da)\right]\,dt - \int_0^T V_t  (\nabla_x h_t)(X_t,P_t,a,\zeta)\,\nu_t(a)\,dt\,. \\
\end{split}
\]
Hence due to~\eqref{eq hamiltonian} we get
\[
\begin{split}
 P_T V_T  = & \int_0^T P_t \left[\int (\nabla_x \phi_t)(X_t,a, \zeta)\,\nu_t(da) V_t + \int \phi(X_t,a, \zeta)\,(\mu_t-\nu_t) (da)\right]\,dt \\
& - \int_0^T \int   \left[  P_t (\nabla_x \phi_t)(X_t,a, \zeta)  V_t  +  V_t (\nabla_x f_t)(X_t,a, \zeta)  \right]\,\nu_t(a)\,dt \\
=  & \int_0^T  \bigg[\int P_t \phi_t(X_t,a, \zeta)\,(\mu_t-\nu_t)(da)
 - \int V_t (\nabla_x f_t)(X_t,a, \zeta)\,\nu_t (da)\bigg]\,dt\,.
\end{split}
\]
From this and Lemma~\ref{lemma diff of J flat} and noting that $(\nabla_x g)(X_T, \zeta)V_T = P_T V_T$ we see that
\[
\begin{split}
 \frac{d}{d\varepsilon} & \bar J^0\left((\nu_t + \varepsilon(\mu_t-\nu_t)_{t\in[0,T]},\xi,\zeta\right)\bigg|_{\varepsilon=0} \\
= &  \int_0^T \left[\int f_t(X_t,a, \zeta)\,(\mu_t-\nu_t)(da) + \int (\nabla_x f_t)(X_t,a, \zeta) V_t \, \nu_t(da) \right]\,dt \\
& + \int_0^T  \bigg[\int P_t \phi_t(X_t,a, \zeta)\,(\mu_t-\nu_t)(da) - \int V_t (\nabla_x f_t)(X_t,a, \zeta)\,\nu_t (da)\bigg]\,dt  \\
 = & \int_0^T \int   h_t(X_t,P_t,a, \zeta) 
(\mu_t-\nu_t)(da)\,dt\,.
\end{split}
\]
We can conclude the proof by properties of the linear derivative.
\end{proof}

\begin{proof}[Proof of Theorem~\ref{thm necessary cond linear}.]
Let $(\mu_t)_{t\in[0,T]}$  be an arbitrary relaxed control.
Since $(\nu_t)_{t\in[0,T]}$ is optimal we know that 	
\[
J^\sigma\left(\nu_t + \varepsilon(\mu_t - \nu_t))_{t\in[0,T]}\right) \geq J^\sigma(\nu)\,\,\,\text{ for any $\varepsilon > 0$.}
\]
From this and Lemma~\ref{lemma der of J as hamiltonian flat} and Lemma~\ref{lemma diff of Ent flat} we get, after integrating over $(\xi, \zeta) \in \mathbb R^d \times \mathcal S$, that
\[
\begin{split}
0 & \leq \limsup_{\varepsilon \to 0} \frac1\varepsilon\left(J^\sigma(\nu_t + \varepsilon(\mu_t - \nu_t))_{t\in[0,T]} - J^\sigma(\nu)\right)\\
& \leq \int_0^T  \int \bigg( \int_{\mathbb R^d \times \mathcal S}  \frac{\delta H^0_t}{\delta m}(X^{\xi,\zeta}_t, P^{\xi,\zeta}_t, \nu_t, a, \zeta) \,\mathcal M(d\xi,d\zeta) + \frac{\sigma^2}{2}\left(\log \nu_t(a) - U(a)\right)\bigg)\, (\mu_t-\nu_t)(da) \,dt\,.	
\end{split}
\]
Now assume there is $S \in \mathcal B([0,T])$, with strictly positive  Lebesgue measure, such that
\[
\int_0^T \mathds{1}_S \int \bigg(\int_{\mathbb R^d \times \mathcal S} \frac{\delta H^0_t}{\delta m}(X^{\xi,\zeta}_t, P^{\xi,\zeta}_t, \nu_t,a,\zeta)\, \mathcal M(d\xi,d\zeta)+ \frac{\sigma^2}{2}\left(\log \nu_t(a) - U(a)\right)\bigg)\, (\mu_t-\nu_t)(da)\,dt < 0
\]
Define $\tilde \mu_t := \mu_t \mathds{1}_S + \nu_t \mathds{1}_{S^c}$.
Then by the same argument as above
\[
\begin{split}
0 & \leq  \int_0^T \int \bigg( \int_{\mathbb R^d \times \mathcal S}\frac{\delta H^0_t}{\delta m}(X^{\xi,\zeta}_t, P^{\xi,\zeta}_t, \nu_t,a,\zeta) \, \mathcal M(d\xi,d\zeta) + \frac{\sigma^2}{2}\left(\log \nu_t(a) - U(a)\right)\bigg) \, (\tilde\mu_t-\nu_t)(da)\,dt \\
&\, = \mathbb E\int_0^T \mathds{1}_S \bigg( \int_{\mathbb R^d \times \mathcal S}\int \frac{\delta H^0}{\delta m_t}(X^{\xi,\zeta}_t, P^{\xi,\zeta}_t, \nu_t,a,\zeta) \, \mathcal M(d\xi,d\zeta)+ \frac{\sigma^2}{2}\left(\log \nu_t(a) - U(a)\right)\bigg) \, (\mu_t-\nu_t)(da)\,dt\\
&\, < 0	
\end{split}
\]
leading to a contradiction. This proves i).

From i), properties of linear derivatives and Lemma~\ref{lemma diff of Ent flat} we have 
\[
\begin{split}
0 & \leq \int \bigg( \int_{\mathbb R^d \times \mathcal S}  \frac{\delta H^0_t}{\delta m}(X^{\xi,\zeta}_t, P^{\xi,\zeta}_t, \nu_t, a, \zeta) \,\mathcal M(d\xi,d\zeta) + \frac{\sigma^2}{2}\left(\log \nu_t(a) - U(a)\right)\bigg)\, (\mu_t-\nu_t)(da)\\	
& \leq \limsup_{\varepsilon \searrow 0} \int_{\mathbb R^d\times \mathcal S} \left[H^\sigma(X^{\xi,\zeta}_t, P^{\xi,\zeta}_t, \nu_t^\varepsilon, \zeta) - H^\sigma(X^{\xi,\zeta}_t, P^{\xi,\zeta}_t, \nu_t, \zeta) \right] \,\mathcal M(d\xi,d\zeta)\,.
\end{split}
\]
From this ii) follows.
\end{proof}

\begin{theorem}[Sufficient condition for optimality]
\label{thm sufficient condition}
Fix $\sigma \geq 0$.
Assume that $g$ and $h$ are continuously differentiable in the $x$ variable.
Assume that $(\nu_t)_{t\in [0,T]}$, $X^{\xi,\zeta}$, $P^{\xi,\zeta}$ are a solution to~\eqref{eq pontryagin system}.
Finally assume that
\begin{enumerate}[i)]
\item the map $x\mapsto g(x,\zeta)$ is convex for every $\zeta \in \mathcal S$ and
\item the map $(x,m) \mapsto H^\sigma_t(x, P^{\xi,\zeta}_t, m, \zeta)$ satisfies that for all $t\in [0,T]$, $\xi \in \mathbb R^d$, $\zeta \in \mathcal S$, $x,x'\in \mathbb R^d$ and all $m,m' \in \mathcal P_2(\mathbb R^p)$ (absolutely continuous w.r.t. the Lebesgue measure if $\sigma >0$) it holds that
\[
\begin{split}
& H^\sigma_t(x,P^{\xi,\zeta}_t,m,\zeta) - H^\sigma_t(x',P^{\xi,\zeta}_t,m',\zeta) \\
& \leq (\nabla_x H^0_t)(x,P^{\xi,\zeta}_t,m,\zeta)(x-x') + \int \bigg( \frac{\delta H^0_t}{\delta m}(x, P^{\xi,\zeta}_t, m, a, \zeta) + \frac{\sigma^2}2\log m(a)- \frac{\sigma^2}2U(a)\bigg)(m - m')(da)\,.	
\end{split}
\]  	
\end{enumerate}
Then the relaxed control $(\nu_t)_{t\in [0,T]}$ is an optimal control.
\end{theorem}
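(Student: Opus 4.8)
The plan is to run the classical verification (sufficiency) argument, adapted to the measure-valued control. Fix an arbitrary admissible competitor $\mu=(\mu_t)_{t\in[0,T]}\in\mathcal V_2$ (with $\mu_t$ absolutely continuous for a.a.\ $t$ when $\sigma>0$); if $\int_0^T\text{Ent}(\mu_t)\,dt=+\infty$ then $J^\sigma(\mu)=+\infty$ and there is nothing to prove, so assume it is finite. Keep the data $(\xi,\zeta)$ fixed: let $X=X^{\xi,\zeta}$ be the state driven by $\nu$ with adjoint $P=P^{\xi,\zeta}$ as in~\eqref{eq pontryagin system}, and let $\tilde X$ be the state driven by $\mu$; all three are absolutely continuous in $t$ under Assumption~\ref{as coefficients}. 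First I would bound the terminal cost from below using convexity of $g$ in $x$, namely $g(\tilde X_T,\zeta)-g(X_T,\zeta)\geq(\nabla_x g)(X_T,\zeta)\cdot(\tilde X_T-X_T)=P_T\cdot(\tilde X_T-X_T)$.

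Next I would write $P_T\cdot(\tilde X_T-X_T)=\int_0^T d\big(P_t\cdot(\tilde X_t-X_t)\big)$, legitimate since $\tilde X_0=X_0=\xi$, apply the product rule together with the adjoint equation $dP_t=-(\nabla_x H_t)(X_t,P_t,\nu_t,\zeta)\,dt$ and the two state equations, and then use the identity $P_t\cdot\Phi_t(x,m,\zeta)=H_t(x,P_t,m,\zeta)-F_t(x,m,\zeta)$ coming from the definition of the Hamiltonian. Substituting into $\bar J^\sigma(\mu,\xi,\zeta)-\bar J^\sigma(\nu,\xi,\zeta)$, the running-cost ($F$) terms cancel exactly and the entropy contributions merge with $H$ into $H^\sigma$, leaving
\[
\bar J^\sigma(\mu,\xi,\zeta)-\bar J^\sigma(\nu,\xi,\zeta)\geq\int_0^T\Big[H^\sigma_t(\tilde X_t,P_t,\mu_t,\zeta)-H^\sigma_t(X_t,P_t,\nu_t,\zeta)-(\nabla_x H_t)(X_t,P_t,\nu_t,\zeta)\cdot(\tilde X_t-X_t)\Big]\,dt\,.
\]

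Then I would invoke convexity. The entropy $m\mapsto\text{Ent}(m)$ satisfies the linear-derivative convexity inequality, the gap $\text{Ent}(\mu_t)-\text{Ent}(\nu_t)-\int[\log\nu_t(a)-\log\gamma(a)](\mu_t-\nu_t)(da)$ being the nonnegative relative entropy of $\mu_t$ with respect to $\nu_t$; combined with hypothesis ii) this makes $(x,m)\mapsto H^\sigma_t(x,P_t,m,\zeta)$ convex in the required sense, with $\frac{\delta H^\sigma_t}{\delta m}$ equal to $\frac{\delta H_t}{\delta m}$ plus the $\frac{\sigma^2}{2}(\log m(a)-\log\gamma(a))$ term from Lemma~\ref{lemma diff of Ent flat}. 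Reading the convexity inequality of hypothesis ii) at $(x,m)=(X_t,\nu_t)$ and $(x',m')=(\tilde X_t,\mu_t)$, and recalling $\nabla_x H^\sigma=\nabla_x H$, yields
\[
H^\sigma_t(\tilde X_t,P_t,\mu_t,\zeta)-H^\sigma_t(X_t,P_t,\nu_t,\zeta)\geq(\nabla_x H_t)(X_t,P_t,\nu_t,\zeta)\cdot(\tilde X_t-X_t)+\int\frac{\delta H^\sigma_t}{\delta m}(X_t,P_t,\nu_t,a,\zeta)(\mu_t-\nu_t)(da)\,.
\]
Inserting this into the previous display, the $\nabla_x H$ terms cancel, so $\bar J^\sigma(\mu,\xi,\zeta)-\bar J^\sigma(\nu,\xi,\zeta)\geq\int_0^T\int\frac{\delta H^\sigma_t}{\delta m}(X_t,P_t,\nu_t,a,\zeta)(\mu_t-\nu_t)(da)\,dt$; integrating over $(\xi,\zeta)$ against $\mathcal M$ and using Fubini,
\[
J^\sigma(\mu)-J^\sigma(\nu)\geq\int_0^T\int\bigg(\int_{\mathbb R^d\times\mathcal S}\frac{\delta H^\sigma_t}{\delta m}(X^{\xi,\zeta}_t,P^{\xi,\zeta}_t,\nu_t,a,\zeta)\,\mathcal M(d\xi,d\zeta)\bigg)(\mu_t-\nu_t)(da)\,dt\,.
\]
Finally, the first line of~\eqref{eq pontryagin system} says $\nu_t$ minimises $m\mapsto\int_{\mathbb R^d\times\mathcal S}H^\sigma_t(X^{\xi,\zeta}_t,P^{\xi,\zeta}_t,m,\zeta)\,\mathcal M(d\xi,d\zeta)$ over $\mathcal P_2(\mathbb R^p)$, so differentiating $\varepsilon\mapsto\int_{\mathbb R^d\times\mathcal S}H^\sigma_t(X^{\xi,\zeta}_t,P^{\xi,\zeta}_t,\nu_t+\varepsilon(\mu_t-\nu_t),\zeta)\,\mathcal M(d\xi,d\zeta)$ at $\varepsilon=0^+$, exactly as in the proof of Theorem~\ref{thm necessary cond linear}, shows the integrand above is $\geq0$ for a.a.\ $t$. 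Hence $J^\sigma(\mu)\geq J^\sigma(\nu)$ for every admissible $\mu$, i.e.\ $\nu$ is optimal.

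The genuinely delicate points, where I would concentrate the work, are twofold. First, justifying the product rule / integration by parts in $t$, the finiteness of all the integrals, and the interchanges of $\int_0^T$, $\int_{\mathbb R^d\times\mathcal S}\mathcal M(d\xi,d\zeta)$ and the $\varepsilon\to0^+$ limit; this is routine given the moment bounds implied by Assumption~\ref{as coefficients} and $\mu,\nu\in\mathcal V_2$, but needs an a priori estimate on $\sup_{t\leq T}|\tilde X_t-X_t|$ and on the adjoint. Second, and more conceptually, the convexity of $H^\sigma$ must be used in precisely the ``supporting hyperplane at the candidate optimum $(X_t,\nu_t)$'' form, so that the first-order terms produced by the adjoint equation cancel exactly against those in the convexity inequality; this cancellation is the heart of the argument, and is the reason the full Pontryagin system~\eqref{eq pontryagin system} (rather than only the necessary conditions of Theorem~\ref{thm necessary cond linear}) is what guarantees optimality.
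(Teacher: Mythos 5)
Your proof is correct and follows essentially the same route as the paper: convexity of $g$, product rule on $P_t\cdot(\tilde X_t-X_t)$, the identity $P_t\cdot\Phi_t = H_t - F_t$, the convexity/supporting-hyperplane inequality for $H^\sigma$ at $(X_t,\nu_t)$, and finally the $\argmin$ (first line of~\eqref{eq pontryagin system}) to conclude. The only cosmetic difference is that you write the chain of inequalities in the "$\mu$ minus $\nu\ \geq$" direction where the paper writes "$\nu$ minus $\mu\ \leq$", and you spell out explicitly (via the relative-entropy nonnegativity) why the convexity hypothesis on $H$ upgrades to convexity of $H^\sigma$, a step the paper leaves implicit.
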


\begin{proof}[Proof of Theorem~\ref{thm sufficient condition}.]
Let $(\tilde \nu_t)_{t\in [0,T]}$ be another control with the associated family of
forward and backward processes $\tilde X^{\xi,\zeta}$, $\tilde P^{\xi,\zeta}$, $(\xi,\zeta) \in \mathbb R^d \times \mathcal S$.
Of course $X^{\xi,\zeta}_0 = \tilde X^{\xi, \zeta}_0$.
For now, we have $(\xi, \zeta)$ fixed and so write $X=X^{\xi,\zeta}$, $P=P^{\xi,\zeta}$.

First, we note that due to convexity of $x\mapsto g(x,\zeta)$ for every $\zeta \in \mathcal S$, we have
\[
\begin{split}
 & g(X_T,\zeta) - g(\tilde X_T,\zeta) \,
\leq  (\nabla_x g)(X_T,\zeta)(X_T - \tilde X_T) =  P_T (X_T - \tilde X_T) \\
  & = \int_0^T (X_t - \tilde X_t)\,dP_t + \int_0^T P_t(dX_t - d\tilde X_t) \\
  & = -\int_0^T (X_t - \tilde X_t)(\nabla_x H^0_t)(X_t,P_t,\nu_t,\zeta)\,dt
+ \int_0^T P_t\left(\Phi_t(X_t,\nu_t,\zeta) - \Phi_t(\tilde X_t,\tilde \nu_t,\zeta)\right)\,dt\,.
\end{split}
\]
Moreover, since $F(x,\nu,\zeta) + \frac{\sigma^2}{2}\text{Ent}(\nu) = H^\sigma_t(x,p,\nu,\zeta) - \Phi(x,\nu)\,p$  we have
\[
\begin{split}
 \int_0^T &  \left[F(X_t,\nu_t,\zeta) - F(\tilde X_t,\tilde \nu_t,\zeta) +\frac{\sigma^2}{2}\text{Ent}(\nu_t) - \frac{\sigma^2}{2}\text{Ent}(\tilde \nu_t) \right]\,dt 	\\
 = &  \int_0^T \Big[H^\sigma_t(X_t,P_t,\nu_t,\zeta) - \Phi_t(X_t,\nu_t,\zeta)Y_t
 - H^\sigma_t(\tilde X_t,P_t,\tilde \nu_t,\zeta) + \Phi_t(\tilde X_t,\tilde \nu_t,\zeta)P_t  \Big]\,dt \,. 	\\
\end{split}
\]
Hence
\[
\begin{split}
&\bar J^\sigma(\nu,\xi,\zeta) - \bar J^\sigma(\tilde \nu,\xi,\zeta) \\
&\leq -\int_0^T (X_t - \tilde X_t)(\nabla_x H^\sigma)(X_t,P_t,\nu_t,\zeta)\,dt +  \int_0^T \left[H^\sigma(X_t,P_t,\nu_t,\zeta) - H^\sigma(\tilde X_t,P_t,\tilde \nu_t,\zeta)\right]\,dt	\,.
\end{split}
\]	
We are assuming that $(x,\mu) \mapsto H(x,P_t,\mu,\zeta)$ is jointly convex in the sense of flat derivatives and so we have
\[
\begin{split}
& H^\sigma(X_t,P_t,\nu_t,\zeta) - H^\sigma(\tilde X_t,P_t,\tilde \nu_t,\zeta) \\
& \leq (\nabla_x H^\sigma_t)(X_t,P_t,\nu_t,\zeta)(X_t-\tilde X_t) + \int \bigg(\frac{\delta H^0_t}{\delta m}(X_t, P_t, \nu_t, a,\zeta) +\frac{\sigma^2}2\log \nu_t(a) - \frac{\sigma^2}2U(a)\bigg) (\nu_t - \tilde \nu_t)(da)\,.	
\end{split}
\]
Integrating over $(\xi,\zeta)$ w.r.t. $\mathcal M$ we thus have
\[
\begin{split}
& J^\sigma(\nu) - J^\sigma(\tilde \nu) \\
& \leq  \int_0^T \int \bigg(\int_{\mathbb R^d \times \mathcal S}  \frac{\delta H^0_t}{\delta m}(X^{\xi,\zeta}_t, P^{\xi,\zeta}_t, \nu_t, a,\zeta) \,\mathcal M(d\xi,d\zeta)+\frac{\sigma^2}2 \log\nu_t(a)-\frac{\sigma^2}2U(a)\Bigg)\,  (\nu_t - \tilde \nu_t)(da)	\,.
\end{split}
\]
Moreover $\nu_t = \argmin_{\mu} \int_{\mathbb R^d \times \mathcal S} H^\sigma_t(X^{\xi,\zeta}_t,P^{\xi,\zeta}_t,\mu,\zeta)\, \mathcal M(d\xi,d\zeta)$ implies that
\[
\int \bigg( \int_{\mathbb R^d \times \mathcal S}   \frac{\delta H^0_t}{\delta m}(X^{\xi,\zeta}_t, P^{\xi,\zeta}_t,\nu_t, a) \, \mathcal M(d\xi,d\zeta) + \frac{\sigma^2}{2}\log \nu_t(a) - \frac{\sigma^2}{2}U(a) \bigg)\, (\nu_t - \tilde \nu_t)(da) \leq 0\,.
\]
Hence $J(\nu) - J(\tilde \nu) \leq 0$ and $\nu$ is an optimal control.
We note that if either $g$ or $H^\sigma$ are strictly convex then $\nu$ is the optimal control.
A particular case is whenever $\sigma > 0$.
\end{proof}

\subsection{Existence, uniqueness, convergence to the invariant measure}
\label{sec exist uniq inv}

%so that
%\[
%\begin{split}
%(\nabla_a {\bold  H})(\theta, \nu_{s} )& :=\int_{\mathbb R^d \times \mathcal S} (D_{\nu} H(X^{\xi,\zeta}_{s,t},Y_{s,t},Z_{s,t},\nu_{s,t},\zeta))(\theta)\,\mathcal M(d\xi, d\zeta) + \frac{\sigma^2}{2}\nabla_a U(\theta)\,, \\
%(\nabla_x {\bold  H})(X_{s,t},Y_{s,t},Z_{s,t},\nu_{s,t}) & := \int_{\mathbb R^d \times \mathcal S}(\nabla_x H)(X^{\xi,\zeta}_{s,t},Y_{s,t},Z_{s,t},\nu_{s,t},\zeta)\,\mathcal M(d\xi, d\zeta)\, \\
% (\nabla_x \bold g)(X_T) &:= \int_{\mathbb R^d \times \mathcal S}(\nabla_x g)(X^{\xi,\zeta}_T,\zeta)\,\mathcal M(d\xi, d\zeta)
%\end{split}
%\]
%The mean-field system that we study is a result of embedding the controls into a space of measures.
The reader will recall the space of relaxed controls $\mathcal V_2$ given by~\eqref{eq def Vq} and the integrated Wasserstein metric given by~\eqref{eq def wasserstein qT}.
%Precisely, let $\mathcal V$ denote the set of positive Borel measures on $[0, T ] \times \mathbb R^p$ with first marginal equal to Lebesgue measure. That is, for $q \geq1$,
%\[
%\mathcal V :=\left\{ \nu \in \mathcal M([0,T]\times \mathbb R^p):\,\,\, \nu(dt,da)=\nu_t(da)dt, \,\, \text{where    }  \nu_t \in \mathcal P_2(\mathbb R^p) \right\}\,.
%\]
%\[
%\mathcal V_2 :=\left\{ \nu \in \mathcal M([0,T]\times \mathbb R^p):\,\,\, \nu(dt,da)=\nu_t(da)dt, \,\, \text{where    }  \int_{0}^{T}\int_{\mathbb R^p} |x|^q\nu_t(dx)dt < \infty \right\}\,.
%\]
The Mean-field Langevin system induces an $s$-time marginal law in the space $\mathcal V_2$ i.e. for each $s\geq 0$ we have $\mathcal{L}(\theta_{s,\cdot})  \in \mathcal V_2$
and moreover the map $s\mapsto \mathcal{L}(\theta_{s,\cdot})$ is continuous in the $\mathcal W^T_2$
metric on $\mathcal V_2$.
Hence, as in~\cite{hu2019meanode}, we have that
the map $I \ni s\mapsto \mathcal{L}(\theta_{s,\cdot}) \in \mathcal V_2$ belongs to the space
\[
\mathcal C(I,\mathcal V_2):=\left\{ \nu =(\nu_{s,\cdot})_{s\in I}: \nu_{s,\cdot} \in \mathcal V_2 \text{ and } \lim_{s'\rightarrow s}  \mathcal W^T_2(\nu_{s',\cdot},\nu_{s,\cdot})=0\, \,\,\, \forall s\in I\right\}\,.
\]
The existence of a unique solution to the system~\eqref{eq mfsgd}-\eqref{eq mfsgd 2} will be established by a fixed point argument in $\mathcal C(I,\mathcal V_2)$.

\begin{lemma}
\label{lemma odes}
Let Assumption~\ref{as coefficients} hold and
let $\mu \in \mathcal V_2$.	
\begin{enumerate}[i)]
\item The system
\begin{equation}
\label{eq odesystem}
\begin{cases}
	X^{\xi,\zeta}_{t}(\mu) &= \xi + \int_0^t  \Phi_r(X^{\xi,\zeta}_r(\mu),\mu_r,\zeta)\,dr
 \,,\,\,\, t\in [0,T]\,, \\
dP^{\xi,\zeta}_t(\mu) & = - (\nabla_x H_t)(X^{\xi,\zeta}_t(\mu),P^{\xi,\zeta}_t(\mu),\mu_t,\zeta)\,dt\,,\,\,\,\,
P^{\xi,\zeta}_T(\mu)=  (\nabla_x  g)(X^{\xi,\zeta}_T(\mu),\zeta)\,.
\end{cases}
\end{equation}
has a unique solution.
\item Assume further point iii) of Assumption~\ref{ass exist and uniq} holds.
Then
\[
\sup_{t\in[0,T]}\sup_{\mathcal M\in\mathcal P(\mathbb R^d\times\mathcal S)}\sup_{\mathcal \nu\in \mathcal V_2} \int_{\mathbb R^d\times\mathcal S} \big[|X_t^{\xi,\zeta}(\nu)|^2+|P_t^{\xi,\zeta}(\nu)|^2 \big]\,\mathcal M(d\xi,d\zeta) < \infty\,.
\]
\end{enumerate}
\end{lemma}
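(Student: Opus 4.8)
The plan is to handle the two equations of~\eqref{eq odesystem} in sequence, since for a fixed control $\mu$ they decouple: the state equation does not see $P$, and, once $X^{\xi,\zeta}(\mu)$ is known, the adjoint equation is linear in $P$.

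For part i), I would first solve the forward equation by a Picard fixed-point argument on $C([0,T];\mathbb R^d)$. The map $x \mapsto \Phi_r(x,\mu_r,\zeta) = \int \phi_r(x,a,\zeta)\,\mu_r(da)$ is Lipschitz in $x$ with constant at most the uniform Lipschitz constant $L$ of $\phi$ from Assumption~\ref{as coefficients}(ii), and for fixed $x$ one has $|\Phi_r(x,\mu_r,\zeta)| \le |\phi_r(0,0,\zeta)| + L|x| + L\int |a|\,\mu_r(da)$, which is integrable in $r$ on $[0,T]$ for $\mathcal M$-a.e.\ $(\xi,\zeta)$: indeed $\int_0^T |\phi_r(0,0,\zeta)|\,dr < \infty$ by Cauchy--Schwarz together with Assumption~\ref{as coefficients}(iii) and Tonelli, while $\int_0^T\int |a|\,\mu_r(da)\,dr < \infty$ because $\mu \in \mathcal V_2$. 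A contraction argument with an exponentially weighted supremum norm then yields a unique absolutely continuous solution, and Gronwall's inequality gives $\sup_{t\le T}|X^{\xi,\zeta}_t(\mu)| < \infty$. Inserting this into the adjoint equation turns it into $dP_t = -(A^{\xi,\zeta}_t P_t + B^{\xi,\zeta}_t)\,dt$ with $A^{\xi,\zeta}_t := \int (\nabla_x\phi_t)(X^{\xi,\zeta}_t(\mu),a,\zeta)\,\mu_t(da)$ and $B^{\xi,\zeta}_t := \int (\nabla_x f_t)(X^{\xi,\zeta}_t(\mu),a,\zeta)\,\mu_t(da)$; the point is that $\nabla_x\phi$ and $\nabla_x f$, being spatial derivatives of functions that are themselves assumed Lipschitz in $(x,a)$, are bounded (by $L$), so $A^{\xi,\zeta}$ and $B^{\xi,\zeta}$ are bounded on $[0,T]$. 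A backward-in-time linear ODE with bounded coefficients and terminal datum $(\nabla_x g)(X^{\xi,\zeta}_T(\mu),\zeta)$ has a unique solution --- concretely, the one given by the integrating factor already used in Section~\ref{section pontryagin derivation} --- which settles i).

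For part ii) the extra work is to make the above bounds uniform, and this is where point (iii) of Assumption~\ref{ass exist and uniq} is needed: the definition~\eqref{eq def Vq} of $\mathcal V_2$ only controls $\int_0^T\int |a|^2\nu_t(da)\,dt$, so the term $L\int |a|\,\nu_t(da)$ appearing above is a priori not uniform in $\nu$, and one must arrange every constant to be free of it. Under alternative (b), $\phi_t(0,a,\zeta)$ is bounded, whence (together with $|\nabla_x\phi|\le L$, writing $\phi_t(x,a,\zeta)=\phi_t(0,a,\zeta)+\int_0^1(\nabla_x\phi_t)(\lambda x,a,\zeta)\cdot x\,d\lambda$) one gets $|\Phi_t(x,\nu_t,\zeta)| \le c(1+|x|)$ with $c$ independent of $\nu$ and $a$; Gronwall then gives $\sup_{t\le T}|X^{\xi,\zeta}_t(\nu)| \le c(1+|\xi|)$ uniformly in $\nu$, and the compact support of $\mathcal M$ makes $\int |\xi|^2\,\mathcal M(d\xi,d\zeta)$ finite. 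The adjoint bound then follows from the terminal estimate $|P^{\xi,\zeta}_T(\nu)| \le |\nabla_x g(0,\zeta)| + L\sup_t|X^{\xi,\zeta}_t(\nu)|$ --- square-integrable against $\mathcal M$ by Assumption~\ref{as coefficients}(iii) --- and Gronwall on the bounded-coefficient backward equation. Alternatively, under (a) the adjoint is bounded directly, since its terminal datum $(\nabla_x g)(X^{\xi,\zeta}_T(\nu),\zeta)$ and its coefficients $A^{\xi,\zeta},B^{\xi,\zeta}$ are all bounded uniformly, giving $\sup_{t\le T}|P^{\xi,\zeta}_t(\nu)|\le c$ with $c$ independent of $\nu$, $\mathcal M$ and $(\xi,\zeta)$. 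Integrating the resulting pointwise bounds against $\mathcal M$ and taking the supremum over $t\in[0,T]$ gives the claim.

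I expect the only genuine difficulty to be precisely this uniformity over $\mathcal V_2$ --- one has to avoid any estimate whose constant involves $\int_0^T\int|a|^2\nu_t(da)\,dt$, and the two alternatives in Assumption~\ref{ass exist and uniq}(iii) are tailored to remove that dependence (one from the state side, one from the adjoint side), after which the automatic boundedness of $\nabla_x\phi$, $\nabla_a\phi$, $\nabla_x f$, $\nabla_a f$ does all the remaining work. (For controls with uniformly bounded second moment --- in particular the laws $\mathcal L(\theta_{s,\cdot})$ arising from~\eqref{eq mfsgd} --- the estimate already follows from part i).)
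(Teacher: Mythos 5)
Your proposal takes essentially the same route as the paper's own (terse) proof: the paper notes that Assumption~\ref{as coefficients} makes $x\mapsto\Phi_t(x,\mu_t,\zeta)$ and $(x,p)\mapsto(\nabla_xH_t)(x,p,\mu_t,\zeta)$ Lipschitz, so existence and uniqueness follow from classical ODE theory, and for part ii) it simply cites the explicit bounds of Lemma~\ref{lem:UniformBounds_Continuity} in Appendix~B. You fill in the decoupling (forward ODE first via Picard, then a linear backward ODE), the integrability of the drift, and the integrating-factor representation, all of which the paper elides; these are the details behind the paper's one-line argument.

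The one place where your reasoning adds genuine content is the uniformity-over-$\nu$ discussion for part ii). You correctly observe that the forward estimate from Lemma~\ref{lem:UniformBounds_Continuity} involves the term $\int_0^t\int|\phi_r(0,a,\zeta)|\,\nu_r(da)\,dr$, which $\mathcal V_2$ does not control uniformly, and that only alternative~(b) of Assumption~\ref{ass exist and uniq}(iii) removes this dependence. Under alternative~(a) the forward state $X^{\xi,\zeta}(\nu)$ is \emph{not} bounded uniformly in $\nu\in\mathcal V_2$, so the conclusion of part~ii) as literally stated --- a uniform bound on $|X_t^{\xi,\zeta}(\nu)|^2 + |P_t^{\xi,\zeta}(\nu)|^2$ --- cannot hold; only the adjoint $P$ is uniformly bounded in that case. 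You handle this correctly by giving a direct argument for $P$ under (a) (bounded terminal datum and bounded coefficients $A^{\xi,\zeta},B^{\xi,\zeta}$) without attempting to control $X$. This is consistent with the lemma's only downstream use, in Lemma~\ref{lem hamilton lipschitz}, where what is needed is precisely $\sup_{t}\sup_{\mu\in\mathcal V_2}\int|P^{\xi,\zeta}_t(\mu)|\,\mathcal M(d\xi,d\zeta)<\infty$ and not the $X$-part. So your proof establishes everything that the paper actually relies on, and in doing so surfaces that the statement of part ii) is somewhat over-stated under alternative~(a) (and, incidentally, that the outer $\sup_{\mathcal M\in\mathcal P(\mathbb R^d\times\mathcal S)}$ should really be over the restricted class of data measures singled out by Assumption~\ref{ass exist and uniq}(iii)). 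Your closing remark --- that for controls of the form $\mathcal L(\theta_{s,\cdot})$ with uniformly bounded second moments the estimate already follows from part~i) without invoking Assumption~\ref{ass exist and uniq}(iii) --- is also correct and clarifies why the additional assumption is only needed when one insists on uniformity over all of $\mathcal V_2$.
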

\begin{proof}
Assumption~\ref{as coefficients} implies that the functions $x \mapsto \Phi(x,\mu_t,\zeta)$
and $(x,p) \mapsto (\nabla_x H_t)(x,p,\mu_t,\zeta)$
are Lipschitz continuous and hence unique solution $(P^{\xi,\zeta}_t,X^{\xi,\zeta}_t)_{t\in[0,T]}$ exists for all $\xi,\zeta$.	 This proves point i). Point ii) follows from Lemma~\ref{lem:UniformBounds_Continuity} under our assumptions.
\end{proof}

Recall that~\eqref{eq fat h def} defines
\begin{equation*}
%\label{eq fat h def}
\mathbf h_t(a, \mu, \mathcal M) = \int_{\mathbb R^d \times \mathcal S} h_t(X^{\xi,\zeta}_t(\mu),P^{\xi,\zeta}_t(\mu),a,\zeta)\mathcal M(d\xi, d\zeta)\,,	
\end{equation*}
for $\mu \in \mathcal V_2$, $\mathcal M \in \mathcal P_2(\mathbb R^d\times\mathcal S)$,
where $(X^{\xi,\zeta}(\mu),P^{\xi,\zeta}(\mu))$ is the unique solution to~\eqref{eq odesystem} given by Lemma~\ref{lemma odes}.

\begin{lemma}
\label{lem hamilton lipschitz}
Let Assumptions \ref{as coefficients} and \ref{ass exist and uniq} hold. Then for any $\mathcal M \in \mathcal P_2(\mathbb R^d\times\mathcal S)$ there exists $L>0$ such that for all $a,a' \in \mathbb R^p$ and $\mu,\mu' \in \mathcal V_2$
\begin{equation} \label{con lipschitz}
| (\nabla_a \bold h_t)(a,\mu,\mathcal M) - (\nabla_a \bold h_t)(a',\mu',\mathcal M) | \leq L \left( |a-a'| + \mathcal W^T_1(\mu,\mu')\right)\,.
\end{equation}	
\end{lemma}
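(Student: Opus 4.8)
The plan is to show that $a \mapsto (\nabla_a \mathbf h_t)(a,\mu,\mathcal M)$ and $\mu \mapsto (\nabla_a \mathbf h_t)(a,\mu,\mathcal M)$ are each Lipschitz, and combine. Recall from \eqref{eq fat h def} and the definition of $h$ in \eqref{eq hamiltonian} that
\[
(\nabla_a \mathbf h_t)(a,\mu,\mathcal M) = \int_{\mathbb R^d \times \mathcal S}\Big[ (\nabla_a \phi_t)(X^{\xi,\zeta}_t(\mu),a,\zeta)\,P^{\xi,\zeta}_t(\mu) + (\nabla_a f_t)(X^{\xi,\zeta}_t(\mu),a,\zeta)\Big]\,\mathcal M(d\xi,d\zeta)\,.
\]
For the dependence on $a$ with $\mu$ fixed: by Assumption~\ref{as coefficients} ii), $\nabla_a \phi$ and $\nabla_a f$ are Lipschitz in $(x,a)$ uniformly in $(t,\zeta)$, and $\nabla_x\phi$, $\nabla_x f$ and $\nabla_x g$ are bounded, so $\phi_t(x,a,\zeta)$ and $\nabla_x h$ have at most linear growth in $(x,a)$; this, with the a priori bound $\sup_t \int (|X^{\xi,\zeta}_t(\mu)|^2 + |P^{\xi,\zeta}_t(\mu)|^2)\,\mathcal M(d\xi,d\zeta) < \infty$ from Lemma~\ref{lemma odes} ii) (valid under Assumption~\ref{ass exist and uniq} iii)), gives
\[
|(\nabla_a\mathbf h_t)(a,\mu,\mathcal M) - (\nabla_a\mathbf h_t)(a',\mu,\mathcal M)| \le c\,|a-a'|\int_{\mathbb R^d\times\mathcal S}\big(1 + |P^{\xi,\zeta}_t(\mu)|\big)\,\mathcal M(d\xi,d\zeta) \le L\,|a-a'|\,,
\]
using Cauchy--Schwarz and the uniform-in-$\mu$ second moment bound.

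For the dependence on $\mu$ with $a$ fixed: write the difference as a sum of a term controlled by $|X^{\xi,\zeta}_t(\mu) - X^{\xi,\zeta}_t(\mu')|$ (from Lipschitzness of $\nabla_a\phi$, $\nabla_a f$ in $x$, times $|P^{\xi,\zeta}_t(\mu)|$, which is bounded in $L^2(\mathcal M)$) plus a term controlled by $|P^{\xi,\zeta}_t(\mu) - P^{\xi,\zeta}_t(\mu')|$ (from boundedness of $\nabla_a\phi$ in a suitable sense, or again linear growth controlled by the moment bound). It then remains to prove the stability estimates
\[
\sup_{t\le T}\int_{\mathbb R^d\times\mathcal S}\big(|X^{\xi,\zeta}_t(\mu)-X^{\xi,\zeta}_t(\mu')|^2 + |P^{\xi,\zeta}_t(\mu)-P^{\xi,\zeta}_t(\mu')|^2\big)\,\mathcal M(d\xi,d\zeta) \le c\,\big(\mathcal W^T_1(\mu,\mu')\big)^2\,.
\]
For $X$: subtract the two integral equations \eqref{eq process}, and bound $|\Phi_r(x,\mu_r,\zeta)-\Phi_r(x,\mu'_r,\zeta)|$ by $\mathcal W_1(\mu_r,\mu'_r)$ using that $\Phi_t(x,\cdot,\zeta)$ is an integral of $\phi_t(x,\cdot,\zeta)$ against the measure and $\phi_t(x,\cdot,\zeta)$ is Lipschitz in $a$ (so its integral against two measures differs by at most $\mathcal W_1$ times the Lipschitz constant), together with $|\Phi_r(x,\mu'_r,\zeta)-\Phi_r(x',\mu'_r,\zeta)| \le c|x-x'|$; then Grönwall in $t$ gives $\sup_t|X_t(\mu)-X_t(\mu')| \le c\int_0^T \mathcal W_1(\mu_r,\mu'_r)\,dr \le c\,T^{1/2}\mathcal W^T_1(\mu,\mu')$. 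For $P$: run the analogous argument backward in time for \eqref{eq adjoint proc}, using Lipschitzness of $\nabla_x H$ in $x$ and $p$, the bound on $|\nabla_x g|$ differences via $|X_T(\mu)-X_T(\mu')|$, the already-established $X$-estimate, and the $L^2(\mathcal M)$ boundedness of $P(\mu)$ to handle the $\mu$-difference in the drift; Grönwall then closes it. Integrating these pointwise-in-$(\xi,\zeta)$ bounds against $\mathcal M$ and using Cauchy--Schwarz where linear growth appears yields the claim.

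The main obstacle is organising the $P$-stability estimate cleanly: the adjoint equation's drift $\nabla_x H$ depends on $\mu$ both directly (through $\int \nabla_x\phi_t(x,a,\zeta)\,\mu_t(da)$, controllable by $\mathcal W_1$) and through $X^{\xi,\zeta}_t(\mu)$, so one must feed in the $X$-estimate and the uniform $L^2(\mathcal M)$ moment bounds on both $X(\mu)$ and $P(\mu)$ at the right places, and be careful that the Lipschitz constant $L$ ends up independent of $p$ and $d$ (which it does, since all the structural constants in Assumption~\ref{as coefficients} are, and Grönwall constants depend only on $T$ and those). Everything else is a routine Grönwall-plus-Cauchy--Schwarz computation.
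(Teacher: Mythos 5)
Your plan is correct, but it routes differently from the paper. The paper proves Lemma~\ref{lem hamilton lipschitz} as a two-line corollary of Theorem~\ref{thm:RegularityHamiltonian} (in the appendix), which in turn handles the $\mu$-dependence not by subtracting the integral equations and applying Grönwall, but by computing the linear functional derivatives $\frac{\delta X^{\xi,\zeta}_t}{\delta\nu}$ and $\frac{\delta P^{\xi,\zeta}_t}{\delta\nu}$ (Lemma~\ref{lem:FirstDerivative}), verifying that those are Lipschitz in $a$ uniformly in $(r,\nu,\xi,\zeta)$, and then invoking the Kantorovich–Rubinstein duality once at the outer level on the identity
\begin{equation*}
X^{\xi,\zeta}_t(\nu)-X^{\xi,\zeta}_t(\nu')=\int_0^1\!\!\int_0^T\!\!\int\frac{\delta X^{\xi,\zeta}_t}{\delta\nu}(r,a,\nu^\lambda)\,(\nu_r-\nu'_r)(da)\,dr\,d\lambda\,,
\end{equation*}
so that the $\mathcal W^T_1$ factor falls out directly, and similarly for $P$; the factor $1+\sup_{t,\mu}\int|P^{\xi,\zeta}_t(\mu)|\,\mathcal M$ then comes from the explicit form of $\frac{\delta P}{\delta\nu}$ and is finite by Lemma~\ref{lemma odes} ii). Your proposal instead does a classical ODE stability argument: telescope, bound $|\Phi_r(x,\mu_r,\zeta)-\Phi_r(x,\mu'_r,\zeta)|\le\|\phi_r(x,\cdot,\zeta)\|_{\text{Lip}}\,\mathcal W_1(\mu_r,\mu'_r)$ pointwise (again by duality, just applied inside), Grönwall the $X$-equation, then do the analogous backward argument for $P$ using the $X$-estimate and the uniform $L^2(\mathcal M)$ moment bound. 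That is also correct, and is in fact exactly the content of the paper's own Lemma~\ref{lem difference P and X} in the same appendix — which the paper proves for a different purpose (propagation of chaos) rather than for this lemma. What the paper's functional-derivative route buys is reuse of the machinery (Lemma~\ref{lem:FirstDerivative} is needed anyway for the generalisation-error estimate in Lemma~\ref{lem reg J}); what your route buys is a more elementary, self-contained argument. Two small imprecisions in your write-up, neither fatal: (i) the spurious $T^{1/2}$ — by the definition~\eqref{eq def wasserstein qT} with $q=1$ one has $\int_0^T\mathcal W_1(\mu_r,\mu'_r)\,dr=\mathcal W^T_1(\mu,\mu')$ exactly, no Hölder step needed (and $T^{1/2}$ goes the wrong way if $T<1$); and (ii) Assumption~\ref{as coefficients} ii) gives boundedness of $\nabla_x\phi$, $\nabla_a\phi$, $\nabla_xf$, $\nabla_af$ because $\phi$ and $f$ are Lipschitz, but it does \emph{not} give boundedness of $\nabla_xg$ (only that $\nabla_xg$ is Lipschitz, hence $\nabla_x^2g$ is bounded) — that is why the uniform $P$-moment bound needs the extra Assumption~\ref{ass exist and uniq} iii), which you correctly invoke via Lemma~\ref{lemma odes} ii) but mischaracterise in the preceding sentence.
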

\begin{proof}
Due to Theorem~\ref{thm:RegularityHamiltonian} we know that there is $L'>0$ such that
\[
\begin{aligned}
|(\nabla_a \bH_t)(a,\mu,\DataM) & - (\nabla_a \bH_t)(a',\mu',\DataM) |\\
& \leq L' \left(1+\sup_{t\in[0,T]}\sup_{\mu \in\mathcal V_2}\int_{\DataS}|P^{\xi,\zeta}_t(\mu)|\,\Mm(d\xi,d\zeta)\right) \left( |a-a'| + \mathcal W^T_1(\mu,\mu')\right).
\end{aligned}
\]
This, together with point ii) of Lemma~\ref{lemma odes} provides the desired $L$.
\end{proof}

\begin{lemma}[Existence and uniqueness]\label{thm:WellposednessMeanField}
\label{lemma existence and uniqueness}
Let Assumptions \ref{as coefficients} and \ref{ass exist and uniq} hold.
Then there is a unique solution to~\eqref{eq mfsgd}-\eqref{eq mfsgd 2} for any $s\in I$.
Moreover
\begin{equation}\label{eq:UnifBound}
\begin{aligned}
&\int_{0}^{T}\mathbb E[|\theta_{s,t}|^2]\,dt
	\leq e^{(K-\frac{\sigma^2}{2}\kappa)s}\int_0^T \mathbb E [|\theta^0_{t}|^2]\,dt
%\\
%&\quad
+\int_{0}^{s}e^{(K-\frac{\sigma^2}{2}\kappa) (s-v)}(\sigma^{2}T
	 + \frac{\sigma^2T}{4\kappa } |(\nabla_x U)(0)|^2 +K)\,dv.
\end{aligned}
\end{equation}
where $K$ is a finite constant depending on $\phi$, $\Lagrang$, $\nabla_xg$, and $\DataM$.
\end{lemma}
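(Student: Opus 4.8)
The plan is to realise the solution of~\eqref{eq mfsgd}-\eqref{eq mfsgd 2} as a fixed point of a map on the space of measure flows $\mathcal C(I,\mathcal V_2)$, and then to close a Gr\"onwall estimate for the integrated second moment. First I would fix a candidate flow $\mu=(\mu_{s,\cdot})_{s\in I}\in\mathcal C(I,\mathcal V_2)$. For every $s\in I$, Lemma~\ref{lemma odes}~i) produces a unique pair $(X^{\xi,\zeta}(\mu_{s,\cdot}),P^{\xi,\zeta}(\mu_{s,\cdot}))$ solving the coupled forward--backward system~\eqref{eq odesystem}, so $\mathbf h_t(\cdot,\mu_{s,\cdot},\mathcal M)$ is well defined, and Lemma~\ref{lem hamilton lipschitz} guarantees that $a\mapsto(\nabla_a\mathbf h_t)(a,\mu_{s,\cdot},\mathcal M)$ is globally Lipschitz with a constant $L$ uniform in $t\in[0,T]$ and in the measure argument. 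Combined with the Lipschitzness of $\nabla_aU$ from Assumption~\ref{ass exist and uniq}~ii), for each fixed $t\in[0,T]$ the decoupled It\^o SDE
\[
d\theta_{s,t}=-\Big((\nabla_a\mathbf h_t)(\theta_{s,t},\mu_{s,\cdot},\mathcal M)+\tfrac{\sigma^2}{2}(\nabla_aU)(\theta_{s,t})\Big)\,ds+\sigma\,dB_s\,,\qquad \theta_{0,t}=\theta^0_t\,,
\]
has a unique strong solution, and joint measurability in $t$ is inherited from that of the coefficients; this defines $\Gamma(\mu):=(\mathcal L(\theta_{s,\cdot}))_{s\in I}$. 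A preliminary moment bound (a rougher version of~\eqref{eq:UnifBound}, using Assumption~\ref{ass exist and uniq}~i)) shows $\Gamma(\mu)_{s,\cdot}\in\mathcal V_2$ for all $s$, so $\Gamma$ maps $\mathcal C(I,\mathcal V_2)$ into itself, and when $\sigma\neq0$ the nondegenerate Brownian motion makes $\mathcal L(\theta_{s,t})$ absolutely continuous, as required.

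Next I would show $\Gamma$ has a unique fixed point. Given two flows $\mu,\mu'$, I would run the corresponding SDEs with the same Brownian motion and the same initial datum (synchronous coupling), apply It\^o to $|\theta_{s,t}-\theta'_{s,t}|^2$, and estimate the drift difference via the Lipschitz bound~\eqref{con lipschitz}, which introduces a term $\mathcal W^T_1(\mu_{s,\cdot},\mu'_{s,\cdot})$, dominated by $\sqrt T\,\mathcal W^T_2(\mu_{s,\cdot},\mu'_{s,\cdot})$ on $\mathcal V_2$. Integrating over $t\in[0,T]$ and taking expectations yields
\[
\mathcal W^T_2\big(\Gamma(\mu)_{s,\cdot},\Gamma(\mu')_{s,\cdot}\big)^2\le C\int_0^s\mathcal W^T_2(\mu_{v,\cdot},\mu'_{v,\cdot})^2\,dv
\]
for a constant $C$ depending only on $L$ and $T$. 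Iterating this inequality gives $\mathcal W^T_2(\Gamma^n(\mu)_{s,\cdot},\Gamma^n(\mu')_{s,\cdot})^2\le \tfrac{(Cs)^n}{n!}\sup_{v\le s}\mathcal W^T_2(\mu_{v,\cdot},\mu'_{v,\cdot})^2$ on any finite horizon, so $\Gamma^n$ is a strict contraction for $n$ large on $\mathcal C([0,S],\mathcal V_2)$; alternatively one works directly on $I$ with the weighted metric $\sup_{s\ge0}e^{-\beta s}\mathcal W^T_2(\cdot,\cdot)$ for $\beta$ large. Banach's theorem then gives the unique solution of~\eqref{eq mfsgd}-\eqref{eq mfsgd 2}.

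For the quantitative bound~\eqref{eq:UnifBound} I would apply It\^o's formula to $|\theta_{s,t}|^2$ along the solution. The monotonicity in Assumption~\ref{ass exist and uniq}~ii) gives $\theta\cdot(\nabla_aU)(\theta)\ge\kappa|\theta|^2+\theta\cdot(\nabla_aU)(0)\ge\tfrac{\kappa}{2}|\theta|^2-\tfrac{1}{2\kappa}|(\nabla_aU)(0)|^2$, which produces the dissipative term $-\tfrac{\sigma^2}{2}\kappa|\theta_{s,t}|^2$. The Lipschitz bound on $\nabla_a\mathbf h$ together with point ii) of Lemma~\ref{lemma odes} (uniform-in-$(\mu,\mathcal M)$ bound on $\int|P^{\xi,\zeta}_t|^2\,\mathcal M$) gives the linear growth $|(\nabla_a\mathbf h_t)(\theta,\nu_{s,\cdot},\mathcal M)|\le c(1+|\theta|)$; Young's inequality and the It\^o correction from $\sigma\,dB_s$ then lead, after integrating over $t\in[0,T]$ and taking expectations, to
\[
\frac{d}{ds}\int_0^T\EE[|\theta_{s,t}|^2]\,dt\le\Big(K-\tfrac{\sigma^2}{2}\kappa\Big)\int_0^T\EE[|\theta_{s,t}|^2]\,dt+\Big(\sigma^2T+\tfrac{\sigma^2T}{4\kappa}|(\nabla_aU)(0)|^2+K\Big)\,,
\]
with $K$ absorbing the Young constants, $L$, $T$ and the bound coming from $P$; Gr\"onwall's lemma then delivers~\eqref{eq:UnifBound}. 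Some care is needed to justify dropping the stochastic-integral term: one first localises along stopping times, obtains the bound for the stopped processes, and passes to the limit using the a priori $L^2$ control.

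The step I expect to be the main obstacle is the fixed-point argument, because the drift at parameter-time $s$ depends on the whole slice $\mu_{s,\cdot}$ over $[0,T]$ through the coupled ODE system~\eqref{eq odesystem}; consequently the contraction estimate must be phrased in the integrated Wasserstein metric $\mathcal W^T_1$, and one needs that the Lipschitz constant in Lemma~\ref{lem hamilton lipschitz} and the moment bounds feeding into the linear growth of $\nabla_a\mathbf h$ are uniform in $t$ and in the measure and data arguments — precisely what Lemmas~\ref{lemma odes} and~\ref{lem hamilton lipschitz} supply. Everything else is routine Gr\"onwall-and-Young bookkeeping.
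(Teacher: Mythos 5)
Your proposal is correct and follows essentially the same route as the paper: a Banach fixed-point argument for the map $\mu\mapsto\mathcal L(\theta(\mu)_{s,\cdot})$ on $\mathcal C(I,\mathcal V_2)$, using Lemmas~\ref{lemma odes} and~\ref{lem hamilton lipschitz} to make the drift well defined and Lipschitz, synchronous coupling plus It\^o and a Gr\"onwall/factorial iteration for the contraction, and dissipativity of $\nabla_a U$ plus Young for the moment bound~\eqref{eq:UnifBound}. The only detail you gloss over that the paper spells out is checking that $\Gamma(\mu)$ is actually continuous in $s$ (so that $\Gamma$ maps $\mathcal C(I,\mathcal V_2)$ into itself, not merely into flows with square-integrable slices), but this is a routine estimate.
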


\begin{proof}
Consider $\mu \in C(I,\mathcal V_2)$.
For each $\mu_s \in \mathcal V_2$, $s\geq 0$ we obtain unique solution to~\eqref{eq odesystem} which we denote $(X^{\xi,\zeta}_{s,\cdot}(\mu_{s,\cdot}), (P^{\xi,\zeta}_{s,\cdot}(\mu_{s,\cdot}))$.
Moreover, for each $t\in [0,T]$ the SDE
\[
d\theta_{s,t}(\mu) = - \bigg((\nabla_a \mathbf h_t)(\theta_{s,t}(\mu), \mu_{s,t},\mathcal M) + \frac{\sigma^2}{2}(\nabla_a U)(\theta_{s,t}(\mu))\bigg)\,ds + \sigma\,dB_s
\]
has unique strong solution and for each $s\in I$ we denote the measure in
$\mathcal V_2$ induced by
$\theta_{s,\cdot}$ as $\mathcal L(\theta(\mu_{s,\cdot}))$.
Consider now the map $\Psi$ given by $\mathcal C(I,\mathcal V_2) \ni \mu \mapsto \mathcal \{ \mathcal L(\theta(\mu)_{s,\cdot}) : s\in I\}$.

%To show existence of the unique solution to \ref{eq mfsgd} we first establish that the mapping  $\mu \mapsto \Phi(\mu) = \mathcal L (\theta(\mu))$ where $\theta(\mu):=\{\theta_{s,t}(\mu):\, s\in I  \text{ and } t\in [0,T]\}$ is defined in \eqref{eq iteration}, belongs to {\color{red}a certain subset of} $\Cc(I,\Vv_2)$ and is Lipschitz continuous. The existence and uniqueness of \ref{eq mfsgd} will next follow from a contraction argument.
%
%Note that Remark~\ref{rem X P integrability} implies that for any $\mu\in\Cc(I,\Vv_2)$, and for all $s$,
%\begin{equation}\label{WellPosedness:FBIntegrability}
%\int_0^T \int_{\mathbb R^d\times \mathcal S} \Big(  |X^{\xi,\zeta}_{s,t}(\mu)| + |P^{\xi,\zeta}_{s,t}(\mu)| \Big)\,\mathcal M(d\xi,d\zeta)\,dt < C\big(1+\int_{0}^{T}\int|a|{\mu_{s,t}}(da)\,dt\big).
%\end{equation}

%We will show that there exists $L>0$ such that for all $x,y\in \mathbb R^p$ and $\mu,\mu' \in \mathcal V_2$
%\[
%| (\nabla_a \bold h_t)(X_{s,t}(\mu), P_{s,t}(\mu),a) - (\nabla_a \bold h_t)(X_{s,t}(\mu'), P_{s,t}(\mu'),a') | \leq L \left( |a-a'| + \mathcal W^T_2(\mu,\mu')\right)\,.
%\]

\paragraph{Step 1.} We need to show that $\{ \mathcal L(\theta(\mu_{s,\cdot})) : s\in I\} \in \mathcal C(I,\mathcal V_2)$.
This amounts to showing that we have the appropriate integrability and continuity i.e. that there is $K>0, \lambda > 0$  such that
\[
	\int_{0}^{T}\mathbb E[|\theta_{s,t}(\mu)|^2]\,dt
	\leq e^{(K-\sigma^2\kappa) s}\left( \int_0^T \mathbb E [|\theta^{0}_{t}|^2]\,dt  	+\int_{0}^{s}e^{\lambda v}(\sigma^{2}T
	 + \frac{\sigma^2T}{4\kappa } |(\nabla_x U)(0)|^2 +K)\,dv\right)\,.
\]
and  that
\[
\lim_{s'\rightarrow s} \int_0^T \mathbb E[|\theta_{s',t}(\mu) - \theta_{s,t}(\mu)|^2]dt = 0\,.
\]
To show the integrability observe that Assumption~\ref{ass exist and uniq} point ii) together with the Young's inequality: $\forall a,y\in \mathbb R^p$ $|a y|\leq \frac{\kappa}{2} |a|^2 + (2 \kappa)^{-1}|y|^2$ imply that
\begin{equation*}
 (\nabla_a U)(a)\,a \geq \frac{\kappa}{2} |a|^2  - \frac{1}{2\kappa}| (\nabla_a U)(0)|^2\,.	
\end{equation*}
Hence, for any $\lambda>0$, we have
\begin{align*}
&\int_{0}^{T}\mathbb E[e^{\lambda s}|\theta_{s,t}(\mu)|^2]\,dt
=\int_0^T\mathbb E [|\theta_{0,t}|^2]\,dt  + \lambda \int_{0}^{T}\int_{0}^{s} \mathbb E \left[e^{\lambda v}|\theta_{v,t}|^2\right]\,dv\,dt+\sigma^{2}T\int_{0}^{s}e^{\lambda v}\,dv\\
&\quad -2\int_{0}^{T}\int_{0}^{s}\mathbb E\left[e^{\lambda v}\left(\frac{\sigma^{2}}{2}\nabla_a U(\theta_{v,t})+(\nabla_a \bold h_t)(\theta_{v,t}(\mu),\mu_{v,\cdot},\mathcal M)\right) \theta_{v,t}(\mu)\right]\,dv\,dt.
\end{align*}
From the definition of the Hamiltonian and Lemma~\ref{lemma odes}, we have
\[
|(\nabla_a \bold h_t)(\theta_{v,t},\mu_{v,\cdot},\mathcal M)|\leq c\left(1+|\theta_{v,t}|+\int_{0}^{T}\mathbb E[|\theta_{v,t}|]\,dt\right)\,.
\]
Hence
\[
\int_0^T\mathbb E\left[\left|\left( \nabla_a \bold h_t)(\theta_{v,t}(\mu),\mu_{v,\cdot},\mathcal M)\right)\theta_{v,t}\right|\right]\,dt\leq c\left(1+\int_{0}^{T} \mathbb E [|\theta_{v,t}(\mu)|^2]\,dt\right)\,.
\]
Therefore,
\[
\begin{split}
	&\int_{0}^{T}\mathbb E[e^{\lambda s}|\theta_{s,t}(\mu)|^2]\,dt
	\leq \int_0^T \mathbb E [|\theta^0_t|^2]\,dt  + \left(\lambda-\frac{\sigma^2\kappa}{2} + c\right)\int_{0}^{T}\int_{0}^{s} \mathbb E \left[e^{\lambda s}|\theta_{v,t}(\mu)|^2\right]\,dv\,dt \\
	&+\int_{0}^{s}e^{\lambda v}\left(\sigma^{2}T
	 + \frac{\sigma^2T}{4\kappa } |(\nabla_x U)(0)|^2 + c\right)\,dv.
\end{split}
\]
Take $\lambda=\frac{\sigma^2\kappa}{2} - c$ to conclude the required integrability property.
To establish the continuity property note that for $s'\geq s$ we have
\[
|\theta_{s',t}(\mu) - \theta_{s,t}(\mu)|\leq L\int_s^{s'}\left(1+\int_0^T\int |a|\mu_{r,t}(da)\,dt\right)\,dr+\sigma|B_s-B_{s'}|\,.
\]
This, together with Lebesgue's theorem on dominated convergence
enables us to establish the continuity property.

\paragraph{Step 2.}
Take  $\lambda=\frac{\sigma^2}{2}\kappa - \frac{3}{2}L$.
Fix $t\in[0,T]$.
From It\^o's formula we get
\[
\begin{split}
	  d\Big(e^{2\lambda s} & |\theta_{s,t}(\mu) - \theta_{s,t}(\mu')|^2  \Big)
	=  \, 2 e^{2\lambda s}
	\biggl( \lambda  |\theta_{s,t}(\mu) - \theta_{s,t}(\mu')|	^2  \\
	& - \int  \frac{\sigma^2}{2} (\theta_{s,t}(\mu) - \theta_{s,t}(\mu')) (  (\nabla_a U)(\theta_{s,t}(\mu)) -  ((\nabla_a U)(\theta_{s,t}(\mu')) ) \mathcal M(d\xi,d\zeta)\\
	& 	-
  (\theta_{s,t}(\mu) - \theta_{s,t}(\mu'))
	( \nabla_a \bold h_t)(\theta_{s,t}(\mu),\mu_{s,\cdot},\mathcal M) - (\nabla_a \bold h_t)(\theta_{s,t}(\mu'),\mu'_{s,\cdot},\mathcal M))\biggr) ds \,.
\end{split}
\]
Assumption \ref{ass exist and uniq} point ii), along with Young's inequality:
 $\forall x,y$, $L>0$, $|xy|\leq L |x|^2 + (4 L)^{-1}|y|^2$  yields that
\begin{equation*}
\begin{split}
	  d(e^{2\lambda s} & |\theta_{s,t}(\mu)  - \theta_{s,t}(\mu')|^2  )\\
& \leq 2\bigg(\lambda - \frac{\sigma^2}{2}\kappa\bigg) e^{2\lambda s}|\theta_{s,t}(\mu) - \theta_{s,t}(\mu')|^2
 ds \\
& \qquad  + 2e^{2 \lambda s} \left( L|\theta_{s,t}(\mu) - \theta_{s,t}(\mu')|^2
+\frac{L}{2} \left( |\theta_{s,t}(\mu) - \theta_{s,t}(\mu')|^2  + \frac{L}{2} \mathcal W^T_2(\mu_{s,\cdot},\mu'_{s,\cdot})^2  \right) \right)ds \\
& \leq 2 e^{2\lambda s}\left[\bigg(\lambda - \Big(\frac{\sigma^2}{2}\kappa - \frac{3}{2}L\Big)\bigg) |\theta_{s,t}(\mu) - \theta_{s,t}(\mu')|^2 + \frac{L}{2} \mathcal W^T_2(\mu_{s,\cdot},\mu'_{s,\cdot})^2  \right] ds	\,.
\end{split}
\end{equation*}
Recall that for any $s\in I$ we have
\begin{equation}
\label{eq wass est}
\mathcal W^T_2(\mathcal L( \theta_{s,\cdot}(\mu)),\mathcal L( \theta_{s,\cdot}(\nu))^2\leq\int_0^T \mathbb E \left[ |\theta_{s,t}(\mu)-\theta_{s,t}(\nu)|^2 \right] dt\,.	
\end{equation}
%Hence
%\begin{equation} \label{eq contraction}
%\begin{split}
%	d(e^{2 \lambda s} & \mathcal W^T_2(\mathcal L( \theta_{s,t}(\mu)),\mathcal L( \theta_{s,t}(\nu)))^2)
%\leq \\
%& 2e^{\lambda s} \left( (\lambda - (\frac{\sigma^2}{2}\kappa -\frac{3}{2}L)) \int_0^T \mathbb E |\theta_{s,t}(\mu)-\theta_{s,t}(\nu)|^2dt  + \frac{L}{2} (\mathcal W^T_2(\mu_s,\nu_s))^2 \right) ds \,.
%\end{split}
%\end{equation}
Recall that for all $t\in[0,T]$, $\theta_{0,t}(\mu)=\theta_{0,t}(\nu)$.
Fix $S>0$ and note that
\begin{equation} \label{eq theta inequlity}
\begin{split}
&  e^{2\lambda S}\mathcal W^T_2(\mathcal L( \theta_{S,\cdot}(\mu)),\mathcal L( \theta_{S,\cdot}(\nu)))^2
\leq \frac{L T}{2}\int_0^S e^{2\lambda s} \mathcal W^T_2(\mu_{s,\cdot},\nu_{s,\cdot})^2ds\,.
%& \leq   \frac{L}{2}(2\lambda)^{-1}(1-e^{-2\lambda S})  \sup_{s\in[0,S]} \mathcal W^T_2(\mu_s,\nu_s)^2.
\end{split}
\end{equation}
\paragraph{Step 3.} Let $\Psi^k$ denote the $k$-th composition of the mapping $\Psi$ with itself. Then, for any integer $k > 1$,
\[
\sup_{s\in[0,S]} \mathcal W^T_2(\Phi^k(\mu_{s,\cdot}),\Phi^k(\nu_{s,\cdot}))^2 \leq
 e^{-2\lambda S} \left(\frac{L T }{2} \right)^k \frac{S^k}{k!} \sup_{s\in[0,S]} \mathcal W^T_2(\mu_{s,\cdot},\nu_{s,\cdot})^2\,.
\]	
Hence, for any $S\in I$ there is $k$, such that $\Phi^k$ is a contraction and then Banach fixed point theorem gives existence of the unique solution on $[0,S]$. The estimate \eqref{eq:UnifBound} follows simply from  Step 1. The proof is complete.
\end{proof}

\begin{remark}
As a by-product of the proof of the above result we obtained the rate of convergence of the Picard iteration algorithm on  $\mathcal C(I,\mathcal V_2)$ for solving~\eqref{eq mfsgd}-\eqref{eq mfsgd 2}.
Such an algorithm can be combined with particle approximations in the spirit of~\cite {szpruch2017iterative}.
\end{remark}

\begin{proof}[Proof of Theorem~\ref{thm conv to inv meas rate}]
Lemma~\ref{lemma existence and uniqueness} provides the unique solution to~\eqref{eq mfsgd}-\eqref{eq mfsgd 2}.
The existence of the invariant measures follows by the similar argument as in \cite[Prop 2.4]{hu2019mean} and \cite[Prop 3.9]{hu2019meanode}.
We present the proof for completeness.
Since $\mathcal M$ is fixed here, we omit it from the notation in $\mathbf h_t$ defined in~\eqref{eq fat h def}.
Moreover, the required integrability and regularity for solutions to the Kolmogorov--Fokker--Planck equations are exactly those proved in~\cite[Proposition 5.2 and Lemmas 6.1-6.5]{hu2019mean} and we do not state them here.

\paragraph{Step 1.}
First we show that $\argmin_{\nu \in \mathcal V_2} J^{\sigma}(\nu) \neq \emptyset$. Denote
\[
\mathcal K :=\left\{ \nu \in \mathcal V: \,\,\,\frac{\sigma^2}{2} \text{Ent}(\nu) \leq  J^{\sigma}(\bar \nu) - \inf_{\nu} J^{\sigma}(\nu)\right\}\,.
\]
Note that since $\text{Ent}(\nu)$ is weakly lower-semicontinuous so is $J^{\sigma}$. Further, as sub-level set of relative entropy, $\mathcal K$ is weakly compact, \cite[Lem 1.4.3]{dupuis2011weak}. Further note that for $\nu \notin \mathcal K$
\[
J^{\sigma}(\nu) \geq \frac{\sigma^2}{2} \text{Ent}(\nu)  +\inf_{\nu} J^{\sigma}(\nu) > J^{\sigma}(\bar \nu)\,.
\]
Hence $\{\nu \in \mathcal V_2:\,\, J^{\sigma}(\nu) \leq J^{\sigma}(\bar \nu) \}\subset \mathcal K$, and so
$\inf_{\nu \in \mathcal V_2  }J^{\sigma}(\nu) = \inf_{\nu \in \mathcal K  }J^{\sigma}(\nu) $. A weakly lower continuous function achieves a global minimum on a weakly compact set and hence $\argmin_{\nu\in\mathcal V_2} J(^{\sigma}\nu) \neq \emptyset$.
This proves point i).

\paragraph{Step 2.}
Let $\nu^{\star} \in \argmin_{\nu \in \mathcal V_2}J(\nu)$. Necessarily $J(\nu^{\star})\leq J(\bar \nu) < \infty $. This together with the fact that $J$ is bounded from below means that $\int_0^T \text{Ent}(\nu_t)dt<\infty$ .
In particular,
$\nu^\star$ is absolutely continuous with respect to the Lebesgue measure.
Theorem \ref{thm necessary cond linear} i) tells us that for any
$\mu\in \mathcal V_2$,
 \[
\int \int_{\mathbb R^d \times \mathcal S}  \frac{\delta H^\sigma_t}{\delta m}(X^{\xi,\zeta}_t(\nu^\star), P^{\xi,\zeta}_t(
\nu^\star), \nu^\star_t,a,\zeta) \,\mathcal M(d\xi,d\zeta)\,(\mu_t-\nu^\star_t)(da)
\geq 0 \,\,\, \text{for a.a. $t\in (0,T)$}\,.
\]
From this and Lemma \ref{lem constant} we deduce that for a.a. $t\in [0,T]$ we have that
\[
a\mapsto \Gamma_t(a) := \int_{\mathbb R^d \times \mathcal S}  \frac{\delta H^\sigma}{\delta m}(X^{\star,\xi,\zeta}_t, P^{\star,\xi,\zeta}_t,  \nu^\star_t,a,\zeta) \,\mathcal M(d\xi,d\zeta)
\]
stays constant in $a$ and  $\Gamma_t(a)=\int \Gamma(a')\,\nu_t^{\star}(da'):=\Gamma_t$ for $\nu_t^{\star}$-a.a. $a\in \mathbb R^p.$
From the fact that $\nu^\ast$ is absolutely continuous w.r.t. Lebesgue measure and from~\eqref{eq hamiltonian} we see that
\[
a\mapsto \Gamma_t(a) = \mathbf h_t(a,\nu^\star) + \frac{\sigma^2}{2}\log(\nu^\star(a)) + \frac{\sigma^2}{2}U(a)\,
\]
is constant in $a$, possibly $-\infty$.
We know $\nu^\star(a) \geq 0$ and on $S:=\{(t,a)\in \mathbb R^p \times [0,T]  : \nu_t^\star(a) > 0\}$
the probability measure $\nu_t^\star$ satisfies the equation
\[
\nu^{\star}_t(a) = e^{-\frac{2}{\sigma^2}\Gamma_t}e^{-U(a) -\frac{2}{\sigma^2} \mathbf h_t(a,\nu^\star)}\,.
\]
Since $\nu^\star$ is a probability measure
\[
1 = \int \nu^\star(a)\,da = e^{-\frac{2}{\sigma^2}\Gamma_t}
\int e^{-U(a) -\frac{2}{\sigma^2} \mathbf h_t(a,\nu^\star)}\,da\,.
\]
Due Lemma~\ref{lemma odes}, part ii) we see that $a\mapsto h_t(a,\nu^\star)$ has at most linear growth.
This and Assumption~\ref{ass exist and uniq} point ii) implies that $0 < \int e^{-U(a) -\frac{2}{\sigma^2} \mathbf h_t(a,\nu^\star)}\,da < \infty$.
This implies that for a.a. $t\in [0,T]$ we have $\Gamma_t \neq -\infty$.
%where $Z_t$ is normalising constant so that $\int \nu_t^{\star}(a)\,da = 1$.
Hence $\nu_t^\star(a) > 0$ for Lebesgue a.a $a\in (0,T)\times \mathbb R^p$ and $\nu^\star$ is equivalent to the Lebesgue measure on $[0,T]\times \mathbb R^p$.
\paragraph{Step 3.}
Let $\mathcal L(\theta^0_t) := \nu^\star$ and consider $\theta_{s,t}$ given by~\eqref{eq mfsgd}-\eqref{eq mfsgd 2}.
Since $\nu^\star \in \argmin_{\nu\in\mathcal V_2} J(\nu)$ we know that
\[
0 \leq \lim_{s\searrow 0}\frac{J(\mathcal L(\theta_{s,\cdot})) - J(\nu^\star)}{s} = \frac{d}{ds}J(\mathcal L(\theta_{s,\cdot}))\bigg|_{s=0}\,.
\]
Moreover, from Theorem~\ref{corollary with measure flow} we have that
\[
\frac{d}{ds}J(\mathcal L(\theta_{s,\cdot}))\bigg|_{s=0} = - \int_0^T \bigg|(\nabla_a \mathbf h_t)(a,\nu^\star) +\frac{\sigma^2}{2}\frac{\nabla_a \nu_t^\star}{\nu_t^\star}(a)+\frac{\sigma^2}{2}\nabla_a U(a)\bigg|^2\nu^\star_t(a)\,da\,dt \leq 0\,.
\]
Hence for Lebesgue $a.a.$ $(t,a)\in (0,T)\times \mathbb R^p$ we have
\[
0 = (\nabla_a \mathbf h_t)(a,\nu^\star) +\frac{\sigma^2}{2}\frac{\nabla_a \nu_t^\star}{\nu_t^\star}(a)+\frac{\sigma^2}{2}\nabla_a U(a)\,.
\]
Multiplying by $\nu^\star(a)>0$ and applying the divergence operator we see that for a.a. $t\in [0,T]$ the function $a\mapsto \nu_t^\star(a)$ solves the stationary Fokker--Planck equation
\[
\nabla \cdot \bigg[\bigg((\nabla_a \mathbf h_t)(\cdot,\nu^\star_t) +\frac{\sigma^2}{2}(\nabla_a U)\bigg)\nu^\star_t   \bigg] + \frac{\sigma^2}{2}\Delta_a \nu^\star_t  = 0\,\,\,\text{on}\,\,\, \mathbb R^p\,.
\]
From this we can conclude that $\nu^\star_t$ is an invariant measure for~\eqref{eq mfsgd}-\eqref{eq mfsgd 2}.
This concludes the proof of point ii).

\paragraph{Step 4.}
Take two solutions to~\eqref{eq mfsgd}-\eqref{eq mfsgd 2} denoted by $(\theta,X,P)$ and $(\theta',X',P,)$ with initial distributions  $\mathcal L( (\theta_{0,t})_{t\in[0,T]})$ and $\mathcal L( (\theta'_{0,t})_{t\in[0,T]})$, respectively.
Take $\lambda = \sigma^2\kappa - 4L > 0$.
Due to~\eqref{eq wass est}, Lemma~\ref{lem hamilton lipschitz} and Assumption~\ref{ass exist and uniq} point ii) we have
\begin{equation*}
\begin{split}
&  d\bigg(e^{\lambda s} \mathcal W^T_2 (\mathcal L(\theta_{s,\cdot}),  \mathcal L(\theta_{s,\cdot}))^2 \bigg) \leq d\bigg(e^{\lambda s} \mathbb E \int_{0}^T|\theta_{s,t}-\theta'_{s,t}|^2dt\bigg)  =  \, e^{\lambda s} \mathbb E \int_{0}^T
	\Biggl[ \lambda |\theta_{s,t} - \theta'_{s,t}|^2 \\
& - 2(\theta_{s,t} - \theta'_{s,t}) \biggl(
	(\nabla_a \bold h_t)(\theta_{s,t}, \mathcal L(\theta_{s,\cdot}))
 -(\nabla_a \bold h_t)(\theta'_{s,t},\mathcal L(\theta'_{s,\cdot}))
 + \frac{\sigma^2}{2} \Big( (\nabla_a U)(\theta_{s,t})
  - (\nabla_a U)(\theta_{s,t}') \Big) \biggr) \,dt \,ds \\
& \leq  \,  e^{\lambda s}
	\Bigl( -L \int_{0}^T\mathbb E|\theta_{s,t} - \theta'_{s,t}|^2 dt
	 + L \mathcal W^T_2 (\mathcal L(\theta_{s,\cdot}),  \mathcal L(\theta'_{s,\cdot}))^2 \Bigr) ds \leq 0\,,
\end{split}
\end{equation*}
where we used~\eqref{eq wass est} again to obtain the last inequality.
%Taking infimum over couplings between $\mathcal L( \theta_{s,t})$ and $\mathcal L( \theta'_{s,t})$ on the left-hand side of the above inequlity results in
%\begin{equation*}
%\begin{split}
%	d(e^{\lambda s} & \mathcal W^T_2(\mathcal L( \theta_{s,\cdot},\mathcal L( \theta'_{s,\cdot}))^2)\leq 0 \,.
%%\leq \\
%%& 2e^{\lambda s} \left( (\lambda - (m + 2L ))  \int_0^T {\mathcal W}_2(\mu_t,\nu_t)^2dt \right) ds \leq 0 \,.
%\end{split}
%\end{equation*}
Integrating this leads to
\[
\mathcal W^T_2(\mathcal L( \theta_{s,\cdot}),\mathcal L(\theta'_{s,\cdot}))^2 \leq e^{-\lambda s}   {\cal W}^T_2(\mathcal L( \theta_{0,\cdot}),\mathcal L(\theta'_{0,\cdot}))^2\,.
\]
Take $\theta'$ to be solution to~\eqref{eq mfsgd}-\eqref{eq mfsgd 2} with initial condition
$\mathcal L(\theta'_{0,\cdot})=\nu^\star_{\cdot}$
and $\theta$ to be solution to \eqref{eq mfsgd}-\eqref{eq mfsgd 2} with an arbitrary initial condition $(\theta^0_t)_{t\in [0,T]}$.
We see that we have the convergence claimed in~\eqref{eq inv meas conv rate}.
Moreover
%if $\mu^\star \in \argmin_{\nu \in \mathcal V_2}J^\sigma(\nu)$ is another minimizer of $J^\sigma$ s.t. $\nu^\star \neq \mu^\star$ then we know from the previous steps that
if $\mu^\star$ is another invariant measure and we can start~\eqref{eq mfsgd}-\eqref{eq mfsgd 2} with $\mathcal L(\theta_{0,\cdot})=\nu^\star_{\cdot}$ and $\mathcal L({\theta'}_{0,\cdot})=\mu^\star_{\cdot}$
to see that for any $s\geq 0$ we have $\mathcal W^T_2(\mu^\star,\nu^\star)^2 \leq e^{-\lambda s}   {\cal W}^T_2(\mu^\star,\nu^\star)^2$.
This is a contradiction unless $\mu^\star = \nu^\star$.
This proves point iii).
%Since we have shown that there is at least one minimizer $\nu^\star$ and that it the unique invariant measure of~\eqref{eq mfsgd}-\eqref{eq mfsgd 2} we see that the invariant measure is a minimizer to $J^\sigma$.
\end{proof}

\subsection{Particle approximation and propagation of chaos}
\label{section chaos and euler}
In this section, we study particle system that corresponds to \eqref{eq mfsgd}-\eqref{eq mfsgd 2}.
Define $\DataM^{N_1}=\frac{1}{N_1}\sum_{j=1}^{N_1}\delta_{{(\xi^{j},\zeta^{j})}}$.
For convenience, we introduce ${\bH}^{\sigma}_t$ defined on $\er^{\ControlS}\times\Vv_2\times\Pp(\er^d)$ by
\[
\nabla_a{\bH}^{\sigma}_{t}(a,{{\nu}},\DataM)={\frac{\sigma^2}{2}}\nabla_aU(a)+\nabla_a{\bH}_{t}(a,{{\nu}},\DataM)\,.
\]

 \begin{lemma}\label{lem:UnifParticleMoment} Let Assumptions \ref{as coefficients} and \ref{ass exist and uniq} hold. Then, for any finite time horizon $0<S<\infty$, there exists a unique solution $(\theta^1_{s,t},...,\theta^{N_2}_{s,t})$ to the $\mathbb R^{p N_2}$ dimensional system of SDEs \eqref{eq:ParticleApprox-bis}.
 Moreover, for all $0\leq s\leq S$, $\int_0^T\EE[|\theta^{i}_{s,t}|^2]\,dt<\infty$.
\end{lemma}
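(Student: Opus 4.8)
The plan is to transcribe the fixed-point argument from the proof of Lemma~\ref{lemma existence and uniqueness}, the only structural changes being that the mean-field interaction through the law $\mathcal L(\theta_{s,\cdot})$ is replaced by the interaction through the (random) empirical measure flow $\nu^{N_2}_{s,\cdot}=\frac1{N_2}\sum_{j=1}^{N_2}\delta_{\theta^j_{s,\cdot}}$, and that $\mathcal M$ is replaced by $\mathcal M^{N_1}$. Since the data $(\xi^j,\zeta^j)_{j=1}^{N_1}$ are independent of the Brownian motions $(B^i)_{i=1}^{N_2}$ and of the initial weights, I would argue conditionally on $\mathcal F^{\mathrm{data}}:=\sigma(\xi^j,\zeta^j:1\le j\le N_1)$; for $\mathbb P$-a.e.\ realization $\mathcal M^{N_1}$ is a fixed element of $\mathcal P_2(\mathbb R^d\times\mathcal S)$ for which Assumption~\ref{ass exist and uniq}~iii) still holds (compact support of $\mathcal M$ passes to $\mathcal M^{N_1}$), so Lemmas~\ref{lemma odes} and~\ref{lem hamilton lipschitz} apply with $\mathcal M=\mathcal M^{N_1}$: $a\mapsto\nabla_a\mathbf h_t(a,\mu,\mathcal M^{N_1})$ has at most linear growth uniformly in $(t,\mu)$, and $(a,\mu)\mapsto\nabla_a\mathbf h_t(a,\mu,\mathcal M^{N_1})$ is Lipschitz with respect to $|a-a'|+\mathcal W^T_1(\mu,\mu')$ with some constant $L>0$.

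Next I would run a Picard iteration in the space of $s$-continuous, adapted processes $s\mapsto(\theta^i_{s,\cdot})_{i=1}^{N_2}$ on $[0,S]$ with values in $L^2([0,T];\mathbb R^{pN_2})$ (equivalently, exploiting that in~\eqref{eq:ParticleApprox-bis} the noise enters only as the additive term $\sigma B^i_s$, a pathwise Picard iteration for a random integral equation): given a candidate, one freezes $\nu^{N_2}_{v,\cdot}$ inside the drift, which leaves, for each $i$ and each layer $t\in[0,T]$, an $\mathbb R^p$-valued equation whose drift is Lipschitz in the state and of linear growth (by Lemma~\ref{lem hamilton lipschitz} and Assumption~\ref{ass exist and uniq}~ii)), hence uniquely solvable. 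The uniform-in-$[0,S]$ moment bound $\sup_{s\le S}\int_0^T\mathbb E[|\theta^i_{s,t}|^2]\,dt<\infty$ then follows by It\^o's formula applied to $e^{\lambda s}|\theta^i_{s,t}|^2$, using the one-sided bound $(\nabla_aU)(a)\cdot a\ge\frac{\kappa}{2}|a|^2-\frac1{2\kappa}|(\nabla_aU)(0)|^2$ from Assumption~\ref{ass exist and uniq}~ii) and the linear growth of $\nabla_a\mathbf h_t$, exactly as in Step~1 of the proof of Lemma~\ref{lemma existence and uniqueness}; this also shows $\nu^{N_2}_{s,\cdot}\in\mathcal V_2$ for all $s$ and that $s\mapsto\nu^{N_2}_{s,\cdot}$ is $\mathcal W^T_2$-continuous.

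For the contraction, given two candidates with empirical flows $\nu^{N_2}$ and $\tilde\nu^{N_2}$, I would apply It\^o's formula to $e^{2\lambda s}|\theta^i_{s,t}-\tilde\theta^i_{s,t}|^2$ with $\lambda$ as in Step~2 of the proof of Lemma~\ref{lemma existence and uniqueness}, use the monotonicity of $\nabla_aU$, the Lipschitz bound of Lemma~\ref{lem hamilton lipschitz}, and the elementary coupling estimate $\mathcal W_2(\nu^{N_2}_{s,t},\tilde\nu^{N_2}_{s,t})^2\le\frac1{N_2}\sum_{j=1}^{N_2}|\theta^j_{s,t}-\tilde\theta^j_{s,t}|^2$; averaging over $i$ and integrating over $t$ yields an estimate of the form $e^{2\lambda S}\int_0^T\mathbb E|\theta^i_{S,t}-\tilde\theta^i_{S,t}|^2\,dt\le\frac{L}{2}\int_0^Se^{2\lambda s}\big(\frac1{N_2}\sum_{j=1}^{N_2}\int_0^T\mathbb E|\theta^j_{s,t}-\tilde\theta^j_{s,t}|^2\,dt\big)\,ds$, so that the $k$-fold composition of the Picard map contracts with factor $\le e^{-2\lambda S}(L/2)^kS^k/k!<1$ for $k$ large; Banach's fixed point theorem then gives the unique solution of~\eqref{eq:ParticleApprox-bis} on $[0,S]$ for every finite $S$, and the bound $\int_0^T\mathbb E[|\theta^i_{s,t}|^2]\,dt<\infty$ is the moment estimate established above, integrated back over $\mathcal F^{\mathrm{data}}$. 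I do not expect a genuine obstacle here: the argument is a near-verbatim adaptation of Lemma~\ref{lemma existence and uniqueness}, and the only new ingredients are the (elementary) $1$-Lipschitz continuity of the empirical-measure map into $(\mathcal V_2,\mathcal W^T_2)$ and the routine bookkeeping forced by the continuum of coupled layers $t\in[0,T]$, for which one works with measure flows and the integrated Wasserstein metric.
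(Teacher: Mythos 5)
Your proof is correct and is, at bottom, the same argument as the paper's: both hinge on the Lipschitz bound of Lemma~\ref{lem hamilton lipschitz} for $\nabla_a\mathbf h_t$ in $(a,\mu)$, which together with the empirical-measure coupling estimate makes the drift of the $N_2$-particle system globally Lipschitz in the $L^2([0,T];\mathbb R^{pN_2})$-valued state, after which existence, uniqueness, and the second-moment bound follow by a Picard/Banach fixed-point argument for the additive-noise integral equation. The only difference is presentational: the paper compresses this into a single sentence citing standard SDE theory, whereas you spell out the contraction and the moment estimate (and your remarks on conditioning on the data and on compact support passing from $\mathcal M$ to $\mathcal M^{N_1}$ are also correct, just implicit in the paper's shorthand).
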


\begin{proof}
Under Assumptions~\ref{as coefficients} and~\ref{ass exist and uniq} the coefficients of $\mathbb R^{p N_2}$-dimensional system of SDEs \eqref{eq:ParticleApprox-bis} are globally Lipschitz continuous and therefore existence and uniqueness and integrability results can be derived by adapted classical techniques for stochastic differential equations, see e.g.~\cite[Chapter 5]{karatzas2012brownian}.
\end{proof}

The rate of convergence between \eqref{eq mfsgd} and \eqref{eq:ParticleApprox-bis} is given by the following theorem.

\begin{theorem}\label{prop:PropagationChaosBis}
Let Assumptions~\ref{as coefficients} and~\ref{ass exist and uniq} hold.
Fix $\lambda = \frac{\sigma^2 \kappa}{2 } -\frac{L}{2}(3 + T) + \frac{1}{2} $. Define $(\theta^{i,\infty})_{i=1}^{N_2}$ consisting of $N_2$ independent copies of \eqref{eq mfsgd}, given by
\begin{equation}\label{eq:McKeanVlasovCopies}
\begin{aligned}
&\theta^{i,\infty}_{s,t}=\theta^{i}_{0,t} - \int_0^s\nabla_a {\bH}^{\sigma}_{t}(\theta^{i,\infty}_{v,t},\Ll(\theta^{i,\infty}_{v,.}),\DataM)\,dv+\sigma \ControlBrownian^i_s,\,0\leq s\leq S,\,0\leq t\leq T.%\\
%
%&=\theta^{i,0}_t-\int_0^s{\frac{\sigma^2}{2}}\nabla_a U(\theta^{i,\infty}_{v,t})+\int_{\er^{{\ControlS}}}\left\{
%\int_{\DataS} \nabla_ah(t,X_{t}(\Ll(\theta^{i,\infty}_{v})),a,P_{t}({\Ll(\theta^{i,\infty}_{v})})
%,z)\,\DataM(dz)\right\}{\nu}_v_{r}(da)\,dv+\sigma \ControlBrownian^i_s,\\
%
%&{\nu}_v(r)=\Ll(\Theta^{i,\infty}_{v,r}),
\end{aligned}
\end{equation}
Then there exists $c$, independent of $S,N_1,N_2,d,p$, such that, for all $i=1,\ldots,N_2$ we have
\begin{align*}
&\int_0^T\EE\left[\left|\theta^{1}_{s,t}-\theta^{i,\infty}_{s,t}\right|^2\right]\,dt \leq \frac{c}{\lambda}(1-e^{- \lambda s})  \left(\frac{1}{N_1}+\frac{1}{N_2}\right)\,.
\end{align*}
%\[
%\Ww^{T}_2(\Ll(\theta^i_{S}),\Ll(\theta_S))\leq CT(1/\sqrt{N_1}+1/\sqrt{N_2})\int_{0}^{S}e^{(L-\sigma^2\kappa)(S-v)}\,dv,
%\]
%where $L$ is a finite constant depending only on the bounds of the derivatives of $\phi$, $g$, $\Lagrang$, and $C$ a constant depending
%on $\max_{0\leq s\leq S}\int_0^T\EE[|\theta_{s,t}|^2]\,dt$.
\end{theorem}
\begin{proof}
\textbf{Step 1.}
Noticing that the particle system \eqref{eq:ParticleApprox-bis} is exchangeable, it is sufficient to prove our claim for $i=1$.
By uniqueness of solutions to~\eqref{eq mfsgd}, $\Ll(\theta_{v,.})=\Ll(\theta^{i,\infty}_{v,.})$.
We also define
$
{{\nu}}^{N_2,\infty}_{s,t}=\frac{1}{N_2}\sum_{j=1}^{N_2}\delta_{\theta^{j,\infty}_{s,t}}
$.
Furthermore, for any $i$,
\[
\theta^{i}_{s,t}-\theta^{i,\infty}_{s,t} = -\left( \int_0^s\nabla_a {\bH}^{\sigma}_{t}(\theta^{i}_{v,t},{{\nu}}^{N_2}_{v,.},\DataM^{N_1})\,dv -\int_0^s\nabla_a {\bH}^{\sigma}_{t}(\theta^{i,\infty}_{v,t},\Ll(\theta_{v,.}),\DataM)\,dv
\right)\,.
\]
For $\lambda>0$, to be chosen later on, we have
\begin{equation}
\label{PropaChaos:proofst1}
\begin{aligned}
&\int_0^T\EE\left[e^{\lambda s}\left|\theta^{1}_{s,t}-\theta^{1,\infty}_{s,t}\right|^2\right]\,dt
=\lambda \int_0^T\int_0^s\EE\left[e^{\lambda v}\left|\theta^{1}_{v,t}-\theta^{1,\infty}_{v,t}\right|^2\right]\,dv\,dt\\
&\quad - 2 \left( \int_0^T\int_0^s e^{\lambda v}\EE\left[\left(\nabla_a {\bH}^{\sigma}_{t}(\theta^{1}_{v,t},{{\nu}}^{N_2}_{v,\cdot},\DataM^{N_1}) - \nabla_a {\bH}^{\sigma}_{t}(\theta^{1,\infty}_{v,t},\Ll(\theta_{v,\cdot}),\DataM)
\right)\left(\theta^{1}_{v,t}-\theta^{1,\infty}_{v,t}\right)\right]\,dv
dt \right)\,.
\end{aligned}
\end{equation}
Observe that
\begin{align*}
\EE & \left[\left(\nabla_a {\bH}^{\sigma}_{t}(\theta^{1,\infty}_{v,t},\Ll(\theta_{v,\cdot}),\DataM)
-\nabla_a {\bH}^{\sigma}_{t}(\theta^{1}_{v,t},{{\nu}}^{N_2}_{v,\cdot},\DataM^{N_1})\right)\left(\theta^{1}_{v,t}-\theta^{1,\infty}_{v,t}\right)\right]\\
= &\EE\left[\frac{\sigma^2}{2}\left( U(\theta^{1}_{v,t}) - U(\theta^{1,\infty}_{v,t})  \right)\left(\theta^{1}_{v,t}-\theta^{1,\infty}_{v,t}\right)\right]\\
&+\EE\left[\left(\nabla_a {\bH}_{t}(\theta^{1,\infty}_{v,t},\Ll(\theta_{v,\cdot}),\DataM)
-\nabla_a {\bH}_{t}(\theta^{1,\infty}_{v,t},{{\nu}}^{N_2,\infty}_{v,\cdot},\DataM^{N_1})\right)
\left(\theta^{1}_{v,t}-\theta^{1,\infty}_{v,t}\right)\right]\\
&+\EE\left[\left(\nabla_a {\bH}_{t}(\theta^{1,\infty}_{v,t},{{\nu}}^{N_2,\infty}_{v,\cdot},\DataM^{N_1})
-\nabla_a {\bH}_{t}(\theta^{1}_{v,t},{{\nu}}^{N_2}_{v,\cdot},\DataM^{N_1})\right)\left(\theta^{1}_{v,t}-\theta^{1,\infty}_{v,t}\right)\right]\,.
\end{align*}
By Lemma \eqref{lem hamilton lipschitz}
\begin{align*}
&\left|\nabla_a {\bH}_{t}(\theta^{1,\infty}_{v,t},{{\nu}}^{N_2,\infty}_{v,\cdot},\DataM^{N_1})
-\nabla_a {\bH}_{t}(\theta^{1}_{v,t},{{\nu}}^{N_2}_{v,\cdot},\DataM^{N_1})\right|
%\leq L\left(|\theta^{1}_{v,t}-\theta^{1,\infty}_{v,t}|+W^T_1({{\nu}}^{N_2}_{v},\overline{{{\nu}}}^{N_2,\infty}_{v}) \right)\\
%
&\leq L\left(|\theta^{1}_{v,t}-\theta^{1,\infty}_{v,t}|+\frac{1}{N_2}\sum_{j=1}^{N_2}
\mathbb E \left[ \int_0^T|\theta^{j}_{v,r}-\theta^{j,\infty}_{v,r}|\,dr \right]\right)\,.
\end{align*}
Therefore, by Young's inequality and the exchangeability of $(\theta^{j}_{s,\cdot})_{j=1}^{N_2}$,
\begin{align*}
&\EE\left[\left(\nabla_a {\bH}_{t}(\theta^{1,\infty}_{v,t},\nu^{N_2,\infty}_{v,\cdot},\DataM^{N_1})
-\nabla_a {\bH}_{t}(\theta^{1}_{v,t},\nu^{N_2}_{v,\cdot},\DataM^{N_1})\right)\left(\theta^{1}_{v,t}-\theta^{1,\infty}_{v,t}\right)\right]\\
&\leq L(1 + \frac{T}{2})  \EE\left[|\theta^{1}_{v,t}-\theta^{1,\infty}_{v,t}|^2\right]+\frac{L}{2}\int_0^T\EE\left[|\theta^{1}_{v,r}-\theta^{1,\infty}_{v,r}|^2\right]\,dt\,.
\end{align*}
Using Young's inequality again, we have
\begin{align*}
&\EE\left[\left(\nabla_a {\bH}_{t}(\theta^{1,\infty}_{v,t},\Ll(\theta_{v,\cdot}),\DataM)
-\nabla_a {\bH}_{t}(\theta^{1,\infty}_{v,t},\nu^{N_2,\infty}_{v,\cdot},\DataM^{N_1})\right)
\left(\theta^{1}_{v,t}-\theta^{1,\infty}_{v,t}\right)\right]\\
&\leq \frac{1}{2}\EE\left[\left|\nabla_a {\bH}_{t}(\theta^{1,\infty}_{v,t},\Ll(\theta_{v,\cdot}),\DataM)
-\nabla_a {\bH}_{t}(\theta^{1,\infty}_{v,t},\nu^{N_2,\infty}_{v,\cdot},\DataM^{N_1})\right|^2\right]
+\frac{1}{2}\EE\left[\left|\theta^{1}_{v,t}-\theta^{1,\infty}_{v,t}\right|^2\right]
\end{align*}
Coming back to \eqref{PropaChaos:proofst1} and employing Assumption~\ref{ass exist and uniq}-ii), we get
\begin{equation}\label{PropaChaos:proofst2}
\begin{aligned}
&\int_0^T\EE\left[e^{\lambda s}\left|\theta^{1}_{s,t}-\theta^{1,\infty}_{s,t}\right|^2\right]\,dt\leq\int_0^T\int_0^s e^{\lambda v}\EE\left[2(\lambda - (\frac{\sigma^2 \kappa}{2 } -\frac{L}{2}(3 + T) + \frac{1}{2} )\left|\theta^{1}_{v,t}-\theta^{1,\infty}_{v,t}\right|^2)\right]\,dv\,dt\\
&\quad+2\,\int_0^T\int_0^s e^{\lambda v}
\EE\left[\left|\nabla_a {\bH}_{t}(\theta^{1,\infty}_{v,t},{{\nu}}_{v,\cdot},\DataM)
-\nabla_a {\bH}_{t}(\theta^{1,\infty}_{v,t},\nu^{N_2,\infty}_{v,\cdot}, \DataM^{N_1})\right|^2\right]\,dv\,dt\\
\end{aligned}
\end{equation}
\paragraph{Step 2.}
The statistical errors will be derived from
\[
\begin{split}
I &:=\int_{0}^{T}\EE\left[\left|\nabla_a {\bH}_{t}(\theta^{1,\infty}_{v,t},{{\nu}}_{v,\cdot},\DataM)
-\nabla_a {\bH}_{t}(\theta^{1,\infty}_{v,t},\nu^{N_2,\infty}_{v,\cdot}, \DataM^{N_1})\right|^2\right]\,dt \\
&\leq 2 \int_{0}^{T}\EE\left[\left|\nabla_a {\bH}_{t}(\theta^{1,\infty}_{v,t},\nu^{N_2,\infty}_{v,\cdot},\DataM)-\nabla_a {\bH}_{t}(\theta^{1,\infty}_{v,t},\nu^{N_2,\infty}_{v,\cdot},\DataM^{N_1})\right|^2\right] \,dt\\
&\quad +2 \int_{0}^{T} \EE\left[\left|\nabla_a {\bH}_{t}(\theta^{1,\infty}_{v,t},{{\nu}}_{v,\cdot},\DataM)-\nabla_a {\bH}_{t}(\theta^{1,\infty}_{v,t},\nu^{N_2,\infty}_{v,\cdot},\DataM) \right|^2\right]\,dt:=  I_1  +  I_2 \,.
\end{split}
\]
Recall that
\[
\nabla_a {\bH}_{t}(\theta^{i,\infty}_{v,t},\nu^{N_2}_{v,\cdot},\mathcal M^{N_1})=
\frac{1}{N_1}\sum_{j=1}^{N_1}\nabla_a
h_{t}(X^{\xi^{j},\zeta^{j}}_{t}(\nu^{N_2}_{v,\cdot}),\theta^i_{v,t},P^{\xi^{j},\zeta^{j}}_{t}(\nu^{N_2}_{v,\cdot}),{\zeta}^{j}) \,.
\]
Note that $\mathbb E \left[ \nabla_a {\bH}_{t}(\theta^{1,\infty}_{v,t},{{\nu}}^{N_2,\infty}_{v,\cdot},\DataM^{N_1})\middle| \theta^{1,\infty}_{v,t},\nu^{N_2,\infty}_{v,\cdot}   \right] = \nabla_a {\bH}_{t}(\theta^{1,\infty}_{v,t},
{{\nu}}^{N_2,\infty}_{v,\cdot},\mathcal M^{N_1}) $.
Hence, from Assumption \ref{as coefficients} and Lemmas \ref{thm:WellposednessMeanField} and \ref{lem:UniformBounds_Continuity}
\begin{align*}
& I_1 = \frac{2}{N_1}
\int_0^T \mathbb E\left[ \mathbb Var\left[
 \nabla_a {\bH}_{t}(\theta^{1,\infty}_{v,t},\nu^{N_2,\infty}_{v,\cdot},\DataM^{N_1}) \middle| \theta^{1,\infty}_{v,t},\nu^{N_2,\infty}_{v,\cdot}   \right]\right]\,dt\,dv\\
&\leq \frac{2}{N_1}\int_{0}^{T}\EE\left[\Vert\Lagrang_{t}(x,
\cdot,\zeta^1)\Vert_{Lip}^2+\Vert\phi_{t}(x,\cdot,\zeta^1)\Vert_{Lip}^2
|P^{\xi^1,\zeta^1}_t(\nu^{N,\infty}_{s,\cdot})|^2\right]\,dt
\leq \frac{c}{N_1}\,.
\end{align*}
Next, we aim to show that there exists constant $c$ independent of $S,d,p$, such that
\begin{equation}\label{PropaChaos:proofst9}
I_2 := 2 \, \EE\left[\left|\nabla_a \bH_{t}(\theta^{1,\infty}_{v,t},{{\nu}}_{v,\cdot},\DataM)-\nabla_a \bH_{t}(\theta^{1,\infty}_{v,t},\nu^{N_2,\infty}_{v,\cdot},\DataM)\right|^2\right]\leq \frac{c}{N_2}\,.
\end{equation}
By Assumption \ref{as coefficients}, we have
\begin{align*}
&\left|\nabla_a \bH_{t}(\theta^{1,\infty}_{v,t},\nu_{v,\cdot},\DataM)-\nabla_a \bH_{t}(\theta^{1,\infty}_{v,t},\nu^{N_2,\infty}_{v,\cdot},\DataM)\right|\\
&\leq \int_{\mathbb R^d\times \mathcal S}\left(\Vert \nabla_a f_t(\cdot,\theta^{1,\infty}_{v,t},\zeta) \Vert_{Lip} +
\Vert \nabla_a \phi_t(\cdot,\theta^{1,\infty}_{v,t},\zeta) \Vert_{Lip} | P^{\xi,\zeta}_t(\nu_{v,\cdot})|\right)|X^{\xi,\zeta}_t(\nu_{v,\cdot})-X^{\xi,\zeta}_t(\nu^{N_2,\infty}_{v,\cdot})|
\DataM(d\xi,d\zeta)\\
&\quad + \lvert  \nabla_a \phi_t \rvert_{\infty} \int_{\mathbb R^d\times \mathcal S} |P^{\xi,\zeta}_t(\nu_{v,\cdot})-P^{\xi,\zeta}_t(\nu^{N_2,\infty}_{v,\cdot})| \DataM(d\xi,d\zeta) \\
&\leq c \left(\int_{\mathbb R^d\times \mathcal S}\left(1+  | P^{\xi,\zeta}_t(\nu_{v,\cdot})|\right)^2 \DataM(d\xi,d\zeta) \right)^{1/2}
\left(\int_{\mathbb R^d\times \mathcal S}  |X^{\xi,\zeta}_t(\nu_{v,\cdot})-X^{\xi,\zeta}_t(\nu^{N_2,\infty}_{v,\cdot})|^2 \DataM(d\xi,d\zeta) \right)^{1/2}\\
&\quad + c \left(\int_{\mathbb R^d\times \mathcal S} |P^{\xi,\zeta}_t(\nu_{v,\cdot})-P^{\xi,\zeta}_t(\nu^{N_2,\infty}_{v,\cdot})|^2 \DataM(d\xi,d\zeta) \right)^{1/2}
\end{align*}
Lemma~\ref{lem difference P and X}, tells us that that there exists (an explicit) constant $c$ such that
\[
\left|X^{\xi,\zeta}_t(\nu_{v,\cdot})-X^{\xi,\zeta}_t(\nu^{N_2,\infty}_{v,\cdot})\right| \leq
c \int_0^T\left|\frac{1}{N_2}\sum_{j=1}^{N_2}\phi_r(X^{\xi,\zeta}_r(\nu_{v,\cdot}),\theta^{j,\infty}_{v,r},\zeta)-
\int\phi_r(X^{\xi,\zeta}_r(\nu_{v,\cdot}),a,\zeta)\,\nu_{v,r}(da)\right|\,dr\,
\]
and
\[
\left|P^{\xi,\zeta}_t(\nu_{v,\cdot})-P^{\xi,\zeta}_t(\nu^{N_2,\infty}_{v,\cdot})\right|\leq
c\int_{0}^{T}\left|\frac{1}{N_2}\sum_{j=1}^{N_2}\phi_r(X^{\xi,\zeta}_r(\nu_{v,\cdot}),\theta^{j,\infty}_{v,r},\zeta)-
\int\phi_r(X^{\xi,\zeta}_r(\nu_{v,\cdot}),a,\zeta)\,\nu_{v,r}(da)\right|\,dr.
\]
Consequently, and due to Lemma \ref{lem:UniformBounds_Continuity}
\begin{align*}
&\EE\left[\left|\nabla_a \bH_{t}(\theta^{1,\infty}_{v,t},\nu_{v,\cdot},\DataM)-\nabla_a \bH_{t}(\theta^{1,\infty}_{v,t},\nu^{N_2,\infty}_{v,\cdot},\DataM)\right|^2\right]\\
&\leq c \int_0^T \int_{\mathbb R^d\times\mathcal S}
\EE\left[\left|\frac{1}{N_2}\sum_{j_2=1}^{N_2}\phi_r(X^{\xi,\zeta}_r(\nu_{v,\cdot}),\theta^{j_2,\infty}_{v,r},\zeta)-
\int\phi_r(X^{\xi,\zeta}_r(\nu_{v,\cdot}),a,\zeta)\,\nu_{v,r}(da)\right|^2\right]
\DataM(d\xi,d\zeta)\,dt.
\end{align*}
Note that $\mathbb E \left[ \phi_r(X^{\xi,\zeta}_r(\nu_{v,\cdot}),\theta^{j_2,\infty}_{v,r},\zeta)\middle| X^{\xi,\zeta}_r(\nu_{v,\cdot}),\zeta  \right] = \int\phi_r(X^{\xi,\zeta}_r(\nu_{v,\cdot}),a,\zeta)\,\nu_{v,r}(da) $.
Hence
\begin{equation}
\label{PropaChaos:proofst9-step}
\begin{split}
&\EE\left[\left|\nabla_a \bH_{t}(\theta^{1,\infty}_{v,t},\nu_{v,\cdot},\DataM)-\nabla_a \bH_{t}(\theta^{1,\infty}_{v,t},\nu^{N_2,\infty}_{v,\cdot},\DataM)\right|^2\right]\\
&\leq \frac{c}{N_2} \int_0^T \int_{\mathbb R^d\times\mathcal S}
\EE\left[   \mathbb Var \left[  \phi_r(X^{\xi,\zeta}_r(\nu_{v,\cdot}),\theta^{1,\infty}_{v,r},\zeta)    \middle| X^{\xi,\zeta}_r(\nu_{v,\cdot}),\zeta  \right] \right]
\DataM(d\xi,d\zeta)\,dt.
\end{split}
\end{equation}
By Assumption \ref{as coefficients} and Lemmas \ref{lemma odes} and \ref{lemma existence and uniqueness} we know that
\[
\begin{split}
\int_{\mathbb R^d\times\mathcal S}\!\! \EE\left[   \mathbb Var \left[  \phi_r(X^{\xi,\zeta}_r(\nu_{v,\cdot}),\theta^{1,\infty}_{v,r},\zeta)    \middle| X^{\xi,\zeta}_r(\nu_{v,\cdot}),\zeta  \right] \right]
\DataM(d\xi,d\zeta)\\
\leq c\int_{\mathbb R^d\times\mathcal S}(1 +\mathbb E |X^{\xi,\zeta}_r(\nu_{v,\cdot})|^2  +
  \mathbb E | \theta^{1,\infty}_{v,r}|^2 )\DataM(d\xi,d\zeta) <\infty\,.	
\end{split}
\]
From this and~\eqref{PropaChaos:proofst9-step} we conclude that the estimate \eqref{PropaChaos:proofst9} holds.

\paragraph{Step 3.} Coming back to \eqref{PropaChaos:proofst1}, with $\lambda = \frac{\sigma^2 \kappa}{2 } -\frac{L}{2}(3 + T) + \frac{1}{2} $
we get that
\begin{align*}
&\int_0^T\EE\left[e^{\lambda s}\left|\theta^{1}_{s,t}-\theta^{1,\infty}_{s,t}\right|^2\right]\,dt \leq c \left(\frac{1}{N_1}+\frac{1}{N_2}\right) \int_0^se^{\lambda v}\,dv\,.
\end{align*}
Calculating the integral on the right hand side concludes the proof.
\end{proof}

%\section{Discrete time approximations }

\subsection{Time discretisation of the gradient descent}\label{sec time disc}
\subsubsection{Proof of Theorem \ref{thm:EulerRate1}}
First, we consider only discretisation of the overdamped Langevin dynamics~\eqref{eq:ParticleApprox-bis}.
A simple, explicit numerical scheme for the numerical approximation of \eqref{eq:ParticleApprox-bis} can be introduced through Euler--Maruyama approximations with non-homogeneous time steps.
Fix an increasing sequence of times $0=s_0 < s_1 < \cdots<s_l<\cdots$ and set
\[
{\Lambda}(s)=\sup\{s_l\,:\,s_l\leq s\}\,.
\]
Now define the family of processes
 $(\widetilde{\theta}^{1}_{s,t})_{0\leq s\leq S, 0\leq t\leq T},...,(\widetilde{\theta}^{N_2}_{s,t})_{0\leq s\leq S, 0\leq t\leq T}$ satisfying, for any $i$,
\begin{equation}\label{TimeDiscrete:proofst1}
\widetilde{\theta}^{i}_{s,t}=\theta^{0,i}_{t}-\int_{0}^{s}\nabla_a{\bH}^\sigma_{t}
\left(\widetilde{\theta}^{i}_{{\Lambda}(v),t},\widetilde{{\nu}}^{N_2}_{{\Lambda}(v),\cdot},\DataM^{N_1}\right)\,dv
+\sigma\ControlBrownian^i_s,\,\,\,\widetilde{{\nu}}^{N_2}_{v,t}=\frac{1}{N_2}\sum_{j_2=1}^{N_2}\delta_{\{\widetilde{\theta}^{j_2}_{v,t}\}}.
\end{equation}
%The above particle system is solution to a version of \eqref{eq:ParticleDiscreteApproximation_a} subject only to a time-discretisation along the $s$ variable. The wellposedness of a solution to \eqref{TimeDiscrete:proofst1}, as well as the analog moment control \eqref{eq:UniformBoundParticle}, can be easily obtained adapting the proof steps of Lemma \ref{lem:UnifParticleMoment}.
For this approximation, the rate of convergence is given by the following lemma.

\begin{lemma}\label{prop:EulerRate1} Let Assumptions~\ref{as coefficients} and~\ref{ass exist and uniq} hold. Assume also that $\{s_l\}$ is a non-decreasing sequence times, starting from $0$, such that $\{s_l-s_{l-1}\}$ is non-increasing, $\sum_{l}(s_l-s_{l-1})^2<\infty$ and, for $\kappa$ large enough, \eqref{TimeStepRestrict} holds.
%\begin{equation}\label{TimeStepRestrict}
%\max_l(s_l-s_{l-1})< \frac{\sigma^2\kappa-L}{2L\left(1+\frac{\sigma^2}{2}\Vert\nabla_aU\Vert^2_{Lip}\right)}.
%\end{equation}
Then, for all $i$, $1\leq l\leq n$,
\begin{align*}
%&\Ww^T_2(\Ll(\theta^i_{s_l,t}),\Ll(\widetilde{\theta}^i_{s_l,t}))^2\\
%
\mathbb E\left [\int_0^T |\theta^i_{s_l,t} - \widetilde \theta^i_{s_l,t}|^2\,dt \right ]
&\leq c\max_{l'}(s_{l'}-s_{l'-1})\left(1+\max_{0\leq s\leq s_l}\int_{0}^{T}\EE\left[\left|\theta^i_{s,t}\right|^2\right]\,dt\right).
\end{align*}
\end{lemma}
%\begin{quote}
%{\color{red}Note to myself: Add a remark on the case where the time steps are homogeneous to illustrate the constraints that $\kappa$ has to be large enough and $\delta s$ small enough to ensure that $\sup_s A_1$ is finite and the uniform convergence of the time discretisation.}
%\end{quote}
\begin{proof}  As the components $(\xi^i,\zeta^i)$, will be fixed throughout the proof, for the sake of simplicity, $\DataM^{N_1}$ will be omitted from now on in the notation of ${\bH}^\sigma$, and will be re-introduced only when needed.

\paragraph{Step 1.} As, for all $s_{l-1}\leq s\leq s_l$, $0\leq t\leq T$, the difference $\triangle_{s,t} \theta^i:=\theta^i_{s,t}-\widetilde{\theta}^i_{s,t}$ is given by
\begin{equation}\label{EulerRate1:proofst2}
\triangle_{s,t}\theta^i=\triangle_{s_{l-1},t}\theta^i+\int_{s_{l-1}}^{s}
\left(\nabla_a\bH^{\sigma}_{t}\left(\theta^{i}_{v,t},\overline{{\nu}}^{N_2}_{v,\cdot}\right)
-\nabla_a\bH^{\sigma}_{t}\left(\widetilde{\theta}^{i}_{s_{l-1},t},\widetilde{{{\nu}}}^{N_2}_{s_{l-1},\cdot}\right)\right)\,dv.
\end{equation}
%Old 26/01
%\begin{equation}\label{EulerRate1:proofst2}
%\theta^i_{s,t}-\widetilde{\theta}^i_{s,t}=\theta^i_{s_{l-1},t}-\widetilde{\theta}^i_{s_{l-1},t}+\int_{s_{l-1}}^{s}
%\left(\nabla_a\bH^{\sigma}_{t}\left(\theta^{i}_{v,t},\overline{{\nu}}^{N_2}_{v}\right)
%-\nabla_a\bH^{\sigma}_{t}\left(\widetilde{\theta}^{i}_{s_{l-1},t},\widetilde{{{\nu}}}^{N_2}_{s_{l-1}}\right)\right)\,dv.
%\end{equation}

In particular, for $s=s_l$, and by integration by parts,
\begin{equation}\label{EulerRate1:proofst3}
\begin{aligned}
&\left|\triangle_{s_l,t}\theta^{i}\right|^2=\left|\triangle_{s_{l-1},t}\theta^{i}\right|^2\\
&\quad -2\int_{s_{l-1}}^{s_l} \left(\triangle_{v,t}\theta^i\right)\cdot\left(
\nabla_a{\bH}^\sigma_{t}\left(\theta^{i}_{v,t},\overline{{\nu}}^{N_2}_{v,\cdot}\right)
-\nabla_a{\bH}^\sigma_{t}\left(\widetilde{\theta}^{i}_{s_{l-1},t},\widetilde{{{\nu}}}^{N_2}_{s_{l-1},\cdot}\right)\right)\,dv\\
&=\left|\triangle_{s_{l-1},t}\theta\right|^2-2\int_{s_{l-1}}^{s_l}\left(\triangle_{v,t}\theta^i\right)\cdot\left(\nabla_a {\bH}^\sigma_{t}
\left(\theta^{i}_{v,t},\overline{{{\nu}}}^{N_2}_{v,\cdot}\right)
-\nabla_a{\bH}^\sigma_{t}\left(\theta^{i}_{s_{l-1},t},\overline{{\nu}}^{N_2}_{s_{l-1},\cdot}\right)\right)\,dv\\
&\quad -2\int_{s_{l-1}}^{s_l}\left(\triangle_{v,t}\theta^i\right)\cdot\left(\nabla_a {\bH}^\sigma_{t}
\left(\theta^{i}_{s_{l-1},t},\overline{{{\nu}}}^{N_2}_{s_{l-1},\cdot}\right)
-\nabla_a{\bH}^\sigma_{t}\left(\widetilde{\theta}^{i}_{s_{l-1},t},\widetilde{{{\nu}}}^{N_2}_{s_{l-1},\cdot}\right)\right)\,dv.
\end{aligned}
\end{equation}
Using again \eqref{EulerRate1:proofst2} and, by the Young inequality: $2a\cdot b\leq |a|^2+|b|^2$, observe then that
\begin{align*}
&-2\int_{s_{l-1}}^{s_l} \left(\triangle_{v,t}\theta^i\right)\cdot\left(\nabla_a {\bH}^\sigma_{t}
\left(\theta^{i}_{s_{l-1},t},\overline{{{\nu}}}^{N_2}_{s_{l-1},\cdot}\right)
-\nabla_a{\bH}^\sigma_{t}\left(\widetilde{\theta}^{i}_{s_{l-1},t},\widetilde{{{\nu}}}^{N_2}_{s_{l-1},\cdot}\right)\right)\,dv\\
&=-2(s_l-s_{l-1})\left(\triangle_{s_{l-1},t}\theta^i\right)\cdot\left(\nabla_a {\bH}^\sigma_{t}
\left(\theta^{i}_{s_{l-1},t},\overline{{{\nu}}}^{N_2}_{s_{l-1},\cdot}\right)
-\nabla_a{\bH}^\sigma_{t}\left(\widetilde{\theta}^{i}_{s_{l-1},t},\widetilde{{{\nu}}}^{N_2}_{s_{l-1},\cdot}\right)\right)\\
&\quad +(s_l-s_{l-1})^2\left|\nabla_a {\bH}^\sigma_{t}
\left(\theta^{i}_{s_{l-1},t},\overline{{{\nu}}}^{N_2}_{s_{l-1},\cdot}\right)
-\nabla_a{\bH}^\sigma_{t}\left(\widetilde{\theta}^{i}_{s_{l-1},t},\widetilde{{{\nu}}}^{N_2}_{s_{l-1},\cdot}\right)\right|^2\\
&\leq (1-\sigma^2\kappa)(s_l-s_{l-1})\left|\triangle_{s_{l-1},t}\theta^i\right|^2+
(s_l-s_{l-1})\left|\nabla_a {\bH}_{t}
\left(\theta^{i}_{s_{l-1},t},\overline{{{\nu}}}^{N_2}_{s_{l-1},\cdot}\right)
-\nabla_a{\bH}_{t}\left(\widetilde{\theta}^{i}_{s_{l-1},t},\widetilde{{{\nu}}}^{N_2}_{s_{l-1},\cdot}\right)\right|^2\\
&\quad +(s_l-s_{l-1})^2\left|\nabla_a {\bH}^\sigma_{t}
\left(\theta^{i}_{s_{l-1},t},\overline{{{\nu}}}^{N_2}_{s_{l-1},\cdot}\right)
-\nabla_a{\bH}^\sigma_{t}\left(\widetilde{\theta}^{i}_{s_{l-1},t},\widetilde{{{\nu}}}^{N_2}_{s_{l-1},\cdot}\right)\right|^2.
\end{align*}
In the same way, we have
\begin{align*}
&-2\int_{s_{l-1}}^{s_l}\left(\triangle_{v,t}\theta^i\right)\cdot\left(\nabla_a {\bH}^\sigma_{t}
\left(\theta^{i}_{v,t},\overline{{{\nu}}}^{N_2}_{v,\cdot}\right)
-\nabla_a{\bH}^\sigma_{t}\left(\theta^{i}_{s_{l-1},t},\overline{{{\nu}}}^{N_2}_{s_{l-1},\cdot}\right)\right)\,dv\\
%
%&=-2\int_{s_{l-1}}^{s_l}\left(\triangle_{s_{l-1},t}\theta^i\right)\cdot\left(\nabla_a {\bH}^\sigma_{t}
%\left(\theta^{i}_{v,t},\overline{{{\nu}}}^{N_2}_{v}\right)
%-\nabla_a{\bH}^\sigma_{t}\left(\theta^{i}_{s_{l-1},t},\overline{{{\nu}}}^{N_2}_{s_{l-1}}\right)\right)\,dv\\
%%
%&\quad+2 \int_{s_{l-1}}^{s_l}(v-s_{l-1})\left(\nabla_a {\bH}^\sigma_{t}
%\left(\theta^{i}_{s_{l-1},t},\overline{{{\nu}}}^{N_2}_{s_{l-1}}\right)
%-\nabla_a{\bH}^\sigma_{t}\left(\widetilde{\theta}^{i}_{s_{l-1},t},\widetilde{{{\nu}}}^{N_2}_{s_{l-1}}\right)\right)\\
%%
%&\quad\quad\cdot\left(\nabla_a {\bH}^\sigma_{t}
%\left(\theta^{i}_{v,t},\overline{{{\nu}}}^{N_2}_{v}\right)
%-\nabla_a{\bH}^\sigma_{t}\left(\theta^{i}_{s_{l-1},t},\overline{{{\nu}}}^{N_2}_{s_{l-1}}\right)\right)\,dv\\
%%
&\leq (s_l-s_{l-1})\left|\triangle_{v,t}\theta^i\right|^2 +(s_l-s_{l-1})^2\left|\nabla_a {\bH}^\sigma_{t}
\left(\theta^{i}_{s_{l-1},t},\overline{{{\nu}}}^{N_2}_{s_{l-1},\cdot}\right)
-\nabla_a{\bH}^\sigma_{t}\left(\widetilde{\theta}^{i}_{s_{l-1},t},\widetilde{{{\nu}}}^{N_2}_{s_{l-1},\cdot}\right)\right|^2\\
&+\int_{s_{l-1}}^{s_l}(1+(v-s_{l-1}))\left|\nabla_a {\bH}^\sigma_{t}
\left(\theta^{i}_{v,t},\overline{{{\nu}}}^{N_2}_{v,\cdot}\right)
-\nabla_a{\bH}^\sigma_{t}\left(\theta^{i}_{s_{l-1},t},\overline{{{\nu}}}^{N_2}_{s_{l-1},\cdot}\right)\right|^2\,dv.
\end{align*}
Coming back to \eqref{EulerRate1:proofst3}, we obtain
\begin{equation}\label{EulerRate1:proofst4}
\begin{aligned}
&\left|\triangle_{s_l,t}\theta^{i}\right|^2\leq\left|\triangle_{s_{l-1},t}\theta^{i}\right|^2\left(1-(\sigma^2\kappa-3)(s_{l}-s_{l-1})\right)\\
&\quad +(s_l-s_{l-1})\left|\nabla_a {\bH}_{t}
\left(\theta^{i}_{s_{l-1},t},\overline{{{\nu}}}^{N_2}_{s_{l-1},\cdot}\right)
-\nabla_a{\bH}_{t}\left(\widetilde{\theta}^{i}_{s_{l-1},t},\widetilde{{{\nu}}}^{N_2}_{s_{l-1},\cdot}\right)\right|^2\\
&\quad +2(s_l-s_{l-1})^2\left|\nabla_a {\bH}^\sigma_{t}
\left(\theta^{i}_{s_{l-1},t},\overline{{{\nu}}}^{N_2}_{s_{l-1},\cdot}\right)
-\nabla_a{\bH}^\sigma_{t}\left(\widetilde{\theta}^{i}_{s_{l-1},t},\widetilde{{{\nu}}}^{N_2}_{s_{l-1},\cdot}\right)\right|^2\\
&\quad  +\int_{s_{l-1}}^{s_l}(1+(v-s_{l-1}))\left|\nabla_a {\bH}^\sigma_{t}
\left(\theta^{i}_{v,t},\overline{{{\nu}}}^{N_2}_{v,\cdot}\right)
-\nabla_a{\bH}^\sigma_{t}\left(\theta^{i}_{s_{l-1},t},\overline{{{\nu}}}^{N_2}_{s_{l-1},\cdot}\right)\right|^2\,dv.
\end{aligned}
\end{equation}
Taking the expectation of the above and integrating the resulting expression over $[0,T]$, we get
\begin{equation}\label{EulerRate1:proofst5}
\begin{aligned}
&\int_0^T\EE[\left|\triangle_{s_l,t}\theta^{i}\right|^2]\,dt\leq\left(1-(1+\sigma^2\kappa)(s_{l}-s_{l-1})\right)
\int_0^T\EE[|\triangle_{s_{l-1},t}\theta^{i}|^2]\,dt\\
&\quad +(s_l-s_{l-1})\int_0^T\EE\left[\left|\nabla_a {\bH}_{t}
\left(\theta^{i}_{s_{l-1},t},\overline{{{\nu}}}^{N_2}_{s_{l-1},\cdot}\right)
-\nabla_a{\bH}_{t}\left(\widetilde{\theta}^{i}_{s_{l-1},t},\widetilde{{{\nu}}}^{N_2}_{s_{l-1},\cdot}\right)\right|^2\right]\,dt\\
&\quad +2(s_l-s_{l-1})^2\int_0^T\EE\left[\left|\nabla_a {\bH}^\sigma_{t}
\left(\theta^{i}_{s_{l-1},t},\overline{{{\nu}}}^{N_2}_{s_{l-1},\cdot}\right)
-\nabla_a{\bH}^\sigma_{t}\left(\widetilde{\theta}^{i}_{s_{l-1},t},\widetilde{{{\nu}}}^{N_2}_{s_{l-1},\cdot}\right)\right|^2\right]\,dt\\
&\quad  +\int_{s_{l-1}}^{s_l}(1+(v-s_{l-1}))\int_0^T\EE\left[\left|\nabla_a {\bH}^\sigma_{t}
\left(\theta^{i}_{v,t},\overline{{{\nu}}}^{N_2}_{v,\cdot}\right)
-\nabla_a{\bH}^\sigma_{t}\left(\theta^{i}_{s_{l-1},t},\overline{{{\nu}}}^{N_2}_{s_{l-1},\cdot}\right)\right|^2\right]\,dt\,dv.
\end{aligned}
\end{equation}
%applying Young's inequality, we get
%\begin{align*}
%&-2\left(\triangle_{s_{l-1},t}\theta^i\right)\cdot\left(\nabla_a {\bH}^\sigma_{t}
%\left(\theta^{i}_{s_{l-1},t},\overline{{{\nu}}}^{N_2}_{s_{l-1}}\right)
%-\nabla_a{\bH}^\sigma_{t_l}\left(\widetilde{\theta}^{i}_{s_{l-1},t},\widetilde{{{\nu}}}^{N_2}_{s_{l-1}}\right)\right)\leq (1-\sigma^2\kappa)\left|\triangle_{s_{l-1},t}\theta^i\right|^2\\
%%
%&\quad +\left|\nabla_a h_{t}(X_{t}(\overline{{{\nu}}}^{N_2}_{s_{l-1}}),\theta^i_{s_{l-1},t},P_{t}(\overline{{{\nu}}}^{N_2}_{s_{l-1}}),\zeta^{j_1})
%-\nabla_a h_{t}(X_{t}(\widetilde{{{\nu}}}^{N_2}_{s_{l-1}}),\widetilde{\theta}^i_{s_{l-1},t},P_{t}(\widetilde{{{\nu}}}^{N_2}_{s_{l-1}}),\zeta^{j_1})\right|^2,
%\end{align*}
Lemma~\ref{lem hamilton lipschitz} then yields
\[
\int_0^T\EE\left[\left| \nabla_a {\bH}_{t}(\theta^i_{s_{l-1},t},\overline{{{\nu}}}^{N_2}_{s_{l-1},\cdot})-
\nabla_a {\bH}_{t}(\widetilde{\theta}^i_{s_{l-1},t},\widetilde{{{\nu}}}^{N_2}_{s_{l-1},\cdot})\right|^2\right]\,dt
\leq L\int_0^T\EE\left[|\triangle_{s_{l-1},t}\theta^i|^2\right]\,dt,
\]
and
\[
\int_0^T\EE\left[\left| \nabla_a {\bH}^{\sigma}_{t}(\theta^i_{s_{l-1},t},\overline{{{\nu}}}^{N_2}_{s_{l-1},\cdot})-
\nabla_a {\bH}^{\sigma}_{t}(\widetilde{\theta}^i_{s_{l-1},t},\widetilde{{{\nu}}}^{N_2}_{s_{l-1},\cdot})\right|^2\right]\,dt\leq L(1+\frac{\sigma^4}{2}\Vert\nabla_aU\Vert^2_{Lip})\int_0^T\EE\left[|\triangle_{s_{l-1},t}\theta^i|^2\right]\,dt.
\]
Plugged into \eqref{EulerRate1:proofst5}, using the exchangeability of $\theta^i$ and $\widetilde{\theta}^i$, we get
\begin{align*}
&\int_0^T\EE\left[\left|\triangle_{s_l,t}\theta^i\right|^2\right]\,dt\\
&\leq  \left(1+(s_l-s_{l-1})\left\{\left(L-\sigma^2\kappa\right)
+2(s_l-s_{l-1})L\left(1+\frac{\sigma^4}{2}\Vert\nabla^2_aU\Vert^2_{\infty}\right)\right\}\right)
\times\int_0^T\EE\left[\left|\triangle_{s_{l-1},t}\theta^i\right|^2\right]\,dt\\
&\quad +\int_{s_{l-1}}^{s_l} (1+(v-s_{l-1}))
\int_0^T\EE\left[\left|\nabla_a{\bH}^\sigma_{t}\left(\theta^{i}_{v,t},\overline{{{\nu}}}^{N_2}_{v,\cdot}\right)
-\nabla_a{\bH}^\sigma_{t}\left(\theta^{i}_{s_{l-1},t},\overline{{{\nu}}}^{N_2}_{s_{l-1},\cdot}\right)\right|^2\right]\,dt\,dv.
\end{align*}
Recalling the recurrence estimate:
\begin{equation}\label{EulerRate1:proofst6}
u_{l+1}\leq c_{l+1}u_{l}+b_{l+1},\,\forall l \:\:\: \Rightarrow \:\:\:
 u_l\leq \sum_{l_1=1}^l\left(\Pi_{l_2=l_1}^l c_{l_2}\right)b_{l-1}+u_0\left(\Pi_{l_2=1}^l c_{l_2}\right),\,\forall l,
\end{equation}
we get, since $\triangle_{0,t}\theta=0$,
\begin{align*}
&\int_0^T\EE\left[\left|\triangle_{s_l,t}\theta\right|^2\right]\,dt\\
&\leq \sum_{l_1=1}^{l}\left(\Pi_{l_2=l_1}^l\left(1+(s_{l_1}-s_{l_1-1})\left(L-\sigma^2\kappa
\right)+(s_{l_1}-s_{l_1-1})^2L\left(1+\frac{\sigma^4}{2}\Vert\nabla_aU\Vert^2_{Lip}\right)\right)\right)\\
&\times\int_{s_{l_1-1}}^{s_{l_1}}(1+(v-s_{l-1}))
\int_0^T\EE\left[\left|\nabla_a{\bH}^\sigma_{t}\left(\theta^{i}_{v,t},\overline{{{\nu}}}^{N_2}_{v,\cdot}\right)
-\nabla_a{\bH}^\sigma_{t}\left(\theta^{i}_{s_{l_1-1},t},\overline{{{\nu}}}^{N_2}_{s_{l_1-1},\cdot}\right)\right|^2\right]\,dt\,dv\\
%%
%&\quad =:\sum_{l_1=1}^{l}\left(\Pi_{l_2=l_1}^l\left(1+(s_{l_1}-s_{l_1-1})(a_{1,1}-\sigma^2\kappa)
%+(s_{l_1}-s_{l_1-1})^2 a_{1,2} \right)\right)\\
%%
%&\quad\times\int_{s_{l_1-1}}^{s_{l_1}}(1+(v-s_{l-1}))
%\int_0^T\EE\left[\left|\nabla_a{\bH}^\sigma_{t}\left(\theta^{i}_{v,t},\overline{{{\nu}}}^{N_2}_{v}\right)
%-\nabla_a{\bH}^\sigma_{t}\left(\theta^{i}_{s_{l_1-1},t},\overline{{{\nu}}}^{N_2}_{s_{l_1-1}}\right)\right|^2\right]\,dt\,dv.
\end{align*}
\paragraph{Step 2.} Observing that
\begin{align*}
\EE\left[\left|\nabla_a{\bH}^\sigma_{t}\left(\theta^{i}_{v,t},\overline{{{\nu}}}^{N_2}_{v,\cdot}\right)
-\nabla_a{\bH}^\sigma_{t}\left(\theta^{i}_{s_{l-1},t},\overline{{{\nu}}}^{N_2}_{s_{l-1},\cdot}\right)\right|^2\right]
\leq L\left(1+\frac{\sigma^4}{2}\Vert\nabla_aU\Vert^2_{Lip}\right)
\int_0^T\EE\left[\left|\theta^{i}_{v,t}-\theta^{i}_{s_{l-1},t}\right|^2\right]\,dt,
\end{align*}
and since, for all $s_{l-1}\leq v\leq s_{l}$,
\[
\EE\left[\left|\theta^{i}_{v,t}-\theta^{i}_{s_{l-1},t}\right|^2\right]\leq L(s_l-s_{l-1})\left(1+2\sigma^2+\frac{\sigma^4}{2}(s_l-s_{l-1})\Vert \nabla_aU\Vert^2_{Lip}\int_{s_{l-1}}^{s_{l}}\EE\left[\left|\theta^i_{v,t}\right|^2\right]\,dv\right),
\]
we have
\begin{align*}
&\int_{s_{l_1-1}}^{s_{l_1}}(1+(v-s_{l-1}))
\int_0^T\EE\left[\left|\nabla_a{\bH}^\sigma_{t}\left(\theta^{i}_{v,t},\overline{{{\nu}}}^{N_2}_{v,\cdot}\right)
-\nabla_a{\bH}^\sigma_{t}\left(\theta^{i}_{s_{l_1-1},t},\overline{{{\nu}}}^{N_2}_{s_{l_1-1},\cdot}\right)\right|^2\right]\,dt\,dv\\
&\leq c(s_{l_1}-s_{l_1-1})^2\left(1+\max_{0\leq s\leq s_l}\int_{0}^{T}\EE\left[\left|\theta^i_{s,t}\right|^2\right]\,dt\right),
\end{align*}
from which we get
\begin{equation}
\label{EulerRate1:proofst7}
\begin{aligned}
&\int_0^T\EE\left[\left|\triangle_{s_l,t}\theta\right|^2\right]\,dt
\leq c\left(1+\max_{0\leq s\leq s_l}\int_{0}^{T}\EE\left[\left|\theta^i_{s,t}\right|^2\right]\,dt\right)\\
&\quad\quad\quad\times
\sum_{l_1=1}^{l}\left(\Pi_{l_2=l_1}^l\left(1+(s_{l_1}-s_{l_1-1})\Big(\left(L-\sigma^2\kappa
\right)+(s_{l_1}-s_{l_1-1})L\left(1+\frac{\sigma^4}{2}\Vert\nabla_aU\Vert^2_{Lip}\right)\Big)\right)\right)(s_{l_1}-s_{l_1-1})^2.
\end{aligned}
\end{equation}
By the assumption \eqref{TimeStepRestrict}, any time step $s_l-s_{l-1}$ is small enough so that the coefficient
\[
\left(L-\sigma^2\kappa
\right)+(s_{l_1}-s_{l_1-1})L\left(1+\frac{\sigma^4}{2}\Vert\nabla_aU\Vert^2_{Lip}\right)=:\overline{\kappa},
\]
is negative. It now remains to prove that the sum
\begin{align*}
&\sum_{l_1=1}^{l}\left(\Pi_{l_2=l_1}^l\left(1+(s_{l_1}-s_{l_1-1})\Big(\left(L-\sigma^2\kappa
\right)+(s_{l_1}-s_{l_1-1})L\left(1+\frac{\sigma^4}{2}\Vert\nabla_aU\Vert^2_{Lip}\right)\right)\Big)\right)(s_{l_1}-s_{l_1-1})\\
&=\sum_{l_1=1}^{l}\left(\Pi_{l_2=l_1}^l\left(1-|\overline{\kappa}|(s_{l_2}-s_{l_2-1})\right)\right)(s_{l_1}-s_{l_1-1})
\end{align*}
is finite. Since $1-x\leq e^{-x}$ for all $x\geq 0$, we have
\[
\sum_{l_1=1}^{l}\left(\Pi_{l_2=l_1}^l\left(1-|\overline{\kappa}|(s_{l_2}-s_{l_2-1})\right)\right)(s_{l_1}-s_{l_1-1})
\leq \sum_{l_1=1}^{l}\exp\{-|\overline{\kappa}|(s_{l}-s_{l_1})\}(s_{l_1 }-s_{l_1-1}).
\]
Comparing this upper-bound with the integral $\int_0^{s_l}\exp\{-|\overline{\kappa}|(s_l-v)\}\,dv$, the assumption $\sum_{l}(s_l-s_{l-1})^2<\infty$ is enough to ensures the finiteness of the sum as:
\begin{align*}
&\sum_{l_1=1}^{l} \exp\{-|\overline{\kappa}|(s_{l}-s_{l_1})\}(s_{l_1}-s_{l_1-1})-\int_0^{s_l}\exp\{-|\overline{\kappa}|(s_l-v)\}\,dv\\
&=\sum_{l_1=1}^{l-1}\int_{s_{l_1}}^{s_{l_1+1}}\Big(\exp\{-|\overline{\kappa}|(s_{l}-s_{l_1})\}(s_{l_1}-s_{l_1-1})-\exp\{-|\overline{\kappa}|(s_l-v)\}\Big)\,dv
-\int_0^{s_{1}}\exp\{-|\overline{\kappa}|(s_l-v)\}\,dv\\
&\leq \exp\{-|\overline{\kappa}|s_{l}\} \sum_{l_1=1}^{l-1}\int_{s_{l_1}}^{s_{l_1+1}}\Big(v-s_{l_1-1}\Big)\exp\{|\overline{\kappa}|v\}\,dv\leq \sum_{l_1=1}^{l-1}(s_{l_1}-s_{l_1-1})^2<\infty.
\end{align*}
This ends the proof.
\end{proof}

\subsection{Generalisation estimates}
\label{sec generalisation estimates}

\begin{lemma} \label{lem reg J}
Let Assumptions \ref{as coefficients} and \ref{ass exist and uniq} hold.
Then there exist constants $L_{1,\mathcal M}$ and $L_{2,\mathcal M}$, such that for any $\mu,\nu\in\mathcal V_2$ we have
\begin{enumerate}[i)]
\item	For all $\mathcal M\in\mathcal P(\DataS)$,
\[
|J^{\mathcal M}({\mu})-  J^{\mathcal M}({{\nu}})| \leq
 L_{1,\mathcal M} W_1^T (\mu,\nu)\,,
\]
\item For any stochastic processes $\eta$, $\eta'$
such that $\mathbb E\int_{0}^{T}[|\eta_t|^2 +|\eta'_t|^2\,dt]< \infty$ we have
\begin{align*}
	\mathbb E\left[ \sup_{\nu \in \mathcal V_2} \left| \int_0^T\frac{\delta J^{\mathcal M}}{\delta \nu}(\nu,t,\eta_t)\,dt\right|^2\right]
	+
	\mathbb E\left[ \sup_{\nu \in \mathcal V_2} \left|\int_{0}^T\int_{0}^T\frac{\delta^2 J^{\mathcal M}}{\delta \nu^2}(\nu,t,\eta_t,t',\eta_t')\,dtdt'\right|^2\right] \leq L_{2,\mathcal M}\,.
	\end{align*}
\end{enumerate}
\end{lemma}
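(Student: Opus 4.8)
The plan is to treat the two parts separately, reducing everything to the Lipschitz/linear‑growth structure of $\phi,f,g$ built into Assumption~\ref{as coefficients} and to the uniform‑in‑$\nu$ moment bounds of point ii) of Lemma~\ref{lemma odes}. For part i) I would first prove the stability estimate $\sup_{t\le T}|X^{\xi,\zeta}_t(\mu)-X^{\xi,\zeta}_t(\nu)|\le c\,\mathcal W^T_1(\mu,\nu)$ with $c$ independent of $(\xi,\zeta)$: subtracting the two copies of~\eqref{eq process} and writing the drift difference as $\Phi_r(X_r(\mu),\mu_r,\zeta)-\Phi_r(X_r(\nu),\mu_r,\zeta)$, which is $\le L|X_r(\mu)-X_r(\nu)|$, plus $\int\phi_r(X_r(\nu),a,\zeta)(\mu_r-\nu_r)(da)$, which is $\le L\,\mathcal W_1(\mu_r,\nu_r)$ by the Kantorovich--Rubinstein duality (as $a\mapsto\phi_r(\cdot,a,\cdot)$ is $L$‑Lipschitz uniformly), Gronwall's lemma gives the claim since $\int_0^T\mathcal W_1(\mu_r,\nu_r)\,dr=\mathcal W^T_1(\mu,\nu)$. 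Then I would bound $\bar J(\mu,\xi,\zeta)-\bar J(\nu,\xi,\zeta)$ by the same device --- the running cost splits into a difference of integrands, $\le L\sup_t|X_t(\mu)-X_t(\nu)|$, plus an integral against $\mu_t-\nu_t$, $\le L\,\mathcal W_1(\mu_t,\nu_t)$; the terminal cost is handled by the local Lipschitz property of $g$, using either boundedness of $\nabla_xg$ or the compact support of $\mathcal M$ from Assumption~\ref{ass exist and uniq} iii) --- obtaining $|\bar J(\mu,\xi,\zeta)-\bar J(\nu,\xi,\zeta)|\le c(1+|\nabla_xg(0,\zeta)|+|X^{\xi,\zeta}_T(\mu)|+|X^{\xi,\zeta}_T(\nu)|)\,\mathcal W^T_1(\mu,\nu)$; integrating against $\mathcal M$, Cauchy--Schwarz and the uniform $\mathcal M$‑moment bounds of Lemma~\ref{lemma odes} ii) together with Assumption~\ref{as coefficients} iii) then give $L_{1,\mathcal M}$.

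For part ii) the starting point is the identification of the flat derivatives. Lemma~\ref{lemma der of J as hamiltonian flat} (with $\sigma=0$) and linearity of $H$ in its measure slot give $\frac{\delta J}{\delta\nu}(\nu,t,a)=\mathbf h_t(a,\nu,\mathcal M)$; since the flat derivative is determined only up to an additive constant in $a$, I would work with the centred representative $\mathbf h_t(a,\nu,\mathcal M)-\mathbf h_t(0,\nu,\mathcal M)$, so that Lipschitz continuity of $\phi,f$ in $a$ yields $\big|\frac{\delta J}{\delta\nu}(\nu,t,a)\big|\le L|a|\int_{\mathbb R^d\times\mathcal S}(1+|P^{\xi,\zeta}_t(\nu)|)\,\mathcal M(d\xi,d\zeta)\le c|a|$, uniformly in $\nu$, by Cauchy--Schwarz and Lemma~\ref{lemma odes} ii). Differentiating once more I would derive, as in Lemma~\ref{lemma V process linear} and Example~\ref{example ode opt}, the affine equations for the sensitivities $Y^{\xi,\zeta}_t(t',a'):=\frac{\delta X^{\xi,\zeta}_t}{\delta\nu}(t',a')$ and $Q^{\xi,\zeta}_t(t',a'):=\frac{\delta P^{\xi,\zeta}_t}{\delta\nu}(t',a')$: $Y$ is solved explicitly by the integrating factor, $Y^{\xi,\zeta}_t(t',a')=\mathbf{1}_{\{t'\le t\}}e^{\int_{t'}^t(\nabla_x\Phi_{\bar r})(X_{\bar r},\nu_{\bar r},\zeta)\,d\bar r}\phi_{t'}(X^{\xi,\zeta}_{t'},a',\zeta)$ modulo a constant in $a'$, and $Q$ by a backward Gronwall argument whose coefficient $\nabla_x\Phi$ is bounded. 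Using that $\nabla_x\phi,\nabla_a\phi,\nabla_xf,\nabla_af$ are bounded and $\nabla^2_x\phi,\nabla^2_xf,\nabla^2_xg$ are bounded (Assumption~\ref{as coefficients} ii)), and that the $L^2(\mathcal M)$ bounds of Lemma~\ref{lemma odes} ii) propagate (the adjoint equation having bounded coefficients) to $\int_{\mathbb R^d\times\mathcal S}(\sup_{t\le T}|P^{\xi,\zeta}_t(\nu)|)^2\mathcal M(d\xi,d\zeta)\le c$ uniformly in $\nu$, one gets the Lipschitz‑in‑$a'$ estimates $|Y^{\xi,\zeta}_t(t',a')-Y^{\xi,\zeta}_t(t',0)|\le c|a'|$ and $|Q^{\xi,\zeta}_t(t',a')-Q^{\xi,\zeta}_t(t',0)|\le c|a'|(1+\sup_{r\le T}|P^{\xi,\zeta}_r(\nu)|)$. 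By the chain rule,
\[
\frac{\delta^2 J}{\delta\nu^2}(\nu,t,a,t',a')=\int_{\mathbb R^d\times\mathcal S}\Big[\big((\nabla_x\phi_t)(X_t,a,\zeta)P_t+(\nabla_xf_t)(X_t,a,\zeta)\big)Y^{\xi,\zeta}_t(t',a')+\phi_t(X_t,a,\zeta)Q^{\xi,\zeta}_t(t',a')\Big]\mathcal M(d\xi,d\zeta),
\]
and passing to the doubly centred representative (subtracting the values at $a=0$ and at $a'=0$), the integrand is $\le c|a||a'|(1+\sup_{r\le T}|P^{\xi,\zeta}_r(\nu)|)$ by the Lipschitz‑in‑$a$ bounds $|\nabla_x\phi_t(x,a,\zeta)-\nabla_x\phi_t(x,0,\zeta)|\le L|a|$ etc.; integrating against $\mathcal M$ and using the propagated $L^2(\mathcal M)$ bound for $P$ gives $\big|\frac{\delta^2 J}{\delta\nu^2}(\nu,t,a,t',a')\big|\le c|a||a'|$, uniformly in $\nu\in\mathcal V_2$.

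To close part ii) I would substitute $\eta,\eta'$ into these uniform pointwise bounds, getting $\sup_{\nu\in\mathcal V_2}\big|\int_0^T\frac{\delta J}{\delta\nu}(\nu,t,\eta)\,dt\big|\le c\int_0^T|\eta_t|\,dt$ and $\sup_{\nu\in\mathcal V_2}\big|\int_0^T\!\int_0^T\frac{\delta^2 J}{\delta\nu^2}(\nu,t,\eta,t',\eta')\,dt\,dt'\big|\le c\big(\int_0^T|\eta_t|\,dt\big)\big(\int_0^T|\eta'_t|\,dt\big)$; squaring, applying Cauchy--Schwarz in time and the elementary inequality $(\int_0^T|\eta_t|\,dt)(\int_0^T|\eta'_t|\,dt)\le\tfrac T2\int_0^T(|\eta_t|^2+|\eta'_t|^2)\,dt$, and taking expectations with $\mathbb E\int_0^T[|\eta_t|^2+|\eta'_t|^2]\,dt<\infty$, yields the stated bound with $L_{2,\mathcal M}$ (which may be taken to depend only on $\mathcal M$, $T$ and $\mathbb E\int_0^T[|\eta_t|^2+|\eta'_t|^2]\,dt$). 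I expect the main obstacle to be the rigorous derivation and, above all, the uniform‑in‑$\nu$ control of the second‑order sensitivity $Q^{\xi,\zeta}_t(t',a')=\frac{\delta P^{\xi,\zeta}_t}{\delta\nu}(t',a')$, whose backward affine equation carries a source term of the form $(\nabla^2_x\Phi)(X,\nu,\zeta)\,Y\,P$ --- a product of a first‑order sensitivity and the adjoint state --- so that the estimate only closes because the $L^2(\mathcal M)$ bounds of Lemma~\ref{lemma odes} ii) (which rely on Assumption~\ref{ass exist and uniq} iii)) control $P^{\xi,\zeta}(\nu)$ uniformly in $\nu$, and because one works throughout with the centred representatives of the flat derivatives, which removes the linear growth in $a,a'$.
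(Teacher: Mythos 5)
Your proposal is correct and reaches the same conclusions, though by a mildly different route for part~i) and with an explicit unpacking of what the paper leaves implicit in part~ii). For part~i) the paper reuses the first–variation formula from Lemma~\ref{lemma der of J as hamiltonian flat} (with $\sigma=0$), writing $\bar J(\mu,\xi,\zeta)-\bar J(\nu,\xi,\zeta)=\int_0^1\int_0^T\int h_t(X_t(\nu^\lambda),P_t(\nu^\lambda),a,\zeta)(\mu_t-\nu_t)(da)\,dt\,d\lambda$ and then invoking Kantorovich duality with the $\mathcal M$-integrable Lipschitz constant $L_1(\xi,\zeta)$. Your alternative --- a Gronwall-plus-duality stability estimate for the state $X$ followed by direct Lipschitz bounds on the running and terminal costs --- bypasses the adjoint process $P$ entirely and is more elementary; it lands on the same $L_{1,\mathcal M}$ controlled via Assumption~\ref{as coefficients}~iii) and Lemma~\ref{lemma odes}~ii). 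For part~ii) the paper's argument is essentially a pointer to the explicit sensitivity formulas of Lemma~\ref{lem:FirstDerivative} and the chain rule of Lemma~\ref{lem:ChainRule}; you re-derive that machinery in-line. The one genuine refinement you add is the choice of representative: you fix the additive-constant ambiguity of the flat derivative by subtracting the value at $a=0$ (resp.\ $a'=0$), turning the uniform Lipschitz-in-$a$ bounds into growth bounds of order $|a|$ (resp.\ $|a|\,|a'|$) with constants independent of $\nu$. This is indeed the right normalization for the $\sup_{\nu\in\mathcal V_2}$ in the statement to be finite, since the $\nu$-centred normalization $\int\frac{\delta J}{\delta\nu}(\nu,t,a)\,\nu_t(da)=0$ adopted in Appendix~\ref{sec measure derivatives} would introduce a $\int|a|\,\nu_t(da)$ factor that is not uniformly bounded over $\mathcal V_2$ --- a point the paper glosses over but which is harmless in its downstream use (Lemma~\ref{lem strong error}), where only differences of the derivative against a conditional expectation ever appear. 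The closing steps --- Cauchy--Schwarz in $t$, squaring, taking expectations --- match what the paper leaves implicit.
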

The expression $\frac{\delta J^{\mathcal M}}{\delta \nu}(\nu,t,a)$ here is to the derivative of $\nu\in\mathcal V_2\mapsto J^{\mathcal M}(\nu)$ (see Definition \ref{def:ExtendedDerivative}) and $\frac{\delta^2 J^{\mathcal M}}{\delta \nu^2}(\nu,t,a,t',a')$ is the derivative of $\nu\in\mathcal V_2\mapsto \frac{\delta J^{\mathcal M}}{\delta \nu}(\nu,t,a)$ (Definition \ref{def:ExtendedSecondOrdDeriv}).
\begin{proof}

%\paragraph{Step 1.}
Let ${{\nu}}^\lambda:= \nu + \lambda(\mu - \nu)$ for $\lambda \in [0,1]$ and recall definition of $\bar J$ from \eqref{eq objective bar J}.
From Lemma \ref{lemma der of J as hamiltonian flat} (with $\sigma=0$) we have
\begin{equation*}
\frac{d}{d\varepsilon} \bar J^{\mathcal M}\left((\nu + (\lambda +\varepsilon)(\mu-\nu),\xi,\zeta\right)\bigg|_{\varepsilon=0}
=  \int_0^T \int h_t(X^{\xi,\zeta}_t(\nu^\lambda),P^{\xi,\zeta}_t(\nu^\lambda),a,\zeta) (\mu_t-\nu_t)(da)  \,dt \,.	
\end{equation*}	
Due to this and the fundamental theorem of calculus we have
\begin{equation}\label{eq J first flat derivative}
\begin{split}
\bar	J^{\mathcal M}({\mu},\xi,\zeta)- \bar J^{\mathcal M}(\nu,\xi,\zeta)
	&=\int_0^1 \frac{d}{d\varepsilon} \bar  J^{\mathcal M} \left(\nu + (\lambda +\varepsilon)(\mu-\nu),\xi,\zeta\right)\bigg|_{\varepsilon=0} \,d\lambda \\
& =\int_0^1  \int_0^T \int h_t(X^{\xi,\zeta}_t(\nu^\lambda),P^{\xi,\zeta}_t(\nu^\lambda),a,\zeta) (\mu_t-\nu_t)(da)  \,dt	
\,d\lambda\,.
\end{split}
\end{equation}
Assumptions~\ref{as coefficients}, \ref{ass exist and uniq} point iii) and Lemma \ref{lem:UniformBounds_Continuity} allow us to conclude that
\[
a \mapsto h_t(X^{\xi,\zeta}_t(\nu^\lambda),P^{\xi,\zeta}_t(\nu^\lambda),a,\zeta)\,  = \phi_t(X^{\xi,\zeta}_t(\nu^\lambda),a,\zeta) \,  P^{\xi,\zeta}_t(\nu^\lambda) +
  f(X^{\xi,\zeta}_t(\nu^\lambda),a,\zeta)\,,
\]
is uniformly Lipschitz in $a$.
From Fubini's Theorem and Kantorovich (dual) representation of the Wasserstein distance \cite[Th 5.10]{villani2008optimal} we conclude that
\begin{equation}\label{eq derivative of cost function}
	|\bar J^{\mathcal M}({\mu},\xi,\zeta)- \bar J^{\mathcal M}({{\nu}},\xi,\zeta)|
	\leq  L_1(\xi,\zeta) \, \mathcal W_1^T (\mu,\nu) \,,\,\,
L_1(\xi,\zeta) :=\sup_{t\in[0,T], \nu\in\mathcal V_2}\left\Vert h_t(X^{\xi,\zeta}_t(\nu),P^{\xi,\zeta}_t(\nu),\cdot,\zeta)  \right\Vert_{Lip}\,,
\end{equation}
and we see, by Lemma \ref{lem:UniformBounds_Continuity}, that
\[
| J^{\mathcal M}({\mu})-  J^{\mathcal M}({{\nu}})| \leq
\int_{\mathbb R^d \times \mathcal S} |\bar J({\mu},\xi,\zeta)- \bar J({{\nu}},\xi,\zeta)|\mathcal M(d\xi,d\zeta)
\leq \int_{\mathbb R^d \times \mathcal S}  L_1(\xi,\zeta)\mathcal M(d\xi,d\zeta) \mathcal W_1^T (\mu,\nu)\,.
\]
Define $L_{1,\mathcal M}:=\int_{\mathbb R^d \times \mathcal S}  L_1(\xi,\zeta)\mathcal M(d\xi,d\zeta) < \infty$.
This completes the proof of part i).

%\textbf{Step 2.}
From \eqref{eq J first flat derivative}, we are able to identify the derivative of $\nu\in\Vv_2\mapsto \bar  J(\nu,\xi,\zeta)$ and see that
\[
\frac{\delta \bar  J}{\delta\nu}(\mu,t,a,\xi,\zeta)= \phi_t(X^{\xi,\zeta}_t(\mu),a,\zeta) \,  P^{\xi,\zeta}_t(\mu) +
  f_t(X^{\xi,\zeta}_t(\mu),a,\zeta)\,.
\]
Hence due to Definition~\ref{def:ExtendedDerivative} and a following similar computation as in Lemma \ref{lem:FirstDerivative} we have
\[
\begin{split}
\frac{\delta^2 \bar J}{\delta\nu^2}(\mu,t,a,t',a',\xi,\zeta)  = &
(\nabla_x\phi_t)(X^{\xi,\zeta}_t(\mu),a,\zeta) \frac{\delta X_t^{\xi,\zeta}}{\delta\nu}(\mu,t',a')   P^{\xi,\zeta}_t(\mu)
+ \phi_t(X^{\xi,\zeta}_t(\mu),a,\zeta)  \frac{\delta P_t^{\xi,\zeta}}{\delta\nu}(\mu,t',a')\\
 & +
  (\nabla_x f_t)(X^{\xi,\zeta}_t(\mu),a,\zeta)\frac{\delta X_t^{\xi,\zeta}}
 {\delta\nu}(\mu,t',a')\,.
  \end{split}
\]
Note that
\[
\begin{split}
	\frac{\delta   J^{\mathcal M}}{\delta\nu}(\mu,t,a)=& \int_{\mathbb R^d \times \mathcal S} \frac{\delta \bar  J}{\delta\nu}(\mu,t,a,\xi,\zeta) \, \mathcal M(d\xi,d\zeta)\,,\\
\frac{\delta^2  J^{\mathcal M}}{\delta\nu^2}(\mu,t,a,t',a')=& \int_{\mathbb R^d \times \mathcal S} \frac{\delta^2 \bar J}{\delta\nu^2}(\mu,t,a,t',a',\xi,\zeta)\,\mathcal M(d\xi,d\zeta)\,.
\end{split}
\]
From Lemma \ref{lem:FirstDerivative} we see that for $\eta$ and $\eta'$
such that $\int_{0}^{T}\mathbb E[|\eta_t|^2 +|\eta'_t|^2]dt < \infty$ we have
\begin{align*}
	\mathbb E\left[ \sup_{\nu \in \mathcal V_2} \left| \int_0^T\frac{\delta J^{\mathcal M}}{\delta \nu}(\nu,t,\eta_t)dt\right|^2\right]
	+
	\mathbb E\left[ \sup_{\nu \in \mathcal V_2} \left|\int_{0}^T\int_{0}^T\frac{\delta^2 J^{\mathcal M}}{\delta \nu^2}(\nu,t,\eta_t,t',\eta'_{t'})dtdt'\right|^2\right] \leq L_{2,\mathcal M}\,.
	\end{align*}
\end{proof}

\begin{lemma}\label{lem strong error}
We assume that the $2$nd order linear functional derivative, in a sense of definition in \ref{def:ExtendedDerivative}, of $J$ exists,  and that there is $L > 0$ such that
	for any random variables $\eta$, $\eta'$ such that $\mathbb E[|\eta|^2 + |\eta'|^2 ] < \infty$, it holds that
	%family $(\xi_i)_{1\le i \le 2}$ of random variables identically distributed with law $\mu$ the following holds for $p=1,2$
	\begin{align} \label{as int 2nd}
	\mathbb E\left[ \sup_{\nu \in \mathcal V_2} \left| \int_0^T\frac{\delta J^{\mathcal M}}{\delta \nu}(\nu_t,t,\eta_t)dt\right|^2\right]
	+
	\mathbb E\bigg[ \sup_{\nu \in \mathcal V_2} \left|\int_{0}^T\int_{0}^T\frac{\delta^2 J^{\mathcal M}}{\delta \nu^2}(\nu_t,t,\eta_t,t',\eta'_{t}) dt dt'\right|^2\bigg] \leq L\,.
	\end{align}
Let  $(\theta^i)_{i=1}^N$ be i.i.d such that $\theta^i \sim \mu$,  $i=1,\ldots, N$.
Let $\mu^N := \frac1N \sum_{i=1}^N \delta_{\theta^i}$.
Then there is $c$ (independent of $N$, $p$ and $d$) such that
\[
	\mathbb E \left[ |J^{\mathcal M}(\mu^N) -  J^{\mathcal M}(\mu) |^2 \right] \leq  \frac{c}N\,.
\]
\end{lemma}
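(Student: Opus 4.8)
The plan is to build the empirical measure one sample at a time, turning the strong error into the increment sum of a martingale plus a negligible correction; this is the martingale--CLT mechanism responsible for the $1/N$ rate. For $k=0,1,\dots,N$ put $\bar\nu^{(k)}:=\tfrac1N\sum_{i=1}^{k}\delta_{\theta^{i}}+\tfrac{N-k}{N}\nu$, which lies in $\mathcal V_2$ since each time-slice $\bar\nu^{(k)}_t$ is a convex combination of probability measures; thus $\bar\nu^{(0)}=\nu$, $\bar\nu^{(N)}=\bar\nu^{N}$, and $\bar\nu^{(k)}-\bar\nu^{(k-1)}=\varepsilon_{k}$ with $\varepsilon_{k}:=\tfrac1N(\delta_{\theta^{k}}-\nu)$. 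Telescoping $J(\bar\nu^{N})-J(\nu)=\sum_{k=1}^{N}\big[J(\bar\nu^{(k)})-J(\bar\nu^{(k-1)})\big]$ and applying the fundamental theorem of calculus to the first linear functional derivative along each step, the $k$-th increment equals $D_{k}+E_{k}$, where
\[
D_{k}:=\tfrac1N\Big[\textstyle\int_0^T\tfrac{\delta J}{\delta\nu}(\bar\nu^{(k-1)},t,\theta^{k}_t)\,dt-\int_0^T\!\!\int\tfrac{\delta J}{\delta\nu}(\bar\nu^{(k-1)},t,a)\,\nu_t(da)\,dt\Big]
\]
is the increment with the interpolating measure frozen at $\bar\nu^{(k-1)}$, and $E_{k}$ collects the difference produced by replacing $\bar\nu^{(k-1)}$ with $\bar\nu^{(k-1)}+\lambda\varepsilon_k$ and integrating over $\lambda\in[0,1]$.

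The martingale part is controlled by the first bound in \eqref{as int 2nd}. With $\mathcal F_{k}:=\sigma(\theta^{1},\dots,\theta^{k})$, the measure $\bar\nu^{(k-1)}$ is $\mathcal F_{k-1}$-measurable while $\theta^{k}$ is independent of $\mathcal F_{k-1}$ with time-$t$ marginal $\nu_t$; hence $\mathbb E[D_{k}\mid\mathcal F_{k-1}]=0$, so $\big(\sum_{k\le m}D_{k}\big)_m$ is an $(\mathcal F_m)$-martingale, and
\[
\mathbb E|D_{k}|^{2}\le\tfrac1{N^{2}}\,\mathbb E\Big[\sup_{\nu\in\mathcal V_2}\big|\textstyle\int_0^T\tfrac{\delta J}{\delta\nu}(\nu,t,\theta^{k}_t)\,dt\big|^{2}\Big]\le\tfrac{L}{N^{2}}
\]
by \eqref{as int 2nd} (the supremum absorbs the random but $\mathcal F_{k-1}$-measurable interpolant, and $\theta^{k}\sim\nu\in\mathcal V_2$ is square-integrable). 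Orthogonality of martingale increments then gives $\mathbb E\big|\sum_{k=1}^{N}D_{k}\big|^{2}=\sum_{k=1}^{N}\mathbb E|D_{k}|^{2}\le L/N$. Only the first-order half of \eqref{as int 2nd} is used here.

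The correction part is where the second linear functional derivative enters, and it is smaller by a further factor $1/N$ because a single resampling perturbs the measure only by $\varepsilon_k=O(1/N)$. Expanding the first derivative once more in the measure argument, $\tfrac{\delta J}{\delta\nu}(\bar\nu^{(k-1)}+\lambda\varepsilon_k,t,a)-\tfrac{\delta J}{\delta\nu}(\bar\nu^{(k-1)},t,a)=\int_0^\lambda\!\int\tfrac{\delta^2 J}{\delta\nu^2}(\bar\nu^{(k-1)}+r\varepsilon_k,t,a,t',a')\,\varepsilon_k(dt',da')\,dr$, which carries the explicit prefactor $1/N$ coming from $\varepsilon_k$; substituting, $E_k$ becomes $N^{-2}$ times a $\lambda,r$-average of at most four terms of the form $\int_0^T\!\int_0^T\tfrac{\delta^2 J}{\delta\nu^2}(\bar\nu^{(k-1)}+r\varepsilon_k,t,\eta_t,t',\eta'_{t'})\,dt\,dt'$ with $\eta,\eta'$ equal to $\theta^k$ or to an independent $\nu$-sample. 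Since $\bar\nu^{(k-1)}+r\varepsilon_k\in\mathcal V_2$, the second bound in \eqref{as int 2nd} (its supremum absorbing this interpolant, and Jensen's inequality for the $\nu$-averaged slots) gives $\mathbb E|E_k|^2\le cL/N^4$, whence, by the triangle inequality in $L^2$, $\big\|\sum_{k=1}^N E_k\big\|_{L^2}\le N\cdot\sqrt{cL}/N^2=\sqrt{cL}/N$.

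Combining the two estimates, $\mathbb E|J(\bar\nu^{N})-J(\nu)|^{2}\le 2\,\mathbb E\big|\sum_k D_k\big|^2+2\,\mathbb E\big|\sum_k E_k\big|^2\le cL/N$, which is the assertion (read with $\bar\nu^N$, $\nu$ in place of $\mu^N$, $\mu$). The step I expect to be the crux is the correction estimate $\mathbb E|E_k|^2\le cL/N^4$: one must both identify the source of the extra $1/N$ and check that the uniform-in-measure form of hypothesis \eqref{as int 2nd} — the supremum over $\mathcal V_2$ sitting inside the expectation — is exactly what allows the random interpolating measures $\bar\nu^{(k-1)}+r\varepsilon_k$ to be absorbed without any additional continuity of $\tfrac{\delta^2 J}{\delta\nu^2}$ in its measure argument. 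This follows the pattern of \cite[Lem.~5.10]{delarue2019master}, \cite[Lem.~2.2]{szpruch2019antithetic} and \cite{jabir2019rate}.
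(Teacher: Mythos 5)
Your proof is correct and takes a genuinely different route from the paper's. The paper applies the fundamental theorem of calculus once along the straight line $\lambda\mapsto\mu+\lambda(\mu^N-\mu)$, writes $J(\mu^N)-J(\mu)$ as an averaged sum of centred terms $\varphi^i_\lambda$, squares, and then must control the $N(N-1)$ cross terms $\mathbb E[\varphi^{i_1}_\lambda\varphi^{i_2}_\lambda]$; this is done by a leave-two-out decoupling (replacing $\theta^{i_1},\theta^{i_2}$ inside the interpolating measure with fresh independent copies, so the decoupled product has zero expectation by conditional independence), with the replacement error controlled via the second linear derivative. You instead telescope over the sample index along the piecewise-linear path through the intermediate measures $\bar\nu^{(k)}$, splitting each increment into a conditionally centred part $D_k$ and a second-order remainder $E_k$; orthogonality of the martingale differences $D_k$ then yields $\mathbb E\big|\sum_kD_k\big|^2\le L/N$ with no cross-term bookkeeping at all, and the remainder $\sum_kE_k$ is even smaller, $O(1/N)$ in $L^2$, because each $E_k$ already carries the factor $1/N^2$ coming from $\varepsilon_k$. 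Both arguments invoke exactly the same two ingredients — the first-derivative bound for the $O(1/N)$ main term and the second-derivative bound for the correction — and both crucially rely on the supremum-in-measure form of \eqref{as int 2nd} to absorb the random interpolating measures, which you correctly flag as the key technical device. The only real difference is that you extract the orthogonality of the main contribution from a filtration rather than from a decoupling device; your organisation is somewhat tidier for i.i.d.\ samples, while the paper's treats all indices symmetrically and mirrors the arguments in the cited references \cite{delarue2019master,szpruch2019antithetic,jabir2019rate}.
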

\begin{proof}
\textbf{Step 1.}
Let $\mu^N_\lambda := \mu + \lambda (\mu^N-\mu)$ for $\lambda \in [0,1]$ and let  $(\tilde{\theta}^i)_{i=1}^N$ be i.i.d., independent of $(\theta^i)_{i=1}^N$ and with law $\mu$.
By the definition of linear functional derivatives, we have
\[
\begin{split}
J^{\mathcal M}(\mu^N) -  J^{\mathcal M}(\mu) &=
%\left[
\int_0^1 \int_0^T \int  \frac{\delta J^{\mathcal M}}{\delta \nu}(\mu^N_\lambda,t, a)\, (\mu^N_t-\mu_t)(da) dt d\lambda \\
&=\int_0^1 \int_0^T \frac1N\sum_{i=1}^N\left( \frac{\delta J^{\mathcal M}}{\delta \nu}(\mu^N_\lambda,t, \theta_t^i) -\mathbb E\left[ \frac{\delta J^{\mathcal M}}{\delta \nu}(\mu^N_\lambda,t, \tilde{\theta}_t^i)\right] \right) d t d\lambda  \\
&=\int_0^1 \int_0^T \frac1N\sum_{i=1}^N
\varphi^i_{\lambda} d t d\lambda
 \,.	
\end{split}
\]
where, for $i \in \{1, \ldots, N\}$ and $\lambda \in [0,1]$,
\begin{equation}
\varphi^i_{\lambda}  =   \frac{\delta J^{\mathcal M}}{\delta \nu}(\mu^N_\lambda,t,{\theta}^i)  -   \mathbb E \bigg[ \frac{\delta J^{\mathcal M}}{\delta \nu}(\mu^N_\lambda,t,\tilde{\theta}^i) \bigg].
\end{equation}
Note that the expectation only applies to $\tilde \theta^i$ and that $\varphi^i_{\lambda}$ is zero mean random variable. We have the estimate
\begin{equation}\label{eq variance J}
\mathbb E \left[ |J^{\mathcal M}(\mu^N) -  J^{\mathcal M}(\mu) |^2 \right] \leq \frac{T}{N^2} \int_0^1 \int_0^T \mathbb E \left[ \sum_{i=1}^N
(\varphi^i_{\lambda})^2 + \sum_{i_1\neq i_2}^N \varphi^{i_1}_{\lambda}\varphi^{i_2}_{\lambda} \right] d t d\lambda  \,.
\end{equation}
\textbf{Step 2.} By our assumption~\eqref{as int 2nd} we have
\[
\mathbb E \left[  \sum_{i=1}^N (\varphi^i_{\lambda})^2 \right]
\leq  2\sum_{i=1}^N \mathbb E \left[  \left|\frac{\delta J^{\mathcal M}}{\delta \nu}(\mu^N_\lambda,t,{\theta}^i)  \right|^2\right]
\leq 2 L N\,.
\]
\textbf{Step 3.} For any $(i_1,i_2)\in \{1,\ldots,N\}^2$, we introduce the (random) measures
\begin{align*}
\mu^{N,-(i_1,i_2)}_{\lambda} :=  \mu^{N}_{\lambda} + \frac{\lambda}{N} \sum_{k\in\{i_1,i_2\}}(\delta_{\tilde{\theta}^k} - \delta_{{\theta}^k}) \,\,\, \text{and} \,\,\,\mu^{N}_{\lambda,\lambda_1}:= (\mu^{N,-(i_1,i_2)}_{\lambda} -\mu^{N}_\lambda)\lambda_1 + \mu^{N}_\lambda,
\,\,\, \lambda,\lambda_1 \in [0,1]\,.
\end{align*}
By the definition of the second order functional derivative
\begin{equation} \label{eq fmu}
\begin{split}
&   \frac{\delta J^{\mathcal M}}{\delta \nu}(\mu^{N,-(i_1,i_2)}_{\lambda} ,t,{\theta}^i) - \frac{\delta J^{\mathcal M}}{\delta \nu}(\mu^N_\lambda,t,{\theta}^i)    \\
= &  \int_0^1 \int_0^T  \int \frac{\delta^2 J^{\mathcal M}}{\delta \nu^2}( {\mu}^{N}_{\lambda,\lambda_1},t,\tilde{\theta}^1_t,t',y_1) (   \mu^{N,-(i_1,i_2)}_{\lambda,t'} - \mu^{N}_{\lambda,t'}) (dy_1) \, d\lambda_1 dt'  \\
= & \frac{\lambda}{N}    \int_0^1 \int_0^T \int \sum_{k\in\{i_1,i_2\}} \frac{\delta^2 J^{\mathcal M}}{\delta \nu^2}( {\mu}^{N}_{\lambda,\lambda_1},t,\tilde{\theta}^1_t,t',y_1) (\delta_{\tilde{\theta}^k_{t'}} - \delta_{\theta^k_{t'}}) (dy_1) \, d\lambda_1 dt' \,.
\end{split}
\end{equation}
By our assumption~\eqref{as int 2nd} we have
\begin{equation*}
 \mathbb E \left[ \left|  \frac{\delta J^{\mathcal M}}{\delta \nu}(\mu^{N,-(i_1,i_2)}_{\lambda} ,t,{\theta}^i) - \frac{\delta J^{\mathcal M}}{\delta \nu}(\mu^N_\lambda,t,{\theta}^i) \right|^2 \right]
\leq \frac{4 T L }{N^2} \,.
%   \int_0^1 \int_0^T \mathbb E \left[ \left| \frac{\delta^2 J}{\delta \nu^2}( {\mu}^{N}_{\lambda,\lambda_1},t,\tilde{\theta}^1_t,t',\theta^k_{t'}) \right|^2 \right] \, d\lambda_1 dt' \,.
\end{equation*}
In the same way we can show that
\[
 \mathbb E \left[ \left| \mathbb E \bigg[ \frac{\delta J^{\mathcal M}}{\delta \nu}(\mu^{N,-(i_1,i_2)}_{\lambda},t,\tilde{\theta}^i) \bigg]
 - \mathbb E \bigg[ \frac{\delta J^{\mathcal M}}{\delta \nu}(\mu^N_\lambda,t,\tilde{\theta}^i) \bigg]
 \right|^2 \right]
\leq \frac{4 T L }{N^2} \,.
\]
Hence
\[
\mathbb E[|\varphi^{i}-\varphi^{i,-(i_1,i_2)}|^2]\leq \frac{8TL}{N^2}\,,\,\,\text{where   }\varphi^{i,-(i_1,i_2)}_{\lambda}  =   \frac{\delta J^{\mathcal M}}{\delta \nu}(\mu^{N,-(i_1,i_2)}_\lambda,t,{\theta}^i)  -   \mathbb E \bigg[ \frac{\delta J^{\mathcal M}}{\delta \nu}(\mu^{N,-(i_1,i_2)}_\lambda,t,\tilde{\theta}^i) \bigg].
\]
Finally, by writing  $\varphi^{i} = (\varphi^{i}-\varphi^{i,-(i_1,i_2)}) + \varphi^{i,-(i_1,i_2)}$, applying Cauchy-Schwarz inequality and using \eqref{as int 2nd}  we have
\[
\mathbb E \left[ \sum_{i_1\neq i_2} \varphi^{i_1}_{\lambda}\varphi^{i_2}_{\lambda} \right]
\leq \sum_{i_1\neq i_2} \left(\frac{1}{N} +\mathbb E [\varphi^{i_1,-(i_1,i_2)}\varphi^{i_2,-(i_1,i_2)}]  \right) = N + \sum_{i_1\neq i_2} \mathbb E [\varphi^{i_1,-(i_1,i_2)}\varphi^{i_2,-(i_1,i_2)}]\,.
\]
By conditional independence argument the last term above is zero.
Combining this,~\eqref{eq variance J}, Conclusions of Step 1 and 2 concludes the proof.
\end{proof}

%As a preliminary step, let us recall the following lemma which is a consequence of the more general Efron--Stein concentration inequality (see Boucheron, Lugosi and Massart \cite{BoucheronLugosiMassart-13}, Chapter $3$, Corollary 3.2):
%
%\begin{lemma}\label{coro:BoundedDifferenceConcentration} Let $(E,\Ee)$ be a measurable space and assume that $f:E^N\rightarrow \er$ is a measurable function satisfying the bounded difference property:
%\[
%\sup_{x_1,x_2,\cdots, x_N\in E,\,x_i'\in E}|f(\cdots, x_{i-1},x_i,x_{i+1},\cdots)-f(\cdots, x_{i-1},x'_i,x_{i+1},\cdots)|\leq c_i,\,\forall i\in \{1,\ldots,N\}
%\]
%for some non-negative constants: $c_1$, $c_2$, ... $c_N$. Then, for any family of i.i.d. $E$-valued random variables $X_1$, $X_2$, ..., $X_N$, it holds that
%\[
%\text{Var}\big(f(X_1,X_2, ...,X_N)\big)\leq \sum_{i=1}^N(c_i)^2.
%\]
%\end{lemma}

\begin{proof}[Proof of Theorem \ref{th generalisation}]
Throughout the proof we write $J = J^{\mathcal M}$.
%{\color{magenta} We decompose the error as follows
%\[
%\Big|J(\nu^{\star,\sigma,N_1})- \mathbb E\Big[J(\nu_{S,\cdot}^{\sigma,N_1,N_2,\gamma})\Big]\Big| \leq |\mathcal E_1| + |\mathcal E_2| + |\mathcal E_3|\,
%\]
%where
%\[
%\begin{split}
%\mathcal E_1 & :=  \,J(\nu^{\star,\sigma,N_1}) - J(\nu_{S,\cdot}^{\sigma,N_1})\,,\,
%\mathcal E_2 := J(\nu_{S,\cdot}^{\sigma,N_1}) - \mathbb E\Big[ J(\nu_{S,\cdot}^{\sigma,N_1,N_2})\Big]\,,\,
%\mathcal E_3 := \mathbb E\Big[J(\nu_{S,\cdot}^{\sigma,N_1,N_2}) - J(\nu_{S,\cdot}^{\sigma,N_1,N_2,\gamma})\Big]\,.
%\end{split}
%\]
%}
We decompose the error as follows:
\[
\mathbb E\Big|J^{\mathcal M}(\nu^{\star,\sigma,N_1})- J^{\mathcal M}(\nu_{S,\cdot}^{\sigma,N_1,N_2,{\Delta s}})\Big|^2 \leq 4\Big(\mathbb E |\mathcal E_1|^2 + \mathbb E|\mathcal E_2|^2 + \mathbb E|\mathcal E_3|^2\Big)\,,
\]
where
\[
\begin{split}
\mathcal E_1 & :=  \,J^{\mathcal M}(\nu^{\star,\sigma,N_1}) - J^{\mathcal M}(\nu_{S,\cdot}^{\sigma,N_1})\,,\,
\mathcal E_2 := J^{\mathcal M}(\nu_{S,\cdot}^{\sigma,N_1}) -  J^{\mathcal M}(\nu_{S,\cdot}^{\sigma,N_1,N_2})\,,\,
\mathcal E_3 := J^{\mathcal M}(\nu_{S,\cdot}^{\sigma,N_1,N_2}) - J^{\mathcal M}(\nu_{S,\cdot}^{\sigma,N_1,N_2,\Delta s})\,.
\end{split}
\]
Here $\mathcal E_1$ is the error arising from running the mean-field gradient descent only for finite time $S$.
The error arising from replacing the mean-field gradient descent by a particle approximation is $\mathcal E_2$ and finally $\mathcal E_3$ arises from doing a time discretisation of the particle gradient descent.
\paragraph{Step 1.}
From Lemma~\ref{lem reg J}-$i)$ and Theorem \ref{thm conv to inv meas rate} we conclude that
\[
\mathbb E|\mathcal E_1|^2 = \mathbb E|J^{\mathcal M}(\nu^{\star,\sigma,N_1}) - J^{\mathcal M}(\nu_{S,\cdot}^{\sigma,N_1})|^2\leq
e^{-\lambda S}\,
L_{1,\mathcal M}^2 \, \mathbb E \Big[\mathcal W_2^T \Big(\mathcal L(\theta^{0}), \mathcal L( v^{\star,\sigma,N_1})  \Big)^2\Big]\,.
\]
\paragraph{Step 2.}
Consider i.i.d copies of the mean-field Langevin dynamic $(\theta^{i})_{i=1}^{N_2}$
%given by~\eqref{eq:ParticleApprox}.
\begin{equation}
 \theta^{i,\infty}_{s,t} =  \theta^{i,\infty}_{0,t}  - \int_0^s
 \left((\nabla_a \mathbf h_t)(\theta^{i,\infty}_{v,t},\mathcal L(\theta^{i,\infty}_{v,\cdot}),\mathcal M^{N_1})
+  \frac{\sigma^2}{2}(\nabla_a U)(\theta^{i,\infty}_{v,t}) \right)\,dv
%\int_{\mathbb R^d \times \mathcal S} (\nabla_a h_t)(X^{\xi,\zeta}_{v,t},P^{\xi,\zeta}_{v,t},\theta^i_{v,t},\zeta))\,\mathcal M^{N_1}(d\xi, d\zeta) + \frac{\sigma^2}{2}(\nabla_a U)(\theta^i_{v,t})
%\right)dv
+ \sigma dB^i_v\,.	
\end{equation}
The associated empirical measure is defined as
$\bar \nu^{\sigma,N_1,N_2}=\frac{1}{N_2}\sum_{i=1}^{N_2}\delta_{ \theta^{i,\infty}}$.
We have
\begin{equation}\label{eq generalisation chaos}
\begin{split}
\mathbb E| \mathcal E_2 |^2 = &  \mathbb E\Big|  J^{\mathcal M}(\nu_{S,\cdot}^{\sigma,N_1}) -  J^{\mathcal M}(\nu_{S,\cdot}^{\sigma,N_1,N_2})\Big|^2
\leq 2\mathbb E\Big| J^{\mathcal M}(\nu_{S,\cdot}^{\sigma,N_1}) - J^{\mathcal M}(\bar \nu_{S,\cdot}^{\sigma,N_1,N_2})\Big|^2 \\
& +  2\mathbb E \Big| J^{\mathcal M}(\bar \nu_{S,\cdot}^{\sigma,N_1,N_2}) -  J^{\mathcal M}(\nu_{S,\cdot}^{\sigma,N_1,N_2})\Big|^2 =: 2\mathbb E |\mathcal E_{2,1}|^2 + 2\mathbb E|\mathcal E_{2,2}|^2 \,.
\end{split}
\end{equation}
From Lemmas \ref{lem reg J}-$ii)$ and \ref{lem strong error} we see that
$\mathbb E|\mathcal E_{2,1}|^2 \leq \frac{c}{N_2}$.
Next we observe that by Lemma \ref{lem reg J}-$i)$ and by the definition of Wasserstein distance
\[
\begin{split}
\mathbb E|\mathcal E_{2,2}|^2 & =
\mathbb E \Big| J^{\mathcal M}(\bar \nu_{S,\cdot}^{\sigma,N_1,N_2}) -  J^{\mathcal M}(\nu_{S,\cdot}^{\sigma,N_1,N_2})\Big|^2
\leq\,  (L_{1,\mathcal M})^2 \, \mathbb E\Big[\mathcal W_2^{T}\Big(\bar \nu_{S,\cdot}^{\sigma,N_1,N_2},
\nu_{S,\cdot}^{\sigma,N_1,N_2}\Big)^2\Big]\\
& \leq \frac{(L_{1,\mathcal M})^2}{N_2} \sum_{i=1}^{N_2}
\int_0^T\mathbb E| \theta^{i,\infty}_{S,t}- \theta^i_{S,t}|^2dt\,.	
\end{split}
\]
Finally, due to Theorem~\ref{thm:WellPosed&PropagationChaos} we see that
$\int_0^T\mathbb E| \theta^{i,\infty}_{S,t}- \theta^i_{S,t}|^2\,dt\leq c \left(\frac1{N_1}+\frac1{N_2}\right)$ and so
\[
\mathbb E|\mathcal E_2|^2 \leq  c \left(\frac1{N_1}+\frac1{N_2}\right)\,.
\]
\paragraph{Step 3.}
By  Lemma~\ref{lem reg J}, by Theorem~\ref{thm:EulerRate1} and by the definition of Wasserstein distance
\[
\begin{split}
\mathbb E |\mathcal E_3|^2 & \leq
\mathbb E |J^{\mathcal M}(\nu_{S,\cdot}^{\sigma,N_1,N_2}) - J^{\mathcal M}(\nu_{S,\cdot}^{\sigma,N_1,N_2,{\Delta s}})|^2 \leq\,  (L^{1,J})^2 \,\mathcal  W_2^{T}(\nu_{S,\cdot}^{\sigma,N_1,N_2},
\nu_{S,\cdot}^{\sigma,N_1,N_2,{\Delta s}})\\
& \leq (L^{1,J})^2\frac{1}{N_2} \sum_{i=1}^{N_2}
\int_0^T\mathbb E| \theta^i_{S,t} - \widetilde \theta^i_{S,t}|^2dt
\leq (L^{1,J})^2 c{\Delta s}
\,,	
\end{split}
\]
where ${\Delta s} := \max_{0 < s_l < S} (s_l - s_{l-1})$.
%{ Statement of Lemma \ref{prop:EulerRate1} needs to be updated}
Collecting conclusions of Steps 1, 2 and 3 we obtain
\[
\mathbb E\Big|J^{\mathcal M}(\nu^{\star,\sigma,N_1})- J^{\mathcal M}(\nu_{S,\cdot}^{\sigma,N_1,N_2,{\Delta s}})\Big|^2 \leq c\left(e^{-\lambda S} + \frac1{N_1} + \frac1{N_2} + {\Delta s} \right)\,,
\]
where $c$ is independent of $\lambda$, $S$, $N_1$, $N_2$, $d$, $p$ and the time partition used in Theorem~\ref{thm:EulerRate1}. 	
\end{proof}

%\section*{Acknowledgements}

%{\noindent \em Remainder omitted in this sample. See http://www.jmlr.org/papers/ for full paper.}

% Acknowledgements should go at the end, before appendices and references

\acks{This was work has been supported by The Alan Turing Institute under the Engineering and Physical Sciences Research Council grant EP/N510129/1.
The first author acknowledges the support of the Russian Academic Excellence Project `5-100'.
The second and third author would like to thank Kaitong Hu (CMAP, Ecole Polytechnique) and Zhenjie Ren (Universit\'e Paris--Dauphine) for their hospitality in Paris in June 2019 and the extensive discussions on the topic of this project.
The authors thank Andrew Duncan (Imperial College, London) for helpful discussions on the topic of Pontryagin principle. }

\appendix

\section{Measure derivatives}
\label{sec measure derivatives}

We first define flat derivative on $\Pp_2(\mathbb R^p)$. See e.g.~\cite[Section 5.4.1]{carmona2018probabilistic} for more details.

\begin{definition}
\label{def:flatDerivative}
A functional $U:\mathcal P_2(\mathbb R^p) \to \mathbb R$ is said to admit a linear derivative
if there is a (continuous on $\mathcal P_2(\mathbb R^p)$) map $\frac{\delta U}{\delta m} : \mathcal P(\mathbb R^p) \times \mathbb R^d \to \mathbb R$, such that $|\frac{\delta U}{\delta m}(a,\mu)|\leq C(1+|a|^2)$ and, for all
$m, m' \in\mathcal P_2(\mathbb R^p)$, it holds that
\[
U(m) - U(m') = \int_0^1 \int \frac{\delta U}{\delta m}(m + \lambda(m' - m),a) \, (m'
-m)(da)\,d\lambda\,.
\]
Since $\frac{\delta U}{\delta m}$ is only defined up to a constant we make a choice by demanding $\int \frac{\delta U}{\delta m}(m,a)\,m(da) = 0$.
	
\end{definition}

We will also need the linear functional derivative on $\Vv_2$, which provides a slight extension of the one introduced in the above Definition~\ref{def:flatDerivative}.

\begin{definition}\label{def:ExtendedDerivative} A functional $F:\Vv_2 \to \er^d$, is said to admit a first order linear derivative, if there exists a functional $\frac{\delta F}{\delta {{\nu}}}:\Vv_2\times(0,T)\times\er^p\rightarrow \er^d$, such that
\begin{enumerate}[i)]
\item For all $(t,a)\in(0,T)\times\er^p$, ${{\nu}}\in\Vv_2\mapsto \frac{\delta F}{\delta \nu}({\nu},t,a,)$ is continuous (for $\Vv_2$ endowed with the weak topology of $\mathscr M^+_b((0,T)\times\er^p)$).
\item For any $\nu \in \mathcal V_2$ there exists $C=C_{\nu,T,d,p} >0$ such that for all $a\in \mathbb R^p$ we have that
\[\left|\frac{\delta F}{\delta \nu}({\nu},t,a)\right|\leq C(1+|a|^q)\,.
\]
\item For all ${{\nu}},{\rho}\in\Vv_2$,
\begin{equation}\label{def:FlatDerivativeOnVa}
F({\rho})-F({{\nu}})=\int_{0}^{1}\int_0^T\int\frac{\delta F}{\delta {{\nu}}}((1-\lambda){{\nu}}+\lambda {\rho},t,a)\left({\rho_t}-{{\nu_t}}\right)(da)\,dt\,d\lambda.
\end{equation}
\end{enumerate}
The functional $\frac{\delta F}{\delta {{\nu}}}$ is then called the linear (functional) derivative of $F$ on $\Vv_2$.
\end{definition}
The linear derivative $\frac{\delta F}{\delta \nu}$ is here also defined up to the additive constant $\int_0^T\int\frac{\delta F}{\delta \nu}({\nu},t,a){{\nu_t}}(da)\,dt$. By a centering argument, $\frac{\delta F}{\delta\nu}$ can be generically defined under the assumption that
$\int_0^T\int\frac{\delta F}{\delta \nu}({\nu},t,a){{\nu_t}}(da)\,dt=0$.
Note that if $\frac{\delta F}{\delta \nu}$ exists according to Definition~\ref{def:ExtendedDerivative} then
\begin{equation}\label{def:FlatDerivativeOnVb}
\forall\,\nu,{\rho}\in\Vv_2,\,\lim_{\epsilon\rightarrow 0^+}\frac{F(\nu+\epsilon(\rho-\nu))-F(\nu)}{\epsilon}=\int_0^T\int\frac{\delta F}{\delta \nu}({\nu},t,a)\left(\rho_t-\nu_t\right)(da)\,dt.
\end{equation}
Indeed~\eqref{def:FlatDerivativeOnVa} immediately implies~\eqref{def:FlatDerivativeOnVb}.
To see the implication in the other direction take $v^\lambda := \nu + \lambda(\rho-\nu)$
and $\rho^\lambda := \rho - \nu + \nu^\lambda$ and notice that~\eqref{def:FlatDerivativeOnVb} ensures for all $\lambda \in [0,1]$ that
\[
\begin{split}
& \lim_{\varepsilon\rightarrow 0^+}\frac{F(\nu^\lambda+\varepsilon(\rho-\nu))-F(\nu^\lambda)}{\varepsilon}
= \lim_{\varepsilon\rightarrow 0^+}\frac{F(\nu^\lambda+\varepsilon(\rho^\lambda-\nu^\lambda))-F(\nu^\lambda)}{\varepsilon} \\
& =\int_0^T\int\frac{\delta F}{\delta \nu}(\nu^\lambda,t,a)\big({\rho^\lambda_t}-{{\nu^\lambda_t}}\big)(da)\,dt
= \int_0^T\int\frac{\delta F}{\delta {{\nu}}}(\nu^\lambda,t,a)\left({\rho_t}-{{\nu_t}}\right)(da)\,dt
\,.	
\end{split}
\]
By the fundamental theorem of calculus
\[
F({\rho})-F({{\nu}})=\int_0^1\lim_{\varepsilon\rightarrow 0^+}\frac{F(\nu^{\lambda+\varepsilon})-F(\nu^{\lambda})}{\varepsilon}\,d\lambda
=\int_0^1\int_{0}^{T}\int\frac{\delta F}{\delta\nu}(\nu^\lambda,t,a)({\rho_t}-\nu_t)(da)dt\,d\lambda\,.
\]
For the estimate on the generalization error (Section \ref{sec generalisation estimates}), we will also need to use a second order variation of $\nu\in\Vv_2\mapsto F(\nu)$ which is given by:

\begin{definition}\label{def:ExtendedSecondOrdDeriv}
We will say that $F:\Vv_2\rightarrow \er^d$ admits a second order linear functional derivative if, for all $t,a$, $\nu\mapsto \frac{\delta F}{\delta {\nu}}(\nu,t,a)$ itself admits a linear functional derivative in the sense of Definition \ref{def:ExtendedDerivative}.
We will denote this second order linear derivative by $\frac{\delta^2 F}{\delta {\nu}^2}$.
In particular
\begin{align*}
&\frac{\delta F}{\delta {\nu}}({\nu'},t,a)-\frac{\delta F}{\delta {\nu}}({\nu},t,a)=\int_{0}^{1}
\int_0^T\int\frac{\delta^2 F}{\delta {\nu}^2}((1-\lambda){\nu}+\lambda{\nu'},t,a,t',a')\,\left({\nu'}_{t'}-{\nu}_{t'}\right)(da')\,dt'\,d\lambda,
\end{align*}
where $(t',a',\nu)\mapsto\frac{\delta F}{\delta {\nu}}(\nu,t,a,t',a')$ satisfies the properties i) and ii) of Definition \ref{def:ExtendedDerivative}.
\end{definition}
Let us finally point out the following chain rule:
\begin{lemma}\label{lem:ChainRule} Assume that $F:\Vv_2\rightarrow \er^d$ admits a linear functional derivative, in the sense of Definition \ref{def:ExtendedDerivative}, and $J:\er^d\times\Vv_2\rightarrow\er^{d}$ is such that, for all ${{\nu}}\in\Vv_2$, $x\mapsto J(x,{{\nu}})$ admits a continuous differential $\nabla_xJ(x,{{\nu}})$ such that ${{\nu}}\mapsto\nabla_xJ(x,{{\nu}})$ is continuous on $\Vv_2$, and for all $x$, ${{\nu}}\mapsto J(x,{{\nu}})$ admits a continuous differential $\frac{\delta J}{\delta {{\nu}}}(x,\nu,t,a)$ on $\Vv_2$.
Then $\nu\mapsto J(\nu,F(\nu))$ admits a linear functional derivative on $\Vv_2$ given by
\[
\frac{\delta J}{\delta \nu}(F(\nu),\nu,t,a)+\nabla_xJ(F(\nu),\nu)\frac{\delta F}{\delta \nu}(\nu,t,a).
\]
\end{lemma}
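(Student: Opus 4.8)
The plan is to verify the three requirements of Definition~\ref{def:ExtendedDerivative} for the composed functional $G(\nu):=J(F(\nu),\nu)$, with the candidate linear derivative
\[
\frac{\delta G}{\delta\nu}(t,a,\nu):=\frac{\delta J}{\delta\nu}(F(\nu),t,a,\nu)+\nabla_xJ(F(\nu),\nu)\,\frac{\delta F}{\delta\nu}(t,a,\nu)\,.
\]
Since the discussion following Definition~\ref{def:ExtendedDerivative} shows that, once properties i) and ii) are available, the integral identity~\eqref{def:FlatDerivativeOnVa} is equivalent to the one-sided directional identity~\eqref{def:FlatDerivativeOnVb}, it is enough to (a) check the growth bound ii) and the weak continuity property i) for this candidate, and (b) prove that for all $\nu,\rho\in\Vv_2$
\[
\lim_{\varepsilon\searrow0}\frac{G(\nu+\varepsilon(\rho-\nu))-G(\nu)}{\varepsilon}=\int_0^T\int\frac{\delta G}{\delta\nu}(t,a,\nu)\,(\rho_t-\nu_t)(da)\,dt\,.
\]

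For (b) I would put $\nu_\varepsilon:=\nu+\varepsilon(\rho-\nu)$ and split the increment as
\[
G(\nu_\varepsilon)-G(\nu)=\big[J(F(\nu_\varepsilon),\nu_\varepsilon)-J(F(\nu),\nu_\varepsilon)\big]+\big[J(F(\nu),\nu_\varepsilon)-J(F(\nu),\nu)\big]=:(\mathrm{I})_\varepsilon+(\mathrm{II})_\varepsilon\,.
\]
Dividing $(\mathrm{II})_\varepsilon$ by $\varepsilon$ and sending $\varepsilon\searrow0$, the identity~\eqref{def:FlatDerivativeOnVb} applied to the functional $\nu\mapsto J(x,\nu)$ at the fixed point $x=F(\nu)$ gives $\varepsilon^{-1}(\mathrm{II})_\varepsilon\to\int_0^T\int\frac{\delta J}{\delta\nu}(F(\nu),t,a,\nu)(\rho_t-\nu_t)(da)\,dt$. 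For $(\mathrm{I})_\varepsilon$, writing $x_\varepsilon:=F(\nu_\varepsilon)$, $x_0:=F(\nu)$ and using the fundamental theorem of calculus in the $\er^d$ variable,
\[
(\mathrm{I})_\varepsilon=\int_0^1\nabla_xJ\big(x_0+r(x_\varepsilon-x_0),\nu_\varepsilon\big)\,(x_\varepsilon-x_0)\,dr\,.
\]
By~\eqref{def:FlatDerivativeOnVb} applied to $F$, $\varepsilon^{-1}(x_\varepsilon-x_0)\to\int_0^T\int\frac{\delta F}{\delta\nu}(t,a,\nu)(\rho_t-\nu_t)(da)\,dt$; in particular $x_\varepsilon\to x_0$, and then the continuity of $\nabla_xJ$ in each argument forces $\nabla_xJ(x_0+r(x_\varepsilon-x_0),\nu_\varepsilon)\to\nabla_xJ(x_0,\nu)$ uniformly in $r\in[0,1]$, so that $\varepsilon^{-1}(\mathrm{I})_\varepsilon\to\nabla_xJ(x_0,\nu)\int_0^T\int\frac{\delta F}{\delta\nu}(t,a,\nu)(\rho_t-\nu_t)(da)\,dt$. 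Adding the two limits (and pulling the $(t,a)$-independent matrix $\nabla_xJ(F(\nu),\nu)$ inside the integral) gives precisely $\int_0^T\int\frac{\delta G}{\delta\nu}(t,a,\nu)(\rho_t-\nu_t)(da)\,dt$, which is (b).

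For (a), property ii) is immediate from the growth bounds ii) for $\frac{\delta J}{\delta\nu}$ and $\frac{\delta F}{\delta\nu}$ together with local boundedness of $\nu\mapsto\nabla_xJ(F(\nu),\nu)$; property i) follows once one observes that $\nu\mapsto F(\nu)$ is itself continuous on $\Vv_2$ (being the primitive of its linear derivative through~\eqref{def:FlatDerivativeOnVa}, using i) and ii) for $F$), whence $\nu\mapsto\nabla_xJ(F(\nu),\nu)$, $\nu\mapsto\frac{\delta J}{\delta\nu}(F(\nu),t,a,\nu)$ and $\nu\mapsto\frac{\delta F}{\delta\nu}(t,a,\nu)$ are continuous by composition with the hypotheses on $J$ and $F$. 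I expect the genuinely delicate point to be the interchange of limit and $dr$-integral in $(\mathrm{I})_\varepsilon$ — and, correlatively, the continuity of $\nu\mapsto\frac{\delta J}{\delta\nu}(F(\nu),t,a,\nu)$ — which needs enough joint regularity of $\nabla_xJ$ (respectively $\frac{\delta J}{\delta\nu}$) in $(x,\nu)$ to control $\nabla_xJ(x_0+r(x_\varepsilon-x_0),\nu_\varepsilon)-\nabla_xJ(x_0,\nu_\varepsilon)$ uniformly in $r$; one exploits here that $\{x_0+r(x_\varepsilon-x_0):r\in[0,1],\,\varepsilon\in(0,1]\}$ is a bounded (precompact) subset of $\er^d$ and that $x_\varepsilon\to x_0$. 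The rest is routine bookkeeping against Definition~\ref{def:ExtendedDerivative}.
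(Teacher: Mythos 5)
Your proposal is correct and takes essentially the same route as the paper's proof: split $J(F(\nu_\varepsilon),\nu_\varepsilon)-J(F(\nu),\nu)$ into the increment in the $\er^d$ argument (handled by the fundamental theorem of calculus for $\nabla_xJ$) plus the increment in the measure argument (handled by the flat-derivative identity), then pass to the limit by dominated convergence and identify the candidate derivative via~\eqref{def:FlatDerivativeOnVb}. Your version is if anything slightly more careful — you keep $(F(\nu_\varepsilon)-F(\nu))/\varepsilon$ as a genuine difference quotient rather than pulling out a spurious factor of $\varepsilon$, and you make explicit the verification of parts i) and ii) of Definition~\ref{def:ExtendedDerivative}, which the paper leaves implicit.
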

\begin{proof} For all ${{\nu}},{\rho}$, we have
\begin{align*}
&J(F({{\nu}}+\epsilon({\rho}-{{\nu}})),{{\nu}}+\epsilon({\rho}-{{\nu}}))-J(F({{\nu}}),{{\nu}})\\
&=J(F({{\nu}}+\epsilon({\rho}-{{\nu}})),{{\nu}}+\epsilon({\rho}-{{\nu}}))-J(F({{\nu}}),{{\nu}}+\epsilon({\rho}-{{\nu}}))
+J(F({{\nu}},{{\nu}}+\epsilon({\rho}-{{\nu}}))-J(F({{\nu}}),{{\nu}})\\
&=\epsilon\int_{0}^{1}\nabla_xJ(F({{\nu}}+(\lambda+\epsilon)({\rho}-{{\nu}})),{{\nu}}+\epsilon({\rho}-{{\nu}}))
\left(F({{\nu}}+\epsilon({\rho}-{{\nu}}))-F({{\nu}})\right)d\lambda\\
&\quad +\epsilon\int_0^1\int_0^T\int\frac{\delta J}{\delta {{\nu}}}(F({{\nu}}),{{\nu}}+(\lambda+\epsilon)({\rho}-{{\nu}}),t,a)({\rho}_{t}(da)-{{\nu}}_{t}(da))\,dt\,d\lambda.
\end{align*}
Dividing this expression by $\epsilon$, the limit $\epsilon\rightarrow 0$, which grants the derivative of  ${{\nu}}\mapsto J({{\nu}},F({{\nu}}))$ (using \eqref{def:FlatDerivativeOnVb}), follows by dominated convergence.
\end{proof}
%The preceding lemma naturally hold true for linear functional derivative on $\Pp(\er^d)$.

The connection between the linear functional derivative $\frac{\delta}{\delta \nu}$ introduced in Definition~\ref{def:ExtendedDerivative}
and  $\frac{\delta}{\delta m}$ introduced in Definition~\ref{def:flatDerivative} is the following one:
Let $\pi^t,\,0\leq t\leq T$ be the family of operators, which, for any $0\leq t\leq T$ and $\nu$ in $\Vv_2$ assigns the measure $\pi^t(\nu)=\nu_t$ of $\mathcal P_2(\mathbb R^p)$.
For any functional $U:\mathcal P_2(\mathbb R^p) \to \mathbb R$, define its extension on $\Vv_2$ as $U^t(\nu)=U(\pi^t(\nu))$. Whenever the functional $U^t$ admits a linear functional derivative on $\Vv_2$, then
\[
U^t(\nu')-U^t(\nu)=\int_{0}^{1}\int_{0}^{T}\int \frac{\delta U^t}{\delta \nu}(\nu+\lambda(\nu'-\nu),r,a)({\nu'}_r(da)-{\nu}_r(da))\,dr\,d\lambda.
\]
For any $m$ in $\Pp_2(\er^p)$, define for the measure $\nu^m$ of $\Vv_2$ constant in the sense $\nu^m_t(da)=m(da)$, for a.e. $t$. Therefore
we have $U^t(\nu^m)=U(m)$, and for all $m,m'$ in $\Pp_2(\er^p)$
\begin{align*}
&U(m')-U(m)=U^t({\nu'}^m)-U^t({\nu}^m)\\
&=\int_{0}^{1}\int_{0}^{T}\int \frac{\delta U^t}{\delta \nu}(\nu^m+\lambda({\nu'}^m-{\nu}^m),r,a)({\nu'}^m_r(da)-{\nu}^m_r(da))\,dr\,d\lambda\\
&=\int_{0}^{1}\left(\int_{0}^{T}\int \frac{\delta U^t}{\delta \nu}({\nu}^m+\lambda({\nu'}^m-{\nu}^m),r,a)\,dr\right)(m'(da)-m(da))\,d\lambda.
\end{align*}
%The definition of linear derivative can be restricted to measures that have
%density w.r.t. the Lebesgue measure and for perturbations that $\mu$ that are also absolutely continuous w.r.t. the Lebesgue measure.
%In this case the definition coincides with that of linear functional derivative
%but we keep the same notation.

\begin{lemma} \label{lem constant}
Fix $\nu \in \mathcal P(\mathbb R^m)$.
Let $u: R^m \to \mathbb R$ be such that for all $\mu \in \mathcal P(\mathbb R^m)$ we have that
\[
0 \leq \int u(a)\,(\mu - \nu)(da)\,.
\]
Then $u$ is a constant function: for all $a\in \mathbb R^m$ we have $u(a) = \int u(a')\nu(da')$.
\end{lemma}
\begin{proof}
Let $M := \int u(a) \,\nu(da)$.
Fix $\varepsilon > 0$.
Assume that $\nu(u - M \leq -\varepsilon) > 0$.
Indeed take $d\mu := \frac{1}{\nu(u - M \leq -\varepsilon)} \mathds{1}_{\{u - M \leq -\varepsilon\}}\,d\nu$.
Then
\[
\begin{split}
0 & \leq \int u(a)\,(\mu-\nu)(da) = \int [ u(a) - M ]\,\mu(da) 	\\
& = \int \mathds{1}_{\{u - M \leq -\varepsilon\}} [u(a)-M]\,\mu(da)
+ \int \mathds{1}_{\{u - M > -\varepsilon\}} [u(a)-M]\,\mu(da)\\
& = \int \mathds{1}_{\{u - M \leq -\varepsilon\}} [u(a)-M] \frac1{\nu(u - M \leq -\varepsilon)} \nu(da) \leq -\varepsilon\,.
\end{split}
\] 	
As this is a contradiction we get $\nu(u - M \leq -\varepsilon) = 0$ and taking $\varepsilon \to 0$ we get $\nu(u - M < 0) = 0$.
On the other hand assume that $\nu(u-M\geq \varepsilon) > 0$.
Then, since $u - M \geq 0$ holds $\nu$-a.s., we have
\[
0 = \int [u(a) - M]\,\nu(da) \geq \int_{\{u-M\geq \varepsilon\}} [u(a) - M]\,\nu(da) \geq \varepsilon \nu(u-M\geq \varepsilon)>0
\]
which is again a contradiction meaning that for all $\varepsilon > 0$ we have
$\nu(u-M\geq \varepsilon) = 0$ i.e. $u=M$ $\nu$-a.s..
\end{proof}

\begin{lemma}
\label{lemma diff of Ent flat}
Let $\nu_t,\mu_t \in \mathcal V_2$ and let $\nu^\varepsilon = \nu + \varepsilon(\mu-\nu)$.
Then
\begin{enumerate}[i)]
\item for any $\varepsilon \in (0,1)$ we have
\[
\frac1\varepsilon \int_0^T \left[\text{Ent}(\nu^\varepsilon_t) - \text{Ent}(\nu_t)\right]\,dt \geq \int_0^T \int [\log \nu_t(a)  - \log \gamma(a) ](\mu_t - \nu_t)(da)\,dt\,,
\]
\item
\[
\limsup_{\varepsilon \to 0} \frac1\varepsilon \int_0^T \left[\text{Ent}(\nu^\varepsilon_t) - \text{Ent}(\nu_t)\right]\,dt
\leq \int_0^T \int [\log \nu_t(a)  - \log \gamma(a) ](\mu_t - \nu_t)(da)\,dt\,,
\]
\end{enumerate}
where $m \mapsto \text{Ent}(m)$ is defined by~\eqref{eq Ent} for $m\in \mathcal P(\mathbb R^p)$.
\end{lemma}
\begin{proof}
This follows the steps of~\cite[Proof of Proposition 2.4]{hu2019mean}.	
For i) we begin by observing that
\[
\begin{split}
& \frac1\varepsilon \left(\text{Ent}(\nu^\varepsilon_t) - \text{Ent}(\nu_t)\right) = \frac1\varepsilon\int \bigg[\log \frac{\nu^\varepsilon_t(a)}{\gamma(a)} \nu^\varepsilon_t(a)  - \log \frac{\nu_t(a)}{\gamma(a)}  \nu_t(a) \bigg]\,da\\
& =  \frac1\varepsilon \int\big(\nu^\varepsilon_t(a) - \nu_t(a)\big) \log \frac{\nu_t(a)}{\gamma(a)} \,da + \frac1\varepsilon  \int\nu^\varepsilon_t(a) \bigg[\log \frac{\nu^\varepsilon_t(a)}{\gamma(a)} - \log \frac{\nu_t(a)}{\gamma(a)} \bigg]\,da\\
& =  \int\big(\mu_t(a) - \nu_t(a)\big) \log \frac{\nu_t(a)}{\gamma(a)} \,da + \frac1\varepsilon  \int\nu^\varepsilon_t(a) \log \frac{\nu^\varepsilon_t(a)}{\nu_t(a)}  \,da\\
& =  \int [\log \nu_t(a) - \log\gamma(a)](\mu_t - \nu_t)(da) + \frac1\varepsilon  \int\frac{\nu^\varepsilon_t(a)}{\nu_t(a)} \log \frac{\nu^\varepsilon_t(a)}{\nu_t(a)} \nu_t(a) \,da\,.
\end{split}
\]
Since $x\log x \geq x - 1$ for $x\in (0,\infty)$ we get
\[
\begin{split}
& \frac1\varepsilon  \int\frac{\nu^\varepsilon_t(a)}{\nu_t(a)} \log \frac{\nu^\varepsilon_t(a)}{\nu_t(a)} \nu_t(a) \,da
\geq \frac1\varepsilon  \int \bigg[\frac{\nu^\varepsilon_t(a)}{\nu_t(a)}  -1 \bigg] \nu_t(a) \,da = \frac1\varepsilon  \int \big[\nu^\varepsilon_t(a)  - \nu_t(a)\big] \,da = 0\,.
\end{split}
\]
Hence
\[
\frac1\varepsilon \left(\text{Ent}(\nu^\varepsilon_t) - \text{Ent}(\nu_t)\right)
\geq \int [\log \nu_t(a) - \log\gamma(a)](\mu_t - \nu_t)(da)\,.
\]
From this i) follows.

To prove ii) start by noting that
\[
\begin{split}
& \frac1\varepsilon \left(\text{Ent}(\nu^\varepsilon_t) - \text{Ent}(\nu_t)\right) = \frac1\varepsilon\int \big[\big(\log \nu^\varepsilon_t(a) - \log \gamma(a)\big) \nu^\varepsilon_t(a)  - \big(\log \nu_t(a) - \log \gamma(a) \big) \nu_t(a) \big]\,da\\
& = \int \frac1\varepsilon\big[\nu^\varepsilon_t(a) \log \nu^\varepsilon_t(a) - \nu_t(a)\log \nu_t(a) - \log \gamma(a)\big( \nu^\varepsilon_t(a) - \nu_t(a)\big)    \big]\,da\\
\end{split}
\]
Now
\[
- \frac1\varepsilon \log \gamma(a) (\nu^\varepsilon_t(a) - \nu_t(a)) = -\log \gamma(a) (\mu_t(a) - \nu_t(a))\,.
\]
Moreover, since the map $x\mapsto x\log x$ is convex for $x > 0$ we have
\[
\frac1\varepsilon\left[
\nu^\varepsilon_t(a)\log(\nu^\varepsilon_t(a)) - \nu_t(a)\log(\nu_t(a))
\right] \leq \mu(a)\log \mu(a) - \nu(a)\log \nu(a)\,.
\]
Hence
\[
\frac1\varepsilon \left(\text{Ent}(\nu^\varepsilon_t) - \text{Ent}(\nu_t)\right) \leq \text{Ent}(\mu_t) - \text{Ent}(\nu_t)\,.
\]
Since $\mu, \nu \in \mathcal V_2^W$ the right hand side is finite.
Finally, by the reverse Fatou's lemma,
\[
\begin{split}
& \limsup_{\varepsilon \to 0} \frac1\varepsilon \left[\text{Ent}(\nu^\varepsilon_t) - \text{Ent}(\nu_t)\right]
\leq \int \limsup_{\varepsilon \to 0} \frac1\varepsilon\big[\nu^\varepsilon_t(a) \log \nu^\varepsilon_t(a) - \nu_t(a)\log \nu_t(a) - \log \gamma(a)\big( \nu^\varepsilon_t(a) - \nu_t(a)\big)    \big]\,da\,.\\
\end{split}
\]
Calculating the derivative of $x\mapsto x \log x$ for $x>0$ leads to
\[
\begin{split}
& \limsup_{\varepsilon \to 0} \frac1\varepsilon\big[\nu^\varepsilon_t(a) \log \nu^\varepsilon_t(a) - \nu_t(a)\log \nu_t(a) - \log \gamma(a)\big( \nu^\varepsilon_t(a) - \nu_t(a)\big)    \big] \\
& = (1 + \log \nu_t(a))(\mu_t(a)-\nu_t(a)) - \log \gamma(a) (\mu_t(a) -\nu_t(a))\,.
\end{split}
\]
Hence
\[
\limsup_{\varepsilon \to 0} \frac1\varepsilon \left[\text{Ent}(\nu^\varepsilon_t) - \text{Ent}(\nu_t)\right]
\leq \int [\log \nu_t(a)  - \log \gamma(a) ](\mu_t - \nu_t)(da)\,.
\]
This completes the proof.
\end{proof}

\section{Bounds and regularity for the forward-backward system \eqref{eq mfsgd}}\label{ssec:TechnicalResults}
In this section, we establish the boundedness and regularity of the mapping assigning to each ${{\nu}}\in\Vv_2$ the solution to
\begin{equation}\label{eq:ForwardBackwardMapAppendix}
\left\{
 \begin{aligned}
X^{\xi,\zeta}_{t}({{\nu}})&=\xi+\int_0^t\int\phi_{r}(X^{\xi,\zeta}_r({{\nu}}),a,\zeta)\,\nu_r(da)\,dr,\\
 P^{\xi,\zeta}_{t}({{\nu}})&=\nabla_xg(X^{\xi,\zeta}_{T}({{\nu}}),\zeta)+\int_t^T \int
 \left(\nabla_x f_{r}(X^{\xi,\zeta}_{r}({{\nu}}),a,\zeta)+\nabla_x\phi_{r}(X^{\xi,\zeta}_r({{\nu}}),a,\zeta)\cdot P^{\xi,\zeta}_{r}({{\nu}})\right)\,\nu_r(da)\,dr.
 \end{aligned}
 \right.
 \end{equation}
 %where $\bPhi$, $\bG$ and $\bL$ are given functions, and $\xi$ is a given point in $\er^d$. The dynamic \eqref{eq:ForwardBackwardMapAppendix}
 %is a simplification of \eqref{eq mfsgd} where we omit the explicit dependence in term of $\zeta$ in the drift and terminal components.

 Hereafter, we will work mostly under the sole assumption \ref{as coefficients}, and, for a fixed couple of data $\xi,\zeta$ chosen according to
 \eqref{as coefficients}-iii), so that for a.e. $0\leq t\leq T$,
 \[
 |\nabla_xg(0,\zeta)|+|\phi_{t}(0,0,\zeta)|+|\nabla_a f_{t}(0,0,\zeta)|+|\nabla_x f_{t}(0,0,\zeta)|<\infty.
 \]

%Old: 05/02/2020
%$(H')-(0)$ ${\nu}\in\Vv_2$;
%
%$(H')-(i)$ $\bPhi$, $\bL$, $\nabla_a\bPhi$, $\nabla_x\bPhi$, $\nabla_a\bL$, $\nabla_x\bL$ and $\nabla_x\bG$ are all Lipschitz continuous in $(x,a)$, uniformly in $t$;%\footnote{e.g. $\sup_{t}\sup_{(x,a)\neq(x',a')}\frac{|\bL_{t}(x,a)-\bL_{t}(x',a')|}{|(x,a)-(x',a')|}<\infty$};
%
%$(H')-(ii)$ $\sup_{t} |\bPhi_{t}(0,0)|+|\bL_{t}(0,0)|<\infty$;

\noindent
Let us also recall the function $\bbH$ given by:
\[
\bbH_{t}(x,a,p,\zeta)=f_{t}(x,a,\zeta)+\phi_t(x,a,\zeta) p.
\]

Let us point out that the process $(P^{\xi,\zeta}_t({{\nu}}))_{0\leq t\leq T}$ can be written as:
\begin{equation}\label{eq:ForwardReformulated}
\begin{aligned}
&P^{\xi,\zeta}_{t}({{\nu}})=\widehat \Xi^{\xi,\zeta,\nu}(T,t)\nabla_xg(X^{\xi,\zeta}_{T}({{\nu}}),\zeta)-\int_t^T \widehat \Xi^{\xi,\zeta,\nu}(r,t)\left(\int\nabla_x f_{r}(X^{\xi,\zeta}_{r}({{\nu}}),a,\zeta)\,\nu_r(da)\right)\,dr.
\end{aligned}
\end{equation}
where $\widehat \Xi^{\xi,\zeta,\nu}$ denote $\nu \in \mathcal V_2$, the continuous $\mathbb R^{d\times d}$-valued function, solution to: 
\begin{equation}\label{PrincipalSolutionBackward}
\frac{d\widehat \Xi^{\xi,\zeta,\nu}(t,t_0)}{dt_0}=\big(\int\nabla_x\phi_{t}(X^{\xi,\zeta}_{t}(\nu),a,\zeta)\nu_{t}(da)\big)\widehat \Xi^{\xi,\zeta,\nu}(t,t_0),\,t_0\leq t\leq T,\,\,\Xi^{\xi,\zeta,\nu}(t,t)=I_d,
\end{equation}
for $I_d$ denoting the identity matrix of size $d$.
%\begin{equation}\label{eq:ForwardReformulated}
%\begin{aligned}
%&P^{\xi,\zeta}_{t}({{\nu}})=\nabla_xg(X^{\xi,\zeta}_{T}({{\nu}}),\zeta)\exp\left\{-\int_t^T\int\nabla_x\phi_{r}(X^{\xi,\zeta}_r({{\nu}}),a,\zeta)\,\nu_r(da)\,dr\right\}\\
%%
%&\quad-\int_t^T \left(\int\nabla_x f_{r}(X^{\xi,\zeta}_{r}({{\nu}}),a,\zeta)\,\nu_r(da)\right)\times\exp\left\{\int_t^r\int\nabla_x \phi_{r'}(X^{\xi,\zeta}_{r'}({{\nu}}),a,\zeta)\,\nu_{r'}(da)\,dr' \right\}\,dr.
%\end{aligned}
%\end{equation}
As a first estimate, let us show the following lemma:
\begin{lemma}[Uniforms bounds and continuity]\label{lem:UniformBounds_Continuity} Under Assumption \ref{as coefficients}, for any ${{\nu}}\in\Vv_2$, $(X^{\xi,\zeta}_t({{\nu}}),P^{\xi,\zeta}_t({{\nu}}))_{0\leq t\leq T}$ given by \eqref{eq:ForwardBackwardMapAppendix} satisfies:
\[
|X^{\xi,\zeta}_t({{\nu}})|\leq \left(|\xi|+\int_0^t\int|\phi_t(0,a,\zeta)|\nu_r(da)\,dr\right)\times\exp\{T\sup_{t,a,\zeta}\Vert \phi_{t}(\cdot,a,\zeta)\Vert_{Lip}\},
\]
\begin{align*}
&|P^{\xi,\zeta}_t({{\nu}})| \leq  \Vert\nabla_xg (\cdot,\zeta)\Vert_{\infty}\wedge\big(\Vert \nabla_x g(\cdot,\zeta)\Vert_{Lip} |X^{\xi,\zeta}_t({{\nu}})|+| \nabla_xg(0,\zeta)|\big)
\times\exp\{T\sup_{t,a,\zeta}\Vert \phi_{t}(\cdot,a,\zeta)\Vert_{Lip}\}\\
&+\Vert\nabla_xf_t\Vert_{\infty}\times\exp\{T\sup_{t,a,\zeta}\Vert \phi_{t}(\cdot,a,\zeta)\Vert_{Lip}\}.
%Old estimate: Case $\bL$ not Lipschitz
%&+\left(\Vert\nabla_x\bL\Vert_{\infty}\wedge\big(T\sup_{t,a}\Vert \nabla_x\bL_{t}(\cdot,a)\Vert_{Lip} |X_t({{\nu}})|+T\Vert \nabla_x\bL_{t}(0,.)\Vert_{Lip}\int_0^T\int|a|{\nu}_{r}(da)\,dr+|\nabla_x\bL_{t}(0,0)|\big)\right)\\
%
%&\quad\times\exp\{T\Vert \bPhi_{t}(\cdot,a)\Vert_{Lip}\}.
\end{align*}
Additionally, for all $0\leq t\leq T$, ${{\nu}}\mapsto (X^{\xi,\zeta}_t({{\nu}}),P^{\xi,\zeta}_t({{\nu}}))$ is continuous on $\Vv_2$ equipped with the topology related to the metric $\Ww^{T}_{2}$.
\end{lemma}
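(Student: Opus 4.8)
The plan is to handle the two a priori bounds by Gronwall's inequality, once applied to the forward equation in \eqref{eq:ForwardBackwardMapAppendix} and once read off the integrating-factor representation \eqref{eq:ForwardReformulated}, and then to deduce continuity from a stability estimate for the forward flow followed by dominated convergence in \eqref{eq:ForwardReformulated}.

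First I would bound $X$. From the forward equation, $(H')$-$(i)$ gives $|\bPhi_r(X_r({{\nu}}),a)|\le |\bPhi_r(0,a)| + \Vert\bPhi_r(\cdot,a)\Vert_{Lip}\,|X_r({{\nu}})|$, and joint Lipschitz continuity in $(x,a)$ uniform in $r$ bounds $\sup_{r,a}\Vert\bPhi_r(\cdot,a)\Vert_{Lip}<\infty$. Since each ${{\nu}}_r$ is a probability measure, $|X_t({{\nu}})|\le |\xi| + \int_0^t\int|\bPhi_r(0,a)|\,{{\nu}}_r(da)\,dr + \sup_{r,a}\Vert\bPhi_r(\cdot,a)\Vert_{Lip}\int_0^t|X_r({{\nu}})|\,dr$, and Gronwall yields the stated exponential bound.

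Next I would bound $P$ directly from \eqref{eq:ForwardReformulated}. Every exponential there has exponent of modulus at most $T\sup_{r,a}\Vert\bPhi_r(\cdot,a)\Vert_{Lip}$, since $|\nabla_x\bPhi_r(x,a)|\le\Vert\bPhi_r(\cdot,a)\Vert_{Lip}$. The terminal term $|\nabla_x\bG(X_T({{\nu}}))|$ is controlled either by $\Vert\nabla_x\bG\Vert_\infty$, or — when that is infinite — by $\Vert\nabla_x\bG\Vert_{Lip}\,|X_T({{\nu}})| + |\nabla_x\bG(0)|$ with $|X_T({{\nu}})|$ estimated by the previous step; the running term $\int|\nabla_x\bL_r(X_r({{\nu}}),a)|\,{{\nu}}_r(da)$ is treated the same way, using $\int|a|\,{{\nu}}_r(da)\le(\int|a|^2\,{{\nu}}_r(da))^{1/2}$ and ${{\nu}}\in\mathcal V_2$ if $\nabla_x\bL$ is only of linear growth. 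Assembling these gives the claimed estimate.

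For continuity, fix ${{\nu}}\in\mathcal V_2$ and ${{\nu}}^n\to{{\nu}}$ in $\mathcal W^T_q$. I would estimate $|X_t({{\nu}}^n)-X_t({{\nu}})|$ by splitting the drift difference at time $r$ into $\int[\bPhi_r(X_r({{\nu}}^n),a)-\bPhi_r(X_r({{\nu}}),a)]\,{{\nu}}^n_r(da)$, bounded by $\sup_{r,a}\Vert\bPhi_r(\cdot,a)\Vert_{Lip}\,|X_r({{\nu}}^n)-X_r({{\nu}})|$, and $\int\bPhi_r(X_r({{\nu}}),a)\,({{\nu}}^n_r-{{\nu}}_r)(da)$, bounded via Kantorovich duality \cite[Th 5.10]{villani2008optimal} by $\Vert\bPhi_r(X_r({{\nu}}),\cdot)\Vert_{Lip}\,\mathcal W_1({{\nu}}^n_r,{{\nu}}_r)$. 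Since $\int_0^T\mathcal W_1({{\nu}}^n_r,{{\nu}}_r)\,dr\le T^{1-1/q}\mathcal W^T_q({{\nu}}^n,{{\nu}})\to0$, Gronwall gives $\sup_{t\le T}|X_t({{\nu}}^n)-X_t({{\nu}})|\to0$. Feeding this into \eqref{eq:ForwardReformulated}, the exponents converge (same splitting of the measure difference, integrands uniformly bounded), $\nabla_x\bG(X_T({{\nu}}^n))\to\nabla_x\bG(X_T({{\nu}}))$ by continuity, and the running integral converges by dominated convergence using the uniform bounds just proved; hence ${{\nu}}\mapsto(X_t({{\nu}}),P_t({{\nu}}))$ is $\mathcal W^T_q$-continuous for each $t$.

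The main obstacle I expect is the measure-difference term in the continuity step: turning Wasserstein convergence ${{\nu}}^n_r\to{{\nu}}_r$ into convergence of integrals of the merely Lipschitz, non-compactly-supported integrands $\bPhi_r(X_r({{\nu}}),\cdot)$ and $\nabla_x\bL_r(X_r({{\nu}}),\cdot)$ — which is exactly where $\mathcal W_1\le\mathcal W_q$ and the Kantorovich dual formulation enter — and doing so with enough uniformity in $r$ that the integrated metric $\mathcal W^T_q$ controls everything after the Gronwall step. The non-locality in time of the backward equation (through $X_T$ and the integrating factor) adds bookkeeping but no essential new difficulty.
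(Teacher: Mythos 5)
Your proposal follows essentially the same route as the paper: Gronwall's inequality applied to the forward equation for the bound on $X$, direct estimation of each factor in the integrating-factor representation \eqref{eq:ForwardReformulated} for the bound on $P$, and for continuity the same splitting of the drift difference into a Lipschitz-in-$x$ part plus a measure-difference part (handled via Kantorovich duality and $\mathcal W^T_1\leq\mathcal W^T_2$), followed by Gronwall and then dominated convergence to pass the limit through \eqref{eq:ForwardReformulated}. No gap; the refinement you mention for $\nabla_x\bL$ of linear growth is a sensible aside but not needed for the bound as stated, since $(H')$-$(i)$ makes $\bL$ Lipschitz and hence $\nabla_x\bL$ bounded.
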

\begin{proof} Owing to the Lipschitz and differentiability properties of $(x,a)\mapsto\phi_{t}(x,a,\zeta)$,
\[
|X^{\xi,\zeta}_t({{\nu}})|\leq |\xi|+\sup_{t,a,\zeta}\Vert\phi_{t}(\cdot,a,\zeta)\Vert_{Lip}\int_{0}^{t}|X^{\xi,\zeta}_r({{\nu}})|\,dr+\int_{0}^{t}\int|\phi_{t}(0,a,\zeta)|\nu_r(da)\,dr.
\]
Applying Gronwall's inequality: For non-negative continuous functions $u,w,\alpha$
\begin{equation}\label{eq:GronwallLemma}
u(t)\leq w(t)+\alpha\int_0^t u(s)\,ds,\,\forall \,0\leq t\leq T\,\Rightarrow u(t)%\leq w(t)+\alpha\int_0^tw(s)\exp\{\alpha(t-s)\}\,ds
\leq \sup_{t\leq T}w(t) \exp\{\alpha T\},\,\forall \,0\leq t\leq T,
\end{equation}
yields to the estimate of $X^{\xi,\zeta}_t({{\nu}})$. The estimate for $P^{\xi,\zeta}_t({{\nu}})$ follows directly from \eqref{eq:ForwardReformulated}.

For the continuity of ${{\nu}}\mapsto (X^{\xi,\zeta}_t({{\nu}}),P^{\xi,\zeta}_t({{\nu}}))$, let $\{{{\nu}}^\epsilon\}_{\epsilon>0}$ be a family of elements of $\Vv_2$ such that $\lim_{\epsilon\rightarrow 0^+}\Ww^{T}_{2}({{\nu}}^\epsilon,{{\nu}})=0$. Observe that
\begin{align*}
&\left|X^{\xi,\zeta}_t({{\nu}}^{\epsilon})-X^{\xi,\zeta}_t({{\nu}})\right|\leq \int_{0}^{t}\int\left\{\left|\phi_{r}(X^{\xi,\zeta}_r({{\nu}}^{\epsilon}),a,\zeta)-\phi_{r}(X_r({{\nu}}),a,\zeta)\right|\right\}
{{\nu}}^\epsilon_{r}(da)\,dr\\
&\quad +\left|\int_{0}^{t}\int\left\{\phi_{r}(X^{\xi,\zeta}_r({{\nu}}),a,\zeta)\right\}\left({{\nu}}^{\epsilon}_{r}(da)-\nu_r(da)\right)\,dr\right|\\
&\leq \sup_{t,a,\zeta}\Vert\phi_{t}(\cdot,a,\zeta)\Vert_{Lip} \int_{0}^{t}\left|X^{\xi,\zeta}_r({{\nu}}^{\epsilon})-X^{\xi,\zeta}_r({{\nu}})\right|\,dr+
\sup_{t,x,\zeta}\Vert\phi_{t}(x,.,\zeta)\Vert_{Lip}\int_{0}^{t}\Ww_1({{\nu}}^{\epsilon}(r),{{\nu}}(r))\,dr.
\end{align*}
Applying Gronwall's inequality \eqref{eq:GronwallLemma} and
since $\Ww^{T}_{1}\leq \Ww^{T}_{2}$, it follows that $\lim_{\epsilon\rightarrow 0}X^{\xi,\zeta}_t({{\nu}}^\epsilon)=X^{\xi,\zeta}_t({{\nu}})$.

Since $(x,a)\mapsto \nabla_xg(x,\zeta),\nabla_x\phi_{t}(x,a,\zeta),\nabla_xf_{t}(x,a,\zeta)$ are bounded continuous (uniformly in $t$), the continuity of $({{\nu}})\mapsto X^{\xi,\zeta}_t({{\nu}})$ also ensure, by dominated convergence that
\[
\lim_{\epsilon\rightarrow 0}\nabla_x g(X^{\xi,\zeta}_T({{\nu}}^\epsilon),\zeta)=\nabla_x g(X_T({{\nu}}),\zeta),
\]
\[
\lim_{\epsilon\rightarrow 0}\int_0^t\int\left\{\nabla_x f_{r}(X^{\xi,\zeta}_r({{\nu}}^\epsilon),a,\zeta)\right\}{{\nu}}^{\epsilon}_{r}(da)\,dr
=\int_0^t\int\left\{\nabla_x f_{r}(X^{\xi,\zeta}_r({{\nu}}),a,\zeta)\right\}\nu_r(da)\,dr,
\]
\[
\lim_{\epsilon\rightarrow 0}\int_0^t\int\left\{\nabla_x \phi_{r}(X^{\xi,\zeta}_r({{\nu}}^\epsilon),a,\zeta)\right\}{{\nu}}^{\epsilon}_{r}(da)\,dr
=\int_0^t\int\left\{\nabla_x \phi_{r}(X^{\xi,\zeta}_r({{\nu}}),a,\zeta)\right\}\nu_r(da)\,dr.
\]
This ensures that $\lim_{\epsilon\rightarrow 0}P^{\xi,\zeta}_t({{\nu}}^\epsilon)=P^{\xi,\zeta}_t({{\nu}})$.
\end{proof}
%In the next two sections, we establish the differentiability properties of the mappings ${{\nu}}\in\Vv_2\mapsto (X_t({{\nu}}),P_t({{\nu}}))$.
%These properties are established introducing a slight extension of the classical notion of linear functional derivative, extension suited to the space $\Vv_2$:

%\subsection{Linear functional derivative of \eqref{eq:ForwardBackwardMapAppendix} on the space $\Vv_2$}

Let us prove the differentiability, in the sense of Definition \ref{def:ExtendedDerivative}, of the mapping ${{\nu}}\in \Vv_2\mapsto (X^{\xi,\zeta}_t({{\nu}}),Y^{\xi,\zeta}_t({{\nu}}))$:

\begin{lemma}\label{lem:FirstDerivative} Let $\widehat \Xi^{\xi,\zeta,\nu}$ be as in \eqref{PrincipalSolutionBackward} and let $(t,t_0)\mapsto \Xi^{\xi,\zeta,\nu}(t,t_0)$ be the solution to
\begin{equation}\label{PrincipalSolutionForward}
\frac{d\Xi^{\xi,\zeta,\nu}(t,t_0)}{dt}=\big(\int\nabla_x\phi_{t}(X^{\xi,\zeta}_{t}(\nu),a,\zeta)\nu_{t}(da)\big)\Xi^{\xi,\zeta,\nu}(t,t_0),\,t_0\leq t\leq T,\,\,\Xi^{\xi,\zeta,\nu}(t_0,t_0)=I_d.
\end{equation}
%Old assumptions:
%Assume that $(H')-(0)$, $(H')-(i)$ and $(H')-(ii)$ hold and that for all $(t,a)\in(0,T)\times\er^{p}$,
%$x\mapsto \nabla_x\bG(x),\,\nabla_x\bPhi_{t}(x,a),\nabla_x \bL_{t}(x,a)$ are all continuously differentiable on $\er^d$ with bounded derivatives. Then,
For all $0\leq t\leq T$, ${{\nu}}\mapsto (X^{\xi,\zeta}_t({{\nu}}),Y^{\xi,\zeta}_t({{\nu}}))$ admits a linear functional derivative given as the solution to the ODEs
\[
\frac{\delta X^{\xi,\zeta}_t}{\delta \nu}({\nu},r,a)=\1_{\{r\leq t\}}\Xi^{\xi,\zeta,\nu}(t,r)\phi_{r}(X^{\xi,\zeta}_{r}({{\nu}}),a,\eta),
\]
and
\begin{align*}
&\frac{\delta P^{\xi,\zeta}_t}{\delta \nu}({\nu},r,a)=\widehat \Xi^{\xi,\zeta,\nu}(T,t)\nabla^2_xg(X^{\xi,\zeta}_T(\nu),\zeta)\times\frac{\delta X^{\xi,\zeta}_{T}}{\delta \nu}({\nu},r,a)\\
&+\int_t^T\widehat \Xi^{\xi,\zeta,\nu}(r,t)\nabla_xh_{r}(X^{\xi,\zeta}_r(\nu),P^{\xi,\zeta}_r(\nu),a,\zeta)\,dr\\
&+\int_{t}^{T}\widehat \Xi^{\xi,\zeta,\nu}(r',t)
\nabla^2_xh_{r'}(X^{\xi,\zeta}_{r'}({{\nu}}),P^{\xi,\zeta}_{r'}({{\nu}}),a',\zeta)\nu_{r'}(da')\,dr'\\
&+\left\{\int_t^T
\widehat \Xi^{\xi,\zeta,\nu}({r'},t)\int \nabla^2_xh_{r'}(X^{\xi,\zeta}_{r'}({{\nu}}),P^{\xi,\zeta}_{r'}({{\nu}}),a',\zeta)\nu_{r'}(da')\,dr'\right\}\frac{\delta X^{\xi,\zeta}_T}{\delta \nu}({\nu},r,a),
\end{align*}
where $h$ is defined as in \eqref{eq hamiltonian}.
\end{lemma}
\begin{proof}
The derivative of ${{\nu}}\mapsto X^{\xi,\zeta}_t({{\nu}})$ follows directly Lemma \ref{lemma V process linear}, which grants
\begin{align*}
&\int_0^T\int\frac{\delta X^{\xi,\zeta}_t}{\delta {{\nu}}}(r,a,{{\nu}}+\epsilon({\rho}-{{\nu}}))\left({\rho}_{r}(da)-{{\nu}}_{r}(da)\right)\,dr=\lim_{\epsilon\rightarrow 0}\frac{X^{\xi,\zeta}_t({{\nu}}+\epsilon({\rho}-{{\nu}}))-X^{\xi,\zeta}_t({{\nu}})}{\epsilon}=V^{\xi,\zeta}_t
\end{align*}
for $(V^{\xi,\zeta}_{t})_{0\leq t\leq T}$ satisfying:
\[
V^{\xi,\zeta}_t=\int_0^t \big(\int\nabla_x\phi_{r}(X^{\xi,\zeta}_{r}({{\nu}}),a,\zeta)\,\nu_r(da)\big)V^{\xi,\zeta}_r\,dr+\int_{0}^{t}\int \phi_{r}(X^{\xi,\zeta}_r({{\nu}}),a,\zeta)
\left(\rho_{r}(da)-\nu_r(da)\right)\,dr,\,0\leq t\leq T.
\]
Recall that any solution to the ODE $du(t)/dt=b(t)u(t)+\alpha(t)\,\text{on}\,[0,T]$, $u(0)=u^0,$ admits the representation:
\begin{equation}\label{ProofTrivia1}
u(t)=\Psi(t,0)u^0+\int_0^t\Psi(t,s)\alpha(s)\,ds,\,
\frac{d\Psi(t,t_0)}{dt}=b(t)\Psi(t,t_0),\,\Psi(t_0,t_0)=I_d,
\end{equation}
we get, using the expression of $V^{\xi,\zeta}_t$,
\begin{align*}
&\int_0^T\int\frac{\delta X^{\xi,\zeta}_t}{\delta\nu}({\nu},r,a)\left({\rho}_{r}(da)-{{\nu}}_{r}(da)\right)\,dr\\
&=\int_0^t\Xi^{\xi,\zeta,\nu}(t,r)\phi_{r}(X^{\xi,\zeta}_r({{\nu}}),a,\zeta)(\rho_{r}(da)-\nu_r(da))\,dr.
\end{align*}
%Old
%\begin{equation}\label{ProofTrivia1}
%u(t)=\exp\{\int_0^t b(r')\,dr'\}u^0+\int_{0}^{t}\exp\{\int_r^tb(r')\,dr'\}\alpha(r)\,dr,
%\end{equation}
%we get, using the explicit expression of $V^{\xi,\zeta}_t$,
%\begin{align*}
%&\int_0^T\int\frac{\delta X^{\xi,\zeta}_t}{\delta\nu}({\nu},r,a)\left({\rho}_{r}(da)-{{\nu}}_{r}(da)\right)\,dr\\
%
%&=\int_0^T\1_{\{r\leq t\}}\left(\exp\left\{\int_r^t\int\nabla_x\phi_{r'}(X^{\xi,\zeta}_{r'}({{\nu}}),a',\zeta)\nu_{r'}(da')\,dr'\right\}
%\phi_{r}(X^{\xi,\zeta}_r({{\nu}}),a,\zeta)\right)(\rho_{r}(da)-\nu_r(da))\,dr.
%\end{align*}
The Lipschitz properties of $(x,a)\mapsto \phi_{t}(x,a,\zeta)$ and $(x,a)\mapsto \nabla_x\phi_{t}(x,a,\zeta)$ ensure, with Lemma \ref{lem:UniformBounds_Continuity} that ${{\nu}}\mapsto X^{\xi,\zeta}_r({{\nu}})$, and by extension ${{\nu}}\mapsto \phi_{r}(X^{\xi,\zeta}_r({{\nu}}),a,\zeta),\int_r^t\int\nabla_x\phi_{r'}(X^{\xi,\zeta}_{r'}({{\nu}}),a',\zeta)\nu_{r'}(da')\,dr'$ are continuous. In particular,
 $\nabla_x\phi_{t}(x,a,\zeta)$ is uniformly bounded and
\[
|\phi_{r}(X^{\xi,\zeta}_r({{\nu}}),a,\zeta)|\leq C(1+|X^{\xi,\zeta}_r({{\nu}})|+|a|+|\phi_t(0,0,\zeta)|)\leq C(1+|\xi|+|a|+|\phi_t(0,0,\zeta)|),
\]
for some finite constant $C$ depending only on $T$, $d$, ${p}$, $\Vert\nabla_x\phi\Vert_{\infty}$ and $\Vert\nabla_a\phi\Vert_{\infty}$.
Therefore, as 
\[
\Vert \Xi^{\xi,\zeta,\nu}(t,t_0)\Vert :=\sup_{|v|= 1}|\Xi^{\xi,\zeta,\nu}(t,r) v|\leq 1+\Vert\nabla_x\phi\Vert_\infty \int_{t_0}^t \Vert \Xi^{\xi,\zeta,\nu}(r,t_0)\Vert\,dr 
\]
Gronwall's inequality yields
\[
\Vert \Xi^{\xi,\zeta,\nu}(t,t_0)\Vert \leq \exp\{(t-t_0)\Vert\nabla_x\phi\Vert_\infty\}
\]
and so
\[
\left|\Xi^{\xi,\zeta,\nu}(t,r)\phi_{r}(X^{\xi,\zeta}_r({{\nu}}),a,\zeta)\right|\leq C(1+|a|+|\xi|+|\phi_t(0,0,\zeta)|).
\]
This enables us to conclude that
\[
\1_{\{r\leq t\}}\Xi^{\xi,\zeta,\nu}(t,r)\phi_{r}(X^{\xi,\zeta}_r({{\nu}}),a,\zeta),
\]
is the linear derivative functional of ${{\nu}}\mapsto X^{\xi,\zeta}_t({{\nu}})$.

In the same way, for ${{\nu}}_\epsilon={{\nu}}+\epsilon({\rho}-{{\nu}})$, we have
\begin{align*}
&P^{\xi,\zeta}_t({{\nu}}_\epsilon)-P^{\xi,\zeta}_t({{\nu}})=\nabla_xg(X^{\xi,\zeta}_T({{\nu}}_\epsilon),\zeta)-\nabla_xg(X^{\xi,\zeta}_T({{\nu}}),\zeta)\\
&\quad +\int_{t}^{T} \left(\int \nabla_xf_{r}(X^{\xi,\zeta}_r({{\nu}}_\epsilon),a,\zeta){{\nu}}^{\epsilon}_{r}(da)-\int \nabla_xf_{r}(X^{\xi,\zeta}_r({{\nu}}),a,\zeta)\nu_r(da)\right)\,dr\\
&\quad +\int_{t}^{T}\left(\int\nabla_x\phi_{r}(X^{\xi,\zeta}_r({{\nu}}_\epsilon),a,\zeta){{\nu}}^{\epsilon}_{r}(da)
-\int\nabla_x\phi_{r}(X^{\xi,\zeta}_r({{\nu}}),a,\zeta)\nu_r(da)\right)P^{\xi,\zeta}_r({{\nu}})\,dr\\
&\quad +\int_{t}^{T}\int\nabla_x\phi_{r}(X^{\xi,\zeta}_r({{\nu}}_\epsilon),a)\left(P^{\xi,\zeta}_r({{\nu}}_\epsilon)-P^{\xi,\zeta}_r({{\nu}})\right)\nu_r(da)\,dr\\
&=\nabla_xg(X^{\xi,\zeta}_T({{\nu}}_\epsilon),\zeta)-\nabla_xg(X^{\xi,\zeta}_T({{\nu}}),\zeta)\\
&\quad +\int_{t}^{T}\int \left(\nabla_xh_{r}(X^{\xi,\zeta}_r({{\nu}}_\epsilon),P^{\xi,\zeta}_r({{\nu}}),a){{\nu}}^{\epsilon}_{r}(da)-\nabla_xh_{r}(X^{\xi,\zeta}_r({{\nu}}),
P^{\xi,\zeta}_r({{\nu}}),a,\zeta)\nu_r(da)\right)\,dr\\
&\quad +\int_{t}^{T}\big(\int\nabla_x\phi_{r}(X^{\xi,\zeta}_r({{\nu}}_\epsilon),a,\zeta)\nu_r(da)\big)\left(P^{\xi,\zeta}_r({{\nu}}_\epsilon)-P^{\xi,\zeta}_r({{\nu}})\right)\,dr.
\end{align*}
Using \eqref{PrincipalSolutionBackward}, we deduce the formulation:
\begin{align*}
&P^{\xi,\zeta}_t({{\nu}}_\epsilon)-P^{\xi,\zeta}_t({{\nu}})=\widehat \Xi^{\xi,\zeta,\nu_\epsilon}(T,t)
\times\big(\nabla_xg(X^{\xi,\zeta}_T({{\nu}}_\epsilon),\zeta)-\nabla_xg(X^{\xi,\zeta}_T({{\nu}}),\zeta)\big)\\
&\quad+\int_{t}^{T}\widehat \Xi^{\xi,\zeta,\nu_\epsilon}(r,t)\\
&\quad\quad\quad\times\left(\int\nabla_xh_{r}(X^{\xi,\zeta}_r({{\nu}}_\epsilon),P^{\xi,\zeta}_r({{\nu}}),a,\zeta){{\nu}}^{\epsilon}_{r}(da)-
\int\nabla_xh_{r}(X^{\xi,\zeta}_r({{\nu}}),P^{\xi,\zeta}_r({{\nu}}),a,\zeta)\nu_r(da)\right)\,dr.
\end{align*}
As $\lim_{\epsilon\rightarrow 0}\widehat \Xi^{\xi,\zeta,\nu_\epsilon}(T,t)=\widehat \Xi^{\xi,\zeta,\nu}(T,t)$, using Lemma \ref{lem:ChainRule} and Lemma \ref{lem:UniformBounds_Continuity} (observing that $\Ww_{T,2}({{\nu}}_\epsilon,{{\nu}})\leq \epsilon\Ww_{T,2}({{\nu}}',{{\nu}})\rightarrow 0$ as $\epsilon\rightarrow 0+$), we obtain
\begin{align*}
&\lim_{\epsilon\rightarrow 0}\frac{P^{\xi,\zeta}_t({{\nu}}+\epsilon({\rho}-{{\nu}}))-P^{\xi,\zeta}_t({{\nu}})}{\epsilon}\\
&=\int_0^T\widehat \Xi^{\xi,\zeta,\nu}(T,t)\int\left(\nabla^2_xg(X^{\xi,\zeta}_T({{\nu}}),\zeta) \frac{\delta X^{\xi,\zeta}_T}{\delta {{\nu}}}({\nu},r,a) \right)(\rho_{r}(da)-\nu_r(da))\,dr\\
&\quad+\int_t^T\widehat \Xi^{\xi,\zeta,\nu}(r,t)\left(\int_0^T\int \nabla_xh_{r}(X^{\xi,\zeta}_r({{\nu}}),P^{\xi,\zeta}_r({{\nu}}),a,\zeta)\left(\rho_{r}(da)-\nu_r(da)\right)\,dr\right)\\
&\quad+\int_t^T \widehat \Xi^{\xi,\zeta,\nu}(r,t)\\
&\quad\quad\times\left(\int_0^T\int\nabla^2_xh_{r}(X^{\xi,\zeta}_r({{\nu}}),P^{\xi,\zeta}_r({{\nu}}),a,\zeta)\frac{\delta X^{\xi,\zeta}_r}{\delta {{\nu}}}({\nu},r'',a'')\left(\rho_{r''}(da'')-\nu_{r''}(da'')\right)d{r''}\right)\nu_r(da)\,dr,
\end{align*}
from which we identify the value of $\frac{\delta P^{\xi,\zeta}_t}{\delta {{\nu}}}(\nu,r,a)$.
\end{proof}

Lemma \ref{lem:FirstDerivative} together with the definition of linear derivative allows to compute $X(\nu)-X(\mu)$ and  $P(\nu)-P(\mu)$. However, to establish propagation of chaos an alternative representation is more convenient.
\begin{lemma} \label{lem difference P and X}
	Let Assumptions \ref{as coefficients} and \ref{ass exist and uniq} hold.
	Then 	
\[
|X^{\xi,\zeta}_t(\nu) - X^{\xi,\zeta}_t(\mu)|  \leq \exp(\| \nabla_x \phi\|_{\infty}(T-t)) \int_{0}^t \bigg| \int \phi_t(X^{\xi,\zeta}_{r}(\mu),a,\zeta) \,(\nu_r - \mu_r)(da) \bigg|\,dr\,,
\]
and
\[
 |P^{\xi,\zeta}_{t}(\nu) - P^{\xi,\zeta}_{t}(\mu)|  \leq c_1
|X^{\xi,\zeta}_T(\nu)-X^{\xi,\zeta}_T(\mu)|  + c_2
\int_t^T | X^{\xi,\zeta}_r(\nu)-X^{\xi,\zeta}_r(\mu)|dr\,,
\]
where $c_1$ and $c_2$ are given in \eqref{eq constants P}.
\end{lemma}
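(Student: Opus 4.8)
The plan is to derive both estimates directly from the integral equations defining $X^{\xi,\zeta}(\nu)$ and $P^{\xi,\zeta}(\nu)$ in~\eqref{eq:ForwardBackwardMapAppendix}, using only Lipschitz/boundedness of the coefficients from Assumption~\ref{as coefficients} together with Gronwall's inequality~\eqref{eq:GronwallLemma}. The key observation is that when we difference two forward ODEs with \emph{different controls} $\nu$ and $\mu$, the discrepancy in the coefficient splits into a part that is Lipschitz in the state (evaluated at the \emph{same} control) and a ``frozen-state'' part involving $(\nu_r-\mu_r)$ acting on $a\mapsto\phi_r(X^{\xi,\zeta}_r(\mu),a,\zeta)$; the latter is precisely the source term we want to isolate on the right-hand side.

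For the forward estimate: write
\[
X^{\xi,\zeta}_t(\nu)-X^{\xi,\zeta}_t(\mu)
= \int_0^t\!\!\int\big[\phi_r(X^{\xi,\zeta}_r(\nu),a,\zeta)-\phi_r(X^{\xi,\zeta}_r(\mu),a,\zeta)\big]\nu_r(da)\,dr
+\int_0^t\!\!\int \phi_r(X^{\xi,\zeta}_r(\mu),a,\zeta)\,(\nu_r-\mu_r)(da)\,dr.
\]
Bound the first term by $\|\nabla_x\phi\|_\infty\int_0^t|X^{\xi,\zeta}_r(\nu)-X^{\xi,\zeta}_r(\mu)|\,dr$ (using that the $\nu_r$ are probability measures) and call the second term $\psi(t):=\int_0^t|\int\phi_r(X^{\xi,\zeta}_r(\mu),a,\zeta)(\nu_r-\mu_r)(da)|\,dr$, which is nondecreasing. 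Then $|X^{\xi,\zeta}_t(\nu)-X^{\xi,\zeta}_t(\mu)|\le \psi(t)+\|\nabla_x\phi\|_\infty\int_0^t|X^{\xi,\zeta}_r(\nu)-X^{\xi,\zeta}_r(\mu)|\,dr$, and Gronwall gives the factor $\exp(\|\nabla_x\phi\|_\infty(T-t))$ (or $e^{\|\nabla_x\phi\|_\infty t}$, which is dominated by this) multiplying $\psi(t)$. A small care point: the stated bound has $\psi(t)$ with the $X(\mu)$ argument, so the splitting must be done keeping the integrand at control $\mu$ in the transport term, which is exactly the decomposition above.

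For the backward estimate: use the representation~\eqref{eq:ForwardReformulated} for $P^{\xi,\zeta}_t(\nu)$, or equivalently the integral equation in~\eqref{eq:ForwardBackwardMapAppendix}. Differencing $P^{\xi,\zeta}_t(\nu)-P^{\xi,\zeta}_t(\mu)$, every term is controlled either by $|X^{\xi,\zeta}_T(\nu)-X^{\xi,\zeta}_T(\mu)|$ (through $\nabla_x g$ Lipschitz and the Lipschitz dependence of the exponential integrating factor on the integrated drift, which is itself Lipschitz in the state by $\nabla_x\phi$ being Lipschitz — here we also need point~ii) of Lemma~\ref{lemma odes} and boundedness assumptions to control $|\nabla_x g(X_T)|$ and $|P_r(\mu)|$ uniformly) or by $\int_t^T|X^{\xi,\zeta}_r(\nu)-X^{\xi,\zeta}_r(\mu)|\,dr$ (through $\nabla_x h=\nabla_x f+(\nabla_x\phi)p$ Lipschitz in $x$, plus the uniform bounds on $P_r(\mu)$) and by $\int_t^T|P^{\xi,\zeta}_r(\nu)-P^{\xi,\zeta}_r(\mu)|\,dr$ (through $\nabla_x\phi$ bounded). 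Collect the first two into $c_1|X^{\xi,\zeta}_T(\nu)-X^{\xi,\zeta}_T(\mu)|+\tilde c_2\int_t^T|X^{\xi,\zeta}_r(\nu)-X^{\xi,\zeta}_r(\mu)|\,dr$, apply the backward Gronwall inequality to absorb $\int_t^T|P_r(\nu)-P_r(\mu)|\,dr$, and relabel the resulting constants as $c_1,c_2$ (as in~\eqref{eq constants P}).

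The main obstacle is bookkeeping rather than conceptual: one must track that all prefactors ($\|\nabla_x g\|$-type quantities, $\sup_\nu\|P_r(\nu)\|$, the Lipschitz constants of the integrating factors) are genuinely finite and independent of $\nu$, which is where point~ii) of Lemma~\ref{lemma odes} (uniform-in-$\nu$ bounds on $X,P$) and point~iii) of Assumption~\ref{ass exist and uniq} are essential — without the latter, $P_r(\nu)$ need not be uniformly bounded and $c_1,c_2$ could blow up. A secondary subtlety is the Lipschitz estimate for the difference of two exponential integrating factors $e^{\int_r^t(\nabla_x\Phi_{\bar r})(X_{\bar r}(\nu),\nu_{\bar r})d\bar r}-e^{\int_r^t(\nabla_x\Phi_{\bar r})(X_{\bar r}(\mu),\mu_{\bar r})d\bar r}$: this is handled by the elementary bound $|e^A-e^B|\le e^{\max(|A|,|B|)}|A-B|$ together with $|A|,|B|\le\|\nabla_x\phi\|_\infty T$ and the Lipschitz-in-$x$ control of the integrand, but care is needed because the drift inside also carries the $\nu$-vs-$\mu$ transport discrepancy — however, since $\nabla_x\phi$ does not depend on the measure except through $X$, that discrepancy only enters via $|X_{\bar r}(\nu)-X_{\bar r}(\mu)|$, keeping us within the asserted form.
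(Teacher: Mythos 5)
Your proposal is correct and follows essentially the same route as the paper: the same decomposition of the drift difference into a Lipschitz-in-$x$ part evaluated at one control plus a frozen-state transport term $\int\phi_r(X_r(\mu),a,\zeta)(\nu_r-\mu_r)(da)$, followed by a Duhamel/Gronwall argument for $X$, and then differencing the explicit representation~\eqref{eq:ForwardReformulated} for $P$. Two small quibbles: (i) your parenthetical that $e^{\|\nabla_x\phi\|_\infty t}$ is ``dominated by'' $e^{\|\nabla_x\phi\|_\infty(T-t)}$ is false for $t>T/2$ — both are simply bounded by $e^{\|\nabla_x\phi\|_\infty T}$ (the $(T-t)$ in the lemma statement appears to be a typo carried over from the backward-in-time setting, since the paper's own derivation via $|\Gamma_{r,t}|\le e^{(t-r)\|\nabla_x\phi\|_\infty}$ produces $e^{\|\nabla_x\phi\|_\infty t}$); and (ii) if you work from the integral equation for $P$ and absorb $\int_t^T|P_r(\nu)-P_r(\mu)|\,dr$ by backward Gronwall, you get a bound of the right \emph{form} but with constants inflated by an extra exponential factor relative to the $c_1,c_2$ of~\eqref{eq constants P}; those specific constants come from differencing the solved Duhamel formula directly (which eliminates the self-referential $P$-term without a second Gronwall), which is the route the paper actually takes and which you also flag as the primary option.
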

\begin{proof}
Let $X_r^{\lambda}:=X^{\xi,\zeta}_r(\nu) + \lambda (X^{\xi,\zeta}_r(\mu) - X^{\xi,\zeta}_r(\nu)) $ and write
\begin{align*}
&X^{\xi,\zeta}_t(\nu)-X^{\xi,\zeta}_t(\mu)
=\int_{0}^{t}\left(\int \phi_r(X^{\xi,\zeta}_r(\nu),a,\zeta) \nu_r(da)-
\int \phi_r(X^{\xi,\zeta}_r(\mu),a,\zeta)\mu_r(da)\right)\,dr\\
&=\int_{0}^{t} \left(\int \left( \phi_r(X^{\xi,\zeta}_r(\nu),a,\zeta)-
\phi_r(X^{\xi,\zeta}_r(\mu),a,\zeta) \nu_r(da)\right) \right)\,dr\\
& + \int_{0}^{t} \left(\int  \phi_r(X^{\xi,\zeta}_r(\mu),a,\zeta)(\nu_r - \mu_r)(da) \right)\,dr \\
&=\int_{0}^{t} \left(\int \int_0^1 (\nabla_x \phi_r)(X^{\lambda}_r,a,\zeta)\, d\lambda\, \nu_r(da) \right)\left (X^{\xi,\zeta}_r(\nu) -X^{\xi,\zeta}_r(\mu)\right)\,dr\\
& + \int_{0}^{t} \left(\int  \phi_r(X^{\xi,\zeta}_r(\mu),a,\zeta)(\nu_r - \mu_r)(da) \right)\,dr\,.
\end{align*}
Let $(r,t)\mapsto\Gamma_{r,t}$ be the solution to 
\[
\frac{d\Gamma_{r,t}}{dr}=\big(\int \int_0^1 (\nabla_x \phi_r)(X^{\lambda}_{r'},a,\zeta) \nu_{r'}(da)\,d\lambda\big),\,\Gamma_{r,r}=I_d.
\]
We then have
\begin{align*}
&X^{\xi,\zeta}_t(\nu)-X^{\xi,\zeta}_t(\mu)
=\int_{0}^{t} \Gamma_{r,t}\left(\int  \phi_r(X^{\xi,\zeta}_r(\mu),a,\zeta)(\nu_r - \mu_r)(da) \right)dr\,.
\end{align*}
Assumption \ref{as coefficients} implies that $|\Gamma_{r,t}|\leq \exp((t-r)\| \nabla_x \phi|_{\infty})$ and leads immediately to the estimate for $|X^{\xi,\zeta}_t(\nu) - X^{\xi,\zeta}_t(\mu)|$.

Let us fix $\xi,\zeta$.
Let us write %$\mathbf \Phi_t(x,a) := \phi_t(x,a,\zeta)$, 
$\mathbf f_t(x,a):=f_t(x,a,\zeta)$ and $\mathbf G(x) := g(x,\zeta)$.
From \eqref{eq:ForwardReformulated} we have
%\[
%\begin{split}
%\Upsilon(x):= & \nabla_x\bG(x)\exp\left\{-\int_t^T\int\nabla_x\bPhi_{r}(x,a)\,\nu_r(da)\,dr\right\}\\
%%
%&\quad-\int_t^T \left(\int\nabla_x \bL_{r}(x,a)\,\nu_r(da)\right)\times\exp\left\{\int_t^r\int\nabla_x \bPhi({r'},x,a)\,\nu_{r'}(da)\,dr' \right\}\,dr.	
%\end{split}
%\]
%We have
%\begin{align*}
%&P^{\xi,\zeta}_t(\nu)-P^{\xi,\zeta}_t(\mu)
%=  \int_{0}^{t} \Gamma_{r,t}\left(\int  \phi_r(X^{\xi,\zeta}_r(\mu),a,\zeta)(\nu_r - \mu_r)(da) \right)dr\,.
%\end{align*}
\begin{equation*}
\begin{aligned}
&P^{\xi,\zeta}_{t}(\nu) - P^{\xi,\zeta}_{t}(\mu)  =\widehat \Xi^{\xi,\zeta,\nu}(T,t)
\left(\nabla_x\bG(X_{T}({{\nu}}))-  \nabla_x\bG(X_{T}({{\mu}}))\right)\\
&+ \left(\widehat \Xi^{\xi,\zeta,\mu}(T,t)-\widehat \Xi^{\xi,\zeta,\nu}(T,t)\right)\nabla_x\bG(X_{T}({{\mu}})) \\
&- \int_t^T \widehat \Xi^{\xi,\zeta,\nu}(r,t)\left(  \int\nabla_x \bL_{r}(X_{r}({{\nu}}),a)\,\nu_r(da) -  \int\nabla_x \bL_{r}(X_{r}({{\mu}}),a)\,\nu_r(da) \right)
\,dr \\
&-\int_t^T \left(  \widehat \Xi^{\xi,\zeta,\mu}(r,t)-\widehat \Xi^{\xi,\zeta,\nu}(r,t)\right)\left(\int\nabla_x \bL_{r}(X_{r}({{\mu}}),a)\,\nu_r(da)\right)\,dr.
\end{aligned}
\end{equation*}
Applying the mean-value theorem in $X$ and using Assumption \ref{as coefficients} implies that
\[
 |P^{\xi,\zeta}_{t}(\nu) - P^{\xi,\zeta}_{t}(\mu)|  \leq c_1
|X^{\xi,\zeta}_T(\nu)-X^{\xi,\zeta}_T(\mu)|  + c_2
\int_t^T | X^{\xi,\zeta}_r(\nu)-X^{\xi,\zeta}_r(\mu)|dr\,,
\]
where
\begin{equation}\label{eq constants P}
\begin{split}
	c_1 & =\|\nabla_x^2 g\|_{\infty} e^{T \|\nabla_x \phi\|_{\infty}}\,\\
	c_2 &=
\| \nabla_x g\|_{\infty }e^{T \|\nabla_x \phi\|_{\infty}} \|\nabla_x^2 \phi\|_{\infty}
+ \| \nabla_x^2 f\|_{\infty }e^{T \|\nabla_x \phi\|_{\infty}}
+ T \| \nabla_x f\|_{\infty }e^{T \|\nabla_x \phi\|_{\infty}} \|\nabla^2_x \phi \|_{\infty}\,.
\end{split}	
\end{equation}
%
%\[
%\begin{split}
% |P^{\xi,\zeta}_{t}(\nu) - P^{\xi,\zeta}_{t}(\mu)| & \leq \|\nabla_x^2 g\|_{\infty} e^{T \|\nabla_x \phi\|_{\infty}}
%|X^{\xi,\zeta}_T(\nu)-X^{\xi,\zeta}_T(\mu)| \\
%& + \| \nabla_x g\|_{\infty }e^{T \|\nabla_x \phi\|_{\infty}} \|\nabla_x^2 \phi\|_{\infty}
%\int_t^T | X^{\xi,\zeta}_r(\nu)-X^{\xi,\zeta}_r(\mu)|dr \\
%& + \| \nabla_x^2 f\|_{\infty }e^{T \|\nabla_x \phi\|_{\infty}}\int_t^T | X^{\xi,\zeta}_r(\nu)-X^{\xi,\zeta}_r(\mu)|dr  \\
%& + T \| \nabla_x f\|_{\infty }e^{T \|\nabla_x \phi\|_{\infty}} \|\nabla^2_x \phi \|
%\int_t^T | X^{\xi,\zeta}_r(\nu)-X^{\xi,\zeta}_r(\mu)|dr  \,.
%\end{split}
%\]
\end{proof}

\section{Regularity estimates on the Hamiltonian}
In this section, we prove the following result:
\begin{theorem}\label{thm:RegularityHamiltonian}
Let Assumption \ref{as coefficients} hold. Let $\nabla_a\bH$ be the function defined on $[0,T]\times\er^p\times\Vv_2\times\Pp(\DataS)$ by
\[
\nabla_a\bH_t(a,\nu,\Mm)=\int_{\DataS} \nabla_a\Lagrang_t(a,X^{\xi,\zeta}_t(\nu))+\nabla_a\phi_t(a,X^{\xi,\zeta}_t(\nu))\cdot P^{\xi,\zeta}_t(\nu)\,\Mm(d\xi,d\zeta),
\]
for $(X^{\xi,\zeta}_t(\nu),P^{\xi,\zeta}_t(\nu))$ satisfying
\begin{equation}\label{eq:ForwardBackwardMap}
 \left\{
 \begin{aligned}
&X^{\xi,\zeta}_{t}({{\nu}})=\xi+\int_0^t\int_{\er^{\ControlS}} \phi_{r}(X^{z}_r({{\nu}}),a,\zeta)\,{{\nu}}_{r}(da)\,dr,\\
 &P^{\xi,\zeta}_{t}({{\nu}})=\nabla_xg(X^{\xi,\zeta}_{T}({{\nu}}),\zeta)+\int_t^T \int_{\er^{\ControlS}}
 \nabla_x h_{r}(X^{\xi,\zeta}_{r}({{\nu}}),a,P^{\xi,\zeta}_{r}({{\nu}}),{\zeta})\,{{\nu}}_{r}(da)\,dr.
 \end{aligned}
 \right.
 \end{equation}
Then there exists $L>0$ such that for all $\DataM \in \mathcal P_2(\mathbb R^d\times\mathcal S)$, for all $a,a' \in \mathbb R^p$ and $\mu,\mu' \in \mathcal V_2$
\begin{equation}
\begin{aligned} \label{con lipschitz appendix}
&| (\nabla_a \bH_t)(a,\mu,\DataM) - (\nabla_a \bH_t)(a',\mu',\DataM) |\\
& \leq L \left(1+\max_t\big(\max_{\mu\in\text{Lin}(\nu,\nu')}\int_{\DataS}|P^{\xi,\zeta}_t(\mu)|\,\Mm(d\xi,d\zeta) \big)\right) \left( |a-a'| + \mathcal W^T_1(\mu,\mu')\right).
\end{aligned}
\end{equation}
for $\text{Lin}(\nu,\nu'):=\left\{\mu\in\Vv_2\,:\, \mu=(1-\lambda)\nu'+\lambda \nu,\,\text{for some}\,0\leq \lambda\leq 1\right\}$.
%
%$(ii)$ Derivatives on $\Vv_2$: For all $(t,a_0)\in(0,T)\times\er^p$, $\Mm\in\Pp(\DataS)$, the mapping $\nu\in\Vv_2\mapsto\nabla_a \bH_{t,a_0,\Mm}(\nu):=\nabla_a \bH_t(a_0,\nu,\Mm)$ admits a linear functional derivative $\frac{\delta \nabla_a\bH_{t,a,\Mm}}{\delta\nu}$ (in the sense of Definition \ref{def:ExtendedDerivative}) which satisfies the properties:
%\begin{align*}
%&\left|\frac{\delta \nabla_a\bH_{t,a,\Mm}}{\delta\nu}(r,a,\nu)\right|\leq L\left(1+|a|+\int |a|\nu(da)\right.\\
%%
%&\quad\left.+\int_{\DataS}|\xi|+|\phi_t(0,0,\zeta)|+|\nabla_xg(0,\zeta)|+|P^{\xi,\zeta}_r(\nu)|
%+\int_0^T |P^{\xi,\zeta}_{r'}(\nu)|\,dr' \,\DataM(d\xi,d\zeta)\right),
%\end{align*}
%\begin{align*}
%&\Vert\frac{\delta \nabla_a\bH_{t,a,\Mm}}{\delta\nu}(r,\cdot,\nu)\Vert_{Lip}\\
%%
%&\leq L\left(1+ \int_{\DataS}|P^{\xi,\zeta}_t({{\nu}})|+|P^{\xi,\zeta}_r({{\nu}})|\DataM(d\xi,d\zeta)+\int_0^T\int_{\DataS}|P^{\xi,\zeta}_t({{\nu}})|\DataM(d\xi,d\zeta)\,dt\right)
%\end{align*}
%%The mapping  $\nu\in\Vv_2\mapsto\frac{\delta \nabla_a \bH_{t,a,\Mm}}{\delta \nu}(r,a,\nu)$ admits itself a linear derivative satisfying:
%%\begin{align*}
%%&\left|\frac{\delta^2\nabla_a \bH_{t,a,\Mm}}{\delta \nu^2}(r,a,r',a',\nu)\right|\\
%%%
%%&\leq
%%\end{align*}
\end{theorem}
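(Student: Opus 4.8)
The plan is to first isolate the dependence on the action variable $a$, and then to reduce the dependence on the measure argument to $\mathcal W^T_1$-stability estimates for the forward state $X^{\xi,\zeta}(\cdot)$ and the adjoint $P^{\xi,\zeta}(\cdot)$ defined by~\eqref{eq:ForwardBackwardMapAppendix}. First I would write
\begin{equation*}
\begin{split}
&(\nabla_a\bH_t)(a,\mu,\DataM)-(\nabla_a\bH_t)(a',\mu',\DataM)\\
&=\big[(\nabla_a\bH_t)(a,\mu,\DataM)-(\nabla_a\bH_t)(a',\mu,\DataM)\big]+\big[(\nabla_a\bH_t)(a',\mu,\DataM)-(\nabla_a\bH_t)(a',\mu',\DataM)\big].
\end{split}
\end{equation*}
The first bracket is immediate: by Assumption~\ref{as coefficients} the maps $(x,a)\mapsto\nabla_a f_t$ and $(x,a)\mapsto\nabla_a\phi_t$ are Lipschitz uniformly in $(t,\zeta)$, so it is bounded by $\big(\|\nabla_a f\|_{\mathrm{Lip}}+\|\nabla_a\phi\|_{\mathrm{Lip}}\int_{\DataS}|P^{\xi,\zeta}_t(\mu)|\,\Mm(d\xi,d\zeta)\big)\,|a-a'|$; this already accounts for the factor $\int|P^{\xi,\zeta}_t(\mu)|\,\Mm$ multiplying $|a-a'|$ in~\eqref{con lipschitz appendix}.

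For the second bracket I would decompose according to the three ways the measure enters: through $X$ inside $\nabla_a f_t(a',X^{\xi,\zeta}_t(\cdot))$; through $X$ inside $\nabla_a\phi_t(a',X^{\xi,\zeta}_t(\cdot))$, which is then paired with $P^{\xi,\zeta}_t(\mu)$; and through $P$, in the remainder $\nabla_a\phi_t(a',X^{\xi,\zeta}_t(\mu'))\cdot\big(P^{\xi,\zeta}_t(\mu)-P^{\xi,\zeta}_t(\mu')\big)$. The first two terms are controlled by $\sup_{t\le T}|X^{\xi,\zeta}_t(\mu)-X^{\xi,\zeta}_t(\mu')|$ against the Lipschitz-in-$x$ constants of $\nabla_a f$ and $\nabla_a\phi$ (with an extra factor $|P^{\xi,\zeta}_t(\mu)|$ in the second), while the third additionally needs the uniform bounds on $\nabla_a\phi$ and on $X^{\xi,\zeta}_t(\mu')$ supplied by Lemma~\ref{lem:UniformBounds_Continuity}, together with the stability estimate for $P$.

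It then remains to establish the two stability estimates, with dimension-free constants. For the state, from~\eqref{eq:ForwardBackwardMapAppendix}, the Kantorovich--Rubinstein bound $\big|\int\phi_r(x,a,\zeta)(\mu_r-\mu'_r)(da)\big|\le\|\phi_r(x,\cdot,\zeta)\|_{\mathrm{Lip}}\,\mathcal W_1(\mu_r,\mu'_r)$, the uniform Lipschitz-in-$x$ property of $\phi$, and Grönwall's inequality~\eqref{eq:GronwallLemma}, I would get $\sup_{t\le T}|X^{\xi,\zeta}_t(\mu)-X^{\xi,\zeta}_t(\mu')|\le C\int_0^T\mathcal W_1(\mu_r,\mu'_r)\,dr=C\,\mathcal W^T_1(\mu,\mu')$ with $C$ built only from the uniform constants of $\phi$. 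For the adjoint I would, in the spirit of Lemma~\ref{lem difference P and X}, use the closed form~\eqref{eq:ForwardReformulated}: differencing $P^{\xi,\zeta}_t(\mu)$ and $P^{\xi,\zeta}_t(\mu')$ produces the difference of the terminal terms $\nabla_x g(X^{\xi,\zeta}_T(\cdot))$, the difference of the running $\nabla_x f$ terms, and the differences of the exponential integrating factors; the exponents of the latter differ by $\int\big[\int\nabla_x\phi(X_{r'}(\mu),a,\zeta)\mu_{r'}(da)-\int\nabla_x\phi(X_{r'}(\mu'),a,\zeta)\mu'_{r'}(da)\big]dr'$, which by Lipschitz-in-$(x,a)$ of $\nabla_x\phi$ and duality is controlled by $\sup|X(\mu)-X(\mu')|+\mathcal W^T_1(\mu,\mu')$, and one uses $|e^{A}-e^{A'}|\le e^{|A|\vee|A'|}|A-A'|$ with the uniform bound on $\nabla_x\phi$. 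The prefactors $\nabla_x g(X^{\xi,\zeta}_T(\mu'))$ and $\int_r^T\nabla_x f(X^{\xi,\zeta}_{r'}(\mu'),a,\zeta)\mu'_{r'}(da)$ multiplying these differences are, by reading~\eqref{eq:ForwardReformulated} backwards, exactly what reconstitutes $P^{\xi,\zeta}(\mu')$ up to bounded constants; taking the supremum over $t$ and over $\mu''\in\text{Lin}(\nu,\nu')$ (to absorb the intermediate values arising when the mean value theorem is applied to the exponentials and to $\nabla_x g$) yields $\int_{\DataS}|P^{\xi,\zeta}_t(\mu)-P^{\xi,\zeta}_t(\mu')|\,\Mm(d\xi,d\zeta)\le C\big(1+\max_t\max_{\mu''\in\text{Lin}(\nu,\nu')}\int_{\DataS}|P^{\xi,\zeta}_t(\mu'')|\,\Mm(d\xi,d\zeta)\big)\,\mathcal W^T_1(\mu,\mu')$. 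Feeding these bounds back into the splitting, integrating against $\Mm$ and using Cauchy--Schwarz wherever a product with $|P^{\xi,\zeta}_t(\mu)|$ appears, produces~\eqref{con lipschitz appendix}, with every constant depending only on the uniform constants of $\phi,f,g$ and on $T$.

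The main obstacle is the adjoint stability estimate: $P^{\xi,\zeta}(\mu)$ solves a backward equation whose running coefficient and terminal datum both depend on the entire forward path $X^{\xi,\zeta}(\mu)$, so a single-pass Grönwall argument is not available; one has to work through the representation~\eqref{eq:ForwardReformulated}, keep every exponential prefactor under control via the uniform bounds, and correctly identify which prefactors reassemble into $P^{\xi,\zeta}(\mu')$ so that the final estimate is linear --- not quadratic --- in $\max_t\int|P^{\xi,\zeta}_t|\,\Mm$, while checking throughout that no constant depends on $d$ or $p$.
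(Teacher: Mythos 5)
Your proposal is correct in substance but takes a genuinely different route from the paper's. The paper proves this theorem by invoking the linear functional derivatives $\frac{\delta X_t}{\delta\nu}$ and $\frac{\delta P_t}{\delta\nu}$ computed in Lemma~\ref{lem:FirstDerivative}, establishing uniform Lipschitz-in-$a$ bounds on those derivatives (this is where the factor $1+\int_{\DataS}|P^{\xi,\zeta}_t(\cdot)|\,\Mm$ naturally appears), and then writing $X_t(\nu)-X_t(\nu')$ and $P_t(\nu)-P_t(\nu')$ via the defining integral $\int_0^1\int_0^T\int \frac{\delta(\cdot)}{\delta\nu}(r,a,\nu^\lambda)(\nu_r-\nu'_r)(da)\,dr\,d\lambda$ combined with the Kantorovich dual. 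You instead bypass the functional-derivative machinery entirely: Grönwall on the forward ODE for the $X$-stability (which is exactly what the paper itself does in Lemma~\ref{lem difference P and X}, used for propagation of chaos), and a direct difference of the explicit representation~\eqref{eq:ForwardReformulated} with $|e^A-e^{A'}|\le e^{|A|\vee|A'|}|A-A'|$ for the adjoint. What the paper's approach buys is that Lemma~\ref{lem:FirstDerivative} is needed anyway (it is reused in Lemma~\ref{lem reg J}), so the Lipschitz estimate falls out of machinery already built; your approach is more self-contained and elementary, essentially an extended version of Lemma~\ref{lem difference P and X}.

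Two imprecisions worth noting, neither fatal. First, the set $\text{Lin}(\nu,\nu')$ in your bound does not in fact arise from the mean value theorem applied to the exponentials or to $\nabla_x g$ — MVT there gives intermediate exponents and intermediate points $X^\ast$, not intermediate \emph{measures}; in the paper it arises naturally from the $\lambda$-integral in the definition of the linear derivative. Your argument really produces $|P^{\xi,\zeta}(\mu')|$ at $\lambda\in\{0,1\}$, which is dominated by $\max_{\mu''\in\text{Lin}}$, so the stated bound still holds, but the reason you give is not the operative one. Second, when you difference~\eqref{eq:ForwardReformulated} the running-cost prefactor $\int\nabla_x f_{r}(X_r(\mu'),a,\zeta)\,\mu'_r(da)$ multiplies the exponential difference; this quantity does not literally ``reconstitute $P(\mu')$'' — it is a separate term with linear growth in $|X_r(\mu')|$ and $\int|a|\mu'_r(da)$. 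Integrated against $\Mm$ it is finite by Assumption~\ref{as coefficients}, but it is not pointwise of the form $c(1+|P^{\xi,\zeta}_r|)$. The resulting constant is therefore of a slightly different algebraic form than the paper's $1+\max_t\max_{\mu\in\text{Lin}}\int|P_t(\mu)|\,\Mm$; this does not affect how the estimate is used downstream (all callers only need a finite Lipschitz constant), but it is worth being honest that your route does not reproduce the paper's prefactor verbatim.
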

%\begin{remark}\label{Remark:RegHamilton} A main consequence of the above statement is that the regularity (Lipschitz continuity or in terms of the linear functional derivative) of $\nabla_a\bH$ mainly depends on the control $(t,\nu,\Mm)\mapsto \int_{\DataS} |P^{\xi,\zeta}_t(\nu)|\Mm(d\xi,\zeta)$. In view of Lemma \eqref{eq:ForwardReformulated}, in the case when $\nabla_xG$ is bounded or whenever $\phi_t(0,a,\zeta)$ is bounded then one get that
%the supremum of $\int_{\DataS} |P^{\xi,\zeta}_t(\nu)|\Mm(d\xi,\zeta)$ over $[0,T]\times\Vv_2$ is finite. If $\nabla_xG$ is not bounded, then assuming that $\phi$ is bounded enable to ensure that uniform finiteness of $\int_{\DataS} |X^{\xi,\zeta}_t(\nu)|\Mm(d\xi,\zeta)$ over $[0,T]\times\Vv_2$.
%\end{remark}
\begin{proof}
From the definition of $\nabla_a\bH$, we have
\[
(\nabla_a \bH_t)(a,\nu,\DataM)
=  \int_{\mathbb R^d \times \mathcal S}\Big[\nabla_a\phi_t(X^{\xi,\zeta}_{t}(\nu),a,\zeta)\cdot P^{\xi,\zeta}_{t}(\mu) + \nabla_a f_t(X^{\xi,\zeta}_{t}(\mu),a,\zeta)\Big] \Mm(d\xi,d\zeta)\,.
\]
This, Lemma~\ref{lemma odes} and Assumption \ref{as coefficients} $ii)$ allow us to conclude that for any $\DataM \in \mathcal P_2(\mathbb R^d\times\mathcal S)$ there exists $L$
such that for all $\mu \in \mathcal V_2$ we have
\begin{equation}\label{HamiltonRegul:proofstp1}
| (\nabla_a \bH_t)(a, \nu, \DataM) - (\nabla_a \bH_t)(a', \nu, \DataM) | \\
\leq L\left(1+ \int_{\DataS} |P^{\xi,\zeta}_t(\nu)|\,\DataM(d\xi,d\zeta)\right) |a-a'| \,,
\end{equation}
for some constant $L$ depending only on the Lipschitz coefficients of $\nabla_a\Lagrang$ and $\nabla_a\phi$.

On the other hand, for all $a\in\er^p$, we have
\[
\begin{split}
& | (\nabla_a \bH_t)(a, \nu, \DataM) - (\nabla_a \bH_t)(a,\nu',\DataM) | \\
& \leq \int_{\er^{p}\times\mathcal{S}} \left[\Vert\nabla_a \Lagrang_t(a,.,\zeta)\Vert_{Lip} \left|X^{\xi,\zeta}_{t}(\nu) - X^{\xi,\zeta}_{t}(\nu')\right|  \right]\,\DataM (d\xi,d\zeta)\\
&+ \int_{\er^{p}\times\mathcal{S}} \left[\Vert\phi_{t}(x,.,\zeta)\Vert_{Lip} \left|P^{\xi,\zeta}_{t}(\nu) - P^{\xi,\zeta}_{t}(\nu')\right| + \|\nabla_a \phi(\cdot,a,\zeta)\|_{Lip} |P^{\xi,\zeta}_{t}(\nu)|\left|X^{\xi,\zeta}_{t}(\nu) - X_{t}(\nu')\right|\right]\,\DataM (d\xi,d\zeta).
\end{split}
\]
Recalling Lemma \ref{lem:FirstDerivative}, $\nu\in\Vv_2\mapsto X^{\xi,\zeta}_t(\nu)$ and $P^{\xi,\zeta}_t(\nu)$ both admit a linear functional derivative $\frac{\delta X^{\xi,\zeta}_t}{\delta {{\nu}}}(r,a,\nu)$ and $\frac{\delta P^{\xi,\zeta}_t}{\delta {{\nu}}}(r,a,\nu)$ (see Definition \ref{def:ExtendedDerivative}), which are given by
\begin{equation}\label{HamiltonRegul:proofstp2}
\frac{\delta X^{\xi,\zeta}_t}{\delta {{\nu}}}({\nu},r,a)=\1_{\{0\leq r\leq t\}}\Xi^{\xi,\zeta,\nu}(t,r)\phi_{r}(X^{\xi,\zeta}_r({{\nu}}),a,\zeta),
\end{equation}
\begin{equation}\label{HamiltonRegul:proofstp3}
\begin{aligned}
&\frac{\delta P^{\xi,\zeta}_t}{\delta \nu}({\nu},r,a)=\widehat \Xi^{\xi,\zeta,\nu}(T,t)\nabla^2_xg(X^{\xi,\zeta}_T(\nu),\zeta)\times\frac{\delta X^{\xi,\zeta}_{T}}{\delta \nu}({\nu},r,a)\\
&+\int_t^T\widehat \Xi^{\xi,\zeta,\nu}(r,t)\nabla_xh_{r}(X^{\xi,\zeta}_r(\nu),P^{\xi,\zeta}_r(\nu),a,\zeta)\,dr\\
&+\int_{t}^{T}\widehat \Xi^{\xi,\zeta,\nu}(r',t)
\nabla^2_xh_{r'}(X^{\xi,\zeta}_{r'}({{\nu}}),P^{\xi,\zeta}_{r'}({{\nu}}),a',\zeta)\nu_{r'}(da')\,dr'\\
&+\left\{\int_t^T
\widehat \Xi^{\xi,\zeta,\nu}({r'},t)\int \nabla^2_xh_{r'}(X^{\xi,\zeta}_{r'}({{\nu}}),P^{\xi,\zeta}_{r'}({{\nu}}),a',\zeta)\nu_{r'}(da')\,dr'\right\}\frac{\delta X^{\xi,\zeta}_T}{\delta \nu}({\nu},r,a),
\end{aligned}
\end{equation}
%Old
%\begin{equation}\label{HamiltonRegul:proofstp2}
%\begin{aligned}
%&\frac{\delta X^{\xi,\zeta}_t}{\delta {{\nu}}}({\nu},r,a)\\
%&=\1_{\{0\leq r\leq t\}}\phi_{r}(X^{\xi,\zeta}_r({{\nu}}),a,\zeta)\exp\left\{\int_r^t\int_{\er^{\ControlS}}
%\left\{\nabla_x\phi(r',X^{\xi,\zeta}_{r'}({{\nu}}),a',\zeta)\right\}
%{{\nu}}_{r'}(da')\,dr'\right\},
%\end{aligned}
%\end{equation}
%\begin{equation}\label{HamiltonRegul:proofstp3}
%\begin{aligned}
%&\frac{\delta P^{\xi,\zeta}_t}{\delta {{\nu}}}(r,a,{{\nu}})=\exp\left\{\int_t^T\int_{\er^{\ControlS}}
%\left\{\nabla_x\phi_{r'}(X^{\xi,\zeta}_r({{\nu}}),a',\zeta)\right\}{{\nu}}_{r'}(da')\,dr'\right\}\\
%&\quad \times\left(\nabla^2_x g(X^{\xi,\zeta}_T({{\nu}}),\zeta) \frac{\delta X^{\xi,\zeta}_T}{\delta {{\nu}}}(r,a,{{\nu}}) +\left\{\nabla_x\Lagrang_{r}(X^{\xi,\zeta}_r({{\nu}}),a,\zeta)
%+\nabla_x\phi_{r}(X^{\xi,\zeta}_r({{\nu}}),a,\zeta)\cdot P^{\xi,\zeta}_r({{\nu}})\right\}\right)\\
%&\quad +\int_t^T\left(\int_{\er^{\ControlS}}
%\exp\left\{\int_{r'}^T\int_{\er^{\ControlS}}\nabla_x\phi_{r''}(X^{\xi,\zeta}_{r''}({{\nu}}),a'',\zeta)\,{{\nu}}_{r''}(da'')d{r''}\right\}\right.\\
%&\quad \left.\times
%\int_{\er^d}\left\{\nabla^2_x\Lagrang_{r}(X^{\xi,\zeta}_r({{\nu}}),a',\zeta)
%+\nabla^2_x\phi_{r}(X^{\xi,\zeta}_r({{\nu}}),a',\zeta)\cdot P^{\xi,\zeta}_r({{\nu}})
%\right\}{{\nu}}_{r'}(da')\right)\frac{\delta X^{\xi,\zeta}_{r'}}{\delta\nu}({\nu},r,a)\,dr'.
%\end{aligned}
%\end{equation}
Since $a\mapsto \phi_{t}(x,a,\zeta)$ is uniformly Lipschitz continuous, $a\mapsto \frac{\delta X^{\xi,\zeta}_{t}}{\delta\nu}({\nu},r,a)$ is also uniformly Lipschitz continuous, uniformly in $r,\nu,\xi$ and $\zeta$ with
\begin{equation}\label{PropaChaos:proofst5}
\begin{aligned}
&\Vert \frac{\delta X^{\xi,\zeta}_t}{\delta \nu}({\nu},r,\cdot)\Vert_{Lip}:=\sup_{a\neq a'}\frac{\left|\frac{\delta X^{\xi,\zeta}_t}{\delta {{\nu}}}({\nu},r,a')-\frac{\delta X^{\xi,\zeta}_t}{\delta\nu}({\nu},r,a)\right|}{|a-a'|}\\
&\leq\Vert \phi_t(a,\cdot,\zeta)\Vert_{Lip}\exp\left\{T\sup_{t,a,\zeta}\Vert \phi_t(a,\cdot,\zeta)\Vert_{Lip}\right\}.
\end{aligned}
\end{equation}
In the same way, $a\mapsto \frac{\delta P_{t}}{\delta \nu}({\nu},r,a)$ is also uniformly Lipschitz continuous with
\begin{equation}\label{PropaChaos:proofst6}
\begin{aligned}
&\Vert \frac{\delta P^{\xi,\zeta}_t}{\delta\nu}({\nu},r,\cdot)\Vert_{Lip}:=\sup_{a\neq a'}\frac{\left|\frac{\delta P^{\xi,\zeta}_t}{\delta \nu}({\nu},r,a')-\frac{\delta P^{\xi,\zeta}_t}{\delta \nu}({\nu},r,a)\right|}{|a-a'|}\\
&\leq L\left(1+ \int_{\DataS}|P^{\xi,\zeta}_r({{\nu}})|\DataM(d\xi,d\zeta)+\int_0^T\int_{\DataS}|P^{\xi,\zeta}_t({{\nu}})|\DataM(d\xi,d\zeta)\,dt\right).
%&\leq \left(\left(\Vert\nabla^2_x g\Vert_{\infty}+T\left(\Vert\nabla^2_x \Lagrang\Vert_{\infty}+\Vert\nabla^2_x \Lagrang\Vert_{\infty}|P^{\xi,\zeta}_t({{\nu}})|
%\right)\right)\Vert \frac{\delta X^{\xi,\zeta}_t}{\delta {{\nu}}}(t,.,{{\mu}})\Vert_{Lip}\right.\\
%&\quad \left.+\Vert\nabla_a \Lagrang\Vert_{\infty}+\Vert \nabla_a \phi \Vert_{\infty}|P^{\xi,\zeta}_t({{\nu}})|\right)
%\exp\left\{T\Vert \nabla_x\phi\Vert_{\infty}\right\}.
\end{aligned}
\end{equation}
For $\lambda \in (0,1)$, define $\mu^{\lambda}=(1-\lambda) \mu' + \lambda \mu$. Let $Lip(1)$ denote the class of Lipschitz functions with Lipschitz constant bounded by $1$. From the definition of the functional derivative
\ref{def:ExtendedDerivative},
\begin{align*}
&\left| X^{\xi,\zeta}_{t}(\nu)-X^{\xi,\zeta}_{t}(\nu')\right| = \left|\int_{0}^{1}\int_0^T\int \frac{\delta X^{\xi,\zeta}_{t}}{\delta \nu}(\nu^{\lambda},r,a)
\left(\nu_{r}(da)-\nu'_{r}(da)\right)\,dr\,d\lambda \right| \\
&\leq  \sup_{t,r\in(0,T),\nu \in \mathcal V_2} \|  \frac{\delta X^{\xi,\zeta}_{t}}{\delta\nu}(\nu,r,\cdot) \|_{Lip}\left|\sup_{c\in Lip(1)} \int_{0}^{T}\int c(a) \left(\nu_{r}(da)-\nu'_{r}(da)\right)dr\right|\\
&\leq L\left|\sup_{c\in Lip(1)} \int_{0}^{T}\int c(a) \left(\nu_{r}(da)-\nu'_{r}(da)\right)dr\right|\,.
\end{align*}
We can estimate $P^{\xi,\zeta}_{t}(\mu) - P^{\xi,\zeta}_{t}(\nu)$ in the same way, obtaining here:
\begin{align*}
&\left| P^{\xi,\zeta}_{t}(\nu)-P^{\xi,\zeta}_{t}(\nu')\right| \\
&=\left|\int_{0}^{1}\int_0^T\int \frac{\delta P^{\xi,\zeta}_{t}}{\delta {{\nu}}}(r,a,\nu^{\lambda})
\left(\nu_{r}(da)-\nu'_{r}(da)\right)\,dr\,d\lambda \right|\\
&\leq  L\left(1+\sup_{t\in(0,T),\mu \in \text{Lin}(\nu,\nu')}\int_{\DataS}\left|P^{\xi,\zeta}_t(\mu)\right|\,\DataM(d\xi,d\zeta) \right)\left|\sup_{c\in Lip(1)} \int_{0}^{T}\int c(a) \left(\nu_{r}(da)-\nu'_{r}(da)\right)dr\right|,
\end{align*}
for $\text{Lin}(\nu,\nu'):=\left\{\mu\in\Vv_2\,:\, \mu=(1-\lambda)\nu'+\lambda \nu,\,\text{for some}\,0\leq \lambda\leq 1\right\}$.

By Kantorovich (dual) representation of the Wasserstein distance \cite[Th 5.10]{villani2008optimal} we conclude that
there is $L>0$ such that
\begin{equation}\label{HamiltonRegul:proofstp6}
 | (\nabla_a \bH_t)(a, \nu,\DataM) - (\nabla_a \bH_t)(a, \nu',\DataM) |\leq L \, \mathcal W^T_1(\nu,\nu')\,.
\end{equation}
Combining \eqref{HamiltonRegul:proofstp1} and \eqref{HamiltonRegul:proofstp6}, we then conclude.
\end{proof}

\section{Full discretisation scheme}\label{ssec:FullDiscrete}
In this section, we illustrate a simple example of a full-time discretization of the particle \eqref{eq:ParticleApprox-bis}, complementing the time-discretization \eqref{TimeDiscrete:proofst1} by adding a discretization in the $t$ variable.

Consider a finite partition $0=t_0< t_1< ...< t_n=T$ of the interval $[0,T]$. Let $\{\widehat{X}^{\xi,\zeta}_{k}(\nu)\}_{k}$ and $\{\widehat{P}^{\xi,\zeta}_{k}(\nu)\}_{k}$ be a uniform $\beta$-order approximation ($\beta> 0$) of $(X^{\xi,\zeta}_t(\nu))$ and $(P^{\xi,\zeta}_t(\nu))$, in the sense that

\begin{equation}\label{eq:DiscreteFB}
\sup_{\xi,\zeta,\nu}|X^{\xi,\zeta}_{t_k}(\nu)-\widehat{X}^{\xi,\zeta}_k(\nu)|+|P^{\xi,\zeta}_{t_k}(\nu)-\widehat{P}^{\xi,\zeta}_k(\nu)|\leq C\max_{k_1\leq k}|t_{k_1}-t_{k_1-1}|^\beta.
\end{equation}
We again refer to \cite{hairer:norsett:wanner:1993} for an exhaustive presentation of numerical approximation scheme for ODEs.

On the other hand, from the partial discretization $(\widetilde{\theta}^{i}_{s_l,t_k})$ defined in \eqref{TimeDiscrete:proofst1},
we introduce the discretization of the time variable $t$ with from the frozen dynamic:
\begin{equation}\label{eq:ParticleDiscreteApproximation_a}
\widehat{\theta}^{i}_{s,t}=\theta^{0,i}_{\eta_n(t)}-
\int_{0}^{s}\nabla_a\overline{\bH}^{\sigma,n}_{\eta_n(t)}
\left(\widehat{\theta}^{i}_{{\Lambda}_M(v),\eta_n(t)},\widehat{{{\nu}}}^{N_2}_{{\Lambda}_M(v),\cdot},\DataM^{N_1}\right)\,dv+\sigma \ControlBrownian^{i}_s,\,\widehat{{{\nu}}}^{N_2}_{v,t}=\frac{1}{N_2}\sum_{j_2=1}^{N_2}\delta_{\{\widehat{\theta}^{j_2}_{v,t}\}},
\end{equation}
where $\eta_n(t)=\inf\{t_k\,:\,t_k\leq t\}$, and $\nabla \overline{\bH}^{\sigma,n}_t$ is defined on $[0,T]\times\Vv_2\times\Pp(\DataS)$ by
\begin{align*}
&\nabla_a\overline{\bH}^{\sigma,n}_{t_k}(a,{{\nu}},\DataM^{N_1})=\nabla_aU(a)+
\nabla_a\overline{\bH}^{n}_{t_k}(a,{{\nu}},\DataM^{N_1}),\\
&\overline{\bH}^{n}_{t_k}(a,{{\nu}},\DataM^{N_1})= \int_{\mathbb R^d \times \mathcal S} h_{t_k}(\widehat{X}^{\xi,\zeta}_k(\nu_{\eta_n(\cdot),\cdot}),\widehat{P}^{\xi,\zeta}_k(\nu_{\eta_n(\cdot),\cdot}),a,\zeta)\DataM^{N_1}(d\xi, d\zeta).
\end{align*}
where $\nu_{\eta_n(\cdot)}$ is the discretized version of $\nu$ at times $t_0,t_1,\cdots$.
 %Old 13/02/2020:
%\begin{align*}
%\widehat{\theta}^{i}_{l,k}=\theta^{0,i}_{k}-
%\int_{0}^{s}\nabla_a\overline{\bH}^{\sigma,n}_{k}
%\left(\widehat{\theta}^{i}_{{\Lambda}_M(v),k},\{\widehat{{{\nu}}}^{N_2}_{{\Lambda}_M(v),k}\}_{k=0}^n,\DataM^{N_1}\right)\,dv+\sigma \ControlBrownian^{i}_{s_l},\,\widehat{{{\nu}}}^{N_2}_{v,k}=\frac{1}{N_2}\sum_{j_2=1}^{N_2}\delta_{\{\widehat{\theta}^{j_2}_{v,k}\}},
%\end{align*}

The rate of convergence between \eqref{TimeDiscrete:proofst1} and \eqref{eq:ParticleDiscreteApproximation_a} is given by:

\begin{proposition}\label{prop:EulerRate2} Assume that the assumptions of Lemma \ref{prop:EulerRate1} hold. Assume also that the following properties hold:

\noindent
$(D_1)$ The discrete schemes $\widehat{X}$ and $\widehat{P}$ satisfy \eqref{eq:DiscreteFB} as well as the properties:
\begin{enumerate}[i)]
  \item $\sup_{k,\nu}\int |\widehat{P}^{\xi,\zeta}_k(\nu)|\DataM^{N_1}(d\xi,d\zeta)<\infty$,
  \item there exists some constant $L'$ such that for all $\nu,\nu'$ in $\Vv_2$,
\[
|\widehat{X}^{\xi,\zeta}_{k}(\nu)-\widehat{X}^{\xi,\zeta}_{k}(\nu')|\leq L'T\sup_{0\leq t\leq T}W_1(\nu_{t},\nu'_{t}),
\]
and
\[
|\widehat{P}^{\xi,\zeta}_{k}(\nu)-\widehat{P}^{\xi,\zeta}_{k}(\nu')|\leq L'T\sup_{0\leq t\leq T}W_1(\nu_{t},\nu'_{t}).
\]
\end{enumerate}
$(D_2)$ The functions
$$
t\mapsto \phi_{t}(x,a,\zeta),\nabla_x\phi_{t}(x,a,\zeta),\nabla_x\Lagrang_{t}(x,a,\zeta),\nabla_a\phi_{t}(x,a,\zeta),\nabla_a\Lagrang_{t}(x,a,\zeta)
$$
are all (uniformly in $x,a,\zeta$) of class $\Cc^\alpha$ (for $0<\alpha\leq 1$), that is, for some $0<L''<\infty$,
\begin{align*}
&\left|\phi_{t}(x,a,\zeta)-\phi_{t'}(x,a,\zeta)\right|+\left|\nabla_x\phi_{t}(x,a,\zeta)-\nabla_x\phi_{t'}(x,a,\zeta)\right|
+\left|\nabla_a\phi_{t}(x,a,\zeta)-\nabla_a\phi_{t'}(x,a,\zeta)\right|\\
&+\left|\nabla_x\Lagrang_{t}(x,a,\zeta)-\nabla_x\Lagrang_{t'}(x,a,\zeta)\right|+\left|\nabla_a\Lagrang_{t}(x,a,\zeta)-\nabla_a\Lagrang_{t'}(x,a,\zeta)\right|\leq L''|t-t'|^{\alpha},
\end{align*}
for all $t,t'\in[0,T]$.

\noindent
$(D_3)$ The initial flow $t\mapsto \theta^{0,i}_t$ satisfies the properties: $\sup_{0\leq t\leq T}\EE[|\theta^{0,i}_t|^2]<\infty$ and
\begin{equation*}
\EE\left[\left|\theta^{0,i}_{t}-\theta^{0,i}_{t'}\right|^2\right]\leq L|t-t'|^{2\alpha},\,\forall t,t'\in[0,T].
\end{equation*}

\noindent
Then, there exists $0<c<\infty$ independent of $(s_l)_l$, $N_1$ and $N_2$,  such that, for all integers $i$, $L$,
\begin{align*}
&\sup_{k,1\leq l\leq L}\EE\left[\left|\widehat{\theta}^i_{l,k}-\widehat{\theta}^i_{s_l,t_k}\right|^2\right]\\
&\leq c\big(1+\max_{l\leq L}(s_l-s_{l-1})\big)(\max_t|t-\eta_n(t)|^{2(\alpha\wedge \beta\wedge 1)}
\left(1+\int_{0}^T\EE\left[\left|\widetilde{\theta}^i_{s_{l},{r'}}\right|^2\right]\,d{r'}\right).
\end{align*}
%\begin{align*}
%&\Ww_2(\Ll(\widetilde{\theta}^i_{s_l,t_k}),\Ll(\widehat{\theta}^i_{s_l,t_k}))^2\\
%%
%&\leq \sum_{l_1=1}^l\left(\Pi_{l_2=l_1}^l \left(1-(\sigma^2\kappa-a_{2,1})(s_{l}-s_{l-1})+a_{2,2}(s_l-s_{l-1})^2\right)\right)\\
%%
%&\,\,\times a_{2,3}(s_l-s_{l-1})\big(1\vee (s_l-s_{l-1})\big)(\max_t|t-\eta^{\pm}_n(t)|^{2\alpha\wedge 1}\left(1+(s_{l-1})^2\max_t|t-\eta^{\pm}_n(t)|+ M^{l-1}(T)\right),
%\end{align*}
%where $a_{2,1}, a_{2,2}$ and $a_{2,3}$ three positive constant depending only on $\phi$, $\Lagrang$, $\nabla_xg$, $\nabla_aU$, $\sigma^2$ and $T$, and
%where
%\[
%M^{l}(T):=\int_{0}^T
%\EE\left[1+\left|X^{\Dd^{j_1}}_{r'}(\widetilde{\nu}^{N_2}_{s_{l}})\right|^2+\left|\widetilde{\theta}^i_{s_{l},{r'}}\right|^2\right]\,d{r'}.
%\]
\end{proposition}
\begin{proof}

\textbf{Step 1}. The assumption $(D_1)$ immediately ensures that: for all $a,a'$, $\nu,\nu'\in\Vv_2$,
\begin{equation}\label{EulerRate2:proofstp1}
|\nabla_a\overline{\bH}^n_{t}(a,\nu,\Mm^{N_1})-\nabla_a\overline{\bH}^n_{t}(a',\nu',\Mm^{N_1})|\leq L\left(|a-a'|+\sup_{0\leq t\leq T}W_1(\nu_{\eta_n(t)},\nu'_{\eta_n(t)})\right).
\end{equation}
For simplicity, we will again omit from now on the explicit notation of the component $\DataM^{N_1}$ in most of the calculations below.

Setting $\triangle_{s_l,t_k}\theta^i:=\widetilde{\theta}^i_{s_l,t_k}-\widehat{\theta}^i_{s_l,t_k}$, observe that, from all $1\leq l\leq M$,
$1\leq k\leq n$,
\begin{align*}
&\triangle_{s_l,t_k}\theta^i=\triangle_{s_{l-1},t_k}\theta^i-\frac{\sigma^2}{2}\left(s_{l}-s_{l-1}\right)
\left(\nabla_aU(\widetilde{\theta}^{i}_{s_{l-1},t_k})-
\nabla_aU(\widehat{\theta}^{i}_{s_{l-1},t_k})\right)\\
&\quad -\left(s_{l}-s_{l-1}\right)\left(
\nabla_a\bH_{t_k}\left(\widetilde{\theta}^{i}_{s_{l-1},t_k},\widehat{{{\nu}}}^{N_2}_{s_{l-1},\cdot},\DataM^{N_1}\right)
-\nabla_a\overline{\bH}^{n}_{t_k}
\left(\widetilde{\theta}^{i}_{s_{l-1},t_k},\widetilde{{{\nu}}}^{N_2}_{s_{l-1},\cdot},\DataM^{N_1}\right)\right)\\
&\quad -\left(s_{l}-s_{l-1}\right)\left(
\nabla_a\overline{\bH}^{n}_{t_k}\left(\widetilde{\theta}^{i}_{s_{l-1},t_k},\widetilde{{{\nu}}}^{N_2}_{s_{l-1},\cdot},\DataM^{N_1}\right)
-\nabla_a\overline{\bH}^{n}_{t_k}
\left(\widehat{\theta}^{i}_{s_{l-1},t_k},\widehat{{{\nu}}}^{N_2}_{s_{l-1},\cdot},\DataM^{N_1}\right)\right).
\end{align*}
Proceeding as in \eqref{EulerRate1:proofst4},
\begin{align*}
&\left|\triangle_{s_l,t_k}\theta^{i}\right|^2\leq\left|\triangle_{s_{l-1},t_k}\theta^{i}\right|^2\left(1-(\sigma^2\kappa-3)(s_{l}-s_{l-1})\right)\\
&\quad +(s_l-s_{l-1})\left|\nabla_a \overline{\bH}^n_{t_k}
\left(\widetilde{\theta}^{i}_{s_{l-1},t_k},\widetilde{{{\nu}}}^{N_2}_{s_{l-1},\cdot},\DataM^{N_1}\right)
-\nabla_a\overline{\bH}^n_{t_k}\left(\widehat{\theta}^{i}_{s_{l-1},t_k},\widehat{{{\nu}}}^{N_2}_{s_{l-1},\cdot},\DataM^{N_1}\right)\right|^2\\
&\quad +2(s_l-s_{l-1})^2\left|\nabla_a \overline{\bH}^{n}_{t_k}
\left(\widetilde{\theta}^{i}_{s_{l-1},t_k},\widetilde{{{\nu}}}^{N_2}_{s_{l-1},\cdot},\DataM^{N_1}\right)
-\nabla_a\overline{\bH}^{n}_{t_k}\left(\widehat{\theta}^{i}_{s_{l-1},t_k},\widehat{{{\nu}}}^{N_2}_{s_{l-1},\cdot},\DataM^{N_1}\right)\right|^2\\
&\quad  +(s_l-s_{l-1})(1+(s_l-s_{l-1})/2)\left|\nabla_a \bH_{t_k}
\left(\widetilde{\theta}^{i}_{s_{l-1},t_k},\widetilde{{{\nu}}}^{N_2}_{s_{l-1},\cdot},\DataM^{N_1}\right)
-\nabla_a\overline{\bH}^{n}_{t_k}\left(\widetilde{\theta}^{i}_{s_{l-1},t_k},\widetilde{{{\nu}}}^{N_2}_{s_{l-1},\cdot},\DataM^{N_1}\right)\right|^2.
\end{align*}
From \eqref{EulerRate2:proofstp1}, we deduce
\begin{align*}
\EE\left[\left|\nabla_a \overline{\bH}^n_{t_k}
\left(\widetilde{\theta}^{i}_{s_{l-1},t_k},\widetilde{{{\nu}}}^{N_2}_{s_{l-1},\cdot},\DataM^{N_1}\right)
-\nabla_a\overline{\bH}^n_{t_k}\left(\widehat{\theta}^{i}_{s_{l-1},t_k},\widehat{{{\nu}}}^{N_2}_{s_{l-1},\cdot},\DataM^{N_1}\right)\right|^2\right]
\leq L\max_k\EE\left[\left|\triangle_{s_{l-1},t_k}\theta^{i}\right|^2\right],
\end{align*}
and
\begin{align*}
&\EE\left[\left|\nabla_a \overline{\bH}^{n}_{t_k}
\left(\widetilde{\theta}^{i}_{s_{l-1},t_k},\widetilde{{{\nu}}}^{N_2}_{s_{l-1},\cdot},\DataM^{N_1}\right)
-\nabla_a\overline{\bH}^{n}_{t_k}\left(\widehat{\theta}^{i}_{s_{l-1},t_k},\widehat{{{\nu}}}^{N_2}_{s_{l-1},\cdot},\DataM^{N_1}\right)\right|^2\right]\\
&\leq L(1+\frac{\sigma^4}{2}\Vert\nabla_aU(\cdot)\Vert^2_{Lip})\max_k\EE\left[\left|\triangle_{s_{l-1},t_k}\theta^{i}\right|^2\right].
\end{align*}
Then it follows that
\begin{equation}\label{EulerRate2:proofstp2}
\begin{aligned}
&\EE\left[\left|\triangle_{s_l,t_k}\theta^{i}\right|^2\right]\leq\EE\left[\left|\triangle_{s_{l-1},t_k}\theta^{i}\right|^2\right]
\left(1-(\sigma^2\kappa-3)(s_{l}-s_{l-1})\right)\\
&+L(s_l-s_{l-1})\Big(1+(s_l-s_{l-1})(1+\frac{\sigma^4}{2}\Vert\nabla_aU(\cdot)\Vert^2_{Lip})\Big)
\max_k\EE\left[\left|\triangle_{s_{l-1},t_k}\theta^{i}\right|^2\right] \\
&+(s_l-s_{l-1})(1+(s_l-s_{l-1})/2)\EE\left[\left|\nabla_a \bH_{t_k}
\left(\widetilde{\theta}^{i}_{s_{l-1},t_k},\widetilde{{{\nu}}}^{N_2}_{s_{l-1},\cdot},\DataM^{N_1}\right)
-\nabla_a\overline{\bH}^{n}_{t_k}\left(\widetilde{\theta}^{i}_{s_{l-1},t_k},\widetilde{{{\nu}}}^{N_2}_{s_{l-1},\cdot},\DataM^{N_1}\right)\right|^2\right].
\end{aligned}
\end{equation}
\textbf{Step $2$}. Owing to the regularity of $\Lagrang$, $\phi$ and $\nabla_aU$, we have,
for all $0\leq t\leq T$,
\begin{align*}
&\EE\left[|\nabla_a \bH_{t}
\left(\widetilde{\theta}^{i}_{s_{l-1},t},\widetilde{{{\nu}}}^{N_2}_{s_{l-1},\cdot},\DataM^{N_1}\right)
-\nabla_a\overline{\bH}^{n}_{t}\left(\widetilde{\theta}^{i}_{s_{l-1},t},\widetilde{{{\nu}}}^{N_2}_{s_{l-1},\cdot},\DataM^{N_1}\right)|^2\right]\\
&\leq \int \EE\left[\left| h_{t_k}(X^{\xi,\zeta}_{t_k}(\widetilde{\nu}_{s_{l-1},\eta_n(\cdot)}),P^{\xi,\zeta}_{t_k}(\widetilde{\nu}_{s_{l-1},\eta_n(\cdot)}),a,\zeta)
-h_{t_k}(\widehat{X}^{\xi,\zeta}_k(\widetilde{\nu}_{s_{l-1},\eta_n(\cdot)}),\widehat{P}^{\xi,\zeta}_k( \widetilde{\nu}_{s_{l-1},\eta_n(\cdot)}),a,\zeta)\right|^2\right]\DataM^{N_1}(d\xi,d\zeta)\\
&\quad +\int \EE\left[\left| h_{t_k}(X^{\xi,\zeta}_{t_k}(\widetilde{\nu}_{s_{l-1},\cdot}),P^{\xi,\zeta}_{t_k}(\widetilde{\nu}_{s_{l-1},\cdot}),a,\zeta) -h_{t_k}(X^{\xi,\zeta}_{t_k}(\widetilde{\nu}_{s_{l-1},\eta_n(\cdot)}),P^{\xi,\zeta}_{t_k}(\widetilde{\nu}_{s_{l-1},\eta_n(\cdot)}),a,\zeta)
\right|^2\right]\DataM^{N_1}(d\xi,d\zeta).
\end{align*}
From \eqref{eq:DiscreteFB} and \eqref{EulerRate2:proofstp1}, we deduce directly that
\begin{equation}\label{EulerRate2:proofstp3}
\begin{aligned}
&\EE\left[|\nabla_a \bH_{t}
\left(\widetilde{\theta}^{i}_{s_{l-1},t},\widetilde{{{\nu}}}^{N_2}_{s_{l-1},\cdot},\DataM^{N_1}\right)
-\nabla_a\overline{\bH}^{n}_{t}\left(\widetilde{\theta}^{i}_{s_{l-1},t},\widetilde{{{\nu}}}^{N_2}_{s_{l-1},\cdot},\DataM^{N_1}\right)|^2\right]\\
&\leq c\sup_k|t_{k}-t_{k-1}|^{2\beta}+c\int_{0}^{T}\EE[|\widetilde{\theta}^i_{s_{l-1},t}-\widetilde{\theta}^i_{s_{l-1},\eta_n(t)}|^2]\,dt.
\end{aligned}
\end{equation}
It now remains to estimate $\EE[|\widetilde{\theta}^i_{s_{l},t}-\widetilde{\theta}^i_{s_{l},\eta_n(t)}|^2]$.
When $l=0$, $\EE[|\widetilde{\theta}^i_{s_{l},t}-\widetilde{\theta}^i_{s_{l},\eta_n(t)}|^2]$ is an immediate consequence of $(D_3)$. When $l>1$, we have, by $(D_2)$, \ref{as coefficients} and \ref{ass exist and uniq}:
\begin{align*}
&\EE\left[\left|\widetilde{\theta}^{i}_{s_{l},t}-\widetilde{\theta}^{i}_{s_{l},\eta_n(t)}\right|^2\right]\\
&\leq \EE\left[\left|\widetilde{\theta}^{i}_{s_{l-1},t}-\widetilde{\theta}^{i}_{s_{l-1},\eta_n(t)}+(s_l-s_{l-1})
\left(\nabla_a\bH^\sigma_t(\widetilde{\theta}^{i}_{s_{l-1},t},\widetilde{\nu}^{N_2}_{s_{l-1},\cdot},\DataM^{N_1})-
\nabla_a\bH^\sigma_{\eta_n(t)}(\widetilde{\theta}^{i}_{s_{l-1},\eta_n(t)},\widetilde{\nu}^{N_2}_{s_{l-1},\cdot},\DataM^{N_1})\right)\right|^2\right]\\
&\leq \EE\left[\left|\widetilde{\theta}^{i}_{s_{l-1},t}-\widetilde{\theta}^{i}_{s_{l-1},\eta_n(t)}\right|^2\right]
\left(1-(\sigma^2\kappa-3)(s_{l}-s_{l-1})\right)\\
&\quad +\EE\left[\left|\widetilde{\theta}^{i}_{s_{l-1},t}-\widetilde{\theta}^{i}_{s_{l-1},\eta_n(t)}\right|^2\right]
\times L(s_l-s_{l-1})\Big(1+(s_l-s_{l-1})(1+\frac{\sigma^4}{2}\Vert\nabla_a U(\cdot)\Vert_{Lip})\Big)
\\
&\quad +2c(s_{l}-s_{l-1})(1+(s_l-s_{l-1}))\Big((t-\eta_n(t))^{2\alpha}\\
&\quad +\int \EE\left[\left|X^{\xi,\zeta}_t(\widetilde{\nu}^{N_2}_{s_{l-1},\cdot})-X^{\xi,\zeta}_{\eta_n(t)}
(\widetilde{\nu}^{N_2}_{s_{l-1},\cdot})\right|^2+\left|P^{\xi,\zeta}_t(\widetilde{\nu}^{N_2}_{s_{l-1},\cdot})-P^{\xi,\zeta}_{\eta_n(t)}
(\widetilde{\nu}^{N_2}_{s_{l-1},\cdot})\right|^2\right]\,\DataM^{N_1}(d\xi,d\zeta)\Big).
\end{align*}

With the assumptions ~\ref{as coefficients}  and~\ref{ass exist and uniq},%\ref{as vect field flow}, 
one can check that
\begin{align*}
&\EE\left[|X^{\xi,\zeta}_t(\widetilde{{{\nu}}}^{N_2}_{s_{l-1},\cdot})-X^{\xi,\zeta}_{\eta_n(t)}(\widetilde{{{\nu}}}^{N_2}_{s_{l-1},\cdot})|^2\right]
+\EE\left[|P^{\xi,\zeta}_t(\widetilde{{{\nu}}}^{N_2}_{s_{l-1},\cdot})-P^{\xi,\zeta}_{\eta_n(t)}(\widetilde{{{\nu}}}^{N_2}_{s_{l-1},\cdot})|^2\right]\\
&\leq c\max_t|t-\eta_n(t)|^{2}\left(1+\EE\left[\int_{0}^{T}|\widetilde{\theta}^i_{s_{l-1},t}|^2\,dt\right]\right)
\end{align*}
so that
\begin{align*}
&\EE\left[\left|\widetilde{\theta}^{i}_{s_{l},t}-\widetilde{\theta}^{i}_{s_{l},\eta_n(t)}\right|^2\right]
\leq c(s_{l}-s_{l-1})(1+(s_l-s_{l-1}))(t-\eta_n(t))^{2(\alpha \wedge 1)}\\
&\quad +\EE\left[\left|\widetilde{\theta}^{i}_{s_{l-1},t}-\widetilde{\theta}^{i}_{s_{l-1},\eta_n(t)}\right|^2\right]
\left(1-(\sigma^2\kappa-3)(s_{l}-s_{l-1})+ L(s_l-s_{l-1})\Big(1+(s_l-s_{l-1})(1+\frac{\sigma^4}{2}\Vert\nabla_a U\Vert_{Lip})\right).
\end{align*}
Proceeding as in \textbf{Step 2} of the proof of Lemma \ref{prop:EulerRate1}, we get
\begin{equation}\label{EulerRate2:proofstp4}
\EE\left[\left|\widetilde{\theta}^{i}_{s_{l},t}-\widetilde{\theta}^{i}_{s_{l},\eta_n(t)}\right|^2\right]
\leq c(1+\max_{l'}(s_{l'}-s_{l'-1}))(t-\eta_n(t))^{2(\alpha \wedge 1)}\\
\end{equation}
\textbf{Step 3}. Plugging \eqref{EulerRate2:proofstp4} into \eqref{EulerRate2:proofstp3}, coming back to \eqref{EulerRate2:proofstp2}, and then following again \textbf{Step 2} from the proof of Lemma \ref{prop:EulerRate1}, we conclude.
%\end{proof}

\end{proof} 

% Manual newpage inserted to improve layout of sample file - not
% needed in general before appendices/bibliography.

% Note: in this sample, the section number is hard-coded in. Following
% proper LaTeX conventions, it should properly be coded as a reference:

%In this appendix we prove the following theorem from
%Section~\ref{sec:textree-generalization}:

\vskip 0.2in
\bibliography{Bibliography}

\end{document}